\pgfplotsset{compat=1.15}
	\def\command@factory#1{%
		\expandafter\def\csname cal#1\endcsname{\mathcal{#1}}
		\expandafter\def\csname frak#1\endcsname{\mathfrak{#1}}
		\expandafter\def\csname scr#1\endcsname{\mathscr{#1}}
		\expandafter\def\csname bb#1\endcsname{\mathbb{#1}}
		\expandafter\def\csname rm#1\endcsname{\mathrm{#1}}
		\expandafter\def\csname bf#1\endcsname{\mathbf{#1}}
	}
\newtheorem{theorem}{Theorem}[section]
\newtheorem{prop}[theorem]{Proposition}
\newtheorem{lemma}[theorem]{Lemma}
\newtheorem{cor}[theorem]{Corollary}
\theoremstyle{definition}
\newtheorem{defn}[theorem]{Definition}
\newtheorem{question}[theorem]{Question}
\theoremstyle{remark}
\newtheorem{remark}[theorem]{Remark}
\newcommand{\cat}{\mathrm{CAT(0)}}
\newcommand{\wt}{\widetilde}
\newcommand{\leftQ}[2]{\left.\raisebox{-.2em}{$#2$}\middle\backslash\raisebox{.2em}{$#1$}\right.}
\title{Cubulating a free-product-by-cyclic group}
\author{Fran\c{c}ois Dahmani}
\address{Fran\c{c}ois Dahmani, Institut Fourier, Univ. Grenoble Alpes, F-38000 Grenoble, France}
\email{francois.dahmani@univ-grenoble-alpes.fr}
\author{Suraj Krishna M S }
\address{Suraj Krishna, Faculty of Mathematics, Technion -- Israel Institute of Technology, Haifa 32000, Israel}
\email{surajms@campus.technion.ac.il}
\begin{document}

\maketitle

\begin{abstract}
   Let $G = H_1 * ... * H_k * F_r$ be a {finitely generated} torsion-free group and $\phi$ an automorphism of $G$ that preserves this free factor system. We show that when $\phi$ is fully irreducible and atoroidal relative to this free factor system, the mapping torus $\Gamma = G \rtimes_{\phi} \mathbb{Z}$ acts relatively geometrically on a hyperbolic CAT(0) cube complex. This is a generalisation of a result of Hagen and Wise for hyperbolic free-by-cyclic groups.
\end{abstract}

\tableofcontents

\section{Introduction}
Consider a finitely generated free group $F$ and an automorphism $\phi : F \to F$. Hagen and Wise showed in \cite{hagen_wise_irreducible} that if $\phi$ is atoroidal and fully irreducible, then the mapping torus $F \rtimes_{\phi} \bbZ$ acts properly and cocompactly on a hyperbolic CAT(0) cube complex. It often happens that automorphisms of free groups are neither atoroidal nor fully irreducible, suggesting various directions of generalisation. In \cite{hagen_wise_general}, Hagen and Wise relaxed the requirement of full irreducibility and, using the sophisticated machinery of relative train track maps, showed that the mapping torus still acts geometrically on a CAT(0) cube complex. They asked (see the discussion around Problem B in \cite{hagen_wise_general}) if a systematic answer to which free-by-cyclic groups admit cubulations is possible, especially in the presence of polynomially growing subgroups.

In this article, by investigating \emph{relative cubulations} instead of usual cubulations, we provide an answer in great generality as to when such groups are relatively cubulated. Let $\phi$ be an automorphism of $F$ and let $F = H_1 * H_2* \ldots * H_k * F_r$ be a free 
{decomposition}
that is preserved by $\phi$
{(up to taking conjugates of the factors)}. 
Such a free 
{decomposition}
always exists for any $\phi$. In particular, when $\phi$ is not fully irreducible, there exists a free 
{decomposition}
preserved by $\phi$, relative to which $\phi$ is fully irreducible. Let us also assume that $\phi$ is atoroidal relative to the free 
{decomposition}.
We will clarify the meanings of both these terms {more precisely, using the notion of free factor systems,}  in \cref{sec: flow_space}. 
In our setting, we allow, for instance, elements to have polynomial growth under $\phi$ as long as they are elliptic in the free product. Under a mild complexity condition, the first named author and Li showed in \cite{dahmani_li} that the mapping torus $F \rtimes_{\phi} \bbZ$ is hyperbolic relative to the suspensions of the free factors $H_i$. A particular case of the main result of this paper shows that such a mapping torus acts relatively geometrically on a hyperbolic CAT(0) cube complex. 

\subsection{Main result}

Let $G$ be a {finitely generated} group, $\phi$ an automorphism of $G$ and $H$ a subgroup of $G$ whose conjugacy class is preserved by a power of $\phi$. Let $n$ be the minimal positive power of $\phi$ such that $\phi^n(H) = g^{-1}Hg$. Then we say that the suspension of $H$ by $\phi$ in the semidirect product $G\rtimes_{\phi} \langle t \rangle$ is the group $H\rtimes_{ad_g \circ \phi^n} \langle t^{n} g^{-1}\rangle $, { where $ad_g: G \to G$ denotes the inner automorphism that sends any element $h \mapsto ghg^{-1}$. }

To state our main result for automorphisms of free products of groups, we need the notions of full irreducibility and atoroidality \emph{relative to a free factor system}. These notions are analogous to the corresponding ones for automorphisms of free groups, with the condition that the free 
{decomposition}
is preserved, but within a free factor, there are no restrictions. 
We also need a technical notion of \emph{no twinned subgroups}.\footnote{This requirement can in fact be removed, see \cite[Lemma 3.3]{DMM23} or \cref{rem: no twins}.} 
We refer the reader to \cref{sec: flow_space} for the definitions.  

We also recall 
the notion of relative cubulation introduced by Einstein--Groves in \cite{relative_cubulation}: 
A relatively hyperbolic group $(\Gamma,\scrP)$ is \emph{relatively cubulated} if it acts cocompactly on a CAT(0) cube complex with cell stabilisers either trivial or conjugate to a finite index subgroup in $\scrP$.

\begin{theorem}\label{thm: main_intro}
    {Let $G$ be a finitely generated torsion-free group and let $G \cong H_1 * \dots * H_k*F_r$ be a free decomposition such that each $H_i$ is nontrivial.} Let $\phi$ be an automorphism that preserves 
    {the associated}
    free factor system. Assume that 
    $k+r \geq 3$, that $\phi$ is fully irreducible relative to the free factor system and atoroidal relative to the free factor system, and that there exist no twinned subgroups. 
    Then the mapping torus $\Gamma = G \rtimes_\phi \bbZ$ admits a relative cubulation for the peripheral structure of the suspensions of the free factors $H_i$.
\end{theorem}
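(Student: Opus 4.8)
The plan is to put a $\Gamma$-invariant wallspace structure on the ``flow space'' of a relative train track representative of $\phi$, to feed it into Sageev's dual cube complex construction, and then to apply the relative cubulation criterion of \cite{relative_cubulation}. First I would record that, under the standing hypotheses — in particular atoroidality relative to the free factor system, the complexity condition $r\geq 2$ or $k+r\geq 3$, and the absence of twinned subgroups — the group $\Gamma$ is hyperbolic relative to $\scrP$, the collection of suspensions of the free factors $H_i$; this is the main theorem of \cite{dahmani_li} (after possibly replacing $\phi$ by a power, which one checks is harmless for the conclusion). In parallel I would fix an expanding relative train track representative $f\colon \tau\to\tau$ of $\phi$, with $\tau$ a marked graph of groups whose vertex groups are the $H_i$, using the relative train track theory for automorphisms of free products developed in \cref{sec: flow_space}. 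The mapping torus of $f$ is a $2$-complex $X$ with $\pi_1(X)\cong\Gamma$, and its universal cover $\widetilde X$ — the flow space — is the carrier of the wall structure; relative atoroidality is what makes $\widetilde X$ hyperbolic relative to the lifts of the vertex spaces, consistently with the relative hyperbolicity of $\Gamma$.

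The walls come from the laminations. Relative full irreducibility provides attracting and repelling laminations $\Lambda^{+},\Lambda^{-}$ of $f$ that are \emph{filling} relative to the free factor system. Each leaf segment together with an illegal turn determines a transverse arc in $\widetilde X$, and the set of flow lines meeting that arc is a two-sided wall; the combinatorics of $f$ organizes these into a discrete, $\Gamma$-invariant family of walls, exactly as in \cite{hagen_wise_irreducible} but now interpreted relative to $\scrP$. With this family one must check three things: (i) each wall stabilizer is a relatively quasiconvex codimension-one subgroup of $(\Gamma,\scrP)$, and each $P\in\scrP$ crosses only finitely many walls, so the family is \emph{full} in the sense of \cite{relative_cubulation} — here the local behaviour of $f$ near the vertex groups and the ``no twinned subgroups'' hypothesis are used to prevent a wall from running along a peripheral coset; (ii) the walls separate the Bowditch boundary $\partial(\Gamma,\scrP)$, i.e.\ any two distinct boundary points are strictly separated by some wall, which I would deduce from the filling property of $\Lambda^{\pm}$ together with the recurrence of $f$-iterates along legal paths; and (iii) there are finitely many $\Gamma$-orbits of walls, giving cocompactness of the dual cube complex. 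Granting (i)--(iii), Sageev's construction produces a $\cat$ cube complex $C$ with a $\Gamma$-action, the relative cubulation criterion of \cite{relative_cubulation} upgrades this to a relatively geometric action for the peripheral structure $\scrP$, and hyperbolicity of $C$ follows from the relative hyperbolicity of $\Gamma$ and the uniform relative quasiconvexity of the wall stabilizers, which prevents an isometrically embedded flat in $C$, as in the relative versions of the Hagen--Wise arguments.

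I expect the crux to be (i) and (ii), and especially proving boundary separation over the \emph{entire} $\partial(\Gamma,\scrP)$ rather than only over the ``hyperbolic directions'' of $\widetilde X$: this requires controlling the interaction between the relative train track machinery — filling laminations, bounded cancellation, and the behaviour of $f$ across the vertex groups — and the relative hyperbolic geometry supplied by \cite{dahmani_li}. The hypotheses that $r\geq 2$ or $k+r\geq 3$ and that there are no twinned subgroups are there precisely to guarantee, respectively, that the laminations are genuinely filling and that $\scrP$ is the correct peripheral structure with no wall degenerating along a peripheral coset.
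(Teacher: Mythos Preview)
Your overall architecture is right --- flow space, walls, boundary criterion of \cite{relative_cubulation} --- but there are two substantive gaps.

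First, your description of the walls is too vague to carry weight, and in places not quite right. The walls are not built from ``leaf segments with illegal turns'' or directly from laminations; they are the explicit Hagen--Wise cut-and-cross-join walls (primary and secondary busts, nuclei, levels, slopes) transported to the non-locally-finite $G$-tree. Lamination-type reasoning does enter (via the limit $\bbR$-tree) when proving level-separation for non-horizontal geodesics, but the walls themselves are concrete combinatorial objects, and even basic facts --- e.g.\ that backward flows are finite, or that translates of a periodic flow ray diverge --- require angle arguments at singular vertices that have no analogue in the locally finite case.

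Second, and more seriously, you have not addressed fullness. The Hagen--Wise walls, as built, have stabilisers that are relatively quasiconvex and codimension-one, but they are \emph{not} full: a wall nucleus typically contains a singular vertex, so the wall stabiliser meets the corresponding parabolic $P\in\scrP$ in an infinite subgroup of infinite index (it contains the vertex group but not the suspending element). Your assertion that ``each $P\in\scrP$ crosses only finitely many walls'' and that no wall ``runs along a peripheral coset'' is the opposite of what happens; the no-twinned-subgroups hypothesis does not prevent this interaction. The paper's main new device is to \emph{saturate} each wall by adjoining the principal flow lines through its singular vertices, together with further wall components along those lines, so that the stabiliser now contains a finite-index subgroup of every parabolic it meets. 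One must then re-prove quasiconvexity for the saturated object and, crucially, show that the saturation still separates the Bowditch boundary --- which now splits into three cases (conical--conical, conical--parabolic, parabolic--parabolic), the last two requiring that the saturation separate the target points \emph{without} separating the two ends of any principal flow line it contains. This is where the bulk of the work lies, and it is invisible in your proposal.
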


We recover the result of \cite{hagen_wise_irreducible} when $G = F_r$ above\footnote{it is stated there in terms of irreducible atoroidal automorphisms, which are fully irreducible by \cite[Corollary B.4]{irred_iwip}.}, as well as some cases of \cite{hagen_wise_general}, through a telescopic argument of Groves and Manning \cite[Theorem D]{groves_manning_omnibus}.  
In  general, the relatively geometric action on CAT(0) cube complexes that we obtain otherwise still has interesting consequences.

Note that any group acting on a CAT(0) cube complex admits an action on an $\ell^2$-space built using characteristic functions of hyperplane-halfspaces \cite{niblo_reeves}. We thus have: 

\begin{cor}
    Let $G$ and $\phi$ be as in \cref{thm: main_intro}. Then the mapping torus $\Gamma$ acts on a Hilbert space with unbounded orbits, with no global fixed point for $G$. 
\end{cor}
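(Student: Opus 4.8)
The plan is to combine the geometric action furnished by \cref{thm: main_intro} with the Niblo--Reeves construction \cite{niblo_reeves} and then rule out that the subgroup $G$ acts elliptically. First I would invoke \cref{thm: main_intro} to obtain a cocompact action of $\Gamma$ on a hyperbolic $\cat$ cube complex $X$ whose cell stabilisers are trivial or conjugate to a finite-index subgroup of one of the suspensions $P_i$ of the free factors $H_i$; since $X$ is finite-dimensional, as always in this context, it is a complete $\cat$ space, so the Bruhat--Tits fixed-point theorem applies to it. By \cite{niblo_reeves}, the cubical action induces an affine isometric action of $\Gamma$ on the Hilbert space $\calH$ spanned by the indicator functions of the halfspaces of $X$, and, fixing a base vertex $x_0$ with associated vector $\xi_0$, one has $\|\gamma\cdot\xi_0-\xi_0\|^2=d(x_0,\gamma x_0)$, the combinatorial distance in $X^{(1)}$. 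This yields the dictionary: $\Gamma$ has unbounded orbits in $\calH$ iff it has unbounded orbits in $(X^{(1)},d)$, and a subgroup of $\Gamma$ has a global fixed point in $\calH$ iff it has a bounded orbit there, iff (by the length formula, using that the $\cat$ metric on $X$ is dominated by $d$ on the vertex set and that $X$ is complete) it fixes a point of $X$. Hence the whole corollary reduces to proving that $G$ fixes no point of $X$ — which also settles the unbounded-orbit statement for $\Gamma$, since $G\le\Gamma$.

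To prove that $G$ fixes no point of $X$ I would argue by contradiction. A $G$-fixed point lies in the interior of a unique cube $c$, which is then $G$-invariant, so the kernel $G_0$ of the action $G\to\mathrm{Aut}(c)$ is a finite-index subgroup of $G$ contained in the stabiliser of every vertex of $c$. Since $r\ge 2$ or $k+r\ge 3$, the group $G$, hence $G_0$, is infinite, so this vertex stabiliser is nontrivial, and therefore conjugate in $\Gamma$ to a finite-index subgroup of some $P_i$. Now I would bring in the retraction $\rho\colon\Gamma\to\Gamma/G\cong\bbZ$: every conjugate of $G_0$ lies in $G=\ker\rho$, while $\rho$ maps $P_i$ onto an infinite cyclic group with kernel a conjugate of $H_i$; combining these shows that a conjugate of $G_0$ — still of finite index in $G$, since $G\trianglelefteq\Gamma$ — is contained in a conjugate of $H_i$. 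The final step is the elementary observation that, in the range $r\ge 2$ or $k+r\ge 3$, the decomposition $G=H_1*\cdots*H_k*F_r$ is a nontrivial free product with at least two nontrivial free factors, so each $H_i$ has infinite index in $G$; a finite-index subgroup of $G$ cannot be conjugate into $H_i$, the desired contradiction.

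The bulk of the work is the group-theoretic argument of the second paragraph; the analytic ingredients — the existence of a fixed point for a bounded affine isometric orbit in a Hilbert space, the Niblo--Reeves length formula, and the completeness of a finite-dimensional $\cat$ cube complex — are standard and used off the shelf. The points that need care are (i) checking that $G$ is genuinely infinite and genuinely non-peripheral for every $(k,r)$ with $r\ge 2$ or $k+r\ge 3$, so that the vertex stabiliser containing $G_0$ must be peripheral while $G$ itself cannot be conjugate into a peripheral; and (ii) identifying the intersection of (a conjugate of) a suspension $P_i$ with $\ker\rho$ as a conjugate of $H_i$, which follows directly from the definition of the suspension recalled before \cref{thm: main_intro}. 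I do not expect either of these to present a real obstacle.
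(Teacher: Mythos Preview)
Your proposal is correct and follows the same route as the paper, which merely records the Niblo--Reeves construction \cite{niblo_reeves} in the sentence preceding the corollary and leaves the rest implicit. You have supplied the details the paper omits --- in particular the verification that $G$ cannot fix a point of the cube complex, via the retraction $\Gamma\to\bbZ$ and the observation that no finite-index subgroup of $G$ can be conjugate into any $H_i$ under the complexity hypothesis; this argument is sound and nothing further is needed.
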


In addition, when $G$ is residually finite, using a generalisation by Einstein--Groves \cite[Theorem 1.6]{einstein_groves_separability} of a well-known result of Haglund--Wise \cite{haglund_wise_special}, we have:

\begin{cor}
    Let $G$ and $\phi$ be as in \cref{thm: main_intro}. Further, let $G$ be residually finite. Then every full relatively quasiconvex subgroup of the mapping torus $\Gamma$ is separable.
\end{cor}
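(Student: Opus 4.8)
The plan is to recognise $(\Gamma,\scrP)$, with $\scrP$ the family of suspensions of the free factors $H_i$, as a relatively hyperbolic pair admitting a \emph{relatively geometric} action on a CAT(0) cube complex with residually finite peripheral subgroups, and then to quote the relatively geometric analogue of the Haglund--Wise separability theorem due to Einstein--Groves \cite{einstein_groves_separability}.

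First I would assemble the relatively geometric action. The pair $(\Gamma,\scrP)$ is relatively hyperbolic --- this is presupposed by the statement of \cref{thm: main_intro} and is established in \cite{dahmani_li} (the hypotheses of \cref{thm: main_intro}, full irreducibility and atoroidality relative to the free factor system together with the absence of twinned subgroups, suffice to put us in the setting of \cite{dahmani_li}). By \cref{thm: main_intro} itself, $\Gamma$ acts cocompactly on a hyperbolic CAT(0) cube complex $X$ in which every cell stabiliser is either trivial or a finite-index subgroup of a conjugate of some $P_i\in\scrP$. This is by definition a relatively geometric action of $(\Gamma,\scrP)$ on $X$.

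Second I would verify that the peripheral subgroups are residually finite, which is the running hypothesis under which \cite{einstein_groves_separability} promotes a relatively geometric action to a \emph{relatively special} one. Each $P_i$ is the suspension $H_i\rtimes_{\theta_i}\bbZ$, where $\theta_i = ad_{g_i}\circ\phi^{n_i}$ (recall $\phi^{n_i}(H_i)=g_i^{-1}H_ig_i$, so $\theta_i$ restricts to an automorphism of $H_i$). Since $H_i$ is a free factor of $G$ it is a retract of $G$, hence residually finite as $G$ is; it then remains to see that the mapping torus $H_i\rtimes_{\theta_i}\bbZ$ is itself residually finite. I expect this to be the only genuinely delicate point: residual finiteness of the mapping torus of a residually finite group by an automorphism is not automatic in general, so this step must be carried by the particular features of the situation (it holds, for instance, whenever the $H_i$ are free, which is how one recovers the absolute case and parts of \cite{hagen_wise_general}).

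Third, with the $P_i$ residually finite, I would invoke \cite{einstein_groves_separability}: a relatively hyperbolic group acting relatively geometrically on a CAT(0) cube complex with residually finite peripheral subgroups is virtually special relative to its peripheral structure, a relative version of \cite{haglund_wise_special}. The canonical-completion machinery available in that relatively special setting then yields that every \emph{full} relatively quasiconvex subgroup of $\Gamma$ is separable, which is the assertion of the corollary. Beyond the residual-finiteness check flagged above, the remaining work is only to confirm that the cube complex produced by \cref{thm: main_intro} meets the technical requirements of a relatively geometric action; all the substance of the corollary lies in \cref{thm: main_intro} and in \cite{einstein_groves_separability}.
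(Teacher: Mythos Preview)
Your approach is exactly the paper's: it offers no proof beyond the sentence introducing the corollary, which simply invokes Einstein--Groves' generalisation of Haglund--Wise together with \cref{thm: main_intro}. You have in fact gone further than the paper by isolating the one point that genuinely needs checking, namely residual finiteness of the peripheral suspensions $P_i=H_i\rtimes_{\theta_i}\bbZ$; you are right that this does not follow from residual finiteness of $G$ (hence of the retracts $H_i$) alone, and the paper does not address it either, so the corollary as stated is tacitly relying on a situational argument or on the stronger hypothesis that the parabolics (equivalently $\Gamma$) are residually finite.
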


{Another consequence of \cref{thm: main_intro} can be seen in our recent work with Mutanguha \cite{DMM23}, where we showed that all hyperbolic hyperbolic-by-cyclic groups are virtually special.}

\subsection{Method}
The procedure that we adopt
to cubulate follows the scheme laid out in \cite{hagen_wise_irreducible}. The goal is to obtain a collection of codimension-1 subgroups of $\Gamma$ and then apply Sageev's dual cube complex construction \cite{sageev_cubulation}. The codimension-1 subgroups we build will be stabilised by full relatively quasiconvex subgroups in the relatively hyperbolic group $\Gamma$. We then apply the boundary criterion of \cite{relative_cubulation}.

In order to do this, an important tool is Francaviglia--Martino's absolute train tracks for free products \cite{francaviglia_martino}. Given $G$ and an automorphism $\phi$ satisfying the hypotheses of the main result, there exists a $G$-tree $T$ 
and a train track map $f:T\to T$ representing $\phi$. Taking mapping cylinders for $f$,  one then obtains a \emph{flow space} on which $\Gamma$ acts. The flow space has the structure of a tree of spaces, where the underlying graph is a line and vertex and edge spaces are copies of the tree $T$. The map $f$ ``flows'' a point on any tree to its image in the next tree. We describe the flow space and various properties we need in order to define walls in the flow space in \cref{sec: flow_space}.

Before explaining how we build walls in our setup, let us motivate our construction in the surface case. Let $M_f$ be the mapping torus of a closed hyperbolic surface $S_g$ under a pseudo-Anosov map $f$. Cooper, Long and Reid showed in \cite{cooper_long_reid} that in this case there exists an immersed, quasi-Fuchsian surface in $M_f$ (and hence a quasiconvex wall in the universal cover). First, take a simple closed curve $C$ in $S_g$ which is disjoint from its $f$-image. Such a simple closed curve exists up to taking a finite cover of $M_f$. The required immersed surface in $M_f$ is then obtained by cutting $S_g$ along $C$ and $f(C)$ and gluing $C_{\pm}$ to $f(C_{\mp})$ (the cut-and-cross-join technique).

Hagen--Wise mimicked this construction to the setup of hyperbolic free-by-cyclic groups in the fully irreducible case. A surface with a pseudo-Anosov map is now replaced by a graph with a train track map. The analogue of cutting along a simple closed curve is cutting along a point in the graph. However, the situation here is more complicated as a train track map is only a homotopy equivalence and not a homeomorphism. A point often has multiple pre-images and the cut-and-cross-join operation is performed along all points with the same image.

We use the same operation, but now our train track representative is defined not on a finite graph but on the $G$-tree $T$. The lack of local finiteness of $T$ tree gives rise to additional difficulties, but we were able to manage them because of the behaviour of train track maps in this setting, and considerations of angles at vertices under relative hyperbolicity.

While cocompact cubulations require walls to be (relatively) quasiconvex, relative cubulations require walls to also be full. This forced us to introduce \emph{saturations} of our walls in order to ensure fullness. The construction of walls and their saturations can be found in \cref{sec: walls}. 

Finally, in order to use the boundary criterion, we need sufficiently many wall saturations to not only cut bi-infinite geodesics in the flow space, but also to cut pairs of \emph{principal flow lines} that are stabilised by maximal parabolic subgroups and pairs consisting of a geodesic ray and a principal flow line. We show this in \cref{sec: wall_saturations_cut}, ensuring the separation of every pair of points in the Bowditch boundary. We verify the latter in \cref{sec: separation_bowditch}, where we also give a proof of \cref{thm: main_intro}. 

\subsection{Questions}
We end this introduction with three questions arising from this work.
\begin{question}
    Let $G = A*B$ be a torsion-free group and $\phi$ be an automorphism that is fully irreducible and atoroidal relative to the above free {decomposition}. Does the mapping torus of $G$ admit a relative cubulation?
\end{question}

The above makes a case for a combination theorem of relatively cubulated groups, which is as yet a largely unexplored area of research.

\begin{question}  Let $G$ be a free product and  $\phi$ be an automorphism that is fully irreducible but not necessarily atoroidal, relative to the given free {decomposition}. Let $\scrP$ be the peripheral structure of suspensions of maximal subgroups of $G$ on which iterations of $\phi$ make lengths of conjugacy classes grow at most polynomially.  Does the mapping torus of $G$ admit a relative cubulation?
\end{question} 

Motivated by \cite{dahmani_li}, such a construction would apply to free groups automorphisms, refining the cartography of possible cubulations of free-by-cyclic groups.

In \cite{dahmani_ms}, we showed that the mapping torus of a torsion-free hyperbolic group is hyperbolic relative to the suspensions of the maximal polynomially growing subgroups. This leads to a natural question: 

\begin{question}
    Let $G$ be a torsion-free hyperbolic group and $\phi$ be an automorphism of $G$. Does the mapping torus of $G$ admit a relative cubulation?
\end{question}

{The answer is yes when $\phi$ is atoroidal \cite{DMM23}.}

\subsection{Acknowledgements}
Suraj Krishna was partially supported during this work by CEFIPRA grant number 5801-1, ``Interactions between dynamical systems, geometry and number theory" at TIFR and by ISF grant number 1226/19 at the Technion. Fran\c{c}ois Dahmani is supported by ANR-22-CE40-0004 GoFR.  Both acknowledge support by    LabEx CARMIN, ANR-10-LABX-59-01.    Different institutions must be thanked for hosting different stages of this work: the IHP (UAR 839 CNRS-Sorbonne Universit\'e)  hosting the trimester program ``Groups acting on Fractals'' in 2022, the Institut Fourier, the Math department of the Technion, for mutual visits, and the IHES.

We would like to thank Mark Hagen, Thomas Ng and Dani Wise for helpful conversations{, and the anonymous referee for numerous comments which made the paper more pleasant}.

\section{The flow space} \label{sec: flow_space}

\subsection{Free \texorpdfstring{$G$}{G}-trees relative to \texorpdfstring{$\calH$}{H}, and train track maps}
{Let us fix $G$ to be a finitely generated group for the rest of the paper.} 

{A free factor system for $G$ is a tuple $(H_1,\dots, H_k)$ of subgroups such that there exists a free subgroup $F<G$ for which $G= H_1*H_2*\dots*H_k*F$.  Another free factor system $(J_1,\dots, J_\ell)$ of $G$ is strictly larger if each $H_i$ is conjugate into some $J_r$, and one inclusion is strict.}

A $G$-tree is a metric tree endowed with an isometric action of $G$.

A \emph{free $G$-tree relative to $\calH= \{H_1,\dots, H_k\}$}
is a $G$-tree which is minimal for its $G$-action, its edge stabilisers are trivial, and its {nontrivial} elliptic subgroups are exactly the conjugates of the $\{H_1, \dots, H_k\} $ in $G$. We may as well require that there is no vertex of valence $2$ that has trivial stabiliser. 

A vertex is \emph{singular} if its stabiliser is nontrivial. 

Observe that, because $G$ is finitely generated, any such $G$-tree has finite quotient by $G$.  There is a whole space of such free $G$-trees relative to $\{H_1, \dots, H_k\}$, as studied in \cite{GL_outerspace}. 

It is convenient to have a notion of \emph{angle} in these non-locally finite trees. Let $T$ be a free $G$-tree relative to $\calH$. Let us choose a word metric for each $H_j${, which is finitely generated as $G$ is}. Let $v_j \in T$ be the unique vertex fixed by $H_j$, and finally choose a finite set of orbit representatives for the $H_j$-action of edges issued from $v_j$. Denote the angle between two edges $e,e'$ issued from the vertex fixed by $H_j$,  to be the word length of the element {$h h'^{-1}$} 
given by $h$ and $h'$ sending respectively $e$ and $e'$ into our set of representatives. If $e,e'$ are distinct edges in the finite set of orbit representatives, the angle between them is one. 
It is clear that the $Stab(v)$-action on edges adjacent to $v$ preserves angles. We hence complete the definition by $G$-equivariance: it defines the angle between two edges adjacent to singular vertices stabilised by conjugates of the $H_j$.  In other (locally finite) vertices, we may convene that angles are $0$ (if edges are equal) or $1$ (if they are different).   Observe that only finitely many edges make a given angle with a given edge, and that angles (around a given vertex) satisfy the triangular inequality.

Let $\phi$ be an automorphism of $G$ that preserves the conjugacy class of each $H_i$.  Consider a free $G$-tree $T$ relative to $\{H_1,\dots, H_k\}$.  One says that a continuous map \emph{$f:T\to T$ realises the automorphism $\phi$} if it is equivariant in the sense that for all $p\in T$, and all $g\in G$, $f(gp)= \phi(g) f(p)$.  Such maps realising $\phi$ exist in our context  \cite[Lemma 4.2]{francaviglia_martino}.
Such a map is also, by equivariance, a quasiisometry of $T$. 

A \emph{turn} is a pair of edges $e_1$ and $e_2$ in $T$ starting at a common vertex $v$. The pair is a \emph{proper turn} if $e_1$ and $e_2$ are distinct. We say that $e_1$ and $e_2$ make a \emph{legal turn} if the two paths $f(e_1)$ and $f(e_2)$ starting at $f(v)$ share no proper common subpath. In other words, by definition, $f$ sends legal turns to proper turns.  We say that $f$ is a \emph{train track map} if it sends edges to reduced paths without non-legal turns, and if moreover, for all such $e_1, e_2$ in $T$ making a legal turn, their images  $f(e_1) $  and $f(e_2)$ are two paths starting at $f(v)$ by two edges that themselves make a legal turn. In other words, by definition, $f$ is a train track map if it  sends legal turns to legal turns. 

It is far from obvious that train track maps representing automorphisms exist. A theorem of Francaviglia and Martino \cite{francaviglia_martino} ensures that if $\phi$ is fully irreducible relative to $\{H_1, \dots, H_k\}$, then there exists a free $G$-tree relative to $\{H_1, \dots, H_k\}$, that we denote by $T$, and there exists $f:T\to T$ continuous, with constant speed on edges, that realises $\phi$ and is a train track map.

Recall that, {if $(H_1, \cdots, H_k)$ is a free factor system of $G$, and if $\phi$ is an automorphism permuting it (i.e. preserving the set of conjugacy classes $\{[H_1], \cdots, [H_k]\}$), we say that } $\phi$ is \emph{fully irreducible relative to $\{H_1, \cdots, H_k\}$} \cite[Definition 8.2]{francaviglia_martino} if {no positive power of $\phi$ preserves any larger free factor system.} 

We recall for later use an equivalent formulation of irreducibility (see Definition 8.1 and Lemma 8.3 of \cite{francaviglia_martino}). Let $f : T \to T$ be a map realising the automorphism $\phi$. We say $f$ is \emph{irreducible} if for every proper subgraph $W$ of $T$ that is $f$-invariant and $G$-invariant, the quotient of $W$ by $G$ is a forest such that each component subtree contains at most one non-free vertex. The map $f$ is \emph{fully irreducible} if $f^i$ is irreducible for all $i>0$. 
We say $\phi$ is (fully) irreducible relative to $\{H_1, \cdots, H_k\}$ if every $f$ realising $\phi$ is (fully) irreducible.

\subsection{The flow space of an automorphism}

From now on $T$ and $f$ are thus chosen, so that $f$ is a train track map on $T$ representing 
{the relatively fully irreducible}
$\phi$. 

Define, for all $i\in \mathbb{Z}$ a tree $T_i$ equivariantly isomorphic to $T$. Let us denote by $(p\mapsto p_i)$  the identification $T\to T_i$. Define the action of $G$ on $T_i$ as $g.p_i = (\phi^i(g) p)_i$. Observe that this makes $T_i$ a free $G$-tree relative to $\{H_1, \dots, H_k\}$. Define $f_i:T_i \to T_{i+1}$ by $f_i(p_i) = (f(p))_{i+1}$. Observe, by the property of train tracks, how $f_i$ sends a turn: if it is legal when seen in $T$, its image is a legal turn when seen in $T$. We will make the abuse of language that any composition of the form {$f_{i+k}\circ f_{i+k-1}\circ \cdots \circ f_{i+1} \circ f_i$}   (from $T_i$ to $T_{i+k+1}$) is called a (positive) iteration of $f$.  

Start with the disjoint union  of all the $T_i$, $i\in \mathbb{Z}$. For each $i$, and each (unoriented) edge $e_i$ in $T_i$, we choose an orientation, and glue a rectangle $R_{e_i} = [0,1] \times [0, 1]$ such that $\{0\} \times [0,1]$
is glued to $e_i$, and $\{1\} \times [0,1]$  
is glued on the path $f_i(e_i)$ in $T_{i+1}$ (respecting orientation). See \cref{fig:2cells} for an illustration.  Finally, for each vertex $v_i$, we glue together the sides $[0,1] \times \{0\}$ 
of the rectangles $R_{e_i}$ for all $e_i$ starting at this vertex, and the sides $[0,1] \times \{1\}$ 
of the rectangles $R_{e_i}$ for all $e_i$ terminating  at this vertex. The resulting space, with a natural structure of a cell complex, is denoted by $\wt X$.  We call this space the \emph{flow space of $\phi$} (with respect to $\calH$, $T$ and $f$). Thus, the flow space is a \emph{tree of spaces}, where the underlying tree is a bi-infinite combinatorial line and vertex spaces are the trees $T_i$.

\begin{figure}
    \centering
\begin{tikzpicture}[scale=.5,line cap=round,line join=round,>=triangle 45,x=1cm,y=1cm]
\draw [line width=3pt,color=red] (-6,7)-- (-6,5);
\draw [color=red, line width = 2pt] (-6,6.5)-- (-6,5.5);
\draw [line width=3pt,color=red] (-2,9)-- (-3,7.5);
\draw [line width=3pt,color=red] (-3,7.5)-- (-3,5.6);
\draw [line width=3pt,color=red] (-3,5.6)-- (-2,4);
\draw [line width=2pt] (-6,7)-- (-2,9);
\draw [line width=2pt] (-6,5)-- (-2,4);
\draw [line width=2pt] (-2,9)-- (3,11);
\draw [line width=2pt] (-3,7.5)-- (3,8);
\draw [line width=2pt] (-3,5.6)-- (3,5);
\draw [line width=2pt] (-2,4)-- (3,2);
\draw [line width=3pt,color=red] (3,2)-- (2,3);
\draw [line width=3pt,color=red] (2,3)-- (2,4);
\draw [line width=3pt,color=red] (2,4)-- (3,5);
\draw [line width=3pt,color=red] (3,5)-- (4,6);
\draw [line width=3pt,color=red] (4,6)-- (4.028,7.15);
\draw [line width=3pt,color=red] (4.028,7.15)-- (3,8);
\draw [line width=3pt,color=red] (3,8)-- (2,9);
\draw [line width=3pt,color=red] (2,9)-- (2,10);
\draw [line width=3pt,color=red] (2,10)-- (3,11);

\draw [line width=0.8pt] (-6,7)-- (-5.3,7.876);
\draw [line width=0.8pt] (-6,7)-- (-5.696,7.92);
\draw [line width=0.8pt] (-6,7)-- (-6.026,7.986);
\draw [line width=0.8pt,dotted] (-6.0167245022983495,7.63424458716049)-- (-6.752,7.238);
\draw [line width=0.8pt] (-6,7)-- (-7.06,7.304);
\draw [line width=0.8pt] (-6,5)-- (-5.322,4.114);
\draw [line width=0.8pt] (-6,5)-- (-5.74,4.07);
\draw [line width=0.8pt] (-6,5)-- (-6.114,4.026);
\draw [line width=0.8pt] (-6,5)-- (-6.906,4.95);
\draw [line width=0.8pt] (-2,4)-- (-0.944,4.18);
\draw [line width=0.8pt] (-2,4)-- (-1.384,4.488);
\draw [line width=0.8pt] (-2,4)-- (-0.988,3.806);
\draw [line width=0.8pt] (-2.96,5.66)-- (-2.022,5.918);
\draw [line width=0.8pt] (-2.96,5.66)-- (-2.352,6.072);
\draw [line width=0.8pt] (-2.96,5.66)-- (-2.638,6.27);
\draw [line width=0.8pt] (-2,9)-- (-2.66,9.504);
\draw [line width=0.8pt] (-2,9)-- (-2.374,9.702);
\draw [line width=0.8pt] (-2,9)-- (-2.066,9.834);
\draw [line width=0.8pt] (-2,9)-- (-1.274,9.636);
\draw [line width=0.8pt] (3,11)-- (2.18,11.242);
\draw [line width=0.8pt] (3,11)-- (2.4,11.506);
\draw [line width=0.8pt] (3,11)-- (3.6963425995492196,11.550223891810681);
\draw [line width=0.8pt] (3,11)-- (3.8963425995492194,11.030223891810682);
\draw [line width=0.8pt] (4.028,7.15)-- (4.0763425995492195,8.030223891810678);
\draw [line width=0.8pt] (4.028,7.15)-- (4.396342599549219,7.990223891810678);
\draw [line width=0.8pt] (4.028,7.15)-- (4.65634259954922,6.430223891810677);
\draw [line width=0.8pt] (4.028,7.15)-- (4.896342599549219,6.710223891810677);
\draw [line width=0.8pt] (2,9)-- (2.65634259954922,9.490223891810679);
\draw [line width=0.8pt] (2,9)-- (2.8563425995492198,9.010223891810679);
\draw [line width=0.8pt] (3,5)-- (3.9163425995492194,5.010223891810675);
\draw [line width=0.8pt] (3,5)-- (3.7963425995492193,4.4902238918106745);
\draw [line width=0.8pt] (3,5)-- (3.4763425995492194,4.350223891810674);
\draw [line width=0.8pt] (3,5)-- (2.7763425995492197,4.410223891810674);
\draw [line width=0.8pt] (2,3)-- (2.29634259954922,3.510223891810673);
\draw [line width=0.8pt] (2,3)-- (2.57634259954922,3.350223891810673);
\draw [line width=0.8pt] (2,3)-- (2.65634259954922,2.6502238918106724);
\draw [line width=0.8pt] (3,2)-- (3.2963425995492197,2.4502238918106722);
\draw [line width=0.8pt] (3,2)-- (3.5963425995492195,2.110223891810672);
\draw [line width=0.8pt] (3,2)-- (3.0963425995492195,1.270223891810671);
\draw [line width=0.8pt] (3,2)-- (2.67634259954922,1.410223891810671);
\draw [line width=0.8pt,dotted] (-6.163657400450777,4.4502238918106745)-- (-6.463657400450777,4.890223891810675);
\draw [line width=0.8pt,dotted] (-2.066,9.834)-- (-1.274,9.636);
\draw [line width=0.8pt,dotted] (-1.384,4.488)-- (-0.944,4.18);
\draw [line width=0.8pt,dotted] (-0.944,4.18)-- (-0.988,3.806);
\draw [line width=0.8pt,dotted] (2.6912053228772304,11.260416844373536)-- (3.3236152407482327,11.255708666005802);
\draw [line width=0.8pt,dotted] (4.216108932156037,7.579094053345781)-- (4.481055023177773,6.920548197229784);
\draw [line width=0.8pt,dotted] (2.7763425995492197,4.410223891810674)-- (3.4763425995492194,4.350223891810674);
\draw [line width=0.8pt,dotted] (2.57634259954922,3.350223891810673)-- (2.65634259954922,2.6502238918106724);
\draw [line width=0.8pt,dotted] (3.31531500599035,2.0582806714342285)-- (3.0409227328098853,1.6900184048779718);
\draw [fill=blue] (-6,7) circle (2.5pt);
\draw [fill=blue] (-6,5) circle (2.5pt);
\draw (-6.5,6) node {$e_0$};
\draw (-6,0) node {$T_0$};
\draw (-2,0) node {$T_1$};
\draw (3,0) node {$T_2$};

\draw [left] (-4,6.2) node {$R_{e_0}$};
\draw (-2.5,6.8) node {${f_1}$};
\draw (0.5,6.8) node {$R_{f_1}$};

\draw [fill=blue] (-2,9) circle (2.5pt);
\draw [fill=blue] (-2.96,7.44) circle (2.5pt);
\draw [fill=blue] (-2.96,5.66) circle (2.5pt);

\draw [fill=blue] (-2,4) circle (2.5pt);
\draw [fill=blue] (3,11) circle (2.5pt);
\draw [fill=blue] (3,8) circle (2.5pt);
\draw [fill=blue] (3,5) circle (2.5pt);
\draw [fill=blue] (3,2) circle (2.5pt);
\draw [fill=blue] (2,3) circle (2.5pt);
\draw [fill=blue] (2,4) circle (2.5pt);
\draw [fill=blue] (4,6) circle (2.5pt);
\draw [fill=blue] (4.028,7.15) circle (2.5pt);
\draw [fill=blue] (2,9) circle (2.5pt);
\draw [fill=blue] (2,10) circle (2.5pt);
\draw [fill=blue] (-5.3,7.876) circle (1pt);
\draw [fill=blue] (-5.696,7.92) circle (1pt);
\draw [fill=blue] (-6.026,7.986) circle (1pt);
\draw [fill=blue] (-7.06,7.304) circle (1pt);
\draw [fill=blue] (-5.322,4.114) circle (1pt);
\draw [fill=blue] (-5.74,4.07) circle (1pt);
\draw [fill=blue] (-6.114,4.026) circle (1pt);
\draw [fill=blue] (-6.906,4.95) circle (1pt);
\draw [fill=blue] (-0.944,4.18) circle (1pt);
\draw [fill=blue] (-1.384,4.488) circle (1pt);
\draw [fill=blue] (-0.988,3.806) circle (1pt);
\draw [fill=blue] (-2.022,5.918) circle (1pt);
\draw [fill=blue] (-2.352,6.072) circle (1pt);
\draw [fill=blue] (-2.638,6.27) circle (1pt);
\draw [fill=blue] (-2.66,9.504) circle (1pt);
\draw [fill=blue] (-2.374,9.702) circle (1pt);
\draw [fill=blue] (-2.066,9.834) circle (1pt);
\draw [fill=blue] (-1.274,9.636) circle (1pt);
\draw [fill=blue] (2.18,11.242) circle (1pt);
\draw [fill=blue] (2.4,11.506) circle (1pt);
\draw [fill=blue] (3.6963425995492196,11.550223891810681) circle (1pt);
\draw [fill=blue] (3.8963425995492194,11.030223891810682) circle (1pt);
\draw [fill=blue] (4.0763425995492195,8.030223891810678) circle (1pt);
\draw [fill=blue] (4.396342599549219,7.990223891810678) circle (1pt);
\draw [fill=blue] (4.65634259954922,6.430223891810677) circle (1pt);
\draw [fill=blue] (4.896342599549219,6.710223891810677) circle (1pt);
\draw [fill=blue] (2.65634259954922,9.490223891810679) circle (1pt);
\draw [fill=blue] (2.8563425995492198,9.010223891810679) circle (1pt);
\draw [fill=blue] (3.9163425995492194,5.010223891810675) circle (1pt);
\draw [fill=blue] (3.7963425995492193,4.4902238918106745) circle (1pt);
\draw [fill=blue] (3.4763425995492194,4.350223891810674) circle (1pt);
\draw [fill=blue] (2.7763425995492197,4.410223891810674) circle (1pt);
\draw [fill=blue] (2.29634259954922,3.510223891810673) circle (1pt);
\draw [fill=blue] (2.57634259954922,3.350223891810673) circle (1pt);
\draw [fill=blue] (2.65634259954922,2.6502238918106724) circle (1pt);
\draw [fill=blue] (3.2963425995492197,2.4502238918106722) circle (1pt);
\draw [fill=blue] (3.5963425995492195,2.110223891810672) circle (1pt);
\draw [fill=blue] (3.0963425995492195,1.270223891810671) circle (1pt);
\draw [fill=blue] (2.67634259954922,1.410223891810671) circle (1pt);
\draw [fill=blue] (-6.163657400450777,4.4502238918106745) circle (1pt);
\draw [fill=blue] (-6.463657400450777,4.890223891810675) circle (1pt);

\draw [fill=blue] (2.6912053228772304,11.260416844373536) circle (1pt);
\draw [fill=blue] (3.3236152407482327,11.255708666005802) circle (1pt);
\draw [fill=blue] (4.216108932156037,7.579094053345781) circle (1pt);
\draw [fill=blue] (4.481055023177773,6.920548197229784) circle (1pt);
\draw [fill=blue] (3.31531500599035,2.0582806714342285) circle (1pt);
\draw [fill=blue] (3.0409227328098853,1.6900184048779718) circle (1pt);
\end{tikzpicture}
    \caption{2-cells in the flow space. The 2-cell $R_{e_0}$ is bounded by the vertical edge $e_0$ on the left, two horizontal edges, and three vertical edges (including $f_1$) in $T_1$.}
    \label{fig:2cells}
\end{figure}
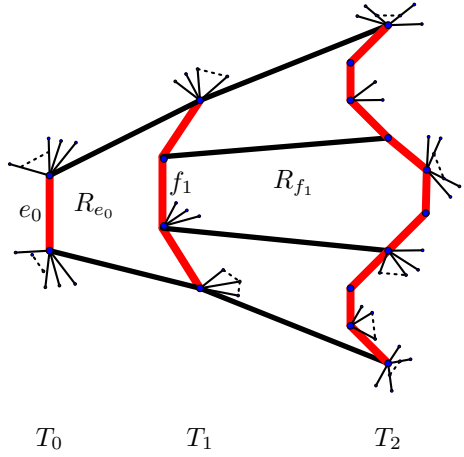

In $\wt X$, we will call any edge in some tree $T_i$ as a vertical edge, and the image of a side $[0,1] \times \{1\}$  or $[0,1] \times \{0\}$ of the rectangle $R(e_i)$ as a horizontal edge. We call a path horizontal if it intersects rectangles in horizontal segments $[0,1] \times \{h\}$.

If $f :T\to T$ is a train track map realising $\phi$, then for all $L\geq 1$, the map  $f^L$ is a train track map realising the automorphism $\phi^L$. Keeping the same $T$ and $f$ we may thus produce the space $\wt X_L$ by using the map $f^L$ realising $\phi^L$.  We will prefer to index the trees by $L\mathbb{Z}$ though.  

Observe that $\wt X_L$ need not be isomorphic to  $\wt X$. However we have a natural map $\varrho_L : \wt X_L \to \wt X$: it is obviously defined on the trees $T_{Li}$, and on a rectangle $R(e_{Li})$, one sends a horizontal edge onto the unique reduced concatenation of horizontal edges of $\wt X$ with the same endpoints.

\subsection{Action on the flow space}

Consider the semi-direct product $G\rtimes_{\phi} \mathbb{Z}$, and write $t$ for the generator of $\mathbb{Z}$ that induces the automorphism $\phi$ by conjugation on $G$: $G\rtimes_{\phi} \mathbb{Z}=\langle G, t\, |\, t^{-1}gt = \phi (g), \forall g\in G\rangle$.  Then $G\rtimes_{\phi} \mathbb{Z}$ acts co-finitely on $\wt X$, by defining the action of $G$ on each $T_i$ as above, and defining the action of $t$ to be the shift of indices: $t$ maps $T_i$ on $T_{i+1}$ isometrically, through the identification with $T$.   

The group $G$ preserves each of the trees $T_i$, and in each of them it induces the action of $G$ on $T$ precomposed by $\phi^i$. Thus, the orbits of $G$ on each of the $T_i$ are the same, after identification to $T$. In particular,

\begin{lemma}\label{lem: group_action_on_flowspace}
If $x, y \in T$ are such that there is $g \in G$ for which $gx=y$, and if $i_1, i_2 \in \mathbb{Z}$, then there exists $\gamma \in G\rtimes_{\phi} \mathbb{Z}$ for which $\gamma x_{i_1} = y_{i_2}$. \qed
\end{lemma}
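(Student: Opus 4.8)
The plan is to build $\gamma$ as a product of a power of $t$ and an element of $G$, handling the index shift and the position shift separately. First I would use the $t$-action to move from index $i_1$ to index $i_2$: by the definition of the action recalled just before the statement, $t$ sends $T_i$ isometrically onto $T_{i+1}$ through the common identification $p\mapsto p_i$ with $T$, so $t^{i_2-i_1}\, x_{i_1} = x_{i_2}$ for every $x\in T$. It then suffices to exhibit an element of $G$ carrying $x_{i_2}$ to $y_{i_2}$ inside the single tree $T_{i_2}$.

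For this second step I would unwind the (twisted) definition of the $G$-action on $T_{i_2}$, namely $g'.p_{i_2} = (\phi^{i_2}(g')\,p)_{i_2}$. Choosing $g' = \phi^{-i_2}(g)$ gives $g'.x_{i_2} = (\phi^{i_2}(\phi^{-i_2}(g))\,x)_{i_2} = (gx)_{i_2} = y_{i_2}$, where the last equality uses the hypothesis $gx=y$. Composing, $\gamma = \phi^{-i_2}(g)\cdot t^{\,i_2-i_1}$ (read as a composition of maps, applying $t^{\,i_2-i_1}$ first) lies in $G\rtimes_\phi\mathbb{Z}$ and satisfies $\gamma\, x_{i_1} = y_{i_2}$, as required.

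Alternatively, one can phrase the position shift using the observation made immediately above the statement: after the identification $p\mapsto p_{i_2}$, the $G$-orbits on $T_{i_2}$ coincide with the $G$-orbits on $T$, so $gx=y$ in $T$ directly yields some $g'\in G$ with $g'\, x_{i_2} = y_{i_2}$, and then $\gamma = g'\, t^{\,i_2-i_1}$ works. There is no genuine obstacle here; the only point requiring care is bookkeeping with the twisted $G$-action on the $T_i$ and with the order of composition in the semidirect product, i.e.\ making sure the relation $t g' t^{-1} = \phi(g')$ is invoked exactly $i_2$ times when translating between the actions on $T$ and on $T_{i_2}$.
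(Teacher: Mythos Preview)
Your proof is correct and follows exactly the approach the paper intends: the lemma is stated with a \qed\ and no proof, relying on the sentence immediately preceding it (orbits of $G$ on each $T_i$ coincide with those on $T$) together with the $t$-action shifting indices. Your explicit choice $\gamma = \phi^{-i_2}(g)\cdot t^{\,i_2-i_1}$ is precisely the elaboration of that remark.
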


\subsection{Forward flow, backward flow, and principal flow lines}

For each $i$, from each point  $x_i$ of $T_i$, there is a unique horizontal ray starting at $x_i$, containing all its images by positive iterates of $f$. We denote it by $\sigma(x_i)$, and $\sigma_k(x_i)$ is its initial subsegment of length $k$.  Its endpoint is denoted (slightly abusively) $f^k(x_i)$ as already convened.   
The segment $\sigma_1(x_i)$ of this ray whose endpoints are $x_i$ and $f_i(x_i) \in T_{i+1}$ is called the midsegment at $x_i$.  
We call the ray $\sigma(x_i)$ the \emph{forward flow (ray)}\footnote{forward path in \cite{hagen_wise_irreducible}.} from $x_i$. It is sometimes useful to use the forward flow of length $L$: the initial subsegment of length $L$ of the forward flow ray (see \cref{fig:fwd_ladder} for an illustration).  The \emph{forward ladder} of a forward flow (ray or segment) $\sigma$, denoted by $N(\sigma)$, is the smallest subcomplex of the cell complex $\wt X$ containing $\sigma$. 

\begin{figure}
    \centering
    \begin{tikzpicture}[scale = .7,line cap=round,line join=round,>=triangle 45,x=1cm,y=1cm]

\draw [line width=2pt,color=red] (-6.06,3.82)-- (-6,1.82);
\draw [line width=2pt] (-6,1.82)-- (-1.84,0.94);
\draw [line width=2pt,color=red] (-1.84,0.94)-- (-1.6,2.48);
\draw [line width=2pt,color=red] (-1.6,2.48)-- (-1.52,4.24);
\draw [line width=2pt,color=red] (-1.52,4.24)-- (-1.72,6.12);
\draw [line width=2pt] (-1.72,6.12)-- (-6.06,3.82);

\draw [line width=2pt] (-1.6,2.48)-- (3.5,3.7);
\draw [line width=2pt,color=red] (3.5,3.7)-- (3.5,-0.56);
\draw [line width=2pt] (3.5,-0.56)-- (-1.84,0.94);
\draw [line width=2pt,color=red] (-1.84,0.94)-- (-1.6,2.48);

\draw [line width=2pt,color=red] (3.5,2.28)-- (3.5,0.84);
\draw [line width=2pt] (3.5,0.84)-- (9.24,-1.36);
\draw [line width=2pt,color=red] (9.24,-1.36)-- (9.32,0.02);
\draw [line width=2pt,color=red] (9.32,0.02)-- (9.32,1.62);
\draw [line width=2pt,color=red] (9.32,1.62)-- (9.22,3.38);
\draw [line width=2pt,color=red] (9.22,3.38)-- (8.78,5.02);
\draw [line width=2pt] (8.78,5.02)-- (3.5,2.28);

\draw [line width=2pt,color=orange] (-6.013089879108802,2.2563293036267362)-- (-1.7193965420714639,1.7138721883747738);
\draw [line width=2pt,color=orange] (-1.7193965420714639,1.7138721883747738)-- (3.5,1.362);
\draw [line width=2pt,color=orange] (3.5,1.362)-- (9.32,0.9506);

\draw [line width=0.8pt,color=gray] (-3.902506522639143,1.989682184426714)-- (-4.2108,2.3784);
\draw [line width=0.8pt,color=gray] (0.6534,1.8944)-- (0.9889294339080661,1.5312869833001);
\draw [line width=0.8pt,color=gray] (0.9889294339080661,1.5312869833001)-- (0.6292,1.241);
\draw [line width=0.8pt,color=gray] (6.1468,1.5556)-- (6.603069343446125,1.1426524522519357);
\draw [line width=0.8pt,color=gray] (6.603069343446125,1.1426524522519357)-- (6.1468,0.8538);
\draw [line width=0.8pt,color=gray] (-3.9025065226391407,1.9896821844267139)-- (-4.235,1.7734);

\draw [fill=blue] (-6.06,3.82) circle (2pt);
\draw [fill=blue] (-6,1.82) circle (2pt);
\draw [fill=blue] (-1.84,0.94) circle (2pt);
\draw [fill=blue] (-1.6,2.48) circle (2pt);
\draw [fill=blue] (-1.52,4.24) circle (2pt);
\draw [fill=blue] (-1.72,6.12) circle (2pt);
\draw [fill=blue] (3.5,3.7) circle (2pt);
\draw [fill=blue] (3.5,-0.56) circle (2pt);
\draw [fill=blue] (3.5,2.28) circle (2pt);
\draw [fill=blue] (3.5,0.84) circle (2pt);
\draw [fill=blue] (9.24,-1.36) circle (2pt);
\draw [fill=blue] (9.32,0.02) circle (2pt);
\draw [fill=blue] (9.32,1.62) circle (2pt);
\draw [fill=blue] (9.22,3.38) circle (2pt);
\draw [fill=blue] (8.78,5.02) circle (2pt);
\draw [fill=black] (-6.013089879108802,2.2563293036267362) circle (2.5pt);
\draw[left] (-6,2.25) node {$x$};
\draw [fill=black] (-1.7193965420714639,1.7138721883747738) circle (2.5pt);
\draw[below right] (-1.85,1.75) node {$f(x)$};
\draw [fill=black] (3.5,1.362) circle (2.5pt);
\draw[right] (3.4,1.8) node {$f^2(x)$};
\draw [fill=black] (9.32,0.9506) circle (2.5pt);
\draw[below right] (9.4,1.5) node {$f^3(x)$};

\end{tikzpicture}
    \caption{The orange segment between $x$ and $f^3(x)$ is the forward flow of length $3$ from $x$. Its forward ladder is the union of the $2$-cells in the picture.}
    \label{fig:fwd_ladder}
\end{figure}
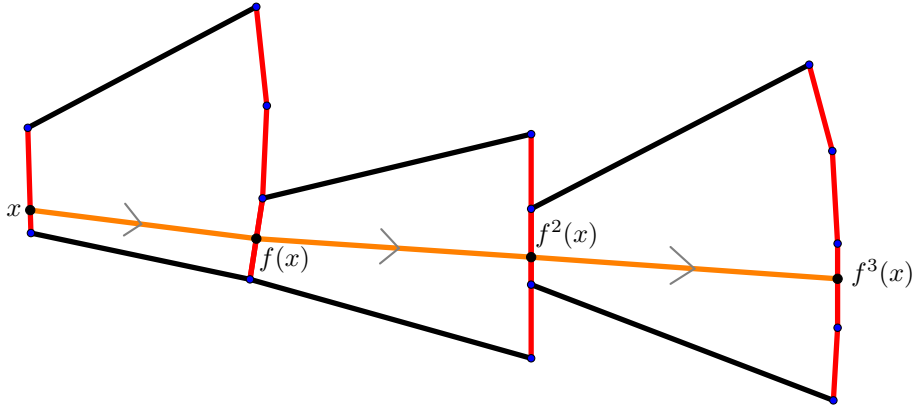

We emphasise that the forward flow is different from the action of $t$. 

In the backward direction, we note the following. 

\begin{lemma}\label{lem: vertical_locally_finite} For any $x_i \in T_i$ with $x_i$ not a singular vertex, its preimage  $f_{i-1}^{-1} (x_i)$ in $T_{i-1}$ is finite. In particular, every vertical edge of $\wt X$ is contained in finitely many $2$-cells.  \end{lemma}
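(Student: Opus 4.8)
The plan is to work directly with $f:T\to T$, since under the equivariant identifications $T_{i-1}\cong T\cong T_i$ the map $f_{i-1}$ is a copy of $f$; so it suffices to show that $f^{-1}(x)$ is finite for every $x\in T$ that is not a singular vertex, and then to read off the statement about $2$-cells. First I would record two elementary reductions. Because $f$ is a quasi-isometry sending each closed edge $\bar e$ to a reduced — hence embedded — edge-path, $f|_{\bar e}$ is a homeomorphism onto a geodesic segment $\overline{f(e)}$; and because $G$ acts on $T$ with finitely many orbits of edges and $f$ is $\phi$-equivariant, the combinatorial lengths $|f(e)|$ are bounded by a constant $C$. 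Hence $f^{-1}(x)$ meets each closed edge in at most one point, and it is enough to bound the number of edges $e$ with $x\in\overline{f(e)}$. When $x$ lies in the interior of an edge $e'$ this condition is equivalent to $f(e)\supseteq e'$ (the endpoints of $\overline{f(e)}$ being vertices, hence not interior to $e'$); when $x$ is a non-singular vertex, it has finitely many incident edges $e'_1,\dots,e'_d$ and the condition forces $\overline{f(e)}\supseteq e'_j$ for some $j$. So the whole problem reduces to showing that, for every edge $e'$ of $T$, the set $E_{e'}:=\{e:\ f(e)\supseteq e'\}$ is finite.

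Next I would localise the problem. Since $f$ is a quasi-isometry and $|f(e)|\le C$, every $e\in E_{e'}$ lies within a uniformly bounded distance of $e'$, so $E_{e'}$ is contained in a ball $B$ of bounded radius. If $B$ contained no singular vertex it would be finite — in our trees a non-singular vertex has finite valence, as $G$ is torsion-free, edge stabilisers are trivial and $T/G$ is finite — and we would be done; the real content of the lemma is dealing with the singular vertices inside $B$, where $T$ fails to be locally finite. The decisive local estimate I would establish is: \emph{for every vertex $v$ of $T$, the set of edges incident to $v$ that lie in $E_{e'}$ has cardinality at most the number $N_0$ of $G$-orbits of edges}. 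Indeed, if $e$ is incident to $v$ and $f(e)\supseteq e'$, then $f(e)$ is an embedded geodesic starting at $f(v)$ and containing $e'$, hence it contains the geodesic from $f(v)$ to $e'$; so the first edge of $f(e)$ — equivalently the image $Df(e)$ of the direction of $e$ at $v$ under the derivative map on directions — is the first edge of that geodesic, a single direction at $f(v)$ depending only on $v$ and $e'$ (the degenerate cases, where $f(v)$ is an endpoint of $e'$, checked by hand). Thus all such $e$ sit in one fibre of $Df$ at $v$, and each such fibre has at most $N_0$ elements: writing the edges at $v$ uniquely — using that edge stabilisers are trivial — as $\{ha_1,\dots,ha_s:\ h\in\mathrm{Stab}(v)\}$ with $s\le N_0$, the identity $Df(ha_r)=\phi(h)\,Df(a_r)$ together with the uniqueness of an element of $G$ carrying one edge to another shows that for each $r$ at most one $h$ sends $a_r$ to a prescribed direction. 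This is exactly where the argument departs from the locally finite (free-group) case, in which these fibres — "gates'' — are finite for trivial reasons, whereas here they are infinite-index subsets of the infinite link of a singular vertex that nonetheless must be shown finite.

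Finally I would combine the bounded ball $B$ with this local estimate. As $B$ may contain infinitely many singular vertices, the local estimate by itself does not bound $\#E_{e'}$, and this is where I would bring in the angle structure at singular vertices, as the authors signal. Concretely, one wants that a train-track map in this relatively hyperbolic setting distorts angles at singular vertices only boundedly, so that an edge $e\in E_{e'}$ can be incident to a singular vertex only within a bounded \emph{angular} — not merely metric — range of $e'$; since only finitely many edges make a given angle with a given edge, this leaves finitely many candidate edges, and summing the at-most-$N_0$ contributions over the finitely many relevant vertices gives $\#E_{e'}<\infty$. I expect this last step — upgrading "metrically bounded'' to "honestly finite'' by controlling how $f$ interacts with angles at singular vertices — to be the main obstacle; the earlier reductions should be essentially formal.

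For the "in particular'': a vertical edge $e_i$ of $T_i$ lies in the cell $R_{e_i}$ (as its left side) and in the cell $R_{e_{i-1}}$ exactly when $e_i$ occurs in the path $f_{i-1}(e_{i-1})$, that is when $e_{i-1}\in E_{e_i}$; and these edges $e_{i-1}$ are in bijection with $f_{i-1}^{-1}(m)$ for $m$ the midpoint of $e_i$, which is not a singular vertex, hence finite by the first part. So $e_i$ lies in $1+\#E_{e_i}<\infty$ of the $2$-cells of $\wt X$.
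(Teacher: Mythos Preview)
Your reductions to the finiteness of $E_{e'}=\{e:\ f(e)\supseteq e'\}$ are correct and more carefully stated than the paper's, and your local estimate --- that the edges of $E_{e'}$ incident to a given vertex $v$ all lie in a single fibre of the derivative $Df$, which has at most $N_0$ elements because trivial edge stabilisers together with $\phi$-equivariance pin down the group element --- is also correct. But the localisation manufactures the obstacle you flag at the end: the very mechanism you use works globally and finishes the proof with neither a vertex count nor an angle argument. After passing to a single $G$-orbit with representative $e_0$, the condition $ge_0\in E_{e'}$ reads $\phi(g)f(e_0)\supseteq e'$, i.e.\ $\phi(g)^{-1}e'$ lies among the at most $C$ edges of the fixed path $f(e_0)$; trivial edge stabilisers then force at most one $g$ per such edge. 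This is exactly your ``at most one $h$ per $a_r$'' argument, run over $G$ rather than over $\mathrm{Stab}(v)$.

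The paper's proof is closer in spirit to this global count than to your vertex-by-vertex route: it passes to one orbit $e_k=g_ke_0$, observes the $g_k$ have bounded displacement, and then argues via angles that infinitely many such $g_k$ would eventually make $\phi(g_k)f(e_0)$ disjoint from $f(e_0)$, contradicting that both contain $x_i$. Your proposed final step --- bounding the number of relevant singular vertices through angle-distortion estimates for $f$ --- could in principle be made to work but is harder to pin down than the paper's use of angles, and both are bypassed by the direct edge-count above.
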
 

\begin{proof}
It suffices to prove that each edge of $T$ is contained in finitely many images of edges of $T$ under $f$.
Assume on the contrary that there are infinitely many such edges $e_k$, whose images meet $x_i$.  Since there are finitely many $G$-orbits of edges in $T$, we may assume that all edges $e_k$ are  images of $e_0$ by some elements $g_k\in G$, all different. The map $f$ being a quasiisometry of $T$, all edges $e_k$ are at bounded distances, so the $g_k$ have bounded displacement. This forces that for large $k$, the  $g_k$  and their images by $\phi$ have a contribution in some of the free factors $H_i$ that is larger than the maximal angle of the path $f(e_0)$. However $f(e_k) = f(g_k(e_0)) = \phi(g_k) f(e_0)$. This forces $f(e_k) $ and $f(e_0)$ to be disjoint, a contradiction.\end{proof}

In the case of the preimage of an infinite valence vertex of $T_i$, we have an even clearer picture. {For $j \in \mathbb{N}$, let us denote by $f^{-j}(v)$ the set $(f^j)^{-1}(v)$.}

\begin{lemma}\label{lem: preimage_singular_vertices}
Let $v \in T_i$ be a singular vertex of $T_i$. Then for each $j \in \mathbb{N}$, the {set $f^{-j}(v)$ is finite and} contains a unique singular vertex.
\end{lemma}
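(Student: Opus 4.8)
The plan is to establish the sharper fact that $f$ restricts to a \emph{bijection} of the set of singular vertices of $T$ onto itself. Since $f^j$ is again a train track map realising $\phi^j$, the same then holds for every iterate, and the assertion that $f^{-j}(v)$ contains a unique singular vertex follows immediately. The starting point is the rigid structure of singular vertices in $T$: a nontrivial elliptic element of $G$ fixes a \emph{unique} vertex (otherwise it would fix an edge, contradicting triviality of edge stabilisers), and every point stabiliser is either trivial or a conjugate of one of the $H_m$. Hence the map sending a singular vertex to its stabiliser is a $G$-equivariant bijection from the set of singular vertices onto $\bigsqcup_{m=1}^{k}\{\,gH_mg^{-1}:g\in G\,\}$, with $G$ acting by conjugation on the target.

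Next I would check that $f$ carries singular vertices to singular vertices of the same type. If $\mathrm{Stab}(v)=K$ is a conjugate of $H_m$, then for every $k\in K$ equivariance gives $f(v)=f(kv)=\phi(k)f(v)$, so $\phi(K)$ fixes $f(v)$; since $\phi$ preserves the conjugacy class of $H_m$, the subgroup $\phi(K)$ is again a conjugate of $H_m$, and as conjugates of the $H_m$ are maximal among elliptic subgroups we get $\mathrm{Stab}(f(v))=\phi(K)$. Under the identification above, $f$ therefore corresponds, on the set of conjugates of each $H_m$, to the assignment $L\mapsto\phi(L)$, which is a bijection precisely because $\phi$ is an automorphism of $G$ fixing the conjugacy class of $H_m$. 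So $f$, and hence each $f^j$, is a bijection of the set of singular vertices; in particular there is exactly one singular vertex $w$ with $f^j(w)=v$, and it is the only singular vertex belonging to $f^{-j}(v)$.

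It remains to see that $f^{-j}(v)$ is finite. Writing $f^{-j}(v)=\bigcup_{y\in f^{-(j-1)}(v)}f^{-1}(y)$ and applying \cref{lem: vertical_locally_finite} to the finitely many non-singular points $y$ produced inductively, the matter reduces to showing $f^{-1}(v)$ finite for a single singular vertex $v$. For this I would use that $f$ is a quasi-isometry of $T$, so $f^{-1}(v)$ has diameter bounded by the quasi-isometry constants and hence lies in a bounded ball about the singular preimage $w$, together with the fact that $f$ is affine on each edge, so $f^{-1}(v)$ meets every edge in at most one point. The only way such a set could be infinite in the non-locally-finite tree $T$ is for infinitely many of its points to accumulate near a common singular vertex $s$; excluding this uses that $f$ sends non-singular vertices to non-singular vertices and sends each edge to a path meeting the singular vertices only at its endpoints, via a $\mathrm{Stab}(s)$-translation argument. \textbf{The main obstacle is precisely this finiteness at singular vertices}: because $T$ is not locally finite, a bounded set of vertices can a priori be infinite, so one must genuinely exploit the structure of the train track map $f$ near singular vertices; the bijectivity half, which carries the conceptual content, is routine.
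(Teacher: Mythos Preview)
Your uniqueness argument is correct and is essentially the paper's approach, just packaged more cleanly: both use equivariance to identify singular vertices with conjugates of the $H_m$ and the fact that $\phi$ permutes these conjugates bijectively. Your formulation as a bijection of singular vertices also gives existence of a singular preimage for free, which the paper leaves implicit.

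The finiteness part has a genuine gap. Your inductive reduction to showing $f^{-1}(v)$ finite for a single singular $v$ is fine, and you correctly isolate the obstacle as ruling out infinitely many preimages accumulating near some singular vertex $s$. But the two structural facts you invoke are both false. First, $f$ need \emph{not} send non-singular vertices to non-singular vertices: equivariance pushes stabilisers forward, so $\mathrm{Stab}(f(x))\supseteq\phi(\mathrm{Stab}(x))$, but there is no reason for this containment to be an equality when the right-hand side is trivial. Second, the image $f(e)$ of an edge is a legal path of combinatorial length $>1$ and will in general pass through singular vertices in its interior; nothing about the train track condition prevents this.

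What the paper does instead is an angle argument. If infinitely many edges $e_i=g_ie_0$ in a bounded ball contain preimages of $v$, then the $g_i$ have bounded displacement, and after a subsequence the segments $[e_0,e_i]$ share a common prefix up to a singular vertex $w$ and then make angles at $w$ tending to infinity. Since $\phi$ restricts to a quasi-isometry of $\mathrm{Stab}(w)$ (this is what controls angles at $f(w)$), the images $f(e_i)=\phi(g_i)f(e_0)$ also make unbounded angles at $f(w)$, hence the sets $f(e_i\setminus\{w\})$ are pairwise disjoint. But then at most one of them can contain $v$, and for the others the preimage in $e_i$ would have to be $w$ itself, contradicting that these were non-singular preimages. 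This angle mechanism is what you should substitute for the two incorrect claims.
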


\begin{proof} We may assume that $v\in T$. The set $f^{-j}(v)$ lies in $T$. We will first show that there is a unique singular vertex in $f^{-j}(v)$ and then show that the set is finite. 

Assume that it contains two different singular vertices $w_1, w_2$, so that $f^j(w_1) = f^j(w_2) = v$. By equivariance of $f$, the stabilisers $G_{w_1}$ and $G_{w_2}$ of $w_1$ and $w_2$,  are sent by $\phi^j$ inside  the stabiliser of $G_{v}$. But they are also sent onto stabilisers of vertices,  since $\phi$ preserves the conjugacy classes of the $H_k$. Thus $\phi^j(G_{w_1}) = \phi^j(G_{w_2})$, and therefore $G_{w_1} = G_{w_2}$. {In particular, the tree-geodesic between the vertices $w_1$ and $w_2$ is pointwise fixed by $G_{w_1}$, and since $w_1\neq w_2$,  this contradicts the triviality of stabilisers of edges of the tree}.

Assume now that the preimage $f^{-j}(v)$ has infinitely many non-singular points. Denote by $(e_i)_{i\in \mathbb{N}}$ the collection of edges containing a preimage. Since $f$ is a quasiisometry, they are all at a bounded distance from each other. Up to extracting a subsequence, we may assume they are in the same $G$-orbit: write $e_i= g_ie_0$. Being at bounded distance from each other, the displacement of $g_i$ remains bounded. Consider the segment $s_i$ between $e_0$ and $e_i= g_ie_0$, containing both. Taking a subsequence if necessary, we may assume that there is a vertex $w$ for which the $s_i$ have the same prefix until $w$ and then start having angles going to infinity at this vertex.   Consider $f(e_i ) = \phi(g_i) f(e_0)$. The displacement of $\phi(g_i)$ also remains bounded, and the angle of $f(s_i)$ at the image $f(w)$ tends to infinity.  After $w$,  all these images of the $f(s_i)$  are thus disjoint. It follows that all the images $e_i$ are either disjoint, or share possibly  only $f(w)$. In other words,  the $f(e_i \setminus \{w\} )$ are disjoint.   Since at  $w$, angles are arbitrarily large, it is a singular point  contradicting the initial assumption that the $e_i$ each contain a non-singular preimage of a point.
\end{proof}

We may now define the backward flow.
For a point $x_i \in T_i$ the \emph{backward flow}\footnote{level in \cite{hagen_wise_irreducible}.} $\tau_L(x_i)$ from $x_i$ of length $L$ is the union of length $L$ forward flows from each point in $f^{-L}(x_i)$ (\cref{fig:my_bwd_flow}).
Note that $\tau_L(x_i)$ is a rooted tree, rooted at $x_i$. 
{When $x \notin \wt{X}^0$, by \cref{lem: vertical_locally_finite}, $\tau_L(x)$ is a finite rooted tree. In particular, \cite[Proposition 2.5]{hagen_wise_irreducible} gives the following observation.}

\begin{figure}
    \centering
    \begin{tikzpicture}[line cap=round,line join=round,>=triangle 45,x=1cm,y=1cm,scale=.8]
\draw [line width=2pt] (-2,1)-- (1.9803373599999943,-0.3363277600000015);
\draw [line width=2pt,color=red] (1.9803373599999943,-0.3363277600000015)-- (2.6212828999999997,0.85370634);
\draw [line width=2pt,color=red] (2.6212828999999997,0.85370634)-- (2.6412828999999993,2.23370634);
\draw [line width=2pt,color=red] (2.6412828999999993,2.23370634)-- (2.0157685799999943,3.4548127799999984);
\draw [line width=2pt] (2.0157685799999943,3.4548127799999984)-- (-2,3);
\draw [line width=2pt] (-6.01,-0.68)-- (-3.476070520000008,-1.1158146000000015);
\draw [line width=2pt,color=red] (-3.476070520000008,-1.1158146000000015)-- (-2,1);
\draw [line width=2pt,color=red] (-2,3)-- (-3.476070520000008,4.234299619999998);
\draw [line width=2pt] (-3.476070520000008,4.234299619999998)-- (-6,1);
\draw [line width=2pt,color=red] (-1.63,-1.36)-- (-2,1);
\draw [line width=2pt,color=red] (-2,1)-- (-2,3);
\draw [line width=2pt,color=red] (-2,3)-- (-1.350197320000007,6.041291839999999);
\draw [line width=2pt] (-1.350197320000007,6.041291839999999)-- (-6,5);
\draw [line width=2pt,color=red] (-5.956255920000009,6.820778679999998)-- (-6.027118360000009,2.6398947199999983);
\draw [line width=2pt] (-6.027118360000009,2.6398947199999983)-- (-9.60567158000001,3.4548127799999984);
\draw [line width=2pt,color=red] (-9.60567158000001,3.4548127799999984)-- (-9.60567158000001,5.049217679999998);
\draw [line width=2pt] (-9.60567158000001,5.049217679999998)-- (-5.956255920000009,6.820778679999998);
\draw [line width=2pt,color=red] (-7.1609174000000095,2.0021327599999985)-- (-6,1);
\draw [line width=2pt,color=red] (-6,1)-- (-6.01,-0.68);
\draw [line width=2pt,color=red] (-6.01,-0.68)-- (-6.49,-1.86);
\draw [line width=2pt] (-6.49,-1.86)-- (-9.46394670000001,-0.9386585000000015);
\draw [line width=2pt,color=red] (-9.46394670000001,-0.9386585000000015)-- (-9.46394670000001,0.9037649399999985);
\draw [line width=2pt] (-9.46394670000001,0.9037649399999985)-- (-7.1609174000000095,2.0021327599999985);
\draw [line width=2pt,dashed ] (-4.908071449165063,2.3992562490425025)-- (-2.750208762726572,-0.0753569234787027);
\draw [line width=2pt] (-5.99,3.64)-- (-4.908071449165065,2.399256249042503);
\draw [line width=2pt] (-2.750208762726572,-0.07535692347870278)-- (-1.63,-1.36);
\draw [line width=2pt,color=orange] (-9.46394670000001,-0.4071902000000015)-- (-6.0059523809523805,0);
\draw [line width=2pt,color=orange] (-6.0059523809523805,0)-- (-2,2);
\draw [line width=2pt,color=orange] (-2,2)-- (2.63177682097438,1.5777868872322538);
\draw [line width=2pt,dashed,color=orange] (-2,2)-- (-4.14328992325197,3.3792886224683025);
\draw [line width=2pt,color=orange] (-4.14328992325197,3.3792886224683025)-- (-5.997680531321089,4.376726612056287);
\draw [line width=2pt,color=orange] (-5.997680531321089,4.376726612056287)-- (-9.60567158000001,4.2697308399999985);
\draw [line width=0.8pt,color=gray] (-8.223854000000012,4.624043039999998)-- (-7.728417117239134,4.325401236040133);
\draw [line width=0.8pt,color=gray] (-7.728417117239134,4.325401236040133)-- (-8.330147660000012,3.9862810799999986);
\draw [line width=0.8pt,color=gray] (-4.539007120000011,4.234299619999998)-- (-4.333697127687685,3.4817046821533673);
\draw [line width=0.8pt,color=gray] (-4.333697127687685,3.4817046821533673)-- (-4.96418176000001,3.4548127799999984);
\draw [line width=0.8pt,color=gray] (-0.1809670600000086,2.2147200799999984)-- (0.27026915530220547,1.7930519012617063);
\draw [line width=0.8pt,color=gray] (0.27026915530220547,1.7930519012617063)-- (-0.25182950000000864,1.2935083599999986);
\draw [line width=0.8pt,color=gray] (-4.50357590000001,1.1872146999999984)-- (-3.931682991895029,1.0355936325754385);
\draw [line width=0.8pt,color=gray] (-3.931682991895029,1.0355936325754385)-- (-4.25555736000001,0.47859029999999847);
\draw [line width=0.8pt,color=gray] (-7.940404240000012,0.1951405399999985)-- (-7.373288447568269,-0.1610083172681089);
\draw [line width=0.8pt,color=gray] (-7.373288447568269,-0.1610083172681089)-- (-7.940404240000012,-0.5843463000000015);
\draw [line width=0.8pt,color=gray] (-7.940404240000012,-0.5843463000000015)-- (-7.373288447568269,-0.1610083172681089);
\draw [fill=blue] (-2,3) circle (1.5pt);
\draw [fill=blue] (-2,1) circle (1.5pt);
\draw [fill=blue] (1.9803373599999943,-0.3363277600000015) circle (1.5pt);
\draw [fill=blue] (2.6212828999999997,0.85370634) circle (1.5pt);
\draw [fill=blue] (2.6412828999999993,2.23370634) circle (1.5pt);
\draw [fill=blue] (2.0157685799999943,3.4548127799999984) circle (1.5pt);
\draw [fill=blue] (-6,1) circle (1.5pt);
\draw [fill=blue] (-6.01,-0.68) circle (1.5pt);
\draw [fill=blue] (-3.476070520000008,-1.1158146000000015) circle (1.5pt);
\draw [fill=blue] (-3.476070520000008,4.234299619999998) circle (1.5pt);
\draw [fill=blue] (-6,5) circle (1.5pt);
\draw [fill=blue] (-5.99,3.64) circle (1.5pt);
\draw [fill=blue] (-1.63,-1.36) circle (1.5pt);
\draw [fill=blue] (-1.350197320000007,6.041291839999999) circle (1.5pt);
\draw [fill=blue] (-5.956255920000009,6.820778679999998) circle (1.5pt);
\draw [fill=blue] (-6.027118360000009,2.6398947199999983) circle (1.5pt);
\draw [fill=blue] (-9.60567158000001,3.4548127799999984) circle (1.5pt);
\draw [fill=blue] (-9.60567158000001,5.049217679999998) circle (1.5pt);
\draw [fill=blue] (-7.1609174000000095,2.0021327599999985) circle (1.5pt);
\draw [fill=blue] (-6.49,-1.86) circle (1.5pt);
\draw [fill=blue] (-9.46394670000001,-0.9386585000000015) circle (1.5pt);
\draw [fill=blue] (-9.46394670000001,0.9037649399999985) circle (1.5pt);
\draw [fill=black] (-9.46394670000001,-0.4071902000000015) circle (2.5pt);
\draw [fill=black] (-6.0059523809523805,0) circle (2.5pt);
\draw [fill=black] (-2,2) circle (2.5pt);
\draw [fill=black] (2.63177682097438,1.5777868872322538) circle (2.5pt);
\draw[right] (2.7,1.7) node {$x$};
\draw [fill=black] (-5.997680531321089,4.376726612056287) circle (2.5pt);
\draw [fill=black] (-9.60567158000001,4.2697308399999985) circle (2.5pt);
\end{tikzpicture}
    \caption{The backward flow of length $3$ from $x$ (in orange).}
    \label{fig:my_bwd_flow}
\end{figure}
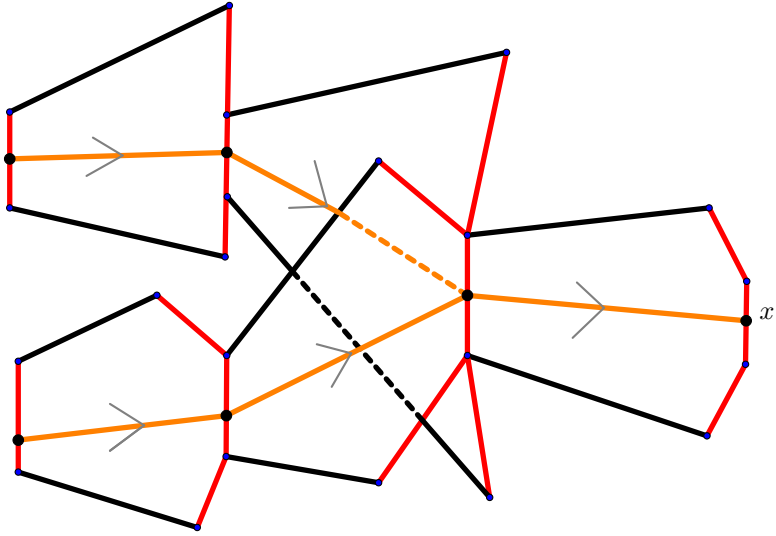

\begin{prop}\label{prop: local_separation_backward_band}
Let $x \notin \wt{X}^0$. Then for any $L \geq 0$, there exists a topological embedding $\tau_L(x) \times [-1,1] \to \wt{X}$ such that $\tau_L(x) \times \{0\}$ maps isomorphically onto $\tau_L(x)$. \qed
\end{prop}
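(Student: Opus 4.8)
The plan is to construct the collar directly, thickening $\tau_L(x)$ one $2$-cell at a time; the point that makes this possible is that $\tau_L(x)$ avoids the $0$-skeleton $\wt{X}^0$. Since $x \notin \wt{X}^0$, it lies in the interior of a vertical edge of some $T_i$; and since $f$ has constant speed on edges and sends edges to reduced edge paths, it restricts to a homeomorphism from each edge onto its image, so in particular sends vertices to vertices. Hence if any iterate $f^j(y)$ of some $y \in f^{-L}(x)$ were a vertex, then $x = f^L(y)$ would be one — impossible — so $\tau_L(x) \cap \wt{X}^0 = \emptyset$. Applying \cref{lem: vertical_locally_finite} $L$ times shows $f^{-L}(x)$ is finite, so $\tau_L(x)$ is a finite tree, assembled from finitely many midsegments: a midsegment $\sigma_1(z)$, with $z$ interior to a vertical edge $\hat{e}$, is the horizontal segment $[0,1] \times \{h_z\}$ in $R_{\hat{e}} = [0,1] \times [0,1]$, where $z = (0, h_z)$ and $h_z \in (0,1)$; it lies in the interior of $R_{\hat{e}}$ apart from its endpoints $z$ and $f(z)$, which are interior points of vertical edges, and consecutive midsegments are glued at such points. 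In particular $\tau_L(x)$ meets the $1$-skeleton of $\wt{X}$ in finitely many points only, all interior to vertical edges: the transition points between midsegments (including any branch points of the tree), the root $x$, and the leaves $f^{-L}(x)$.

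Next I would record the local model. Let $p$ be one of those points, lying interior to a vertical edge $\hat{e} \subseteq T_{i'}$. The $2$-cells incident to $\hat{e}$ are $R_{\hat{e}}$, glued along all of $\hat{e}$ on the forward side, together with the rectangles $R_{e'}$ — one for each edge $e'$ of $T_{i'-1}$ whose reduced image $f(e')$ contains $\hat{e}$ — each glued along all of $\hat{e}$ on the backward side; by the proof of \cref{lem: vertical_locally_finite} there are finitely many of the latter. So a small neighbourhood $U$ of $p$ in $\wt{X}$ is a ``book'' of finitely many half-disks glued along a sub-arc $J_p \subseteq \hat{e}$ about $p$, and $\tau_L(x) \cap U$ is the sub-pod made of the forward leg into $R_{\hat{e}}$ (absent when $p = x$) together with, for each preimage of $p$ that lies on $\tau_L(x)$, a backward leg into the corresponding $R_{e'}$ (there are none when $p$ is a leaf). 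Using exactly the pages carried by these legs produces, for small enough $U$ and $\epsilon$, an embedding $(\tau_L(x) \cap U) \times (-\epsilon, \epsilon) \hookrightarrow \wt{X}$ that is the inclusion on $(\tau_L(x) \cap U) \times \{0\}$ and sends $\{p\} \times (-\epsilon, \epsilon)$ to $J_p$. At an interior point of a midsegment the analogue is immediate: $\wt{X}$ is there the interior of a single rectangle, and the local pod is an arc.

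Finally I would glue these together. To each midsegment $\sigma_1(z) \subseteq R_{\hat{e}}$ of $\tau_L(x)$ assign the constant-width strip $S_z = [0,1] \times [h_z - \delta, h_z + \delta] \subseteq R_{\hat{e}}$, whose middle level is $\sigma_1(z)$. For each of the finitely many points $p$ where $\tau_L(x)$ meets the $1$-skeleton, fix a small sub-arc $J_p$ of the ambient vertical edge about $p$ and choose the widths so that every strip touching $p$ meets that edge exactly in $J_p$ (finitely many constraints, since $\tau_L(x)$ is finite, so one uniform $\delta > 0$ works); take $\delta$ also small enough that distinct strips are disjoint off the $J_p$ and distinct $J_p$ are disjoint. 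Gluing the $S_z$ along the $J_p$ builds a space homeomorphic to $\tau_L(x) \times [-1,1]$, the $J_p$ being the slices $\{p\} \times [-1,1]$; by the local model the induced map to $\wt{X}$ is a continuous injection restricting to the inclusion on $\tau_L(x) \times \{0\}$, hence a topological embedding since $\tau_L(x) \times [-1,1]$ is compact. (For $L = 0$, $\tau_0(x) = \{x\}$ and one takes a sub-arc of the vertical edge through $x$.)

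The crux, I expect, is the first step: that $\tau_L(x)$ must be kept off $\wt{X}^0$. At a singular vertex $\wt{X}$ is not locally finite, and the clean book-of-half-disks model above breaks down there, so keeping $\tau_L(x)$ clear of $\wt{X}^0$ — which is exactly what the hypothesis $x \notin \wt{X}^0$ and the non-folding of train track maps (hence their sending vertices to vertices) guarantee — is essential. What remains is finite bookkeeping, made finite by \cref{lem: vertical_locally_finite}.
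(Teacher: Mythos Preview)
Your argument is correct. In the paper this proposition is stated without proof, so there is nothing to compare against; your construction supplies exactly the details the authors leave implicit. The two essential observations you isolate---that $\tau_L(x)$ misses $\wt X^0$ because train track maps send vertices to vertices, and that $\tau_L(x)$ is a finite tree by \cref{lem: vertical_locally_finite}---are precisely what make the naive strip-thickening go through, and your local book-of-half-disks model at the branch points is the right picture.

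One small wording point: the phrase ``restricts to a homeomorphism from each edge onto its image'' is stronger than you need and slightly misleading (the image path may have length $>1$ and hence contain vertices of $T$ in its interior, though $f$ is still injective on the edge). All you actually use is that $f$ sends vertices to vertices, which is immediate for a train track map; you might state that directly.
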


Finally, by \cref{lem: preimage_singular_vertices}, there is a unique singular vertex $v_j$ in each $T_j, j<i$ that is in the backward flow of a singular vertex $v_i$ of $T_i$. Hence, we can construct the \emph{principal flow line} of $v_i$  to be the direct limit of the forward flow rays of $v_j$, for $j \to -\infty$. It is well defined, bi-infinite, as all $v_j$ as above, and all images of $v_i$ by positive iterates of $f$ have the same principal flow line.

\subsection{Geometry of the flow space}

In order to have relative hyperbolicity for the group $\Gamma := G \rtimes_{\phi} \mathbb{Z}$, following \cite{dahmani_li}, we will need the following 
three 
additional properties, which we will assume to hold from now on:

\begin{itemize}
\item Any fundamental domain of $T$ contains at least two edges{, i.e., $k+r \geq 3$ or $r\geq 2$}.
    \item The automorphism $\phi$ is \emph{atoroidal relative to $\{H_1, \cdots, H_k\}$}: given any element $g \in G$ such that $g$ is not contained in any conjugate of any $H_i$, then for all $n \in \mathbb{N}$, $[\phi^n(g)] \neq [g]$.
    \item The automorphism $\phi$ has \emph{no twinned subgroups}: given two subgroups $H \neq K$ such that $[H],[K] \in \calH$, then given any $g \in G$ and any $m \in \mathbb{N}$, $\phi^m(H) \neq gHg^{-1}$ whenever $\phi^m(K) = gKg^{-1}$.
\end{itemize}
{\begin{remark}\label{rem: no twins}
    We in fact do not need to assume the no twinning property, as by \cite[Lemma 3.3]{DMM23}, it automatically holds whenever $\phi$ is relatively fully irreducible and $k+r \geq 3$. We clarify that the case $r \geq 2$ is redundant if $k = 0$ as there are no atoroidal maps of $F_2$. 
\end{remark}}

Let us recall that the \emph{cone-off} of a graph $Z$ over a family of subgraphs $\mathcal{L}$, which we denote here by $\widehat{Z}$, is the graph obtained by adding to $Z$ a vertex $v_L$ for each $L \in \mathcal{L}$ and an edge between each vertex of $L$ and $v_L$. Usually, each such edge is assigned to have length $\frac{1}{2}$. 

{It can help to picture $T$ as equivariantly quasiisometric (by a collapse map) to a cone-off of the Cayley graph of $G$ over the left cosets of the free factors, and to picture $\wt X$ as equivariantly q.i. to a cone-off of the Cayley graph of $\Gamma$ over the left cosets of the same free factors of $G < \Gamma$. Finally, one can see the cone-off of $\wt X$ over principal flow lines as equivariantly q.i. to the cone-off of the Cayley graph of $\Gamma$ over the cosets of the suspensions of the free factors of $G$.}

\begin{theorem}\label{thm: hyperbolicity_graph_and_cone-off}
The $1$-skeleton $\wt{X}^1$ is $\delta$-hyperbolic for some $\delta >0$. Moreover, the cone-off of $\wt X^{1}$ over the principal flow lines is also hyperbolic.
\end{theorem}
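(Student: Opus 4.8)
The plan is to reduce hyperbolicity of $\wt X^1$ to a combination-theorem statement for trees of spaces, in the spirit of Bestvina--Feighn, and then use the relative hyperbolicity input to get the cone-off statement. First I would recall that $\wt X$ is a tree of spaces over a bi-infinite line, with vertex and edge spaces all copies of the $G$-tree $T$ (hence $0$-hyperbolic), and attaching maps given by $f_i$ and inclusions. The vertical edges are the edges of the $T_i$ and the horizontal edges come from the rectangles; a key structural fact I would isolate is that horizontal paths are, up to bounded backtracking, quasigeodesics, because $f$ is a train track map: the images $f^k(e)$ of an edge are reduced paths, legal turns are preserved, so along a forward flow ray no cancellation occurs and the ray is a genuine quasigeodesic in $\wt X^1$. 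Combined with \Cref{lem: vertical_locally_finite} (vertical edges lie in finitely many $2$-cells) and the angle considerations around singular vertices, this gives the two hypotheses one needs for a Bestvina--Feighn / Mj--Sardar style combination theorem: (i) the edge-to-vertex maps are quasi-isometric embeddings (here isometries up to the quasi-isometry constants of $f$), and (ii) a flaring / hallways-flare condition.

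The heart of the argument, and the step I expect to be the main obstacle, is verifying the flaring condition: a bi-infinite horizontal quasigeodesic in $\wt X$ that is ``thin'' in the middle must grow exponentially in width in at least one of the two directions. In the forward direction this is exactly the statement that $f$, being a train track map with all transition matrices eventually having a positive power (full irreducibility), is uniformly expanding on legal paths — lengths of legal segments grow by a definite multiplicative factor under iteration of $f$, with no cancellation. In the backward direction one uses the same train track property applied to preimages: two nearby points whose forward flows stay close for a long time must have had nearby preimages for a long time, but legality forces the preimage configurations to shrink (equivalently, forward flows spread), so going backward they flare. The non-local-finiteness of $T$ is handled precisely because \Cref{lem: vertical_locally_finite} and \Cref{lem: preimage_singular_vertices} control preimages, and because the angle metric at singular vertices satisfies the triangle inequality and only finitely many edges make a bounded angle with a given edge: this lets us run the flaring estimate on the finitely many relevant $G$-orbits of edges, exactly as in the proof of \Cref{lem: vertical_locally_finite}. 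I would also need the hypothesis that every fundamental domain of $T$ has at least two edges to rule out degenerate ladders. Assembling (i) and the flaring estimate, the combination theorem yields a $\delta$ with $\wt X^1$ being $\delta$-hyperbolic.

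For the second assertion, I would invoke the relative hyperbolicity of $\Gamma = G\rtimes_\phi\bbZ$ with respect to the suspensions of the $H_i$ — this is where the atoroidality relative to $\calH$ and the no-twinned-subgroups hypotheses enter, via the result of Dahmani--Li \cite{dahmani_li} (together with the structure of maximal parabolics being suspensions of the $H_i$, which act on the flow space stabilising principal flow lines). The principal flow lines are the coarse images in $\wt X$ of the parabolic cosets: each principal flow line of a singular vertex $v_i$ is stabilised by (a finite-index subgroup of) the suspension of the corresponding $H_j$, and it is quasiconvex by the train track argument above. Then the cone-off of $\wt X^1$ over the principal flow lines is the natural geometric model for the coned-off Cayley graph of $(\Gamma,\scrP)$; since $\Gamma$ is hyperbolic relative to $\scrP$, its coned-off Cayley graph is hyperbolic, and a quasi-isometry between this coned-off graph and $\widehat{\wt X^1}$ — built from the $G$-cofiniteness of the action and the fact that principal flow lines are uniformly quasiconvex and coarsely cover the parabolic cosets — transports hyperbolicity. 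Fineness is not needed for this statement, only hyperbolicity of the cone-off, so I would not dwell on it here. The delicate point to be careful about is that distinct parabolic cosets can only share bounded diameter, which is again guaranteed by no-twinned-subgroups and atoroidality (distinct principal flow lines diverge), so the cone-off does not collapse too much; this is the only place the third bulleted hypothesis is really used.
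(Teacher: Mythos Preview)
Your proposal is correct in outline but takes a somewhat different route from the paper. The paper does not separately verify flaring for a Bestvina--Feighn/Mj--Sardar combination theorem and then argue for the cone-off via a quasi-isometry with the coned-off Cayley graph. Instead, it invokes a single combination theorem for trees of \emph{relatively} hyperbolic spaces, namely Theorem~4.5 of Mj--Reeves \cite{mj_reeves}, applied to the line $\bbZ$ with vertex spaces the (relatively hyperbolic) trees $T_i$. The hypotheses of that theorem are exactly what was verified in \cite{dahmani_li}, so no flaring estimate is redone here. The conclusion of Mj--Reeves is that $\wt X$ is hyperbolic \emph{relative to} the maximal cone subtrees, which in this setting are precisely the principal flow lines; both assertions of the theorem then drop out at once: hyperbolicity of $\wt X^1$ follows because the peripherals are quasiconvex lines (hence themselves hyperbolic), and hyperbolicity of the cone-off is part of what relative hyperbolicity means.

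Your two-step approach also works, and in fact your second step anticipates \cref{prop: rel_swarc_milnor}, which the paper proves later for other purposes. The trade-off is this: the paper's route is shorter and packages both conclusions into one citation, at the cost of importing a heavier relative combination theorem; your route is more hands-on about the flaring, but requires you to independently establish the equivariant quasi-isometry between $\widehat{\wt X^1}$ and $\widehat\Gamma$ before you can transport hyperbolicity of the cone-off. One caution on your flaring sketch: backward flaring for a train track map is genuinely asymmetric to forward flaring and is not simply ``the same property applied to preimages''; this is where bounded cancellation and the atoroidality hypothesis do real work, and is precisely the content checked in \cite{dahmani_li} that the paper is leaning on.
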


This is essentially proved in \cite{dahmani_li}. Let us cover how to obtain it. We want to use Theorem 4.5 from \cite{mj_reeves}.  However, a little care is in order. The assumption of this theorem is that one has a tree (denoted by $T$ in \cite{mj_reeves}, that we temporarily denote by $T_{MR}$ here) of relatively hyperbolic spaces $S_v, v\in T_{MR}^{(0)}$, with some properties, and the conclusion is that the whole space denoted by $X$ in \cite{mj_reeves}, and that we temporarily denote by $X_{MR}$, is itself hyperbolic relative to maximal cone subtrees. Here the tree $T_{MR}$ is just a bi-infinite line $\mathbb{Z}$ indexing our trees $T_i$, and the spaces $S_i, i\in \mathbb{Z}$ are indeed  the trees $T_i$, all isometric to $T$, on which $G$ acts through $\phi^i$, and the total space $X_{MR}$ is $\wt{X}$.  The different assumptions of the theorem were checked in {Section 2.3 of} \cite{dahmani_li}, and the conclusion is that  $\wt X$ is hyperbolic relative to a collection of quasiconvex lines (the principal flow lines of singular vertices).
It follows that $\wt X$ is hyperbolic itself.  It also implies that its cone-off over those lines is hyperbolic as well. 

Recall that a graph is said to be \emph{fine} if for each $n \in \mathbb{N}$, every edge of the graph belongs to only finitely many simple cycles of length $n$.
Recall also that a finitely generated group $\Gamma$ is hyperbolic relative to a finite collection of finitely generated subgroups $\mathscr{P}$ if the cone-off of a Cayley graph of $\Gamma$ over the left cosets of elements of $\mathscr{P}$ is hyperbolic and fine \cite{bowditch_rel_hyp}.

The main result of \cite{dahmani_li} (Theorem 0.2) was actually a related statement, that the group $G\rtimes_\phi \langle t\rangle$ is itself relatively hyperbolic with respect to the collection $\mathscr{P}$ consisting of mapping tori of the subgroups $H_1, \cdots, H_k$. {Since the stabiliser of a singular vertex in $T_0$ is a conjugate of some $H_i$, each principal flow line is in fact cocompactly stabilised by the suspension of a conjugate of some $H_i$ (this can, for instance, be deduced from \cref{lem: periodic flow line}(2)).}

\subsection{Quasiconvexity and divergence of forward flow rays}

We first {observe, following \cite{hagen_wise_irreducible},} that forward flow rays are uniformly quasiconvex (\cref{prop: quasiconvexity_fwd_ladder}).

From $x_i \in T_i$ to $f^n(x_i)$ any path intersects each tree $T_{i+k}$ and therefore has at least $n$ horizontal edges. However, the forward-path has exactly $n$ horizontal edges and no vertical contribution.  If $D$ is an upper bound to the diameter of $2$-cells, we see that the intersection of the forward flow ray with  $\bigcup_i T_i$  is  a $D$-bilipschitz embedding of $\mathbb{N}$ in the $1$-skeleton of $\wt X$. Since the latter is hyperbolic, we have quasiconvexity. We thus have:

\begin{prop}[Proposition 2.3 in \cite{hagen_wise_irreducible}]\label{prop: quasiconvexity_fwd_ladder}
There exists $\lambda \geq 0$ such that the $1$-skeleton $N(\sigma)^1$ of any forward ladder is $\lambda$-quasiconvex in $\wt{X}^1$. \qed
\end{prop}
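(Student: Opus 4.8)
The statement asserts that there exists a uniform constant $\lambda \geq 0$ such that the $1$-skeleton $N(\sigma)^1$ of any forward ladder is $\lambda$-quasiconvex in $\wt X^1$. The discussion immediately preceding the proposition already contains the essential idea, so the plan is to make that sketch precise. The key point is that the forward flow ray $\sigma$ itself sits inside $N(\sigma)$ and is an efficient path: going from $x_i$ to $f^n(x_i)$ requires crossing the $n$ consecutive trees $T_{i+1}, \dots, T_{i+n}$, so any path from $x_i$ to $f^n(x_i)$ in $\wt X^1$ uses at least $n$ horizontal edges; meanwhile $\sigma$ uses \emph{exactly} $n$ horizontal edges and has no vertical contribution at all. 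Since $2$-cells have uniformly bounded diameter $D$, the intersection $\sigma \cap \bigl(\bigcup_i T_i\bigr)$ is a $D$-bilipschitz embedded copy of an interval (or ray) of $\mathbb{N}$ into $\wt X^1$. By \cref{thm: hyperbolicity_graph_and_cone-off}, $\wt X^1$ is $\delta$-hyperbolic, and bilipschitz embeddings of intervals into $\delta$-hyperbolic spaces are quasigeodesics, hence uniformly quasiconvex (with a Morse-lemma constant depending only on $D$ and $\delta$).

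From here the plan is to upgrade quasiconvexity of $\sigma$ to quasiconvexity of the whole ladder $N(\sigma)$. First I would observe that $N(\sigma)$, being the smallest subcomplex containing $\sigma$, is contained in a uniformly bounded neighbourhood of $\sigma$: each $2$-cell $R_{e_i}$ met by $\sigma$ contributes only that single rectangle (of diameter $\leq D$), and by \cref{lem: vertical_locally_finite} each vertical edge lies in finitely many $2$-cells, but more to the point the cells of $N(\sigma)$ are exactly the rectangles traversed by the horizontal segments of $\sigma$, so every point of $N(\sigma)$ is within $D$ of $\sigma$. Thus $N(\sigma)^1 \subseteq \mathcal{N}_D(\sigma)$ in $\wt X^1$. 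A subset contained in a bounded neighbourhood of a $C$-quasiconvex set is itself $(C + 2D + \text{const})$-quasiconvex: given two points $a, b \in N(\sigma)^1$, push them to nearby points $a', b' \in \sigma$, use a geodesic $[a', b']$ which stays $C$-close to $\sigma \subseteq N(\sigma)$, and concatenate with the short segments $[a, a']$, $[b, b']$ which lie in $N(\sigma)$ (or within $2D$ of it, after perturbing to the $1$-skeleton). The resulting constant $\lambda$ depends only on $D$ and $\delta$ and hence is uniform over all forward ladders — this uniformity is exactly what lets the argument run identically for ladders of all lengths and all basepoints, using the $G \rtimes_\phi \mathbb{Z}$-action and cocompactness only implicitly through the bound $D$ on $2$-cell diameters.

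The step I expect to require the most care is the first one: verifying that $\sigma$ is genuinely a quasigeodesic in $\wt X^1$ rather than merely a path with few horizontal edges. The subtle point is that a competitor path could in principle save length by taking large vertical excursions in a single tree $T_j$ — trees are unbounded, so ``few horizontal edges'' does not by itself bound the length. What rescues the argument is that the \emph{length} of $\sigma$ in $\wt X^1$ is controlled: $\sigma$ consists of $n$ horizontal edges, so $|\sigma| \leq nD$ in the path metric, while any path from $x_i$ to $f^n(x_i)$ has at least $n$ horizontal edges and therefore length at least $n$ (each horizontal edge having length bounded below, say by $1$ after suitable normalisation of the cell complex metric). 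Hence $d_{\wt X^1}(x_i, f^n(x_i)) \geq n \geq \tfrac{1}{D}|\sigma|$, which is precisely the quasigeodesic inequality; combined with $\delta$-hyperbolicity and the Morse lemma this gives the uniform quasiconvexity of $\sigma$, and then of $N(\sigma)^1$ as above. I would also note that this is literally Proposition 2.3 of \cite{hagen_wise_irreducible} transported to the non-locally-finite tree setting, where the only new input needed is the uniform bound $D$ on $2$-cell diameters, which holds because there are finitely many $G \rtimes_\phi \mathbb{Z}$-orbits of cells in $\wt X$.
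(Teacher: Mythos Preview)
Your proposal is correct and follows the paper's own argument essentially verbatim: the paper's proof is precisely the short paragraph preceding the proposition (horizontal-edge counting gives a $D$-bilipschitz embedding of $\mathbb{N}$, and hyperbolicity yields quasiconvexity), and you have simply fleshed out the details, including the implicit passage from $\sigma$ to the ladder $N(\sigma)^1$ via the bounded-neighbourhood observation. Your extra care about the vertical-excursion issue is a welcome clarification but does not constitute a different approach.
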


Second, two flow rays starting at different points in the same edge of $T_i$ diverge from each other, as stated below.

\begin{lemma}\label{lem: edge_flows_diverge} 
Given $\epsilon >0$, there exists $N>0$ such that for any two points $x,y$ contained in any vertical edge $e_i \subset T_i$ with $d(x,y) \geq \epsilon$, the distance in $T_{i+N}$ between $f^N(x)$ and $f^N(y)$ is at least $e^{100(\delta + \lambda)}$ (and thus the forward rays from $x$ and $y$ diverge).
\end{lemma}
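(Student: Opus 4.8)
The plan is to argue that because $f$ is a train track map, the image under a positive iterate of a nondegenerate subsegment $[x,y]$ of a vertical edge cannot be ``crushed'': $f$ does not fold distinct edges, so lengths of legal paths grow. First I would reduce to the case where $[x,y]$ is contained in a single edge $e_i$ with $d(x,y) \geq \epsilon$. Since $f$ has constant speed on edges, $f^n$ maps $[x,y]$ onto a subsegment of $f^n(e_i)$, and the relevant quantity is the length of $f^n(e_i)$ restricted to the image of $[x,y]$. Now $f^n(e_i)$ is a reduced path with no non-legal turns (train track property iterated), hence it is a legal path; because $f$ does not fold, consecutive edges $f(e)f(e')$ along any legal path remain a reduced concatenation. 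The key quantitative input is the Perron--Frobenius growth: the transition matrix of $f$ on the (finitely many $G$-orbits of) edges of $T$ is irreducible with Perron eigenvalue $\lambda_{\mathrm{PF}} > 1$ (this uses that $\phi$ is fully irreducible relative to $\calH$ and that a fundamental domain has at least two edges, so $f$ is not a graph isometry). Therefore there is $C \geq 1$ and $\mu > 1$ with: for every edge $e$ of $T$, the number of edges in the reduced path $f^n(e)$ is at least $C^{-1}\mu^n$.

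Next I would use this to bound $d_{T_{i+N}}(f^N(x), f^N(y))$ from below. Parametrising $e_i$ by $[0, \ell]$ with $\ell$ the edge length and $f$ having constant speed $\ell_{f}/\ell$ where $\ell_f = \operatorname{length}(f(e_i))$, the image of $[x,y]$ inside $f(e_i)$ has length $\frac{\ell_f}{\ell} d(x,y)$. Iterating and using that every edge of $f^m(e_i)$ is itself an edge of $T$ whose $f$-image grows, one gets that the image of $[x,y]$ under $f^N$ is a subpath of the \emph{reduced} path $f^N(e_i)$ whose length is at least $\mathrm{const}\cdot \epsilon \cdot \mu^N$ times a minimal edge length — here I use that the image is a subsegment of a \emph{legal} (hence reduced, hence geodesic in the tree $T_{i+N}$) path, so its length equals the distance between its endpoints $f^N(x)$ and $f^N(y)$ in $T_{i+N}$. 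Choosing $N$ large enough that $\mathrm{const}\cdot\epsilon\cdot\mu^N \geq e^{100(\delta+\lambda)}$ gives the claim. The divergence of the forward rays from $x$ and $y$ then follows because the forward ray restricted to $\bigcup_i T_i$ is a $D$-bilipschitz embedding of $\mathbb{N}$ (as in \cref{prop: quasiconvexity_fwd_ladder}), so endpoints that are far apart in $T_{i+N}$ stay far apart along the rays.

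The main obstacle, and the point requiring genuine care, is the lack of local finiteness of $T$: the ``reduced $\Rightarrow$ geodesic'' step is automatic in a tree, but I must be sure that $f^N(e_i)$ really is reduced and that no backtracking is introduced at the infinite-valence singular vertices. This is exactly guaranteed by the train track hypothesis — $f$ sends edges to reduced paths without non-legal turns and sends legal turns to legal turns — combined with the angle considerations from the proof of \cref{lem: vertical_locally_finite}: a legal turn at a singular vertex is sent to a proper turn, and the triangle inequality for angles around a fixed singular vertex prevents the hidden cancellation that non-local-finiteness might otherwise allow. A secondary subtlety is the uniformity over $i$: this is free, because every $T_i$ is $G$-equivariantly isometric to $T$ and $f_i$ is conjugate to $f$, so the Perron--Frobenius constants $C,\mu$ and the edge-length bounds are the same in every level.

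Alternatively, one could bypass the explicit Perron--Frobenius discussion and argue more softly: if the conclusion failed, there would be (by a limiting/pigeonhole argument using finiteness of $G$-orbits of edges) a legal path that an iterate of $f$ shortens, contradicting that train track maps do not decrease the length of legal paths; but making ``does not decrease'' into ``grows without bound'' still requires irreducibility of the transition matrix, so the quantitative route above seems cleaner.
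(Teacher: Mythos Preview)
Your proposal is correct and follows essentially the same route as the paper. The paper's proof is a two-line appeal to the fact that forward images of an edge are legal paths together with \cite[Lemma 1.11]{dahmani_li}, which asserts a uniform stretching factor $>1$ on legal paths; your Perron--Frobenius argument is exactly the content of that cited lemma, so you have simply unpacked what the paper outsources to a reference.
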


\begin{proof}
    The forward images of a single edge are all legal paths, and  by \cite[Lemma 1.11]{dahmani_li} $f$ applies a uniform stretching factor $>1$ on all legal paths.
\end{proof} 

Finally, two flow rays starting at different points in the star of a vertex either uniformly diverge from each other or fellow-travel forever. More precisely we have the following.

\begin{prop}\label{prop: flowrays_fellow_travel_or_diverge} For all $\epsilon >0$, there exists $N>0$ such that,  for all singular vertices $v\in T_i$, for all edges $e_1\neq e_2$ in $T_i$ starting at $v$, for all $x_1, x_2 $ respectively in $e_1, e_2$, at distance at least $\epsilon$ from $v$, either the distance in $T_{i+N}$ between $f^N(x_1)$ and $f^N(x_2)$ 
is at least $e^{100(\delta + \lambda)}$, 
or the forward rays fellow-travel until infinity.
\end{prop}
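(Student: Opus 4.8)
The plan is to dichotomise according to whether the turn $\{e_1,e_2\}$ at $v$ is legal or not, and to feed this into the uniform-stretching machinery already recorded in \cref{lem: edge_flows_diverge}. First I would handle the legal case: if $\{e_1,e_2\}$ is a legal turn at $v$, then $f$ sends it to a legal turn at $f(v)$, and by definition of train track maps every positive iterate $f^k$ sends $\{e_1,e_2\}$ to a legal turn. Consequently, for each $k$, the concatenation of the forward flow segment $\sigma_k(x_1)$ reversed through $v$ with $\sigma_k(x_2)$ — or more precisely the image under $f^k$ of the legal path $\overline{[v,x_1]}\cdot[v,x_2]$ inside $T_i$ — is a legal path in $T_{i+k}$. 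By \cite[Lemma 1.11]{dahmani_li}, $f$ stretches legal paths by a uniform factor $\Lambda>1$, so $d_{T_{i+k}}(f^k(x_1),f^k(x_2)) \geq \Lambda^k\bigl(d(x_1,v)+d(v,x_2)\bigr) \geq \Lambda^k\,2\epsilon$, which exceeds $e^{100(\delta+\lambda)}$ once $k\geq N$ for a threshold $N=N(\epsilon)$ depending only on $\epsilon$, $\Lambda$, $\delta$, $\lambda$. This is the ``uniformly diverge'' alternative.

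The subtler case is when $\{e_1,e_2\}$ is \emph{not} a legal turn at $v$, i.e.\ $f(e_1)$ and $f(e_2)$ share a nontrivial common initial subpath at $f(v)$; call the maximal such shared prefix $\gamma$, with terminal vertex $w = $ the ``branch point'' where $f(e_1)$ and $f(e_2)$ first separate. Here I would split off two sub-cases. If the images $f^k(x_1)$ and $f^k(x_2)$ leave the shared portion $f^k(\gamma)$ within bounded (in fact, $=1$) combinatorial depth — i.e.\ after applying $f$ once we land at distinct points of two edges $e_1',e_2'$ at the vertex $w$ that themselves make a legal turn (because $f$ is a train track map, the turn taken by $f(e_1)$ and $f(e_2)$ immediately after $w$ \emph{is} legal) — then we are reduced, after one application of $f$, to the legal case analysed above, applied to $f(x_1)$ and $f(x_2)$ sitting on a legal turn at $w$, with the only bookkeeping being to propagate the positive-distance lower bound $d(f(x_1),w)+d(w,f(x_2))$: since $f$ stretches $[x_1,v]$ (which is legal, being contained in a single edge) this is bounded below in terms of $\epsilon$ and $\Lambda$, and again the uniform-stretching estimate kicks in with a new $N$. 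Otherwise, the only remaining possibility is that $f^k(x_1)$ and $f^k(x_2)$ stay inside a common image path for all $k$ — equivalently the point of $e_1$ at distance $\delta$ from $v$ is ``flowed'' onto the same horizontal trajectory as a point of $e_2$; in this case the two forward flow rays coincide from some finite stage on (they enter the same midsegment and thereafter $\sigma$ is unique by the existence-and-uniqueness of the forward flow ray through a point), hence fellow-travel until infinity. That is the second alternative.

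The step I expect to be the main obstacle is making precise, in the non-locally-finite setting, that the ``not legal'' case genuinely collapses to exactly these two possibilities — that there is no intermediate behaviour where $f^k(x_1),f^k(x_2)$ stay within bounded distance of each other forever without actually being on the same flow ray. The key point is that the cancellation between $f(e_1)$ and $f(e_2)$ occurs only at the vertex $v$ (a single turn) and, because $f$ is a train track map, no \emph{new} illegal turns are created downstream; so after the first application of $f$, any positive distance between the two images, measured at the first point where they part, is a length along a legal path and grows by $\Lambda$ at every subsequent step. Thus either the two images have already parted after one step (legal case, with a controlled lower bound), or they never part (same flow ray). I would need the angle considerations from the definition of $T$ — that only finitely many edges make a bounded angle with $f(e_0)$, used already in \cref{lem: vertical_locally_finite} — only to guarantee that the maximal shared prefix $\gamma$ is a genuine finite path and that $w$ is a well-defined vertex; with that in hand, the argument above runs on the uniform stretching factor alone, and the dependence of $N$ on $\epsilon$ is uniform over all singular vertices $v$ by $G$-equivariance and finiteness of the quotient $T/G$.
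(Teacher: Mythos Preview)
Your legal-turn case is correct, but the illegal-turn case has a real gap. The assertion that ``because $f$ is a train track map, the turn taken by $f(e_1)$ and $f(e_2)$ immediately after $w$ is legal'' is false. The train track condition guarantees that each $f(e_i)$ is itself a legal path and that legal turns map to legal turns; it says nothing about the turn at the branch point $w$ between the post-$\gamma$ continuations of $f(e_1)$ and $f(e_2)$, which is neither an internal turn of a single edge image nor the image of a legal turn. That turn can perfectly well be illegal, in which case a further application of $f$ produces new cancellation beyond $f(w)$, and iterating gives a moving branch point $w_k$ for $f^k(e_1),f^k(e_2)$. This is exactly the mechanism behind (periodic) Nielsen paths and is precisely the ``intermediate behaviour'' you flagged as the obstacle: it does occur, and your dichotomy ``either they part after one step, or they never part'' does not hold.

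There is also a structural gap concerning uniformity of $N$. At a singular vertex $v$ the valence is infinite, and there are infinitely many $\mathrm{Stab}(v)$-orbits of \emph{pairs} of edges (the angle is an orbit invariant and is unbounded), so $G$-equivariance together with finiteness of $T/G$ does not suffice. The paper deals with this via a genuinely different device: it invokes \cite[Lemma~2.6]{dahmani_li}, which says $f$ is a quasi-isometry on angles at singular vertices, to produce a threshold $\theta_0$ such that any pair with angle $\geq\theta_0$ still subtends a positive angle after $N_\epsilon$ iterations; hence $f^{N_\epsilon}(e_1)$ and $f^{N_\epsilon}(e_2)$ are disjoint beyond $f^{N_\epsilon}(v)$ and $d_{T_{i+N_\epsilon}}(f^{N_\epsilon}(x_1),f^{N_\epsilon}(x_2))$ equals the sum of the distances of the $f^{N_\epsilon}(x_j)$ to $f^{N_\epsilon}(v)$, which is large by \cref{lem: edge_flows_diverge}. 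The remaining pairs, those with angle $<\theta_0$, fall into only finitely many $\mathrm{Stab}(v)$-orbits and can be handled individually (each either fellow-travels or diverges, by quasiconvexity of forward rays in the hyperbolic space $\wt X^1$). This large-angle/small-angle split is the missing ingredient in your plan.
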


\begin{proof} We fix $\epsilon$ and $v$. For each $e$ edge issued at $v$, and $x \in e$ at distance at least $\epsilon$ from $v$, we know by \cref{lem: edge_flows_diverge} that there is $N_{\epsilon}$ such that, for all $n\geq N_{\epsilon}$,  $f^{n} (x)$ is at distance (in $T_{i+n}$) at least $e^{1000(\delta +\lambda)}$ from the principal flow line of $v$.  

By \cite[Lemma 2.6]{dahmani_li} we know that $f$ induces a quasiisometry on angles: there exists $\theta_1>1$ such that, if $e_1, e_2$ issued at $v$ make an angle $\theta$, then the paths $f(e_1), f(e_2)$ make an angle at least $\theta/\theta_1 - \theta_1$ at their common initial point $f(v)$  (while it is slightly stronger than the stated lemma, the claim is still true with same proof, however, the stated lemma is also sufficient to be used in a similar way). Call $\rho(\theta) = \theta/\theta_1 - \theta_1$.  Calculus ensures that it is possible to find $\theta_0$ such that $\rho^{N_0} (\theta_0) >0$. If $e_1, e_2$ issued at $v$ make an angle greater than $\theta_0$, the paths $f^{N_0} (e_1)$ and $f^{N_0} (e_2)$ make a positive angle at their initial common point $f^{N_0} (v)$. In particular, they do not overlap. It follows that, measured in $T_{i+N_0}$, the distance $d( f^{N_0} (x_1), f^{N_0}(x_2))$ is equal to $d( f^{N_0} (x_1), f^{N_0}(v)) + d(f^{N_0}(v),  f^{N_0}(x_2))$. It is then greater than $2e^{1000(\delta +\lambda)}$.  
Since the two rays are quasiconvex in the hyperbolic space $\wt X$ and have started to diverge after $N_0$ edges, they will diverge onward after that point. 

It remains to treat the case of two edges making an angle less than $\theta_0$. There are finitely many $Stab(v)$-orbits of such pairs of edges. Partition them into classes of those whose forward ray fellow travel, and those whose forward rays exponentially diverge. Since there are finitely many, one can choose $N$ that is suitable for all of those in the second class, and larger than $N_0$.  
\end{proof}

Observe that this does not prevent some points in different edges from having fellow-travelling forward rays, with the same endpoint at infinity. 
However, if the origins of the flow rays are translates of each other by an elliptic element $g$ in $T_i$, they diverge, provided that $g$ is not a torsion element: 

\begin{lemma}\label{lem: divergence_flowrays_elliptic}
For any point $x_i$ in $T_i$, and for each $g \in G$ elliptic in $T_i$ such that $g^nx_i \neq x_i$ for any $n \in \mathbb{N}$, the flow rays from $gx_i$ and $x_i$ do not fellow-travel until infinity.
\end{lemma}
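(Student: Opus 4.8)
The plan is to reduce the statement to the assertion that a single sequence of tree-distances is unbounded, and then to establish that unboundedness from the expansion of $f$ on legal paths (\cref{lem: edge_flows_diverge}). Let $v\in T_i$ be the unique vertex fixed by $g$; it exists since $g$ is elliptic in $T_i$, and it is singular because $Stab(v)\supseteq\langle g\rangle\neq 1$. First observe that the hypothesis that $g^nx_i\neq x_i$ for all $n\ge 1$ is equivalent to $x_i\neq v$: if $x_i\neq v$ and $g^nx_i=x_i$, then $g^n$ would fix the nondegenerate segment $[v,x_i]$, hence an edge, hence $g^n=1$, contradicting that $G$ is torsion-free. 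Next, the flow maps $f_j\colon T_j\to T_{j+1}$ are $G$-equivariant, so for every $n$ we have $f^n(gx_i)=g\cdot f^n(x_i)$, while $g\cdot f^n(v)=f^n(gv)=f^n(v)$; thus $g$ fixes $f^n(v)$. Since $g\neq 1$ and edge stabilisers in $T_{i+n}$ are trivial, $g$ fixes no edge, so the first edge of $[f^n(v),f^n(x_i)]$ differs from the first edge of $[f^n(v),f^n(gx_i)]=g\cdot[f^n(v),f^n(x_i)]$; hence the two geodesics meet only at $f^n(v)$ and
\[
d_{T_{i+n}}\!\bigl(f^n(x_i),f^n(gx_i)\bigr)=2\,d_{T_{i+n}}\!\bigl(f^n(v),f^n(x_i)\bigr).
\]
Because each forward flow ray crosses the trees $T_j$ transversally — one point per tree, joined by midsegments of bounded length — inside the hyperbolic complex $\wt X^1$, the rays from $x_i$ and $gx_i$ can fellow-travel in $\wt X$ only if the right-hand side stays bounded in $n$. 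So it suffices to prove $d_{T_{i+n}}(f^n(v),f^n(x_i))\to\infty$.

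If $x_i$ is a singular vertex this is quick: $f^n(v)$ and $f^n(x_i)$ lie on the principal flow lines of $v$ and of $x_i$ respectively, which are distinct parabolic subsets of the relatively hyperbolic complex $\wt X$ (\cref{thm: hyperbolicity_graph_and_cone-off}); as distinct parabolic subsets have bounded coarse intersection while $\{f^n(v)\}_n$ is an unbounded subset of the first, $d_{T_{i+n}}(f^n(v),f^n(x_i))$ cannot stay bounded. Otherwise, let $e$ be the first edge of $[v,x_i]$ (issued from $v$), $m$ its other endpoint, and $z\in e$ the point at distance $\epsilon_0:=\min\{\tfrac12,d(v,x_i)\}>0$ from $v$. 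A single edge is a legal path, and a train track map sends legal paths to reduced paths, so $f^n(e)$ is reduced and based at $f^n(v)$ and $f^n([v,z])$ is an initial subsegment of it; hence $d(f^n(v),f^n(z))$ equals the length of $f^n([v,z])$. Applying \cref{lem: edge_flows_diverge} to $z$ and $v$ on the edge $e$, together with \cref{prop: quasiconvexity_fwd_ladder} and hyperbolicity, gives $d(f^n(v),f^n(z))\to\infty$. If $x_i\in e$ we are done, since $f^n(x_i)$ then lies on the reduced path $f^n(e)$ strictly farther from $f^n(v)$ than $f^n(z)$. In general, writing $[v,x_i]=[v,z]\cup[z,x_i]$, the geodesic $[f^n(v),f^n(x_i)]$ is the reduction of $f^n([v,z])\cdot f^n([z,x_i])$, and one argues that for $n$ large no cancellation reaches back into $f^n([v,z])$: such cancellation would first have to consume $f^n([z,m])$, a terminal subsegment of the reduced legal path $f^n(e)$ whose length also tends to infinity, and, comparing at each vertex of $[v,x_i]$ the two legal paths issuing from the corresponding point of $T_{i+n}$, the cancellation travelling back towards $f^n(v)$ is outpaced by the lengths already present. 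This yields $d(f^n(v),f^n(x_i))\ge\tfrac12 d(f^n(v),f^n(z))\to\infty$.

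I expect the last step — controlling the cancellation in $f^n([v,x_i])$ near $f^n(v)$ — to be the main obstacle. The delicate point is that the bounded cancellation constant of $f^n$ is not uniform in $n$ (at a periodic illegal turn the two forward branches share a prefix whose length grows with $n$), so one cannot simply invoke Cooper's bounded cancellation lemma with a fixed constant; instead one must run a junction-by-junction estimate along $[v,x_i]$, using essentially that images of single edges are legal (so that at each junction only a controlled amount cancels before legality is reached, leaving an uncancelled tail that grows) and that edge stabilisers are trivial (which is also what forces the geodesics apart at $f^n(v)$ in the first place). A secondary point to make precise is the implication ``fellow-travel in $\wt X$'' $\Rightarrow$ ``$d_{T_{i+n}}(f^n(v),f^n(x_i))$ bounded'', which should follow from hyperbolicity of $\wt X^1$ together with quasiconvexity of forward ladders and of principal flow lines.
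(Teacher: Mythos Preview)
Your approach differs from the paper's, and the cancellation problem you flag at the end is exactly what the paper's argument is designed to sidestep. Rather than attempting to prove $d_{T_{i+n}}(f^n(v),f^n(x_i))\to\infty$, the paper exploits the whole $\langle g\rangle$-orbit of $x_i$: if the forward rays from $x_i$ and $gx_i$ fellow-travel (i.e.\ share a limit point $\xi\in\partial\wt X^1$), then since $g$ carries one ray to the other one has $g\xi=\xi$, and hence by equivariance every ray from $g^nx_i$, $n\in\bbZ$, also limits to $\xi$. But the first edge of $[v,g^nx_i]$ is the $g^n$-translate of the first edge of $[v,x_i]$, and because $g$ has infinite order in $\mathrm{Stab}(v)$ their angle at $v$ tends to infinity with $n$. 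The large-angle part of the proof of \cref{prop: flowrays_fellow_travel_or_diverge} then forces the rays from $x_i$ and $g^nx_i$ to diverge for large $n$, contradicting that they all limit to $\xi$.

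The crucial feature is that this argument uses only the \emph{first} edge of $[v,x_i]$ and its $g^n$-translates; the rest of the segment $[v,x_i]$ --- and in particular the growth of cancellation at its illegal turns under iteration --- never enters. Your junction-by-junction estimate would amount to a bounded-cancellation statement uniform in $n$, and as you correctly suspect this is problematic: at a periodic illegal turn the shared prefix grows like $\lambda^n$, so it can genuinely consume all of $f^n([z,m])$, and there is no evident inductive mechanism preventing the cancellation from reaching back to $f^n(v)$. The paper's trick of passing to high powers $g^n$ and invoking the angle structure at $v$ avoids this difficulty entirely. (Your secondary worry about converting fellow-travelling in $\wt X^1$ into a bound on tree-distances is also left unjustified; it is not addressed because the paper's route never needs it.)
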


\begin{proof}
Note that the midpoint of the segment of $T_i$ between $x_i$ and $gx_i$ is {fixed by $g$, and is hence} a singular vertex $v_i$. If the flow rays from $x_i$ and $gx_i$ fellow-travel until infinity, then by equivariance, so do the flow rays from $g^nx_i$ and $g^{n'}x_i$, for all $n,n' \in \mathbb{Z}$. But for large enough powers, the segments from $v_i$ to $g^nx_i$ and $x_i$ make arbitrarily large angles. By arguing as in \cref{prop: flowrays_fellow_travel_or_diverge}, we can conclude that the flow rays from $x_i$ and $g^nx_i$ cannot fellow-travel for arbitrarily large powers, a contradiction.
\end{proof}

Henceforth, we will assume that the group $G$ is torsion-free, so the above result always holds for elliptic elements.

Let us also record a basic observation about principal flow lines:

\begin{lemma}\label{lem: principal flow lines diverge}
Let $v_1 \neq v_2$ be two singular vertices in $\wt{X}$. Then either the principal flow lines of $v_1$ and $v_2$ coincide or uniformly diverge from each other in both the forward and backward directions{, i.e., for each $R >0$ there exists $B_R  > 0$ such that for any pair of singular vertices $v_1 \neq v_2$, either the principal flow lines $\Lambda_{v_i}$ through $v_i$ coincide, or $N_R(\Lambda_{v_1}) \cap \Lambda_{v_2}$ has diameter at most $B_R$}. 
\end{lemma}
\begin{proof}
    {Observe that by \cref{lem: preimage_singular_vertices}, for each singular vertex, there exists a unique principal flow line going through this vertex. 
Since the automorphism $\phi$ has no twinned subgroups for the (finite) collection $\mathcal{H}$,
 if two principal flow lines are different, they must uniformly diverge as required.}
\end{proof}

\subsection{Periodic points and forward flow rays}
In the case of certain special points called periodic points, we have no fellow-travelling of flow rays between any translates, which we prove below. We will also show that periodic points {with flow rays (lines, in fact) diverging from every principal flow line} are dense in {edges of $T$ (\cref{lem: periodic_divergence_principal_lines}).}

A point $x \in T$ is a \emph{periodic point} if there exist $g = g_x \in G$ and $n = n_x>0$ such that $f^n(x) = gx$. If $x$ is periodic, then we will call each $x_i \in T_i$ also as a periodic point. We say that the periodic point $x \in T$ has \emph{period $n$} if $f^n(x) = gx$ as above and for all $0 <k < n$, $f^k(x) \neq hx$ for any $h \in G$.

\begin{prop}\label{prop: periodic_flowrays}
For any periodic point $x_i$ in $T_i$, and for each $g \in G$ with $gx_i \neq x_i$, the flow rays from $gx_i$ and $x_i$ diverge.
\end{prop}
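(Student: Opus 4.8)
The plan is to reduce to the divergence statements already available and to invoke the periodicity of $x$ only in the one genuinely new case. Throughout write $\sigma(\cdot)$ for forward flow rays and use that the $G$-action commutes with the flow, so $\sigma(gx_i)=g\,\sigma(x_i)$. First I would dispose of the easy cases. If $x_i$ is a singular vertex — which is automatically periodic, since $f$ permutes the finitely many $G$-orbits of singular vertices compatibly with the permutation of $\{[H_1],\dots,[H_k]\}$ induced by $\phi$ — then $gx_i$ is a singular vertex $\neq x_i$ and $\sigma(x_i),\sigma(gx_i)$ are the forward halves of the principal flow lines through $x_i$ and $gx_i$; these are distinct because each meets $T_i$ in a unique singular vertex by \cref{lem: preimage_singular_vertices}, hence uniformly diverge by \cref{lem: principal flow lines diverge}. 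If $x_i$ is not singular and $g$ is elliptic on $T_i$, then $g^mx_i\neq x_i$ for every $m\geq 1$ — otherwise $g^m$ fixes the segment from $x_i$ to a vertex fixed by $g$, hence an edge, hence is trivial, forcing $g=1$ — so \cref{lem: divergence_flowrays_elliptic} applies. There remains the case that $x_i$ is not singular and $g$ is loxodromic on $T_i$; this is where periodicity enters.

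For this case I would argue by contradiction: suppose $\sigma(x_i)$ and $\sigma(gx_i)$ fellow-travel forever. Since $\sigma(g^mx_i)=g^m\sigma(x_i)$, iterating shows that all the rays $\sigma(g^mx_i)$, $m\in\mathbb{Z}$, are pairwise asymptotic in the hyperbolic space $\wt X^1$ (\cref{thm: hyperbolicity_graph_and_cone-off}), sharing an endpoint $\xi\in\partial\wt X^1$; in particular $g$ fixes $\xi$. Now invoke periodicity: writing $f^n(x)=hx$ in $T$, \cref{lem: group_action_on_flowspace} produces $\gamma_x\in\Gamma$ with $\gamma_x x_i=f^n(x_i)$, and any such element has the form $g't^n$ with $n\neq 0$; it therefore commutes with the flow, so it carries $\sigma(x_i)$ onto the subray of itself issued from $f^n(x_i)$. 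Its nonzero $t$-exponent forces $d_{\wt X^1}(p,\gamma_x^{k}p)\geq n|k|$, so $\gamma_x$ is loxodromic on $\wt X^1$, with quasi-axis the bi-infinite flow line $L_x:=\bigcup_{k\geq0}\gamma_x^{-k}\sigma(x_i)\supseteq\sigma(x_i)$ (a quasigeodesic by \cref{prop: quasiconvexity_fwd_ladder}) whose attracting endpoint is $\xi$.

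I would then split according to the type of $\gamma_x$ in the relatively hyperbolic group $\Gamma$. If $\gamma_x$ is parabolic, it lies (up to a power) in a maximal parabolic of the form $\mathrm{Stab}(\ell_v)$, where $\ell_v$ is the principal flow line of a singular vertex $v\in T_i$; having nonzero $t$-exponent, $\gamma_x$ translates along $\ell_v$, so $\ell_v$ and $L_x\supseteq\sigma(x_i)$ are both quasi-axes of $\gamma_x$ and hence stay at bounded Hausdorff distance. Then $\sigma(gx_i)=g\sigma(x_i)$ stays bounded distance from $g\ell_v=\ell_{gv}$, so the principal flow lines $\ell_v$ and $\ell_{gv}$ share the endpoint $\xi$ and therefore coincide by \cref{lem: principal flow lines diverge}; uniqueness of the singular vertex of a principal flow line in $T_i$ (\cref{lem: preimage_singular_vertices}) then gives $v=gv$, contradicting that $g$ is loxodromic on $T_i$. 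If instead $\gamma_x$ is loxodromic in $\Gamma$, then $g$ is also loxodromic in $\Gamma$: lying in $G$ and being loxodromic on $T$ it is not conjugate into any $H_i$, so no conjugate of it lies in a suspension of an $H_i$, i.e.\ it is not parabolic. Thus $g$ and $\gamma_x$ are loxodromic elements of $\Gamma$ fixing the common point $\xi$ of the Bowditch boundary, so they lie in a common elementary subgroup, which — $\Gamma$ being torsion-free — is infinite cyclic; hence $g$ and $\gamma_x$ have a common nontrivial power. This is absurd, since $g\in G$ while every nontrivial power of $\gamma_x=g't^n$ has nonzero $t$-exponent, and this contradiction proves the proposition.

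The heart of the argument, and the step I expect to be the main obstacle, is the loxodromic case, specifically the interface between two geometries. The flow structure lives in $\wt X^1$, in which the fibres $T_i$ are exponentially distorted and elements of $G$ are \emph{not} loxodromic; whereas the dichotomy of the third paragraph and the final contradiction take place on the cone-off over the principal flow lines, whose boundary is the Bowditch boundary and on which $\Gamma$-loxodromics behave as usual. Making precise that $\gamma_x$ is parabolic in $\Gamma$ exactly when $L_x$ shadows a principal flow line, and tracking the endpoint $\xi$ faithfully across the coning-off while flow rays are only quasigeodesic, is where the real work lies; the remaining ingredients are quoted from earlier in the paper or are routine.
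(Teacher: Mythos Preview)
Your argument is correct, but it is considerably more elaborate than the paper's and the ``main obstacle'' you identify is one the paper sidesteps entirely. The paper never passes to the Bowditch boundary and never splits on the type of $\gamma_x$ in $(\Gamma,\scrP)$; it stays in the hyperbolic space $\wt X^1$ throughout. After reducing to loxodromic $g$ via \cref{lem: divergence_flowrays_elliptic} and producing the element $\gamma_x=g_it^{k_i}$ as you do, the paper simply observes that $g$ and $\gamma_x$ generate a non-elementary subgroup of isometries of $\wt X^1$ \emph{because their axes diverge}: the axis of $\gamma_x$ fellow-travels the periodic flow line through $x_i$, which crosses every tree $T_j$, whereas the $g$-orbit of any point of $T_i$ remains in $T_i$, so the axis of $g$ stays in a fixed neighbourhood of $T_i$. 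Hence $g$ cannot fix the endpoints of $\gamma_x$'s axis, and $g\gamma_x g^{-1}$ has a different axis than $\gamma_x$, i.e.\ the forward rays from $x_i$ and $gx_i$ diverge. Your separate treatment of singular $x_i$ is also unnecessary (it is absorbed by \cref{lem: divergence_flowrays_elliptic} and the loxodromic case). What your route does buy is that the common-power contradiction via elementary closures in a relatively hyperbolic group is textbook, whereas the paper's sentence ``their axes eventually diverge'' implicitly uses that $g$ is loxodromic on $\wt X^1$---which one justifies exactly as you did, by noting $g$ is not parabolic in $(\Gamma,\scrP)$---so your version makes that dependence explicit.
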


\begin{proof}
Observe that by \cref{lem: divergence_flowrays_elliptic}, the statement has to be proved only for loxodromic elements $g$. Since $x_i$ is periodic, there is a minimal positive $k_i$ and $g_i\in G$ such that $g_i x_{i+k_i} = (f^{k_i}(x_i))$ (which is in $T_{i+k_i}$). Thus, the element $g_i t^{k_i}$ sends $x_i$ to $ f^{k_i}(x_i)$ in the flow space. This flow space is hyperbolic, and this element is loxodromic, since for any $x\in T_0$, $d_{\wt{X}^1}( (g_i t^{k_i})^r x, x ) \geq rk_i$ (the distance separating $T_0$ from $T_{rk_i}$).  It follows that the forward flow from $x_i$ remains at bounded distance from  $(g_i t^{k_i})^n x_i)_{n\in \mathbb{N}}$, hence from the axis of $(g_i t^{k_i})$.   Now, from $gx_i$, the forward flow remains close to the axis of  $g(g_i t^{k_i}) g^{-1}$. 

Since the element $g$ is loxodromic on $T_i$, $g$ and $g_i t^{k_i}$  generate a non-elementary subgroup of isometries of $\wt X$, because their axes eventually diverge. It follows that the limit points of $g_i t^{k_i}$ and of $g$ in the boundary of $\wt{X}^1$ cannot be the same. One concludes that $g$ does not fix the limit points of $g_i t^{k_i}$, and $g(g_i t^{k_i}) g^{-1}$ and $(g_i t^{k_i})$ have thus divergent axes. This is the desired conclusion.
\end{proof}

\begin{lemma}\label{lem: irreducibility}
For all edges $e, e'$ in $T$, there exist $n \geq 0$ and $g\in G$ such that $f^n(e)$ contains $ge'$.
\end{lemma}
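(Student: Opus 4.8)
The plan is to deduce this from the full irreducibility of $f$ (equivalently, of $\phi$ relative to $\{H_1,\dots,H_k\}$), using the characterization recalled in the excerpt: for every positive power $i$, $f^i$ is irreducible, meaning there is no proper $G$-invariant, $f^i$-invariant subgraph whose quotient is a forest with at most one non-free vertex per component. Fix edges $e, e'$ in $T$. The natural object to consider is $W := \bigcup_{n \ge 0} \bigcup_{g \in G} g \cdot (\text{edges appearing in some } f^n(e))$, i.e. the smallest $G$-invariant subgraph of $T$ containing $e$ that is closed under applying $f$. By construction $W$ is $G$-invariant and $f(W) \subseteq W$, so $f^i(W) \subseteq W$ for all $i$. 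First I would observe that $W$ contains an edge in every $G$-orbit of edges: otherwise $W$ would be a proper $G$-invariant, $f$-invariant subgraph. Then by irreducibility of $f$, the quotient $W/G$ is a forest with at most one non-free vertex per component; but $W$ contains an edge from every orbit and $T/G$ is connected (being the quotient of a tree by a minimal action) with finitely many edges, so if $W \subsetneq T$ then $W/G$ is a proper spanning subforest of the finite connected graph $T/G$. One then needs to rule out that $W$ is a proper such subgraph.

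The key step is therefore to upgrade ``$W$ meets every edge orbit'' to ``$W = T$''. Here I would use that $W$ is not just $f$-invariant but contains all of $e$ and all of its iterates including the turns taken; more importantly I would iterate the argument with powers of $f$. Consider instead $W_i := $ the smallest $G$-invariant subgraph containing $e$ with $f^i(W_i) \subseteq W_i$ — but these are all equal to $W$ since $f(W) \subseteq W$ already. The real leverage is that $W$, being a $G$-invariant subgraph of the tree $T$ containing at least one edge from every orbit (hence, since every vertex of $T$ is an endpoint of some edge and $G$ acts transitively on edge-orbits appropriately, spanning all vertices), and closed under $f$, either equals $T$ or its complement $T \setminus W$ contains an edge; in the latter case the $G$-orbit of a component of $\overline{T \setminus W}$ together with what $f$ does to it must be controlled. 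Actually the cleanest route: if $W$ meets every edge orbit but $W \ne T$, pick an edge $a \notin W$; its orbit representative is, up to translation, in $W$ — contradiction, since $W$ is $G$-invariant. Hence $W$ meeting every orbit forces $W = T$ outright, and the only thing to prove is that $W$ meets every edge orbit.

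So the argument reduces to: $W$ (the $f$-closure of the $G$-orbit of $e$) meets every edge orbit of $T$. Suppose not. Then $W$ is a proper $G$-invariant $f$-invariant subgraph, so by irreducibility $W/G$ is a forest with $\le 1$ non-free vertex per component. Now I would derive a contradiction from the hypothesis $r \ge 2$ or $k + r \ge 3$ (equivalently ``any fundamental domain of $T$ has at least two edges'', which is among the standing assumptions), combined with the fact that $f$ is a quasi-isometry so $f^n(e)$ has length $\to \infty$: an $f$-invariant forest-quotient that is proper would force the edge $e$ to have all its iterates confined to a subforest, but legal iteration stretches $e$ across the whole tree. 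Concretely, I would use Lemma \ref{lem: edge_flows_diverge} / the uniform stretch factor $> 1$ on legal paths (from \cite[Lemma 1.11]{dahmani_li}): $f^n(e)$ is a legal path whose length grows without bound, so it cannot stay in a $G$-cofinite-complement subgraph that is a proper subforest after quotienting unless that subgraph already carries a loop in $T/G$ — but forests have none. The main obstacle I anticipate is making the last implication fully rigorous: translating ``the $f$-orbit of $e$ generates a proper subforest in the quotient'' into a genuine contradiction with full irreducibility requires carefully matching the definition (each component subtree of $W/G$ has at most one non-free vertex) against the presence of enough edges; one must check that $W$ being the $f$-closure of an edge cannot accidentally land inside such a degenerate subgraph, which is exactly the content the irreducibility hypothesis is designed to exclude once one knows $W$ misses some edge orbit. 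I would handle this by passing to a power: if $W \ne T$, then $f^N(e) \subseteq W$ for all $N$, and $W/G$ being a forest contradicts that some positive power $f^N$ sends $e$ across an edge whose orbit is not in $W$, which one gets because $f$ fully irreducible implies no proper invariant subgraph at all can absorb a growing legal segment.
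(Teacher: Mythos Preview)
Your approach is the paper's: form $W$ as the $G$-saturation of $\bigcup_n f^n(e)$, note it is $G$- and $f$-invariant, assume it misses the orbit of $e'$ so that it is proper, apply irreducibility to conclude $W/G$ is a forest with at most one non-free vertex per component, and contradict this using the unbounded length of $f^n(e)$. Your middle paragraph is a detour: once $W$ is $G$-invariant, ``$W$ meets every edge orbit'' and ``$W=T$'' are tautologically the same statement, so there is nothing to upgrade.

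The point you flag as the obstacle is real, and your sketch does not quite close it. Saying ``forests have no loops'' is not enough: a long reduced path in $T$ can in principle project to a long \emph{folded} path in a finite quotient graph. What you must use is precisely the clause \emph{at most one non-free vertex per component}. The projection of the reduced path $f^n(e)$ to $W/G$ can backtrack only at a non-free vertex (at a free vertex the stabiliser is trivial, so distinct edges at a lift remain distinct downstairs). With at most one such vertex $v$ in the relevant component, the projected path decomposes into reduced arcs in the tree $W/G$ separated by folds at $v$; each intermediate arc then runs from $v$ to $v$ and, being reduced in a tree, is trivial. Hence the projection has length bounded by twice the diameter of that component, contradicting $|f^n(e)|\to\infty$. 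This is exactly what the paper compresses into ``the quotient of each $f^n(e)$ is a segment with at most one non-free vertex; this is not possible as the lengths of $f^n(e)$ are arbitrarily long.'' Your proposed fix of ``passing to a power'' does nothing: $W$ is already $f^i$-invariant for every $i$, so no new information appears.
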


\begin{proof}
Assume that for some $e,e'$, for every $n$, the path $f^n(e)$ does not contains any $G$-translate of $e'$. Note that this means that no $G$-translate of $f^n(e)$ contains any $G$-translate of $e'$. Let $W$ be the union of all $G$-translates of all paths $f^n(e)$. Then $W$ is a proper subgraph which is both $f$-invariant and $G$-invariant. By the irreducibility of $f$, the quotient of $W$ by $G$ is a (bounded) forest with at most one non-free vertex in each component. Thus, the quotient of each $f^n(e)$ is a segment with at most one non-free vertex. This is not possible as the lengths of $f^n(e)$ are arbitrarily long. 
\end{proof}

\begin{lemma}\label{lem: periodic_dense}
For each $\epsilon >0$ and each closed subinterval $d$ of any edge $e$ in $T$ of length $\epsilon$, there exists a periodic point $x$ in $d$.
\end{lemma}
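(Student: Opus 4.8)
The plan is to show that for a suitable $N$ some $G$-translate of $d$ lies inside $f^N(d)$, and then to pin down a periodic point of $d$ by a Banach contraction argument. First note that $d$, being a subsegment of the single edge $e$, is a legal path; since a train track map sends each edge to a reduced path and legal turns to legal (in particular proper) turns, every forward image $f^n(d)$ is again a reduced path, hence an \emph{embedded} arc of $T$ (a reduced edge‑path in a tree is embedded), and $f^n$ restricts to an injective map on each such arc. By \cite[Lemma 1.11]{dahmani_li} there is a uniform $\mu>1$ by which $f$ stretches every legal path, so the length of $f^n(d)$ is at least $\mu^n\epsilon$. Because $T/G$ is finite, edge lengths in $T$ are bounded by some $M$, so for $N_1$ large enough $f^{N_1}(d)$ is an embedded arc of length $>2M$. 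Such an arc must pass through at least two vertices of $T$ — otherwise it would lie in the union of at most two edges — and the portion of the arc between two consecutive vertices it meets is a full edge $e_1$ of $T$; thus $e_1\subseteq f^{N_1}(d)$.

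Next I would apply \cref{lem: irreducibility} to the pair $(e_1,e)$: there are $N_2\ge 0$ and $g\in G$ with $f^{N_2}(e_1)\supseteq ge$. Since $f^{N_2}$ is injective on the legal arc $f^{N_1}(d)\supseteq e_1$, subarcs are carried to subarcs, so with $N:=N_1+N_2$ we obtain $f^{N}(d)\supseteq f^{N_2}(e_1)\supseteq ge\supseteq gd$.

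Finally set $h:=g^{-1}\circ f^{N}\colon T\to T$. Then $h(d)\supseteq d$; moreover, for $p,q\in d$ the segment $[p,q]$ lies in one edge and $f^N$ stretches this legal segment by at least $\mu^N$ while $g^{-1}$ is an isometry, so $d_T(h(p),h(q))\ge\mu^{N}d_T(p,q)$, and in particular $h|_d$ is injective. Put $J:=(h|_d)^{-1}(d)\subseteq d$, which is nonempty since $h(d)\supseteq d$; then $h|_J\colon J\to d$ is a bijection whose inverse $\psi\colon d\to J\subseteq d$ satisfies $d_T(\psi(p),\psi(q))\le\mu^{-N}d_T(p,q)$, i.e.\ $\psi$ is a contraction of the complete metric space $d$. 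By the Banach fixed point theorem $\psi$ has a fixed point $x\in J\subseteq d$, so $h(x)=x$, that is $f^N(x)=gx$; hence $x$ is a periodic point lying in $d$.

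The step I expect to require the most care is the combinatorial claim that a long embedded arc of $T$ contains a full edge, together with the bookkeeping that every $f^n$ genuinely restricts to an injection on legal arcs (so that subarcs go to subarcs and the translate of $d$ inside $e_1$ really survives into $f^N(d)$); this is exactly where the tree structure and the train track hypotheses are used. The uniform expansion $\mu>1$ and the Banach argument are then routine.
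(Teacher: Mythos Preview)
Your proof is correct and follows essentially the same route as the paper: use the uniform expansion factor to make $f^{N_1}(d)$ swallow a full edge, invoke \cref{lem: irreducibility} to get $f^{N}(d)\supseteq gd$, and then extract a fixed point. The only cosmetic difference is that the paper appeals to Brouwer's fixed point theorem while you run a Banach contraction on the inverse branch $\psi$; since the map is an expanding homeomorphism of an interval onto an overinterval, either argument (or even the intermediate value theorem) works equally well.
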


\begin{proof}
Lemma 1.11 of \cite{dahmani_li} ensures that there exists a growth factor $\lambda > 1$ such that every subsegment of every edge of $T$ expands under $f$ by $\lambda$. Thus, given $d \subset e$, there exists $n$ such that $f^n(d)$ contains an edge. The result then follows from \cref{lem: irreducibility}, by applying Brouwer's fixed point theorem.
\end{proof}

Let us now observe the following useful facts about periodic points. Given a periodic point $x$ of period $n$, the preimage $f^{-n}(x)$ contains a $G$-translate of $x$ (\cref{lem: periodic flow line}(1)). One can then define a \emph{periodic flow line} through $x$ as the direct limit of forward flow rays from the $G$-translates of $x$ at the various $f^{-nk}(x)$. We will show below in \cref{lem: periodic flow line}(2) that a periodic flow line is really \emph{periodic}, i.e., there is an infinite subgroup of $\Gamma$ that stabilises the line and acts cocompactly on the line.

\begin{lemma}\label{lem: periodic flow line}
Let $x \in T$ be a periodic point of period $n$. Then \begin{enumerate}
    \item there exists $g' \in G$ such that $g'x \in f^{-n}(x)$, and
    \item the periodic flow line through $x$ is periodic. {In particular, principal flow lines are periodic and stabilised by suspensions of the relevant free factors of $G$.}
\end{enumerate}
\end{lemma}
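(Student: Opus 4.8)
\textbf{Part (1).} The plan is to locate a singular vertex on the segment between $x$ and one of its preimages and exploit uniqueness of singular preimages. Since $x$ is periodic of period $n$, we have $f^n(x) = gx$ for some $g \in G$. Apply $f^{-n}$: the set $f^{-n}(gx) = f^{-n}(f^n(x))$ contains $x$, hence $f^{-n}(x)$ contains $g^{-1}x$ after translating by $g^{-1}$; more carefully, $f^n(g^{-1} \cdot (gx)) = \phi^n(g^{-1}) f^n(gx)$, so one should track the equivariance $f^n(hy) = \phi^n(h) f^n(y)$ and choose $g' \in G$ with $f^n(g'x) = x$, i.e. $\phi^n(g') gx = x$, so $g' = \phi^{-n}(g^{-1})$ works and $g'x \in f^{-n}(x)$. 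The only subtlety is that this is a genuine preimage point of $x$ (not merely something mapping into the orbit), which is immediate from the computation. So Part (1) is essentially a bookkeeping exercise with the equivariance relation.

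\textbf{Part (2).} Here the plan is to produce an explicit loxodromic element of $\Gamma = G \rtimes_\phi \langle t\rangle$ stabilising the periodic flow line and acting cocompactly on it. Using Part (1), set $g' \in G$ with $g'x \in f^{-n}(x)$, so that $\gamma := t^{-n}(g')^{-1}$ (or an appropriately chosen variant $g'' t^n$, to be pinned down so that it carries $x_i$ to the next copy $g'x_{i-n}$ inside the backward-flow picture) maps $x$ to its chosen preimage $g'x$ one level down, and more generally shifts the defining forward-flow rays of the periodic flow line onto one another. I would verify, exactly as in the proof of \cref{prop: periodic_flowrays}, that this $\gamma$ is loxodromic on $\wt{X}^1$ because $d_{\wt X^1}(\gamma^r y, y) \geq rn$ for any $y \in T_0$ (the copies $T_0$ and $T_{rn}$ are at distance $\geq rn$). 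Then $\langle \gamma \rangle$ is infinite cyclic, stabilises the periodic flow line $\ell$ through $x$ (since $\gamma$ sends each defining forward-flow ray to the next one in the direct limit, by construction and equivariance of $f$), and acts on $\ell$ with the fundamental domain being the portion of $\ell$ between level $0$ and level $n$, which is compact — giving cocompactness.

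\textbf{Main obstacle.} The routine part is the loxodromic/cocompactness verification, which mirrors \cref{prop: periodic_flowrays} almost verbatim. The point requiring genuine care is \textbf{well-definedness of the periodic flow line} and the compatibility of the successive translates: one must check that the $G$-translate $g'x \in f^{-n}(x)$ is \emph{canonical} enough that the direct limit of forward flow rays from the iterated preimages $g^{(k)}x \in f^{-nk}(x)$ is unambiguous, and that the generator $\gamma$ indeed shifts this whole coherent system by one step. Concretely, at each level one chooses a translate of $x$ in $f^{-n}(\text{previous choice})$, and one must argue (using that $f$ has constant speed on edges and that legal images stretch, so the forward ray of the chosen preimage really does contain the previous ray as an initial segment after rescaling levels) that these choices assemble into a single bi-infinite line invariant under $\langle\gamma\rangle$. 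Unlike the principal flow line case, there is no singular-vertex uniqueness to appeal to, so the coherence must instead be extracted from minimality of the period $n$ together with \cref{prop: periodic_flowrays} (translates of a periodic point have divergent flow rays), which forces the choice of preimage in the relevant orbit to be unique up to the stabiliser and hence makes $\ell$ well defined.
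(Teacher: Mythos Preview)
Your Part (1) is correct and matches the paper exactly: the element $g' = \phi^{-n}(g^{-1})$ does the job by straightforward equivariance. (Your opening sentence about locating a singular vertex is a false start that you correctly abandon.)

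For Part (2) your overall strategy --- exhibit an element of $\Gamma$ of the form $g''t^n$ that shifts the line by one period --- is the same as the paper's, but the paper is more direct and your ``main obstacle'' is a red herring. The paper takes the specific element $t^ng$ and simply \emph{computes} that $(t^ng)(\sigma_n(x))$ is the forward flow segment from $f^n(x) \in T_n$ to $f^{2n}(x) \in T_{2n}$, using the relations $gt^n = t^n\phi^n(g)$ and $\phi^n(g)f^n(x) = f^{2n}(x)$; similarly $(t^ng)^{-1}(\sigma_n(x))$ is the flow segment ending at $x$ and starting at $t^{-n}g'x$. Hence the periodic flow line is literally $\bigcup_{k\in\bbZ}(t^ng)^k(\sigma_n(x))$, and periodicity is immediate. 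No loxodromicity argument is needed, and no appeal to \cref{prop: periodic_flowrays} or to canonicity of the preimage choice is required: the explicit tiling by translates of $\sigma_n(x)$ \emph{is} the well-definedness. Your proposed route through divergence of flow rays to force uniqueness of the $G$-translate in $f^{-n}(x)$ would work too, but it is more circuitous than the paper's two-line verification.
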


\begin{proof}
\begin{enumerate}
\item Let $g \in G$ be such that $f^n(x) = gx$. Observe that, by equivariance of $f$, for $g' = \phi^{-n}(g^{-1})$, $g'x \in f^{-n}(x)$. Indeed, $f^n(\phi^{-n}(g^{-1})x) = \phi^n(\phi^{-n}(g^{-1}))f^n(x) = g^{-1}f^n(x) = x$.
\item Consider the flow segment $\sigma_n(x)$ from $x \in T_0$ to $f_{n-1}\circ\cdots\circ f_0(x) \in T_n$. Recall that the element $t$ acts by shifting indices, and therefore, $f_{n-1}\circ f_{n-2}\circ\cdots\circ f_0(x) = t^n f^n(x)$. This implies that $t^ng(\sigma_n(x))$ is the flow segment from $t^ngx \in T_n$ to $t^ngt^nf^n(x) \in T_{2n}$. Using the facts that $gt^n = t^n\phi^n(g)$ and $gx = f^n(x)$, we conclude that $t^ng(\sigma_n(x))$ is the flow segment from $t^nf^n(x)$ to $t^{2n}\phi^n(g)f^n(x)$. Since $\phi^n(g)f^n(x) = f^n(gx) = f^{2n}(x)$, we have that the concatenation $\sigma_n(x)\cdot t^ng(\sigma_n(x)) = \sigma_{2n}(x)$. It is also easy to check that the flow segment of length $n$ from $t^{-n}g'(x)$ is equal to $(t^ng)^{-1}(\sigma_n(x))$. Thus the periodic flow line through $x$ is the union over all $k \in \mathbb{Z}$ of $(t^ng)^k(\sigma_n(x))$. \qedhere
\end{enumerate}
\end{proof}

\begin{lemma} \label{lem;from_one_to_both_directions} If a periodic flow line is asymptotic to another periodic flow line in one direction, they are asymptotic in both directions. 
\end{lemma}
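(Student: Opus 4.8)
The plan is to produce a single loxodromic element of $\Gamma$ stabilising both lines. Since periodic flow lines meet every $T_j$ and, by \cref{lem: periodic flow line}, their points are again periodic, we may replace $\ell_1,\ell_2$ by the periodic flow lines through $\ell_i\cap T_0$; write $x_0:=\ell_1\cap T_0$, $y_0:=\ell_2\cap T_0$, and suppose the asymptoticity is in the direction of increasing level, so that the forward halves of $\ell_1,\ell_2$ are precisely the forward flow rays $\sigma(x_0),\sigma(y_0)$, which lie within Hausdorff distance $C$ of each other in $\wt X^1$ for some $C$. Suppose we have found $\rho\in\Gamma$, loxodromic on $\wt X^1$, with $\rho\ell_1=\ell_1$ and $\rho\ell_2=\ell_2$. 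Then $\ell_1$ and $\ell_2$ are $\rho$--invariant quasigeodesic lines in the $\delta$--hyperbolic space $\wt X^1$ (\cref{thm: hyperbolicity_graph_and_cone-off}), hence both are quasi-axes of $\rho$; therefore they share both endpoints $\rho^{\pm}\in\partial\wt X^1$ and are asymptotic in both directions. So it remains to build such a $\rho$.

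First I would dispose of principal flow lines. If $x_0$ and $y_0$ are both singular, then $\ell_1,\ell_2$ are principal flow lines and the claim is \cref{lem: principal flow lines diverge}. If exactly one of them, say $y_0$, is singular, then $x_0$ is non-singular and, for any nontrivial $h\in\mathrm{Stab}(y_0)$, the flow ray $\sigma(hx_0)=h\,\sigma(x_0)$ fellow-travels $h\,\sigma(y_0)=\sigma(y_0)$ (forward flow is $\Gamma$--equivariant and $h$ fixes $\sigma(y_0)$ pointwise), hence fellow-travels $\sigma(x_0)$; as $x_0$ is a non-singular periodic point and $hx_0\ne x_0$, this contradicts \cref{prop: periodic_flowrays}. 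Thus I may assume $x_0,y_0$ are non-singular, so $\ell_1,\ell_2$ are not principal.

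To construct $\rho$, let $M$ be a common multiple of the periods of $x_0,y_0$. For each $k\ge 0$ the point $\ell_1\cap T_{Mk}$ is a $\Gamma$--translate $\alpha_k\cdot x_0$, with $\alpha_k$ of level-shift $Mk$, and similarly $\ell_2\cap T_{Mk}=\beta_k\cdot y_0$; asymptoticity gives $d_{\wt X^1}(\alpha_kx_0,\beta_ky_0)\le C$, so $\delta_k:=\alpha_k^{-1}\beta_k$ has level-shift $0$, i.e. $\delta_k\in G$, and $d_{\wt X^1}(x_0,\delta_ky_0)\le C$. The crux is that the points $\delta_ky_0$ take only finitely many values. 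Along the segment of $T_{Mk}$ joining $\ell_1\cap T_{Mk}$ to $\ell_2\cap T_{Mk}$, the angle at each interior singular vertex is at most the constant $\theta_0$ furnished by \cref{prop: flowrays_fellow_travel_or_diverge}: otherwise the forward flow rays issuing from the two incident edges would diverge, whereas (using quasiconvexity of forward ladders, \cref{prop: quasiconvexity_fwd_ladder}) they fellow-travel $\sigma(x_0)$, hence each other, forever. Translating by $\alpha_k^{-1}$, the point $\delta_ky_0$ lies within distance $C$ of $x_0$ and is reached from $x_0$ by a path all of whose turns at singular vertices have angle $\le\theta_0$; since only finitely many edges subtend a bounded angle at a given vertex, only finitely many points are reachable this way. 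By pigeonhole $\delta_ky_0=\delta_{k'}y_0$ for some $k<k'$, whence $\delta_k=\delta_{k'}$ (as $y_0$ is non-singular), whence $\alpha_{k'}\alpha_k^{-1}=\beta_{k'}\beta_k^{-1}=:\rho$. This $\rho$ has level-shift $M(k'-k)>0$, hence is loxodromic on $\wt X^1$; and since forward flow is $\Gamma$--equivariant, $\rho\ell_1$ is a periodic flow line containing the forward subray $\sigma(\ell_1\cap T_{Mk'})$ of $\ell_1$, so $\rho\ell_1=\ell_1$ (a periodic flow line is determined by any of its forward subrays), and symmetrically $\rho\ell_2=\ell_2$.

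The main obstacle is the finiteness assertion: in the non-locally-finite trees $T_j$ bounded balls are infinite, and it is exactly the angle estimate of \cref{prop: flowrays_fellow_travel_or_diverge} together with the finiteness of the set of edges subtending a bounded angle at a vertex that restores enough rigidity to run the pigeonhole. Making the angle bound uniform along the whole segment — in particular handling singular vertices lying within the relevant $\epsilon$ of an endpoint, where \cref{prop: flowrays_fellow_travel_or_diverge} does not apply verbatim, and checking that forward flow rays from interior points of the segment do fellow-travel $\sigma(x_0)$ — is where the real work lies. (Alternatively, granting that the action of $\Gamma$ on the cone-off of $\wt X^1$ over the principal flow lines is acylindrical, this would follow from the standard fact that two loxodromic elements of an acylindrically hyperbolic group sharing one fixed point at infinity share the other.)
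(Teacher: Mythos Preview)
Your overall strategy---constructing a single loxodromic $\rho\in\Gamma$ stabilising both lines, so that each becomes a quasi-axis of $\rho$ and they share both endpoints---is sound and takes a genuinely different route from the paper. The paper argues the contrapositive via a commutator: assuming the two lines share one endpoint but not the other, the commutator $[\gamma',\gamma'']$ of their stabilising loxodromics lies in $G$, has bounded displacement along the common end, and by atoroidality must be elliptic; its fixed singular vertices then pin both lines to a common principal flow line, forcing $\gamma',\gamma''$ into the same maximal parabolic, a contradiction. Your approach avoids atoroidality and the parabolic structure entirely, at the cost of a finiteness step in the non-locally-finite tree.

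That finiteness step, however, has a real gap as written. Your justification for the angle bound asserts that the forward flow rays from points on the two edges incident to an interior singular vertex $v$ of $[x_0,\delta_ky_0]_{T_0}$ fellow-travel $\sigma(x_0)$; but the subsegment $[x_0,v]$ may itself contain illegal turns, so forward rays issued near $v$ need not track $\sigma(x_0)$ at all, and \cref{prop: flowrays_fellow_travel_or_diverge} gives no contradiction. Fortunately the entire detour through angles is unnecessary: you have already established that $\sigma(\delta_ky_0)=\alpha_k^{-1}\sigma(\beta_ky_0)$ fellow-travels $\alpha_k^{-1}\sigma(\alpha_kx_0)=\sigma(x_0)$, so any two $\sigma(\delta_ky_0)$, $\sigma(\delta_{k'}y_0)$ fellow-travel each other. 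Since the $\delta_ky_0$ are $G$-translates of the non-singular periodic point $y_0$, \cref{prop: periodic_flowrays}---which you already invoked in the mixed case---forces $\delta_ky_0=\delta_{k'}y_0$ outright, whence $\delta_k=\delta_{k'}$ and the common $\rho$ drops out with no pigeonhole or angle estimate at all. One caveat: this shortcut, like your original argument, is written only for asymptoticity in the direction of increasing level; in the backward case the translated forward rays fellow-travel $\sigma(x_0)$ only for a finite stretch, so a separate argument (or the acylindricity remark you make at the end) is still needed there.
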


\begin{proof} We will prove the contrapositive. Assume $\Lambda', \Lambda''$ are periodic flow lines that are non-asymptotic in the forward  direction {(the other case is similar).} Since they are periodic, there are elements $\gamma', \gamma''$ in the group $\Gamma$ {(\cref{lem: periodic flow line}(2))} that respectively fix the endpoints of $\Lambda'$ and $\Lambda''$ in the boundary $\partial \wt X^1$ of the hyperbolic space $\wt X^1$.  In particular,    $\gamma', \gamma''$ cannot be elements of the same parabolic subgroup of $\Gamma$. Assume that the backward directions of $\Lambda', \Lambda''$ converge to the same point of $\partial \wt X^1$. 
It would follow that the commutator  
$[\gamma',\gamma'']$ is in $G$ and has small displacement on infinitely many $T_{-i}, i \in \bbN$,  realised near the ray $\Lambda'$.   Since $\gamma'$ and $\gamma''$ have different axes, they do not commute.  However, by hyperbolicity of the automorphism,   $[\gamma',\gamma'']$  must be elliptic, hence it is  in a unique conjugate of a free factor of the free product $G$. It hence fixes non-free vertices in each $T_{-i}$, at bounded distance from $\Lambda'$, and all in the same principal flow line. It then follows that both $\Lambda'$ and $\Lambda''$ are asymptotic to the same principal flow line, and therefore that the elements $\gamma', \gamma''$ preserving them fix a point fixed by a parabolic group, thus ensuring that they are in the same parabolic subgroup, contradicting what we previously had.  \end{proof}

Given two flow lines that intersect the tree $T_0$ in $x, y$, we say that the two flow lines are separated by the segment $[x,y]$ in $T_0$. The next statement says that a periodic flow line either diverges from every principal flow line, or it is asymptotic to a principal flow line which is Hausdorff-close to it in terms of both distance and angle.

\begin{lemma}\label{lem;only_finitely_many_traps}
  {There exist constants $\delta_0, \theta_0$ such that the following holds: 
  Let $\Lambda$ be a periodic flow line in $\wt X$ and $x$ be its intersection with $T_0$.  
  If a principal flow line $\Lambda'$, with intersection $y$ at $T_0$, is asymptotic to $\Lambda$, then $d_{T_0}(x,y)\leq \delta_0$.
 Further, there exists a principal flow line $\Lambda''$ asymptotic to $\Lambda, \Lambda'$, such that if $z \in \Lambda''\cap T_0$, then {for every vertex $v$ in the interior of the segment $[x,z]_{T_0}$} the angle {subtended by $[x,z]_{T_0}$ at $v$} 
 is less than $\theta_0$.}
\end{lemma}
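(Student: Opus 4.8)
The plan is to reduce both assertions to a statement about the single segment $[x,y]_{T_0}$, and then to exploit the periodicity of $\Lambda$ together with the expansion of legal paths. First, observe that $\Lambda'$, being a principal flow line, is in particular a periodic flow line (it is the periodic flow line of a singular vertex, which is a periodic point of period one), so \cref{lem;from_one_to_both_directions} applies to the pair $\Lambda,\Lambda'$: they are asymptotic in both directions, hence remain at finite Hausdorff distance in the hyperbolic space $\wt X^1$. Next, any principal flow line asymptotic to $\Lambda'$ must, by \cref{lem: principal flow lines diverge}, coincide with $\Lambda'$; thus the flow line $\Lambda''$ sought in the second assertion is forced to be $\Lambda'$ itself, with $z=y$, and the second assertion becomes the claim that the maximal angle of $[x,y]_{T_0}$ in $T_0$ is bounded by a uniform $\theta_0$.

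For the distance bound I would first produce a group element implementing the periodicity. By \cref{lem: periodic flow line} there is $\gamma\in\Gamma$ with positive level-shift $n$ that preserves $\Lambda$ and sends $x=\Lambda\cap T_0$ to $\Lambda\cap T_n$ (the forward-flow image $f^n(x)$). Since $\gamma\Lambda'$ is again a principal flow line, and lies at finite Hausdorff distance from $\gamma\Lambda=\Lambda$, hence from $\Lambda'$, \cref{lem: principal flow lines diverge} forces $\gamma\Lambda'=\Lambda'$; so $\gamma$ lies in the maximal parabolic $\mathrm{Stab}(\Lambda')$ and carries $y=\Lambda'\cap T_0$ to $\Lambda'\cap T_n=f^n(y)$. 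As $\gamma$ restricts to an isometry $T_0\to T_n$, this yields $d_{T_0}(x,y)=d_{T_n}(f^n(x),f^n(y))$, and iterating $\gamma$ and $\gamma^{-1}$ gives $d_{T_0}(x,y)=d_{T_{jn}}(f^{jn}(x),f^{jn}(y))$ for every $j\in\mathbb{Z}$. Equivalently, the reduced path $[f^{jn}(x),f^{jn}(y)]_{T_{jn}}$, which is the reduction of $f^{jn}$ applied to $[x,y]_{T_0}$, has length independent of $j$.

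Now I would combine this constancy with the expansion of legal paths. Any sub-segment of $[x,y]_{T_0}$ contained in a single edge is legal (edges map to reduced legal paths under the train-track map $f$, and legal paths map to legal paths), so by \cite[Lemma 1.11]{dahmani_li} applying $f^{jn}$ multiplies its length by $\mu^{jn}$ for a uniform $\mu>1$; moreover, using bounded cancellation for the iterates of $f$, the $f^{jn}$-image of such a sub-segment survives the reduction of $f^{jn}([x,y]_{T_0})$ up to a loss negligible compared with $\mu^{jn}$. Hence if $[x,y]_{T_0}$ had positive length it would contain such a sub-segment, and $d_{T_{jn}}(f^{jn}(x),f^{jn}(y))$ would be unbounded in $j$, contradicting the constancy just established. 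Therefore $[x,y]_{T_0}$ has length zero, i.e. $x=y$ (and $\Lambda=\Lambda'$): the distance bound holds with any $\delta_0\geq 0$, and the segment being a point gives the angle bound with any $\theta_0>0$. (If one prefers to argue without this strongest form, uniformity of $\delta_0,\theta_0$ is still available directly from the uniform divergence estimate of \cref{lem: edge_flows_diverge}, the hyperbolicity constant $\delta$ of \cref{thm: hyperbolicity_graph_and_cone-off}, the quasiconvexity constant of forward ladders in \cref{prop: quasiconvexity_fwd_ladder}, and the finitely many $\Gamma$-orbits of edges and of principal flow lines.)

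The step I expect to be the genuine obstacle is the control of cancellation: one must know that a legal sub-segment of $[x,y]_{T_0}$ is not entirely consumed by cancellation with its neighbours after iterating $f$, i.e. that the bounded-cancellation constant for $f^{jn}$ grows sub-exponentially in $j$. This is exactly where the train-track hypothesis on $f$, and the analysis of illegal turns and their images in the non-locally-finite tree $T$, really enter, in the same spirit as \cref{lem: edge_flows_diverge} and \cref{prop: flowrays_fellow_travel_or_diverge}. A secondary point needing care is the backward direction, handled by running the same argument with $\gamma^{-1}$ and using that $\Lambda,\Lambda'$ are asymptotic in both directions. (An alternative, more hands-on route to both bounds: if $d_{T_0}(x,y)$ or the angle at some singular vertex on $[x,y]_{T_0}$ were large, then $[x,y]_{T_0}$ would contain a full edge, respectively a turn of large angle at a singular vertex, and \cref{lem: edge_flows_diverge}, respectively the divergence alternative of \cref{prop: flowrays_fellow_travel_or_diverge}, would force the forward flows from the two sides to diverge; the difficulty there is to propagate this to divergence of $\sigma(x)$ and $\sigma(y)$ themselves, contradicting asymptoticity — which again uses quasiconvexity of forward ladders and hyperbolicity of $\wt X^1$.)
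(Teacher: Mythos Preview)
Your approach is genuinely different from the paper's, and your target conclusion $x=y$ (hence $\Lambda=\Lambda'$) is in fact correct and stronger than what the lemma states. However, the argument you give for it has the gap you yourself flag: you need the legal sub-edge of $[x,y]_{T_0}$ to survive reduction after applying $f^{jn}$, and for this you invoke ``bounded cancellation with sub-exponential constants'' without justification. In the free-product setting there is no such off-the-shelf statement, and the cancellation at an illegal turn can in principle eat an amount comparable to the full expanded length; so as written the step from constancy of $d_{T_{jn}}(f^{jn}(x),f^{jn}(y))$ to $x=y$ is not complete. Your two fallback sketches (direct uniformity from \cref{lem: edge_flows_diverge}, \cref{prop: flowrays_fellow_travel_or_diverge}, etc.) point in reasonable directions but are not carried out either.

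There is a much shorter way to close your gap. You already showed that the periodicity element $\gamma$ preserves $\Lambda'$; by the same token, any nontrivial $h\in\mathrm{Stab}_G(y)$ preserves $\Lambda'$ and sends $\Lambda$ to $h\Lambda$, which is therefore again asymptotic to $\Lambda'$ and hence to $\Lambda$. But the forward rays of $\Lambda$ and $h\Lambda$ are the flow rays from $x$ and $hx$, and these diverge by \cref{lem: divergence_flowrays_elliptic} (as $h$ is elliptic, torsion-free, and does not fix $x$ when $x\neq y$). This contradiction forces $x=y$ with no cancellation analysis.

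For comparison, the paper does not aim for $x=y$ at all. For $\delta_0$ it takes a $\wt X^1$-geodesic from $x$ to $y$ (length $\le\delta$ by asymptoticity), records its at most $\delta$ vertical edges lying in trees $T_k$ with $|k|\le\delta$, and pulls each back to $T_0$ using the Lipschitz constant $K$ of $f$, yielding $d_{T_0}(x,y)\le\delta K^{\delta}$. For $\theta_0$ it observes that illegal turns occur only at uniformly bounded angles; hence a turn of angle $\ge\theta_0$ on $[x,y]_{T_0}$ at a singular vertex $v$ is legal, so $f^n(v)$ stays between $f^n(x)$ and $f^n(y)$ for all $n$, forcing $\Lambda_v$ to be asymptotic to $\Lambda$. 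Taking the first such $v$ from $x$ gives $\Lambda''=\Lambda_v$ with $[x,v]$ of small angles. (Your observation that \cref{lem: principal flow lines diverge} then forces $\Lambda_v=\Lambda'$, hence $v=y$, is correct and sharpens the paper's conclusion.)
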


\begin{proof}
Consider a geodesic $[x,y]_{\wt X^{1}}$ in $\wt X^1$. By asymptoticity of the bi-infinite lines $\Lambda$ and $\Lambda'$ in the hyperbolic space $\wt X^1$,  its length is at most $\delta$.  Denote by $e_1, \dots, e_r$ its consecutive vertical edges, let $T_{k_i}$ be the tree containing $e_i$, and write $e_i= (v_i^{k_i}, w_i^{k_i})$. Observe that $r\leq \delta$ and $|k_i| \leq \delta$ for all $i$.
     
Since $e_1$ is the first vertical edge, $x\in f^{-k_1}(v_1^{k_1} )$, and similarly, $y\in f^{-k_r} (w_r^{k_r})$.
Take, for all $i<r$, $x_i \in f^{-k_i} (w_i^{k_i})$. Such a point $x_i$ is in $T_0$ and is in $f^{-k_{i+1}} (v_{i+1}^{k_{i+1}})$. Therefore, for all $i<r$, $d_{T_0} (x_i, x_{i+1}) \leq K^{|k_{i+1}|}$, where $K$ is the Lipschitz constant of $f$. Since for all $i$, $|k_{i+1}|\leq \delta$, and $r\leq \delta$, it follows that $d_{T_0}(x,y) \leq \delta K^\delta$.

 {For the second part of the statement, we first observe: By \cref{lem: divergence_flowrays_elliptic}, for any edge $e$ incident to a vertex $w$, and any $g \in \mathrm{stab}(w)$, $e$ and $ge$ form a legal turn and so their iterates under $f$ subtend a positive angle at all $f^n(w)$. Since there are finitely many orbits of edges incident to $w$, there are only finitely many edges incident to $w$ that make an illegal turn with $e$. Since there are finitely many orbits of vertices, there exists a maximal angle, which we call $\theta_0-1$, such that any illegal turn between any pair of edges at any vertex subtends an angle of at most $\theta_0-1$.}
 
{Assume now that there exists $v\in [x,y]_{T_0}$ such that $Ang_v [x,y]_{T_0} \geq \theta_0$. We choose $v$ to be closest to $x$ with this property. Note that, after changing $\theta_0$ if necessary, $v$ is a singular vertex.} Let $\Lambda_v$  be the associated principal flow line.

For all $n \in \bbN$, let us denote by $x_{-n}$ the point $f^{-n} (x) \cap \Lambda$ and by $y_{-n}$ the point $f^{-n}(y) \cap \Lambda'$. Let us also denote: $x_n = f^n(x), y_n = f^n(y), v_n = f^n(v) $ and $v_{-n}$ the point $f^{-n} (v) \cap \Lambda_v$. 

Since $\theta_0 -1$ is the maximal angle at an illegal turn,
for all $n\in \bbN$,  $Ang_{v_n} [x_n, y_n]_{T_n} \geq 1$.
In particular, 
$v_n$ is between $x_n, y_n$ in $T_n$ for all $n \in \bbN$, and so the principal flow line $\Lambda_v$ is asymptotic to $\Lambda$ in one and hence both directions. 
\end{proof} 

\begin{lemma}\label{lem: periodic_divergence_principal_lines}
   For each $\epsilon > 0$ and each closed subinterval $d$ of any edge $e$ in $T$ of length $\epsilon$, there exists a periodic point $x$ in $d$ such that the periodic flow line through $x$ diverges from every principal flow line in both the forward and backward directions.
\end{lemma}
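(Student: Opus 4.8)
The plan is to show that only finitely many periodic points of $d$ can have a periodic flow line asymptotic to a principal flow line, and then to produce the desired point using the density of periodic points (\cref{lem: periodic_dense}) in the complement of this finite set.

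First I would fix notation for the obstructions. Call a periodic point $x \in d$ \emph{trapped} if the periodic flow line $\Lambda$ through it is asymptotic to some principal flow line; recall that $\Lambda$ meets $T_0$ only at $x$ and that the forward portion of $\Lambda$ is the flow ray $\sigma(x)$ (this is exactly the content of the proof of \cref{lem: periodic flow line}(2)). If $x$ is trapped, then by \cref{lem;only_finitely_many_traps} — whose proof shows that asymptoticity in one direction forces it in both — there is a principal flow line $\Lambda''$, asymptotic to $\Lambda$ in both directions, whose intersection $z$ with $T_0$ is a singular vertex with $d_{T_0}(x,z)\le \delta_0$ and such that every turn of the geodesic $[x,z]_{T_0}$ subtends angle less than $\theta_0$.

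The next step is to see that there are only finitely many such vertices $z$. A geodesic as above starts at a point of $d \subset e$, runs along $e$ to one of the two endpoints of $e$, and then continues as a reduced edge-path all of whose turns subtend angle $<\theta_0$. Since there are finitely many $G$-orbits of edges, edge lengths are bounded below, so the path crosses a bounded number of edges; and since only finitely many edges make a given angle with a given edge at a vertex, there are only finitely many such edge-paths, hence finitely many possible $z$. Write $Z$ for this finite set, and for $z \in Z$ let $\Lambda_z$ be the principal flow line through $z$. Now for each $z \in Z$ there is at most one periodic point $x\in d$ whose periodic flow line is forward-asymptotic to $\Lambda_z$: two distinct such points $x_1\ne x_2$ would have $\sigma(x_1)$ and $\sigma(x_2)$ both fellow-travelling the forward ray of $\Lambda_z$, hence fellow-travelling each other, contradicting \cref{lem: edge_flows_diverge} applied with $\epsilon = d(x_1,x_2)>0$, since $x_1,x_2$ lie on the common vertical edge $e$. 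Since every trapped point is forward-asymptotic to some $\Lambda_z$ with $z\in Z$, the set of trapped points in $d$ is finite. Consequently $d$ contains a closed subinterval $d'$ of positive length, contained in the interior of $e$ and disjoint from the trapped set; by \cref{lem: periodic_dense}, $d'$ contains a periodic point $x$, which is not trapped, so its periodic flow line is asymptotic to no principal flow line, i.e.\ it diverges from every principal flow line in both the forward and backward directions.

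The main obstacle is the finiteness of $Z$: because $T$ is not locally finite, the distance bound $d_{T_0}(x,z)\le\delta_0$ by itself would allow infinitely many candidate singular vertices, so one genuinely needs the angular control supplied by \cref{lem;only_finitely_many_traps} together with the lower bound on edge lengths and the local angular finiteness of $T$. Everything else is bookkeeping on top of \cref{lem;only_finitely_many_traps}, \cref{lem: edge_flows_diverge}, and \cref{lem: periodic_dense}.
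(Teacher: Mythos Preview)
Your proof is correct and follows essentially the same strategy as the paper: both arguments use \cref{lem;only_finitely_many_traps} to reduce to a finite set $Z$ of candidate singular vertices, then use \cref{lem: edge_flows_diverge} to show that each $\Lambda_z$ can trap at most one periodic point of $d$, and finally invoke \cref{lem: periodic_dense} to produce an untrapped periodic point. The only cosmetic differences are that the paper first shrinks $d$ so that periodic flow lines miss all vertices, and then runs a pigeonhole argument with $2N+1$ explicit periodic points rather than passing to a subinterval disjoint from the (finite) trapped set; neither difference affects the logic. One small remark: the distance bound $d_{T_0}(x,z)\le\delta_0$ you use for $z$ is not literally stated in \cref{lem;only_finitely_many_traps} (only the angle bound is), but it follows immediately by applying the first clause of that lemma with $\Lambda'=\Lambda''$.
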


\begin{proof}
There are finitely many orbits of vertices, hence finitely many orbits of periodic lines containing a vertex. Let $n_0$ be the maximum of their period. If a subinterval $d$ of $e$ is given, one may take a sufficiently small subinterval $d'$ such that its images by $f^k, k=1, \dots n_0$  do not meet any vertex. This, together with the periodicity of periodic points guarantees that any periodic point in this subinterval will have a periodic flow line that misses all vertices. 

There are only finitely many vertices in the tree $T_0$ that are at distance $\leq \delta K^\delta+1$ from $d$ with a path that makes no angle greater than $\theta_1$. Let $N$ be  this number.   This produces $N$ principal flow lines. 

Consider $2N+1$ periodic points in the subinterval $d'$. Since the interval between any two of them is a legal path, their forward flow rays diverge from each other. Therefore, at least one of these  periodic points, say $x$,  belongs to a periodic flow line 
that  diverges from the $N$ principal flow lines in both directions. By \cref{lem;only_finitely_many_traps}, we know that it diverges from all principal flow lines. 
\end{proof}

\section{Relative cubulation}\label{sec: rel_cubul}
In this section, we will recall the notion of relative cubulation and the boundary criterion for relative cubulation. We will construct walls in \cref{sec: walls} for the flow space $\wt X$ and show in \cref{sec: wall_saturations_cut} and \cref{sec: separation_bowditch} that the stabilisers in $\Gamma = G \rtimes_{\phi} \mathbb{Z}$ of ``wall saturations'' satisfy the hypotheses required for the boundary criterion to hold.

\begin{defn}[\cite{relative_cubulation}]\label{defn: rel_cubulation}
Let $\Gamma$ be hyperbolic relative to a collection of subgroups $\mathscr{P}$. Then $(\Gamma, \mathscr{P})$ is \emph{relatively cubulated} if there exists a $\cat$ cube complex $C$ such that
\begin{enumerate}
\item there is a cubical, cocompact action of $\Gamma$ on $C$,
\item each element of $\mathscr{P}$ is elliptic, and
\item  the stabiliser of any cube of $C$ is either finite, or conjugate to a finite index subgroup of an element of $\mathscr{P}$.
\end{enumerate}
We call such an action of $\Gamma$ on $C$ a \emph{relatively geometric} action.
\end{defn}

Let us recall a few definitions before stating the boundary criterion for relative cubulation. A subgroup $H \leq \Gamma$ is \emph{relatively quasiconvex} (with respect to $\scrP$) if there exists a constant $K$ such that given $h_1,h_2 \in H$, any geodesic path $\gamma$ between $h_1$ and $h_2$ in the cone-off $\widehat{\Gamma}$ (with respect to $\scrP$) is such that every non-cone vertex of $\gamma$ is at $\Gamma$-distance at most $K$ from a vertex of $H$ (see \cite{osin_rel_hyp,hruska_relative_quasiconvex}).

A subgroup $H$ of $(\Gamma, \scrP)$ is \emph{full} if the intersection of any conjugate of $H$ with any element $P \in \scrP$ is either finite or is of finite index in $P$.

A subgroup $H$ of $\Gamma$ is a \emph{codimension-$1$ subgroup} if for some $r \geq 0$, $\Gamma \setminus \calN_r(H)$ contains at least two orbits of components that are not contained in any finite neighbourhood of $H$.

Let us also recall a definition (of the many equivalent ones) of the \emph{Bowditch boundary} of a relatively hyperbolic group $(\Gamma, \scrP)$: it is a compact, metrizable space $\partial_B(\Gamma, \scrP)$ such that \begin{enumerate}
    \item $\Gamma$ acts properly discontinuously on the space of distinct triples of $\partial_B(\Gamma, \scrP)$, 
    \item each point of $\partial_B(\Gamma, \scrP)$ is either a conical point or a bounded parabolic point, and
    \item the stabiliser of a bounded parabolic point is a conjugate of an element of $\scrP$.
\end{enumerate} 

As a set, $\partial_B(\Gamma, \scrP)$ is the union of the Gromov boundary of the cone-off $\widehat{\Gamma}$ relative to $\scrP$ and the set of cone vertices of $\widehat{\Gamma}$. We refer to \cite[Section 9]{bowditch_rel_hyp} for more details{, in particular for the definitions of conical and bounded parabolic points that will not be needed here}.

The following theorem is implicit in Bergeron and Wise's \cite{bergeron_wise} and explicitly stated and proved in Einstein and Groves' \cite{relative_cubulation}.

\begin{theorem}[Boundary criterion for relative cubulation]\label{thm: bdry_criterion}
Let $(\Gamma, \mathscr{P})$ be relatively hyperbolic with one-ended parabolics. Suppose that for each pair of distinct points $u,v \in \partial_B (\Gamma, \mathscr{P})$, there exists a full relatively quasiconvex codimension-$1$ subgroup $H$ of $\Gamma$ such that $u,v$ lie in {$H$-}distinct components of $\partial_B(\Gamma, \mathscr{P}) \setminus \Lambda H$. Then there exists a finite collection of full relatively quasiconvex codimension-$1$ subgroups such that the action of $\Gamma$ on the dual cube complex is relatively geometric.
\end{theorem}
We note that since our parabolics are suspensions of the infinite free factors of $G$, they are always one-ended.

\section{Walls in the flow space} \label{sec: walls}
\subsection{Constructing immersed walls} \label{sec: immersed_walls}
Our construction of walls starts with  that of Hagen--Wise in \cite{hagen_wise_irreducible}, although they work in a locally finite setup. However, the stabilisers of walls thus obtained would not be full subgroups in general, so we later ``saturate'' these walls to obtain full codimension-1 subgroups.

Let us explain the construction of Hagen--Wise. Fix a fundamental domain $D \subset T$ for the $G$-action on $T${, i.e., a smallest subtree that contains exactly one edge from each orbit of edges}. In order to construct immersed walls in $\wt X$, we need the following data: a choice of a \emph{tunnel length} $L \in \mathbb{N}$ and a choice of a subinterval of each open edge of $D$, called a \emph{primary bust}. 

Choose the tunnel length $L \in \mathbb{N}$. The immersed wall will be first constructed in $\wt X_L$ and then pushed to $\wt X$ via $\varrho_L : \wt X_L \to \wt X$.

Choose a closed subinterval in the interior of each edge of $D$. This choice extends equivariantly to a choice of a subinterval $d_k$ in every edge $e_k$ of $T$. 
For each $i \in \mathbb{Z}$, the copy $d_{k,Li}$ in $T_{Li}$ of each $d_k \subset e_k$ is a \emph{primary bust}.  
Note that $f^{-L}(d_k)$ is a union of finitely many subintervals $\{d_{kj}\} \subset T$, by \cref{lem: vertical_locally_finite}. 
For each $i \in \mathbb{Z}$, the copy $d_{kj,Li}$ in $T_{Li}$ of each $\{d_{kj}\}$ is a \emph{secondary bust}.
We will always choose primary busts satisfying the following: 

\begin{lemma} \label{lem: choice_of_busts}
Let $L \in \mathbb{N}$ and $\epsilon >0$. Let $\{x_k\}$ be a collection of points in $D$, with exactly one point in any edge of $D$.  The collection of primary busts in $\wt X_L$ can be chosen so that: 
\begin{enumerate}
    \item every primary bust is disjoint from every secondary bust in the collection;
    \item the primary busts in $D$ lie in the $\epsilon$-neighbourhood of $\{x_k\}$;
    \item the flow rays from the endpoints of the primary busts do not meet any vertex of $\wt X_L$;
    \item if the flow ray from $x_k$ does not meet a vertex, then one of the endpoints of the primary bust $d_k$ in $D$ can be chosen to be $x_k$;
    \item the $f^L$-image of any primary bust does not contain two points in the same $G$-orbit;
    \item if $f^L(x_k) \neq f^L(x_j)$ whenever $k \neq j$, then $f^L(d_k) \cap f^L(d_j) = \emptyset$.
\end{enumerate}
\end{lemma}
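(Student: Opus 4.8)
The plan is to choose the primary busts one $G$-orbit of edges at a time, shrinking a candidate interval in each edge of $D$ finitely many times so as to kill each of the six obstructions in turn; the only subtlety is keeping the choices $G$-equivariant and then $f^L$-equivariant when pushed to $\wt X_L$. First I would fix, for each edge $e_k$ of $D$, a small closed interval $I_k$ in its interior of length $<\epsilon$ lying in the $\epsilon$-neighbourhood of $x_k$, and if the forward flow ray from $x_k$ misses every vertex I would arrange $x_k$ to be one endpoint of $I_k$ (this secures (2) and the compatibility demand in (4)); these choices extend $G$-equivariantly to intervals $d_k\subset e_k$ for all edges of $T$.

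The heart of the argument is a finiteness count. By \cref{lem: vertical_locally_finite} the set $f^{-L}(e_k)$ meets only finitely many $G$-orbits of edges, so the secondary busts $d_{kj}$ that could possibly overlap a given primary bust $d_k$ come from finitely many orbits; after shrinking each $I_k$ away from this finite set of ``bad'' subintervals I get (1). For (3), the forward flow ray from a point $p$ of $e_k$ passes through a vertex only if some iterate $f^m(p)$ is a vertex of $T$; since $f$ has a uniform expansion factor $\lambda>1$ on legal segments (Lemma 1.11 of \cite{dahmani_li}) and the edges of $T$ fall into finitely many $G$-orbits, within any bounded number of iterations only finitely many points of $e_k$ are sent to vertices — hence the set of $p\in e_k$ whose flow ray ever hits a vertex is countable and in fact discrete in each edge, so I can choose the two endpoints of $d_k$ to avoid it; combined with the endpoint prescription from (4) this is consistent because a single prescribed endpoint $x_k$ was assumed to have a vertex-free flow ray. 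For (5), note $f^L$ restricted to a short enough subinterval of $e_k$ is an embedding onto a legal path (again by expansion and the train-track property), and two points of that path lie in the same $G$-orbit only if $f^L(e_k)$ crosses a $G$-translate of itself; shrinking $I_k$ so that $f^L$ of it stays inside a single edge, or more carefully inside a path meeting each $G$-orbit of edges at most once, removes this — here \cref{lem: vertical_locally_finite} again bounds the relevant data. Finally (6) is automatic once the busts are short enough: if $f^L(x_k)\neq f^L(x_j)$ then by continuity a sufficiently small neighbourhood of $x_k$ and of $x_j$ have disjoint $f^L$-images, so shrink $I_k,I_j$ accordingly (only finitely many pairs $k\neq j$).

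All six shrinkings are performed simultaneously on the finitely many edges of $D$ and each only removes a closed nowhere-dense (indeed finite or discrete) subset or replaces $I_k$ by a shorter subinterval, so a common nonempty choice survives; transporting the result equivariantly to $T$ and then to the copies $d_{k,Li}$ in $\wt X_L$ gives the primary busts, with the secondary busts $d_{kj,Li}$ defined as the components of $f^{-L}$ of the primary busts. The main obstacle I anticipate is item (5) together with the equivariance: one must be careful that ``no two points of $f^L(d_k)$ in the same $G$-orbit'' is not destroyed when passing from $D$ to all of $T$, and this is exactly where the non-local-finiteness of $T$ could bite — but \cref{lem: vertical_locally_finite} and \cref{lem: preimage_singular_vertices} control the preimages tightly enough that the finite count goes through. (This is Lemma 3.5 of \cite{hagen_wise_irreducible} adapted to the non-locally-finite tree; the adaptation is routine given the preimage-finiteness lemmas above.)
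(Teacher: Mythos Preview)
The paper does not supply its own proof of this lemma: immediately after the statement it writes ``We refer the reader to \cite{hagen_wise_irreducible} for a proof,'' adding only that condition (3) is arranged by choosing bust endpoints near periodic points as in \cref{lem: periodic_divergence_principal_lines}. Your sketch is precisely the standard shrinking argument behind Hagen--Wise's Lemma~3.5, and you have correctly isolated the one point where the non-locally-finite setting needs care---namely that the relevant finiteness counts (for secondary busts meeting a given edge, for vertex-preimages, and for $G$-orbit collisions in $f^L(d_k)$) are supplied here by \cref{lem: vertical_locally_finite} and \cref{lem: preimage_singular_vertices} rather than by local finiteness of the tree---so there is nothing to correct.
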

{The lemma is a reproduction of \cite[Lemma 3.5]{hagen_wise_irreducible}, whose proof can be replicated in our case. Since there are several properties to check, the proof is long, but not difficult, based on Brouwer's fixed point theorem.  We thus refer the reader to \cite{hagen_wise_irreducible} for the proof. In fact, (3) can be obtained either by the proof of Hagen and Wise \cite{hagen_wise_irreducible}, or by applying  \cref{lem: periodic_divergence_principal_lines}.} 

Denote by $T_{Li + \frac{1}{2}} = T'_{Li} \subset \wt X_L$ the parallel copy of $T$ at distance $\frac{1}{2}$ from $T_{Li}$ and distance $L - \frac{1}{2}$ from $T_{(i+1)L}$. We will denote by $d'_{k,Li}$ and $d'_{kj,Li}$ respectively, the copies of $d_{k,Li}$ and $d_{kj,Li}$ in $T'_{Li}$. We refer the reader to \cref{fig:wall} for an illustration of what follows.

\begin{enumerate}
    \item Consider the subspace $\mathbf{C}_{Li}$ of $T'_{Li}$ obtained by removing each open primary bust and each open secondary bust in $T'_{Li}$. A component $\mathbf{N}_{Li}$ of $\mathbf{C}_{Li}$ is a \emph{nucleus}. Note that each nucleus is either a subinterval of some edge or contained in the star of a vertex.
    \item The endpoints of the nuclei are endpoints of either primary busts or of secondary busts. To each endpoint of each secondary bust $d'_{kj,L(i-1)}$ in a nucleus, glue a forward flow segment of length $L-\frac{1}{2}$. Note that this flow segment ends at the primary bust $d_{k,Li}$. 
    The union of all flow segments which end at a common endpoint (an endpoint of some $d_{k,Li}$) is a \emph{level} $\mathbf{L}$. Such an endpoint is a \emph{forward endpoint}. 
    \item Join the endpoints $d^{\pm}_{k,Li}$ of each primary bust $d_{k,Li}$ to the endpoints $d'^{\mp}_{k,Li}$ (note the change in order) of its parallel copy $d'_{k,Li}$ by segments called \emph{slopes}. A slope is denoted by $\mathbf{S}$.
\end{enumerate}

{The graph $\overline{W}_L$ is constructed as a quotient of the disjoint union of the nuclei, levels and slopes, with the gluing performed in a natural way: an endpoint of a primary bust of a nucleus is glued to an endpoint of the corresponding slope, an endpoint of a secondary bust in a nucleus is glued to the initial point of the relevant flow segment of a level, while a forward endpoint is glued to the relevant endpoint of a slope.} 
There exists a non-combinatorial immersion of the graph $\overline{W}_L$ in $\wt X_L$, with loss of injectivity at midpoints of slopes. Let $\overline{W}$ be the graph obtained from $\overline{W}_L$ by `folding' its levels according to $\varrho_L$ so that the following diagram commutes:

\begin{center}
\begin{tikzcd}
\overline{W}_L \arrow[r] \arrow[d]
& \wt X_L \arrow[d, "\varrho_L"] \\
\overline{W} \arrow[r]
& \wt X
\end{tikzcd}    
\end{center}
A component $W_u$ of $\overline{W}$ is an \emph{immersed wall}.\footnote{The subscript $u$ in $W_u$ denotes ``unsaturated'', as we will soon saturate $W_u$ in order to obtain a full codimension-1 subgroup.}

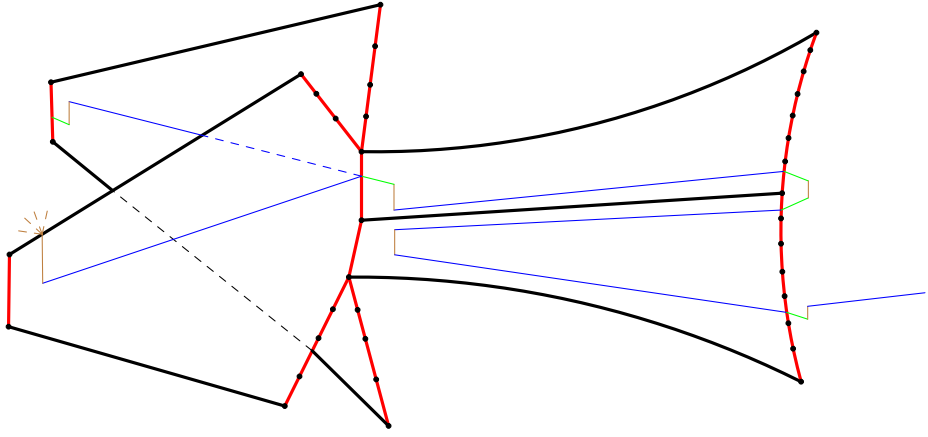
\begin{figure}
    \centering
    \begin{tikzpicture}[scale = .6,line cap=round,line join=round,>=triangle 45,x=1cm,y=1cm]

\draw [line width=1.2pt, color=red] (-0.52,2.72)-- (-0.52,1.2);
\draw [line width=1.2pt, color=red] (-0.52,1.2)-- (-0.8,-0.06);
\draw [line width=1.2pt] (-0.52,1.2)-- (8.8,1.8);
\draw [shift={(-0.19777777777777658,22.053333333333324)},line width=1.2pt]  plot[domain=4.695723856670749:5.241356869229838,variable=\t]({1*19.336018332073213*cos(\t r)+0*19.336018332073213*sin(\t r)},{0*19.336018332073213*cos(\t r)+1*19.336018332073213*sin(\t r)});
\draw [shift={(-0.6231307309405295,-22.094124967251766)},line width=1.2pt]  plot[domain=1.1077142286854866:1.578823215572557,variable=\t]({1*22.034834826038637*cos(\t r)+0*22.034834826038637*sin(\t r)},{0*22.034834826038637*cos(\t r)+1*22.034834826038637*sin(\t r)});
\draw [shift={(21.45194178082192,0.9601472602739729)},line width=1.2pt, color=red]  plot[domain=2.7872249919041923:3.4081614624309373,variable=\t]({1*12.679786411834097*cos(\t r)+0*12.679786411834097*sin(\t r)},{0*12.679786411834097*cos(\t r)+1*12.679786411834097*sin(\t r)});
\draw [line width=1.2pt, color=red] (-0.52,2.72)-- (-1.86,4.44);
\draw [line width=1.2pt, color=red] (-0.52,2.72)-- (-0.1,5.98);
\draw [line width=1.2pt, color=red] (-0.8,-0.06)-- (-2.22,-2.92);
\draw [line width=1.2pt, color=red] (-0.8,-0.06)-- (0.08,-3.36);
\draw [line width=1.2pt] (-1.86,4.44)-- (-8.32,0.44);
\draw [line width=1.2pt, color=red] (-8.32,0.44)-- (-8.34,-1.16);
\draw [line width=1.2pt] (-8.34,-1.16)-- (-2.22,-2.92);
\draw [line width=1.2pt] (0.08,-3.36)-- (-1.6117156008892224,-1.6948638158754759);
\draw [  dashed] (-1.6117156008892224,-1.6948638158754759)-- (-6.032867545676891,1.8561810862681796);
\draw [line width=1.2pt] (-6.032867545676891,1.8561810862681796)-- (-7.36,2.94);
\draw [line width=1.2pt, color=red] (-7.36,2.94)-- (-7.4,4.26);
\draw [line width=1.2pt] (-7.4,4.26)-- (-0.1,5.98);
\draw [ color=blue] (0.2148309541697958,0.9897520661157008)-- (8.780717180349555,1.4260341286407408);
\draw [ color=blue] (0.19980465815176424,1.4255146506386152)-- (8.841394994453214,2.283436041507512);
\draw [ color=green] (8.780717180349555,1.4260341286407408)-- (9.380871525169045,1.695987978963182);
\draw [ color=green] (8.841394994453214,2.283436041507512)-- (9.380871525169045,2.07164537941397);
\draw [ color=brown] (9.380871525169045,2.07164537941397)-- (9.380871525169045,1.695987978963182);
\draw [ color=brown] (0.19980465815176424,1.4255146506386152)-- (0.19980465815176335,1.9965138993238134);
\draw [ color=brown] (0.2148309541697958,0.9897520661157008)-- (0.2148309541697949,0.43377911344853415);
\draw [ color=blue] (0.2148309541697949,0.43377911344853415)-- (8.901063945924673,-0.8433074863769174);
\draw [ color=green] (8.901063945924673,-0.8433074863769174)-- (9.365845229150997,-0.9937190082644612);
\draw [ color=brown] (9.365845229150997,-0.9937190082644612)-- (9.365845229150997,-0.7082193839218622);
\draw [  color=blue] (9.365845229150997,-0.7082193839218622)-- (11.965394440270453,-0.4076934635612316);
\draw [ color=green] (0.19980465815176335,1.9965138993238134)-- (-0.52,2.1768294515401916);
\draw [ color=blue] (-0.52,2.1768294515401916)-- (-7.58382,-0.19733);
\draw [  dashed,color=blue] (-0.52,2.1768294515401916)-- (-4.051438033936354,3.083072424807212);
\draw [ color=blue] (-4.051438033936354,3.083072424807212)-- (-6.997791134485342,3.8297220135236603);
\draw [ color=brown] (-6.997791134485342,3.8297220135236603)-- (-6.997791134485342,3.318827948910588);
\draw [ color=green] (-6.997791134485342,3.318827948910588)-- (-7.376485611287644,3.4840251724922227);
\draw [ color=brown] (-7.58382,-0.19733)-- (-7.595587973547305,0.8885523383608028);
\draw [  dashed,color=brown] (-7.595587973547305,0.8885523383608028)-- (-8.094710743801643,1.3503831705484575);
\draw [  dashed,color=brown] (-7.595587973547305,0.8885523383608028)-- (-7.809211119459045,1.4856198347107412);
\draw [  dashed,color=brown] (-7.595587973547305,0.8885523383608028)-- (-7.4485800150262875,1.560751314800899);
\draw [  dashed,color=brown] (-7.595587973547305,0.8885523383608028)-- (-8.290052592036053,0.9897520661157008);

\draw [fill=black] (-0.52,2.72) circle (1.5pt);
\draw [fill=black] (-0.52,1.2) circle (1.5pt);
\draw [fill=black] (-0.8,-0.06) circle (1.5pt);
\draw [fill=black] (8.8,1.8) circle (1.5pt);
\draw [fill=black] (9.22,-2.38) circle (1.5pt);
\draw [fill=black] (9.56,5.36) circle (1.5pt);
\draw [fill=black] (9.423604362828659,4.9721395843737906) circle (1.5pt);
\draw [fill=black] (9.276581930151853,4.500994045496966) circle (1.5pt);
\draw [fill=black] (9.14182335724371,3.9995498080147316) circle (1.5pt);
\draw [fill=black] (9.033516373713434,3.5213371903289667) circle (1.5pt);
\draw [fill=black] (8.941100806379133,3.023116373454311) circle (1.5pt);
\draw [fill=black] (8.866507102648233,2.504108856673259) circle (1.5pt);
\draw [fill=black] (8.775256123400037,1.2405473184889635) circle (1.5pt);
\draw [fill=black] (8.77525285392669,0.6798950516038189) circle (1.5pt);
\draw [fill=black] (8.80422682583122,0.05887600974101881) circle (1.5pt);
\draw [fill=black] (8.854539263970649,-0.482916590760307) circle (1.5pt);
\draw [fill=black] (8.937946667498558,-1.083601685524325) circle (1.5pt);
\draw [fill=black] (9.04323604851519,-1.647726490265102) circle (1.5pt);
\draw [fill=black] (-1.86,4.44) circle (1.5pt);
\draw [fill=black] (-0.1,5.98) circle (1.5pt);
\draw [fill=black] (-2.22,-2.92) circle (1.5pt);
\draw [fill=black] (0.08,-3.36) circle (1.5pt);
\draw [fill=black] (-8.32,0.44) circle (1.5pt);
\draw [fill=black] (-8.34,-1.16) circle (1.5pt);
\draw [fill=black] (-7.36,2.94) circle (1.5pt);
\draw [fill=black] (-7.4,4.26) circle (1.5pt);
\draw [fill=black] (-1.5204055532183423,4.0041026503996635) circle (1.5pt);
\draw [fill=black] (-1.092417332772402,3.4547446360959193) circle (1.5pt);
\draw [fill=black] (-0.41919067012217703,3.5024724176231024) circle (1.5pt);
\draw [fill=black] (-0.32884635320251765,4.20371640133284) circle (1.5pt);
\draw [fill=black] (-0.21855164753794898,5.059813402443539) circle (1.5pt);
\draw [fill=black] (-0.6061410788381743,-0.7869709543568466) circle (1.5pt);
\draw [fill=black] (-0.43485477178423243,-1.4292946058091285) circle (1.5pt);
\draw [fill=black] (-0.19485477178423238,-2.3292946058091286) circle (1.5pt);
\draw [fill=black] (-1.1540251078854453,-0.7730364848960376) circle (1.5pt);
\draw [fill=black] (-1.4722298940761083,-1.4139278148293448) circle (1.5pt);
\draw [fill=black] (-1.893828952530404,-2.263063946645743) circle (1.5pt);

\end{tikzpicture}
    \caption{A part of a wall in $\wt X_L$. Brown paths are nuclei, blue paths are levels and green paths are slopes.}
    \label{fig:wall}
\end{figure}

\begin{remark} \label{rmk: local_homeo_wall}
Note that the immersion $W_u \to \wt X$ extends to a local homeomorphism $W_u \times [-1,1] \to \wt X$, with $W_u$ identified with $W_u \times \{0\}$. Indeed, this is because the embedding of each nucleus, level and slope of $W_u$ in $\wt X$ extends to a local homeomorphism of the above type.  
\end{remark}

We will denote by $N(W_u)^1 \subset \wt{X}^1$ the $1$-skeleton of the smallest subcomplex of $\wt X$ that contains $W_u$. When the tunnel length $L$ is sufficiently large, it turns out that $N(W_u)^1 \hookrightarrow \wt{X}^1$ is a quasiconvex embedding (\cref{prop: approximations_quasiconvex}) and that
$W_u$ separates $\wt X$ into exactly two components (\cref{prop: long_tunnels_walls}). Thus the local homeomorphism of \cref{rmk: local_homeo_wall} is in fact a homeomorphism in that case.

\subsection{Wall saturations} \label{sec: saturations}
Let $W_u$ be an immersed wall as above, which is a component of the graph $\overline{W}$. Fix a \emph{saturation length} $M \in L\mathbb{N}$. We will define below the $M$-saturation $W$ of $W_u$ as a union of components of $\overline{W}$ and principal flow lines. We first start with a definition:

Let $v \in T_{Li}$ be a singular vertex. 
The \emph{$M$-saturation of $v$} is the set of singular vertices $\{f^{Mk}(v) \in T_{Mk + Li}\}_{k \in \mathbb{Z}}$. 
(Recall that for each $k \in \mathbb{Z}$, there is a unique singular vertex $f^{Mk}(v)$, by \cref{lem: preimage_singular_vertices}.)
By abuse of notation, for a vertex $v$ in a ``fractional'' copy $T_{\frac{1}{2}+ Li}$, we will denote by the $M$-saturation of $v$ the intersection points of the principal flow line of $v$ in the trees $T_{\frac{1}{2}+ Mk + Li}$.

Let $v \in T_{\frac{1}{2}+ Li}$ be a singular vertex in a nucleus of $\overline{W}$. Denote by $W(v) (\subset \overline{W})$, the smallest union of components of $\overline{W}$ such that each vertex in the $M$-saturation of $v$ is contained in the nucleus of some component of $W(v)$.

Let $W_u$ be an immersed wall. Let $W'_u \subset \overline{W}$ be the smallest subgraph of $\overline{W}$ satisfying the following two conditions: \begin{itemize}
    \item $W_u \subset W'_u$, and
    \item for each singular vertex $v \in W'_u$, either the principal flow line of $v$ intersects a component infinitely many times, or $W(v) \subset W'_u$ (but not both).
\end{itemize}

The \emph{$M$-saturation $W$ of $W_u$} is defined as the union of $W'_u$ with the principal flow lines of all singular vertices of $W'_u$. We refer the reader to \cref{fig:saturation} for an illustration.

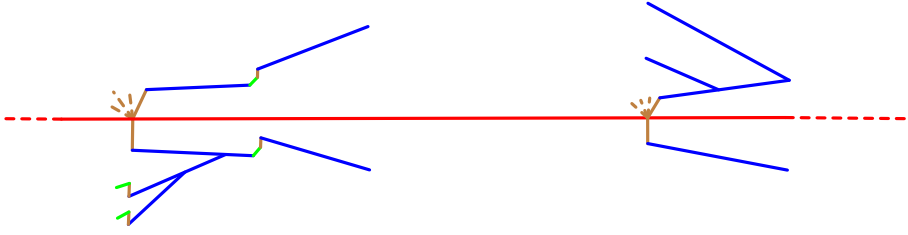
\begin{figure}
    \centering
    \begin{tikzpicture}[line cap=round,line join=round,>=triangle 45,x=.3cm,y=.3cm]
\draw [line width=1.2pt,color=brown] (2.16,1.02)-- (2.14,-0.38);
\draw [line width=1.2pt,dashed,color=brown] (2.16,1.02)-- (1.995541602072372,2.3294838855185493);
\draw [line width=1.2pt,color=brown] (2.16,1.02)-- (2.7668829377735795,2.3080577373046274);
\draw [line width=1.2pt,dashed,color=brown] (2.16,1.02)-- (0.9456603395901725,1.7081255873148051);
\draw [line width=1.2pt,dashed,color=brown] (2.16,1.02)-- (1.3099048592268538,2.200926996235016);
\draw [line width=1.2pt,color=blue] (2.14,-0.38)-- (7.502341371589709,-0.6227562529966207);
\draw [line width=1.2pt,color=blue] (1.989806249202907,-2.426181191440244)-- (6.276784081174258,-0.5672745753044475);
\draw [line width=1.2pt,color=blue] (1.9667742019836376,-3.669911741280784)-- (4.47214215854439,-1.3497980548322286);
\draw [line width=1.2pt,color=brown] (1.989806249202907,-2.426181191440244)-- (2.0128382964221765,-1.850380010958512);
\draw [line width=1.2pt,color=brown] (1.9667742019836376,-3.669911741280784)-- (1.989806249202907,-3.1171426080183213);
\draw [line width=1.2pt,color=green] (2.0128382964221765,-1.850380010958512)-- (1.4370371159404411,-2.0346363887126664);
\draw [line width=1.2pt,color=green] (1.989806249202907,-3.1171426080183213)-- (1.48310121037898,-3.393527174649553);
\draw [line width=1.2pt,color=blue] (2.7668829377735795,2.3080577373046274)-- (7.348036353576131,2.5081271235405667);
\draw [line width=1.2pt,color=green] (7.348036353576131,2.5081271235405667)-- (7.687085219378824,2.8552485813861814);
\draw [line width=1.2pt,color=green] (7.502341371589709,-0.6227562529966207)-- (7.823429688807945,-0.24423989539686747);
\draw [line width=1.2pt,color=brown] (7.687085219378824,2.8552485813861814)-- (7.695157811421746,3.2185152233176386);
\draw [line width=1.2pt,color=brown] (7.823429688807945,-0.24423989539686747)-- (7.837730778040094,0.17049169233547226);
\draw [line width=1.2pt,color=blue] (7.695157811421746,3.2185152233176386)-- (12.584547970066836,5.105298249260805);
\draw [line width=1.2pt,color=blue] (7.837730778040094,0.17049169233547226)-- (12.653644111724645,-1.2515467832575113);
\draw [line width=1.2pt,color=red] (-1,1)-- (31.07659529460902,1.0700271734055748);
\draw [line width=1.2pt,dashed,color=red] (-1,1)-- (-3.52967571461338,1.0195807142084583);
\draw [line width=1.2pt,dashed,color=red] (31.07659529460902,1.0700271734055748)-- (36.62570580629191,1.0195807142084583);
\draw [line width=1.2pt,color=brown] (24.976854682075437,1.0567106854903283)-- (24.97893094162056,-0.08383160547001854);
\draw [line width=1.2pt,color=brown] (24.976854682075437,1.0567106854903283)-- (25.515266792779123,1.944327069566578);
\draw [line width=1.2pt,dashed,color=brown] (24.976854682075437,1.0567106854903283)-- (24.672591830310903,1.8326472552635618);
\draw [line width=1.2pt,dashed,color=brown] (24.976854682075437,1.0567106854903283)-- (24.27663612505475,1.6905093097869957);
\draw [line width=1.2pt,dashed,color=brown] (24.976854682075437,1.0567106854903283)-- (25.068547535567056,1.9341743591753946);
\draw [line width=1.2pt,color=blue] (25.515266792779123,1.944327069566578)-- (31.25026790672866,2.723581408250297);
\draw [line width=1.2pt,color=blue] (24.97893094162056,-0.08383160547001854)-- (31.168959447252302,-1.260533106091253);
\draw [line width=1.2pt,color=blue] (28.126335915169022,2.2991111230999497)-- (24.908208067572687,3.699282921966595);
\draw [line width=1.2pt,color=blue] (31.25026790672866,2.723581408250297)-- (24.989516527049044,6.13853670625734);
\end{tikzpicture}
    \caption{A part of a wall saturation. The red line is a principal flow line, with two unsaturated components (on the left and right) attached.}
    \label{fig:saturation}
\end{figure}

\begin{lemma}\label{lem: saturations are connected}
Let $W_u$ be an immersed wall. For any $M \in L\mathbb{N}$, the $M$-saturation $W$ of $W_u$ is a connected graph. 
\end{lemma}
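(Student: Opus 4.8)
The plan is to peel apart the recursive definition of the $M$-saturation and exhibit, for every piece of $W$, an explicit path inside $W$ joining it to the seed component $W_u$. Two facts make this possible: $W_u$ is connected, being a connected component of the graph $\overline{W}$; and each principal flow line is a connected bi-infinite edge-path (well defined by \cref{lem: preimage_singular_vertices}), which will serve as the glue between the various components of $\overline{W}$ that together make up $W'_u$.

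First I would realise the construction of $W'_u$ as an increasing union. Set $W^{(0)} = W_u$, and let $W^{(n+1)}$ be the union of $W^{(n)}$ with all the sets $W(v)$, where $v$ ranges over the singular vertices of $W^{(n)}$ whose principal flow line does \emph{not} intersect a single component of $\overline{W}$ infinitely many times; put $W'_u = \bigcup_{n \ge 0} W^{(n)}$. A direct check shows that this union satisfies the two conditions defining $W'_u$ and is contained in every subgraph that does, so it is the minimal one. By induction each $W^{(n)}$ is a union of components of $\overline{W}$, since each $W(v)$ is one by definition; hence every component $C$ of $W'_u$ has a \emph{stage}, namely the least $n$ with $C \subseteq W^{(n)}$, and for $n \ge 1$ a stage-$n$ component $C$ is a component of $W(v)$ for some singular vertex $v$ contained in a component $C_0$ of $W^{(n-1)}$ which is not in the infinitely-many-times case.

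The geometric input is a single observation. By minimality of $W(v)$, every component appearing in $W(v)$ contains, inside one of its nuclei, at least one vertex $w$ of the $M$-saturation of $v$; and, unwinding the ``fractional tree'' case of the definition of the $M$-saturation, such a $w$ is by construction one of the points in which the principal flow line $\Lambda_v$ of $v$ meets the copies $T_{\frac12 + Mk + Li}$. Since $\Lambda_v$ also passes through $v$, and $\Lambda_v \subseteq W$ as $v$ is a singular vertex of $W'_u$, the line $\Lambda_v$ provides a connected path in $W$ from $v$ (hence from $C_0$) to $w$ (hence to $C$). Now induct on the stage: $W^{(0)} = W_u$ is connected; and if $C$ has stage $n \ge 1$ with parent component $C_0$, then $C_0$ is path-connected to $W_u$ in $W$ by the inductive hypothesis, while $C \cup \Lambda_v \cup C_0$ is connected because $C$ and $C_0$ are each connected and $\Lambda_v$ is a connected line meeting both. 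Hence every component of $W'_u$ is path-connected to $W_u$ inside $W$. Finally $W = W'_u \cup \bigcup_v \Lambda_v$ with $v$ running over singular vertices of $W'_u$, and each such $\Lambda_v$ is connected and meets $W'_u$ at the point $v$; so any point of $W$ can be joined inside $W$ to $W_u$, and $W$ is connected.

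The geometric ingredients above (a singular vertex lying in some nucleus, a principal flow line passing through all the $M$-saturation vertices from which it is assembled) are immediate from the definitions. The step I expect to require the most care is the bookkeeping around $W'_u$: checking that the minimal-subgraph description really coincides with the increasing union $\bigcup_n W^{(n)}$ even when the recursion never terminates, that $W'_u$ is genuinely a union of components of $\overline{W}$, and hence that the stage function is well-founded, so that the induction above is legitimate.
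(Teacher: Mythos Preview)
Your proposal is correct and takes essentially the same approach as the paper: both realise the saturation via an ascending union indexed by a stage, observe that principal flow lines connect each newly added component $W(v)$ back to the component containing $v$, and conclude connectedness by induction on the stage. The only cosmetic difference is that the paper throws the principal flow lines into the union at every stage (so each $W^n$ is already connected), whereas you keep $W'_u$ as a union of components of $\overline{W}$ and add the flow lines as glue at the end; your more explicit bookkeeping around the minimal-subgraph description of $W'_u$ is, if anything, slightly more careful than the paper's.
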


\begin{proof}We will prove the lemma by showing that a certain connected subgraph containing $W_u$ in $W$ is in fact equal to $W$. Consider the subgraph $W'$ of $W$ built inductively as the ascending union of subgraphs $W^n$ as follows: $W^1 = W_u$. $W^2$ is the union of each principal flow line intersecting $W_u$ with the union of all $W(v)$, where $v$ is a singular vertex in $W^1$. $W^n$ is the union of each principal flow line intersecting $W^{n-1}$ along with the union of all $W(v)$, where $v$ is a singular vertex in $W^{n-1}$ with the property that $W(v) \subset W$. Note that $W'$ is a connected subgraph of $W$, by construction. Further, the subgraph $W'_u$ (introduced in the definition of the $M$-saturation $W$ of $W_u$) is contained in $W'$. This gives the reverse containment $W' \supset W$.
\end{proof}

\subsection{Approximations of walls}

In order to show that families of immersed walls are uniformly quasiconvex, Hagen--Wise use a technical construction called approximations. Approximations are not walls in $\wt X$ (they are walls in $\wt X_L$, as it turns out), but have the advantage that, unlike immersed walls, they do not have backward flows that can have long fellow-travelling subpaths. 

We will first state the definition of approximations given in \cite{hagen_wise_irreducible} and then extend their definition to saturations. Let $W_u \to \wt X$ be an immersed wall of tunnel length $L$. 
The \emph{approximation} $A : W_u \to \wt{X}$ 
is defined as below. We refer the reader to \cref{fig:approx} for an illustration.
\begin{enumerate}
    \item For each $x' \in W_u$ such that $x'$ lies in a nucleus $\mathbf{N}_{Li} \subset T_{Li + \frac{1}{2}}$, let $x \in T_{Li}$ denote the parallel copy of $x'$ behind it at distance $\frac{1}{2}$. Then $A(x') := f^L(x) \in T_{Li+L} \subset \wt X$.
    \item  For each $x$ in a level of $W_u$, $A(x)$ is the unique forward endpoint of the level.
    \item Let $\mathbf{S}$ be a slope in $W_u$ associated to the primary bust $d$. Recall that $\mathbf{S}$ is a segment from an endpoint $d^+$ (say) of $d$ to the other endpoint $d'^-$ of $d'$, where $d'$ is the parallel copy of $d$ at distance $\frac{1}{2}$. $A$ maps $\mathbf{S}$ homeomorphically to the concatenation of $d$ and the forward flow segment of length $L$ from $d^-$. 
\end{enumerate}

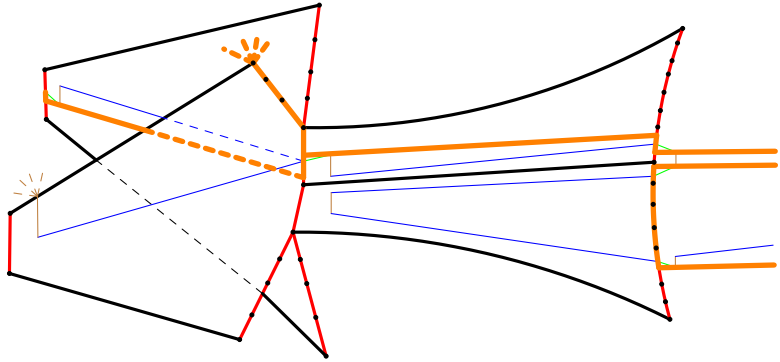
\begin{figure}
    \centering  
    \begin{tikzpicture}[line cap=round,line join=round,>=triangle 45,x=1cm,y=1cm, scale=.5]
\draw [line width=1.2pt,color=red] (-0.52,2.72)-- (-0.52,1.2);
\draw [line width=1.2pt,color=red] (-0.52,1.2)-- (-0.8,-0.06);
\draw [line width=1.2pt] (-0.52,1.2)-- (8.8,1.8);
\draw [shift={(-0.19777777777777658,22.053333333333324)},line width=1.2pt]  plot[domain=4.695723856670749:5.241356869229838,variable=\t]({1*19.336018332073213*cos(\t r)+0*19.336018332073213*sin(\t r)},{0*19.336018332073213*cos(\t r)+1*19.336018332073213*sin(\t r)});
\draw [shift={(-0.6231307309405295,-22.094124967251766)},line width=1.2pt]  plot[domain=1.1077142286854866:1.578823215572557,variable=\t]({1*22.034834826038637*cos(\t r)+0*22.034834826038637*sin(\t r)},{0*22.034834826038637*cos(\t r)+1*22.034834826038637*sin(\t r)});
\draw [shift={(21.45194178082192,0.9601472602739729)},line width=1.2pt,color=red]  plot[domain=2.7872249919041923:3.4081614624309373,variable=\t]({1*12.679786411834097*cos(\t r)+0*12.679786411834097*sin(\t r)},{0*12.679786411834097*cos(\t r)+1*12.679786411834097*sin(\t r)});
\draw [line width=1.2pt,color=red] (-0.52,2.72)-- (-1.86,4.44);
\draw [line width=1.2pt,color=red] (-0.52,2.72)-- (-0.1,5.98);
\draw [line width=1.2pt,color=red] (-0.8,-0.06)-- (-2.22,-2.92);
\draw [line width=1.2pt,color=red] (-0.8,-0.06)-- (0.08,-3.36);
\draw [line width=1.2pt] (-1.86,4.44)-- (-8.32,0.44);
\draw [line width=1.2pt,color=red] (-8.32,0.44)-- (-8.34,-1.16);
\draw [line width=1.2pt] (-8.34,-1.16)-- (-2.22,-2.92);
\draw [line width=1.2pt] (0.08,-3.36)-- (-1.6117156008892224,-1.6948638158754759);
\draw [ ,dashed] (-1.6117156008892224,-1.6948638158754759)-- (-6.032867545676891,1.8561810862681796);
\draw [line width=1.2pt] (-6.032867545676891,1.8561810862681796)-- (-7.36,2.94);
\draw [line width=1.2pt,color=red] (-7.36,2.94)-- (-7.4,4.26);
\draw [line width=1.2pt] (-7.4,4.26)-- (-0.1,5.98);
\draw [ ,color=blue] (0.2148309541697958,0.9897520661157008)-- (8.780717180349555,1.4260341286407408);
\draw [ ,color=blue] (0.19980465815176424,1.4255146506386152)-- (8.841394994453214,2.283436041507512);
\draw [ ,color=green] (8.780717180349555,1.4260341286407408)-- (9.380871525169045,1.695987978963182);
\draw [ ,color=green] (8.841394994453214,2.283436041507512)-- (9.380871525169045,2.07164537941397);
\draw [ ,color=brown] (9.380871525169045,2.07164537941397)-- (9.380871525169045,1.695987978963182);
\draw [ ,color=brown] (0.19980465815176424,1.4255146506386152)-- (0.19980465815176335,1.9965138993238134);
\draw [ ,color=brown] (0.2148309541697958,0.9897520661157008)-- (0.2148309541697949,0.43377911344853415);
\draw [ ,color=blue] (0.2148309541697949,0.43377911344853415)-- (8.90106394592467,-0.843307486376901);
\draw [ ,color=green] (8.90106394592467,-0.843307486376901)-- (9.365845229150997,-0.9937190082644612);
\draw [ ,color=brown] (9.365845229150997,-0.9937190082644612)-- (9.365845229150997,-0.7082193839218622);
\draw [ ,color=blue] (9.365845229150997,-0.7082193839218622)-- (11.965394440270453,-0.4076934635612316);
\draw [ ,color=green] (0.19980465815176335,1.9965138993238134)-- (-0.52,1.8272294515401915);
\draw [ ,color=blue] (-0.52,1.8272294515401915)-- (-7.58382,-0.19733);
\draw [ ,dashed,color=blue] (-0.52,1.8272294515401915)-- (-4.226934700602961,2.974405758140582);
\draw [ ,color=blue] (-4.226934700602961,2.974405758140582)-- (-6.997791134485342,3.8297220135236603);
\draw [ ,color=brown] (-6.997791134485342,3.8297220135236603)-- (-6.997791134485342,3.318827948910588);
\draw [ ,color=green] (-6.997791134485342,3.318827948910588)-- (-7.38195227795431,3.6644251724922228);
\draw [ ,color=brown] (-7.58382,-0.19733)-- (-7.595587973547305,0.8885523383608023);
\draw [ ,dashed,color=brown] (-7.595587973547305,0.8885523383608023)-- (-8.094710743801643,1.3503831705484575);
\draw [ ,dashed,color=brown] (-7.595587973547305,0.8885523383608023)-- (-7.809211119459045,1.4856198347107412);
\draw [ ,dashed,color=brown] (-7.595587973547305,0.8885523383608023)-- (-7.4485800150262875,1.560751314800899);
\draw [ ,dashed,color=brown] (-7.595587973547305,0.8885523383608023)-- (-8.290052592036053,0.9897520661157008);
\draw [line width= 2pt,color=orange] (-7.381952277954311,3.6644251724922188)-- (-7.374912627222182,3.4321166983319853);
\draw [line width= 2pt,color=orange] (-7.374912627222182,3.4321166983319853)-- (-4.744635953713634,2.653847706678865);
\draw [line width= 2pt,dashed,color=orange] (-4.744635953713634,2.653847706678865)-- (-0.52,1.3890952470064308);
\draw [line width= 2pt,color=orange] (-0.52,1.3890952470064308)-- (-0.52,1.9842568546045714);
\draw [line width= 2pt,color=orange] (-0.52,1.9842568546045714)-- (8.867973957529818,2.5160191290606395);
\draw [line width= 2pt,color=orange] (8.867973957529818,2.5160191290606395)-- (8.819976691753645,2.060347895745961);
\draw [line width= 2pt,color=orange] (8.819976691753645,2.060347895745961)-- (12.021386000516358,2.091724420723366);
\draw [line width= 2pt,color=orange] (8.7929668734321,1.686327502230151)-- (12.025662215392098,1.7211538974670193);
\draw [shift={(21.45194178082194,0.960147260273971)},line width= 2pt,color=orange]  plot[domain=3.0842906000810824:3.2975250669274314,variable=\t]({1*12.67978641183412*cos(\t r)+0*12.67978641183412*sin(\t r)},{0*12.67978641183412*cos(\t r)+1*12.67978641183412*sin(\t r)});
\draw [line width= 2pt,color=orange] (8.925997249042533,-1.0090396370312906)-- (11.99216,-0.93554);
\draw [line width= 2pt,color=orange] (-0.52,1.9842568546045714)-- (-0.52,2.72);
\draw [line width= 2pt,color=orange] (-0.52,2.72)-- (-1.86,4.44);
\draw [line width= 2pt,dashed,color=orange] (-1.86,4.44)-- (-2.6395269286122374,4.811626796933604);
\draw [line width= 2pt,dashed,color=orange] (-1.86,4.44)-- (-2.1397729969389663,5.047621709112649);
\draw [line width= 2pt,dashed,color=orange] (-1.86,4.44)-- (-1.7510754945264222,5.144796084715785);
\draw [line width= 2pt,dashed,color=orange] (-1.86,4.44)-- (-1.3762600457714689,5.103149923743012);
\draw [fill=black] (-0.52,2.72) circle (1.5pt);
\draw [fill=black] (-0.52,1.2) circle (1.5pt);
\draw [fill=black] (-0.8,-0.06) circle (1.5pt);
\draw [fill=black] (8.8,1.8) circle (1.5pt);
\draw [fill=black] (9.22,-2.38) circle (1.5pt);
\draw [fill=black] (9.56,5.36) circle (1.5pt);
\draw [fill=black] (9.423604362828659,4.9721395843737906) circle (1.5pt);
\draw [fill=black] (9.276581930151853,4.500994045496966) circle (1.5pt);
\draw [fill=black] (9.178576598844137,4.144717962825276) circle (1.5pt);
\draw [fill=black] (9.063061398260693,3.660633571352447) circle (1.5pt);
\draw [fill=black] (8.967040274244066,3.1746941260645096) circle (1.5pt);
\draw [fill=black] (8.897176034486424,2.736334329355827) circle (1.5pt);
\draw [fill=black] (8.775256123400037,1.2405473184889635) circle (1.5pt);
\draw [fill=black] (8.775252853926688,0.679895051603864) circle (1.5pt);
\draw [fill=black] (8.804226825831217,0.05887600974106366) circle (1.5pt);
\draw [fill=black] (8.854539263970647,-0.48291659076030147) circle (1.5pt);
\draw [fill=black] (9.002127593325264,-1.4438307235144856) circle (1.5pt);
\draw [fill=black] (9.101884737458525,-1.9126692106135892) circle (1.5pt);
\draw [fill=black] (-1.86,4.44) circle (1.5pt);
\draw [fill=black] (-0.1,5.98) circle (1.5pt);
\draw [fill=black] (-2.22,-2.92) circle (1.5pt);
\draw [fill=black] (0.08,-3.36) circle (1.5pt);
\draw [fill=black] (-8.32,0.44) circle (1.5pt);
\draw [fill=black] (-8.34,-1.16) circle (1.5pt);
\draw [fill=black] (-7.36,2.94) circle (1.5pt);
\draw [fill=black] (-7.4,4.26) circle (1.5pt);
\draw [fill=black] (-1.5204055532183423,4.0041026503996635) circle (1.5pt);
\draw [fill=black] (-1.092417332772402,3.4547446360959193) circle (1.5pt);
\draw [fill=black] (-0.41919067012217703,3.5024724176231024) circle (1.5pt);
\draw [fill=black] (-0.32884635320251765,4.20371640133284) circle (1.5pt);
\draw [fill=black] (-0.21855164753794898,5.059813402443539) circle (1.5pt);
\draw [fill=black] (-0.6061410788381743,-0.7869709543568466) circle (1.5pt);
\draw [fill=black] (-0.43485477178423243,-1.4292946058091285) circle (1.5pt);
\draw [fill=black] (-0.19485477178423238,-2.3292946058091286) circle (1.5pt);
\draw [fill=black] (-1.1540251078854453,-0.7730364848960376) circle (1.5pt);
\draw [fill=black] (-1.4722298940761083,-1.4139278148293448) circle (1.5pt);
\draw [fill=black] (-1.893828952530404,-2.263063946645743) circle (1.5pt);
\end{tikzpicture}
    \caption{The approximation of the part of the wall from \cref{fig:wall} is indicated in thick orange.}
    \label{fig:approx}
\end{figure}

Let $M \in L \mathbb{N}$ and let $W \to \wt{X}$ be an $M$-saturation of $W_u$. The \emph{approximation of the $M$-saturation} $ A: W \to \wt X$ is defined the same way on nuclei, slopes and levels. On each point $x$ of a principal flow line, by abuse of notation, $A(x) := f^{L-\frac{1}{2}}(x)$. 

We will denote by $N(A(W_u))^1$ (respectively $N(A(W))^1$) the $1$-skeleton of the smallest subcomplex of $\wt X$ containing $A(W_u)$ (respectively $A(W)$).

\subsection{Large tunnel length and quasiconvexity}

In this subsection, we recall results from section 4 of \cite{hagen_wise_irreducible} towards quasiconvexity of immersed walls. We note that their methods do not require local finiteness of $\wt{X}^1$ and thus work in our case as well, both for immersed walls and their saturations.

The main lemma that we will repeatedly use is the following. Recall that $\mathcal{N}_r(Y)$ denotes the $r$-neighbourhood of $Y$.

\begin{lemma}[Lemma 4.3 of \cite{hagen_wise_irreducible}] \label{lem: quasiconvexity small overlaps}
Let $Z$ be a $\delta$-hyperbolic space and let $P = \alpha_0\beta_1\alpha_1\cdots\beta_k\alpha_k$ be a path such that each $\alpha_i$ is a $(\lambda_1,\lambda_2)$-quasigeodesic and each $\beta_i$ is a $(\mu_1,\mu_2)$-quasigeodesic. Suppose that for each $R \geq 0$, there exists a $B_R \geq 0$ such that for all $i$, each intersection below has diameter $\leq B_R$:
$$ \calN_{3\delta +R}(\beta_i) \cap \beta_{i+1}, \; \calN_{3\delta +R}(\beta_i) \cap \alpha_i, \; \calN_{3\delta +R}(\beta_i) \cap \alpha_{i-1}. $$
Then there exists $L_0$ such that if each $\beta_i$ is of length at least $L_0$, then the path $P$ is a {$({4\lambda_1\mu_1}, \frac{\mu_2}{2})$}-quasigeodesic.
\end{lemma}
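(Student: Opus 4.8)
\emph{Proof strategy.} This is a local-to-global statement: a concatenation of quasigeodesics is again a quasigeodesic, the small-overlap hypothesis being exactly what prevents the output constants from depending on the number $k$ of long pieces. My plan is: (i) replace every $\alpha_i,\beta_i$ by a geodesic; (ii) establish uniform nearest-point projection estimates along the resulting long geodesics by a two-sided induction; and (iii) deduce that the geodesic joining the two endpoints of $P$ fellow-travels every piece along all but a uniformly bounded portion, which yields the quasigeodesic inequality once each $\beta_i$ is long compared with $\delta$.

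\emph{(i) Reduction to geodesics.} By stability of quasigeodesics in the $\delta$-hyperbolic space $Z$ there is a constant $D$, depending only on $\delta$ and the quasigeodesic constants, with each $\alpha_i$ (resp.\ $\beta_i$) within Hausdorff distance $D$ of the geodesic $a_i$ (resp.\ $b_i$) having the same endpoints; the length of $b_i$ is bounded below by a definite linear function of that of $\beta_i$, so the $b_i$ stay long. Applying the hypothesis with a single, suitably large $R$ (depending only on $\delta$ and $D$) and absorbing the Hausdorff error produces one constant $B'$ with
\[
\mathrm{diam}\bigl(\calN_{3\delta}(b_i)\cap b_{i+1}\bigr),\quad \mathrm{diam}\bigl(\calN_{3\delta}(b_i)\cap a_i\bigr),\quad \mathrm{diam}\bigl(\calN_{3\delta}(b_i)\cap a_{i-1}\bigr)\ \le\ B'
\]
for all $i$; choosing $R$ large is what forbids two of these geodesic pieces from running parallel to one another at any bounded distance. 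Since $P$ and $Q:=a_0b_1a_1\cdots b_ka_k$ share endpoints (and corresponding subpaths agree up to an additive $2D$) and have comparable lengths once the $b_i$ are long, it suffices to show $Q$ is a $(\tfrac12,\epsilon_0)$-quasigeodesic for a uniform $\epsilon_0$.

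\emph{(ii) Uniform projection estimates --- the crux.} Write $\pi_g$ for coarse nearest-point projection to a geodesic $g$, and let $x_0,x_{k+1}$ be the endpoints of $Q$. I would prove, by induction on the number of pieces peeled off from the left (and symmetrically from the right), that there is $\kappa=\kappa(\delta,B')$ with $\pi_c(x_0)$ (resp.\ $\pi_c(x_{k+1})$) lying in the initial (resp.\ terminal) length-$\kappa$ subsegment of $c$ for every piece $c$ of $Q$, and with every prefix $Q_j:=a_0b_1\cdots b_ja_j$ a $(\tfrac12,\epsilon_0)$-quasigeodesic. In the inductive step one glues the long geodesic $b_{j+1}$ onto $Q_j$: its two neighbours $b_j$ and $a_j$ terminate at (or, if $a_j$ is long, near) the initial point of $b_{j+1}$ and each meets $\calN_{3\delta}(b_{j+1})$ in a set of diameter $\le B'$, so standard projection estimates force $\pi_{b_{j+1}}(Q_j)$ into the initial $\kappa$-subsegment of $b_{j+1}$; since $\mathrm{length}(b_{j+1})$ is large compared with $\kappa+\delta+\epsilon_0$, the concatenation $Q_jb_{j+1}$ is again $(\tfrac12,\epsilon_0)$-quasigeodesic with far end projecting into the terminal $\kappa$-subsegment of $b_{j+1}$, and gluing on the possibly short $a_{j+1}$ (of $\calN_{3\delta}$-overlap $\le B'$ with $b_{j+1}$) changes nothing essential. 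The step I expect to require real care is that $\kappa$ and $\epsilon_0$ must \emph{not} accumulate over the $k$ steps: all the ``damage'' at a junction has to be absorbed inside the long piece being attached, and the projection bound must be re-derived at each stage from the fixed pair $(\delta,B')$ rather than inherited and degraded. The fiddly sub-case is when some $a_i$ is short, so that $b_i$ and $b_{i+1}$ nearly share an endpoint.

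\emph{(iii) Conclusion.} Fix the geodesic $\sigma=[x_0,x_{k+1}]$. By (ii) and the standard fact that $[x,y]$ passes $O(\delta)$-close to $\pi_g(x)$ and to $\pi_g(y)$ and runs along $g$ between them, $\sigma$ passes within $\kappa'=O(\kappa+\delta)$ of every point of each piece of $Q$ outside its two length-$\kappa$ end subsegments; because the large-$R$ hypothesis makes consecutive pieces diverge rather than run parallel away from their common junction, these shadows occur disjointly and in order along $\sigma$, so $\sigma$ comes within $\kappa'$ of each of the junction points of $Q$, in order. Therefore $d(x_0,x_{k+1})=\mathrm{length}(\sigma)\ge\mathrm{length}(Q)-c\kappa' k$ for an absolute $c$; as $\mathrm{length}(Q)\ge kL_0'$ (the reduced threshold), taking the threshold $L_0$ (hence $L_0'$) to be a fixed multiple of $\kappa'$ gives $\mathrm{length}(Q)\le 2\,d(x_0,x_{k+1})$. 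Unwinding (i) and tracking constants then yields the quasigeodesic estimate for $P$ with the asserted constants (a routine, if slightly tedious, accounting). Finally the same argument applies to an arbitrary subpath of $P$: it has the identical alternating structure, possibly with its two extreme pieces truncated --- those truncated pieces being subsegments of $\beta$'s, for which all the overlap and projection estimates only improve --- and if a subpath meets no complete $\beta_i$ it is a concatenation of at most three geodesics with pairwise bounded $\calN_{3\delta}$-overlaps, which is directly a quasigeodesic by the large-$R$ hypothesis. This gives the quasigeodesic inequality for all subpaths, as required.
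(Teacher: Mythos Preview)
The paper does not give its own proof of this lemma: it is quoted verbatim as Lemma~4.3 of \cite{hagen_wise_irreducible} and used as a black box, so there is nothing in the present paper to compare your argument against.

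That said, your strategy is the standard one for such local-to-global statements and is essentially sound. Replacing each piece by a geodesic via Morse stability, then inductively controlling nearest-point projections so that the error at each junction is absorbed by the adjacent long $b_{j+1}$ (rather than accumulating over $k$), is exactly the right mechanism; you correctly isolate the delicate point, namely that $\kappa$ and $\epsilon_0$ must be rederived at each step from the fixed data $(\delta,B')$ and not inherited. The only place where your write-up is genuinely incomplete is the final accounting: you assert that unwinding step~(i) yields the specific constants $(\tfrac{1}{4\lambda_1\mu_1},\tfrac{\mu_2}{2})$, but nothing in your argument tracks the dependence on $\lambda_1,\lambda_2,\mu_1,\mu_2$ with that precision---your $\epsilon_0$ and $\kappa'$ depend on $B'$ and $D$, which in turn depend on all four quasigeodesic constants in a way you have not made explicit. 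For the applications in this paper the exact constants are immaterial (only independence from $k$ is used), so this is a cosmetic gap rather than a mathematical one; but if you want to claim the stated constants you would need to carry them through honestly, and it is not clear to me that exactly those values fall out of your route.
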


Let $\mathcal{W} := \{W_u \to \wt X\}$ be a family of immersed walls in $\wt X$. In order to use \cref{lem: quasiconvexity small overlaps}, we need the following property to be satisfied:
\begin{defn}[Definition 3.15 in \cite{hagen_wise_irreducible}]\label{defn: ladder_overlap}
The family of immersed walls $\mathcal{W}$ has the \emph{ladder overlap property} if there exists $B \geq 0$ such that for all $W_u \in \mathcal{W}$ and all distinct slopes $\mathbf{S}_1, \mathbf{S}_2 \subset W_u$, 
$$ \mathrm{diam}(\mathcal{N}_{3\delta + 2\lambda}(N(A(\mathbf{S}_1))^1) \cap \mathcal{N}_{3\delta + 2\lambda}(N(A(\mathbf{S}_2)))^1) \leq B,$$
where $\lambda$ is the quasiconvexity constant for forward ladders (\cref{prop: quasiconvexity_fwd_ladder}).
\end{defn}

In practice, any family of immersed walls is constructed in the following way: We will first choose a finite set of periodic points in the fundamental domain $D$ of $T$ such that the flow rays from these points diverge and do not meet any vertex. The various immersed walls of the family will then be constructed by choosing tunnel lengths and by choosing primary busts in small neighbourhoods of these periodic points{, with the size of the neighbourhoods depending on the chosen tunnel lengths}. The ladder overlap property will then hold because the finitely many flow rays from the periodic points have bounded overlaps.

\begin{prop}\label{prop: approximations_quasiconvex}
Let $\mathcal{W}$ be a family of immersed walls satisfying the ladder overlap property. Then there exist $L_0, \kappa_1, \kappa_2$ such that for all $W_u \in \mathcal{W}$ with tunnel length at least $L_0$, the inclusion $N(A(W_u))^1 \hookrightarrow \wt{X}^1$ is a $(\kappa_1,\kappa_2)$-quasiisometric embedding.
\end{prop}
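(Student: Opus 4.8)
\emph{Proof plan.} Since the inclusion $N(A(W_u))^1\hookrightarrow\wt X^1$ is combinatorial it is automatically $1$-Lipschitz, so the whole content of the statement is the reverse estimate. I would aim to produce constants $c_1,c_2$, uniform over all $W_u\in\calW$ once the tunnel length $L$ exceeds some threshold $L_0$, such that every combinatorial geodesic $\gamma$ of $N(A(W_u))^1$ is a $(c_1,c_2)$-quasigeodesic of $\wt X^1$; applied to the endpoints $p,q$ of $\gamma$ this gives $d_{N(A(W_u))^1}(p,q)=\mathrm{length}(\gamma)\le c_1 d_{\wt X^1}(p,q)+c_1c_2$, which is exactly what is needed. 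This is the scheme of \cite[\S4]{hagen_wise_irreducible}, run through the local-to-global lemma \cref{lem: quasiconvexity small overlaps}; the points that genuinely need checking in the non-locally-finite tree $T$ are that the ``rails'' of the approximation are uniformly quasiconvex and that the ``junctions'' are uniformly bounded.

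For the rails: for a slope $\mathbf S$ of $W_u$, $A(\mathbf S)$ is the concatenation of a primary bust $d$ (an interval of uniformly bounded length) with a forward flow segment $\sigma_{\mathbf S}$ of length $L$, so $N(A(\mathbf S))^1$ lies within uniformly bounded Hausdorff distance of the forward ladder $N(\sigma_{\mathbf S})^1$, which is $\lambda$-quasiconvex in the $\delta$-hyperbolic space $\wt X^1$ by \cref{prop: quasiconvexity_fwd_ladder}; being connected, it is quasiisometrically embedded with constants $(\mu_1,\mu_2)$ depending only on $\delta,\lambda$. For the junctions: $A$ collapses each level to a point, while for a nucleus $\mathbf N_{Li}$ the set $A(\mathbf N_{Li})=f^L(\bar{\mathbf N}_{Li})$, with $\bar{\mathbf N}_{Li}\subset T_{Li}$ the parallel copy, is connected and meets no primary bust of the target tree, because $\bar{\mathbf N}_{Li}$ meets no secondary bust, i.e.\ no component of $f^{-L}$ of any primary bust (a finite union of subintervals, by \cref{lem: vertical_locally_finite}). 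Hence $A(\mathbf N_{Li})$ lies in a single component of $T_{Li+L}\smallsetminus\bigcup(\text{primary busts})$, and each such component is contained in a star of a vertex, so of diameter at most twice the maximal edge length of $T$ — a constant $D_0$ independent of $L$. This is precisely where the lack of local finiteness is harmless: a star has bounded diameter even when it has infinitely many edges, and it is also where the fineness built into \cref{lem: choice_of_busts} is used. So $N(A(W_u))^1$ is the union of the uniformly quasiconvex rails $N(\sigma_{\mathbf S})^1$, glued along $D_0$-bounded nucleus images together with bounded bust neighbourhoods.

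Next I would decompose a combinatorial geodesic $\gamma$ of $N(A(W_u))^1$ as $\gamma=\alpha_0\beta_1\alpha_1\cdots\beta_k\alpha_k$, with each $\beta_i$ a maximal subpath inside a single rail $N(\sigma_{\mathbf S_i})^1$ and each $\alpha_i$ supported in the junctions. A maximal rail-subpath of a geodesic either is one of the two end pieces, or runs from one end of its rail to the other — a geodesic cannot enter and leave a rail on the same side, as that would be a pointless detour — and in the latter case has length at least $L$, since each of the $L$ stacked rectangles of the rail must be crossed by at least one horizontal edge; so after fixing $L_0$ accordingly the internal $\beta_i$ all have length $\ge L_0$. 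As a subpath of a $N(A(W_u))^1$-geodesic contained in the subcomplex $N(\sigma_{\mathbf S_i})^1$, each $\beta_i$ is a geodesic of $N(\sigma_{\mathbf S_i})^1$, hence a $(\mu_1,\mu_2)$-quasigeodesic of $\wt X^1$; each $\alpha_i$ has length $\le D_0'$ for a uniform $D_0'$, hence is a $(1,D_0')$-quasigeodesic (the two end pieces, possibly partial rail traversals, are themselves $(\mu_1,\mu_2)$-quasigeodesics and are absorbed into $\alpha_0,\alpha_k$ with an obvious adjustment of constants).

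Finally I would verify the overlap hypotheses of \cref{lem: quasiconvexity small overlaps}. For $R\ge0$, the sets $\calN_{3\delta+R}(\beta_i)\cap\alpha_i$ and $\calN_{3\delta+R}(\beta_i)\cap\alpha_{i-1}$ lie in the $(3\delta+R)$-neighbourhood of a set of diameter $\le D_0'$, hence have diameter $\le D_0'+2(3\delta+R)$; and $\calN_{3\delta+R}(\beta_i)\cap\beta_{i+1}$ involves two rails of distinct slopes of the same wall $W_u$ (if $\mathbf S_i=\mathbf S_{i+1}$ the block $\beta_i\alpha_i\beta_{i+1}$ can be merged into a single rail-quasigeodesic), so, up to the uniformly bounded error relating $N(\sigma_{\mathbf S})^1$ to $N(A(\mathbf S))^1$, it is bounded by the constant $B$ of the ladder overlap property, \cref{defn: ladder_overlap}. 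Enlarging constants, all three intersections have diameter $\le B_R$ with $B_R$ depending only on $R$, $\calW$, $\delta$, $\lambda$. Then \cref{lem: quasiconvexity small overlaps} yields a threshold $L_0$ (taken at least as large as the one above) such that for $L\ge L_0$ the path $\gamma$ is a quasigeodesic of $\wt X^1$ with constants depending only on $\mu_1,\mu_2,D_0'$, which gives the desired uniform $(\kappa_1,\kappa_2)$. I expect the main obstacle to be the junction bound in the second paragraph — making precise that the nucleus images have diameter bounded \emph{independently of $L$} even though $T$ is not locally finite — together with correctly identifying the coarse intersection of consecutive $\beta_i$'s with the quantity controlled by \cref{defn: ladder_overlap}; everything else is bookkeeping around \cref{lem: quasiconvexity small overlaps}.
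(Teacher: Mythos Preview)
Your proposal is correct and follows essentially the same route as the paper, which simply defers to \cite[Proposition~4.1]{hagen_wise_irreducible}: decompose a geodesic of $N(A(W_u))^1$ into long forward-ladder pieces (slope approximations) separated by short junction pieces (nucleus images), then feed this into \cref{lem: quasiconvexity small overlaps} using the ladder overlap property to bound consecutive-rail interactions. Your explicit argument that $A(\mathbf N_{Li})=f^L(\bar{\mathbf N}_{Li})$ misses every primary bust --- because $\bar{\mathbf N}_{Li}$ misses every secondary bust --- and hence sits in a single star of $T_{Li+L}$, is exactly the uniform junction bound the paper alludes to when it says the constants depend only on the quasiconvexity constant of nuclei and on $B$; you have just made that step explicit. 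The one point you glossed over is that \cref{defn: ladder_overlap} literally gives the overlap bound only at scale $3\delta+2\lambda$, whereas \cref{lem: quasiconvexity small overlaps} asks for a $B_R$ at every scale $R$; this follows from the standard fact that two uniformly quasiconvex subsets of a $\delta$-hyperbolic space with bounded coarse intersection at one scale have bounded coarse intersection at every scale, but it is worth saying.
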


We refer to Proposition 4.1 of \cite{hagen_wise_irreducible} for a proof. They show that when tunnels are long enough, the hypotheses of \cref{lem: quasiconvexity small overlaps} are satisfied. {The same proof works in our case as \cref{lem: quasiconvexity small overlaps} works for any hyperbolic space, not necessarily proper. The main point is that geodesic paths in $N(A(W_u))^1$ can be written as alternating paths $P = \alpha_0\beta_1\alpha_1\cdots\beta_k\alpha_k$ satisfying the conditions of \cref{lem: quasiconvexity small overlaps}, where each $\beta_i$ is a geodesic fellow traveling with a forward flow segment of length $L \geq L_0$.}

We note that the constants depend only on the following data: $L_0$ depends on the quasiconvexity constant $\mu$ of nuclei of walls (there is a uniform quasiconvexity constant $\mu$ as nuclei are either subintervals of edges or stars of vertices) and the constant $B$ of the ladder overlap property. The constants $\kappa_1, \kappa_2$ depend on $L_0$ and $\mu${, but not on the tunnel length $L \geq L_0$ of individual immersed walls in $\mathcal{W}$}. 

\begin{prop}\label{prop: approximations_trees}
Let $\mathcal{W}$ be a family of immersed walls with the ladder overlap property. Then there exists $L_1 \geq L_0$ such that for each $W_u \in \mathcal{W}$ with tunnel length at least $L_1$, $A(W_u)$ is a tree.
\end{prop}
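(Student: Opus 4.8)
The plan is to establish connectivity of $A(W_u)$ --- which is immediate, as it is a continuous image of the connected graph $W_u$ --- and then, for $L$ large, to rule out embedded cycles by showing that any such cycle would be a uniform quasigeodesic loop in the hyperbolic space $\wt X^1$, an absurdity. For this it helps to record the structure of $A(W_u)$: it is a union of \emph{nucleus blocks} $A(\mathbf N) = f^L(\mathbf N^{\flat})$, each a continuous image of the parallel copy $\mathbf N^{\flat}$ of a nucleus and hence a subtree of a single tree $T_{Lm}$; of \emph{slope arcs} $A(\mathbf S)$, each an embedded arc obtained by concatenating a primary bust in a tree $T_{L(m-1)}$ with a forward flow segment of length $L$, spanning $T_{L(m-1)}$ to $T_{Lm}$ and meeting $T_{Lm}$ in a single point; and of images of levels, which are points.

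So suppose $c \subset A(W_u)$ were an embedded cycle. Since nucleus blocks and busts lie in single trees and level-images are points, the projection of $c$ to the underlying line $\bbZ$ of $\wt X$ is non-constant, hence non-monotone, so $c$ traverses at least two forward-flow parts of slope arcs; moreover for $L$ large, distinct such parts have only bounded overlap (the ladder overlap property), so every maximal subarc of $c$ lying in forward-flow parts has length $\gtrsim L$. In particular $\operatorname{len}(c)\gtrsim L$. Write $c = \alpha_0\beta_1\alpha_1\cdots\beta_k\alpha_k$ cyclically, with the $\beta_j$ the maximal subarcs of $c$ in forward-flow parts --- each of length $\gtrsim L$, and a uniform quasigeodesic and uniformly quasiconvex in $\wt X^1$ by the discussion around \cref{prop: quasiconvexity_fwd_ladder} --- and the $\alpha_j$ the complementary arcs, each a geodesic of a single tree $T_{Lm_j}$ lying inside a nucleus block together with at most two busts. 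One must then check that each $\alpha_j$ is also a uniform quasigeodesic of $\wt X^1$: if it lies in a bust or in a nucleus block that is an $f^L$-image of a subinterval of an edge, it is a \emph{legal} segment, and legal segments of arbitrary length are uniformly quasi-isometrically embedded in $\wt X^1$ (uniform expansion of legal paths, \cite[Lemma~1.11]{dahmani_li}, together with $\delta$-hyperbolicity); if it runs through a nucleus block that is an $f^L$-image of the star of a vertex $v$, it is the reversal of a legal segment followed by a legal segment, meeting at the bifurcation point of the relevant prongs $f^L(e_1),f^L(e_2)$ --- a genuine legal path whenever $\{e_1,e_2\}$ is a legal turn at $v$, and otherwise a path that is legal except possibly at that one point, where the turn has angle $\le\theta_0$ (\cref{lem: edge_flows_diverge}, \cref{lem;only_finitely_many_traps}).

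Granting this, I would apply \cref{lem: quasiconvexity small overlaps} to $c = \alpha_0\beta_1\cdots\beta_k\alpha_k$. Its overlap hypotheses hold with bounds independent of $L$: $\calN_{3\delta+R}(\beta_j)\cap\beta_{j+1}$ is bounded by the ladder overlap property (\cref{defn: ladder_overlap}), since $\beta_j$ and $\beta_{j+1}$ lie in forward ladders of distinct slopes of $W_u$, while $\calN_{3\delta+R}(\beta_j)\cap\alpha_j$ and $\calN_{3\delta+R}(\beta_j)\cap\alpha_{j-1}$ are bounded because $\beta_j$ meets the tree containing $\alpha_j$ (resp.\ $\alpha_{j-1}$) in a single point and $\alpha_j$ is uniformly quasi-isometrically embedded in that tree. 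So \cref{lem: quasiconvexity small overlaps} yields constants $K_1,K_2$ --- depending only on the uniform quasigeodesic data of the $\alpha_j,\beta_j$ and on $\delta$ --- and a threshold $L_0'$, such that once each $\beta_j$ has length $\ge L_0'$ the loop $c$ is a $(K_1,K_2)$-quasigeodesic; but a $(K_1,K_2)$-quasigeodesic loop has length at most $K_1K_2$. Choosing $L_1 \ge L_0$, also larger than $L_0'$ and than $K_1K_2$, then contradicts $\operatorname{len}(c)\gtrsim L\ge L_1$, so $A(W_u)$ contains no cycle and is a tree.

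The step I expect to be the main obstacle is the uniform control of an arc $\alpha_j$ running through a nucleus block that is an $f^L$-image of the star of a vertex: such a block has diameter growing with $L$, the arc need not be a legal path, and $\wt X^1$ then offers genuine ``flow shortcuts'' between its endpoints --- obtained by flowing a little way backward, crossing a tree, and flowing forward again --- unless the relevant turn at $v$ is legal. Resolving this will need either a proof that the wall-and-cycle combinatorics cannot route a closed cycle through such an illegal-turn arc (exploiting that the busts are chosen near periodic points whose flow lines diverge from all principal flow lines, \cref{lem: periodic_divergence_principal_lines}, \cref{lem: choice_of_busts}), or a direct argument that an arc which is legal except at a single turn of bounded angle is nevertheless uniformly quasiconvex in $\wt X^1$. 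The verification that $c$ admits the alternating decomposition above, and that its $\beta_j$ are essentially complete forward flow segments, is more routine but still required.
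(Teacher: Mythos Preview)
Your approach is essentially the same as the paper's, which simply reads ``This is a consequence of \cref{prop: approximations_quasiconvex}. See Proposition~4.4 of \cite{hagen_wise_irreducible} for a proof.'' Unpacking that citation yields precisely your strategy: assume an embedded cycle, decompose it into flow-parts and tree-parts, and use the small-overlaps lemma (\cref{lem: quasiconvexity small overlaps}) together with the ladder overlap property to obtain a uniform quasigeodesic loop of length $\gtrsim L$, a contradiction in the $\delta$-hyperbolic space $\wt X^1$ once $L$ is large.

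Where you diverge from the paper is in trying to re-derive the quasigeodesic estimate from scratch rather than invoking \cref{prop: approximations_quasiconvex} as a black box. That proposition already asserts that $N(A(W_u))^1 \hookrightarrow \wt X^1$ is a $(\kappa_1,\kappa_2)$-quasi-isometric embedding with constants \emph{independent of $L$}, and its proof (Hagen--Wise, Proposition~4.1) already negotiates exactly the decomposition you describe and the uniform control you worry about. So your anxiety about the $\alpha_j$ arcs running through $f^L$-images of vertex-stars --- which is a genuine issue, since the trees $T_i$ are exponentially distorted in $\wt X^1$ and an arbitrary tree-geodesic is \emph{not} a uniform quasigeodesic --- is real, but it is the content of \cref{prop: approximations_quasiconvex}, not of the present proposition. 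The paper's remark that ``there is a uniform quasiconvexity constant $\mu$ as nuclei are either subintervals of edges or stars of vertices'' is pointing to how that earlier proof packages the decomposition so that the vertical pieces are controlled; you should use that result rather than re-prove it.

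In short: your plan is correct, and once you accept \cref{prop: approximations_quasiconvex} the obstacle you flag dissolves --- any embedded cycle in $A(W_u)$ yields, inside the uniformly quasi-isometrically embedded $N(A(W_u))^1$, a configuration with two distinct slope-ladders of length $\geq L$ whose $(3\delta+2\lambda)$-neighbourhoods can overlap only in diameter $\leq B$ (\cref{defn: ladder_overlap}); for $L$ large enough relative to $\kappa_1,\kappa_2,B$ this is impossible.
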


This is a consequence of \cref{prop: approximations_quasiconvex}: {Indeed, any closed path in $A(W_u)$ has length at most $\kappa_1\kappa_2$. Thus, if $L_1 \geq L_0, \kappa_1\kappa_2 + 1$, then no closed path can contain a forward flow segment from a slope approximation. This implies that the closed path has to be contained in the intersection of a tree $T_i$ in the flow space with the approximation of $W_u$, which is impossible.}
See Proposition 4.4 of \cite{hagen_wise_irreducible} for a precise proof.


Recall that a \emph{wall} is a subspace $Y \subset \wt X$ such that $\wt X \setminus Y$ has exactly two components, none of which is contained in any finite neighbourhood of $Y$ (i.e., components are deep). 
We now state the main result of this subsection:

\begin{prop}\label{prop: long_tunnels_walls}
Let $\mathcal{W}$ be a family of immersed walls with the ladder overlap property and let $L_1$ be the constant from \cref{prop: approximations_trees}. Then for each $W_u \in \mathcal{W}$ with tunnel length at least $L_1$, $W_u$ is a wall.
\end{prop}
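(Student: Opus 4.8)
The plan is to establish, for $W_u$ with tunnel length at least $L_1$, the three properties in the definition of a wall: that the immersion $W_u\hookrightarrow\wt X$ is a topological embedding carrying a global band neighbourhood (upgrading the local homeomorphism of \cref{rmk: local_homeo_wall}), that $\wt X\setminus W_u$ has exactly two components, and that both are deep. The argument is that of Hagen--Wise \cite{hagen_wise_irreducible} for long-tunnel immersed walls; as noted above it is metric in nature --- it uses only $\delta$-hyperbolicity of $\wt X^1$ together with the quasiconvexity and tree statements \cref{prop: quasiconvexity_fwd_ladder}, \cref{prop: approximations_quasiconvex} and \cref{prop: approximations_trees} --- and wherever local finiteness of $T$ would be invoked in \cite{hagen_wise_irreducible}, it is replaced here by the finiteness of the relevant preimages supplied by \cref{lem: vertical_locally_finite} and \cref{lem: preimage_singular_vertices}. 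I would treat the embedding property first, as it is the heart of the matter.

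To see that $\iota\colon W_u\times[-1,1]\to\wt X$ is injective once $L\geq L_1$, I would compare $W_u$ with its approximation $A\colon W_u\to\wt X$ via the forward-flow homotopy: the homotopy from the inclusion to $A$ that flows each nucleus forward by $f^L$, retracts each level to its forward endpoint, and slides each slope across the bust it carries and then along the length-$L$ forward flow. Were $\iota$ not injective, one checks that this forces two distinct points of $W_u$ themselves to have a common image, hence a nondegenerate reduced path $\gamma\subset W_u$ whose image is a loop in $\wt X$. The forward-flow homotopy carries this loop to the path $A(\gamma)$ in $A(W_u)$. Since $N(A(W_u))^1\hookrightarrow\wt X^1$ is a quasiisometric embedding (\cref{prop: approximations_quasiconvex}) and $\wt X^1$ is $\delta$-hyperbolic, $A(\gamma)$ cannot fold back on itself by more than a constant depending only on the data of \cref{prop: approximations_quasiconvex}; but $A(W_u)$ is a tree (\cref{prop: approximations_trees}), so $A(\gamma)$ must backtrack, while $\gamma$, being reduced and containing a tunnel of length $\geq L_1$, forces $A(\gamma)$ to run without backtracking along a forward-ladder segment of length comparable to $L$. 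Quantifying this dichotomy, exactly as in \cite{hagen_wise_irreducible}, rules out self-intersections for $L\geq L_1$. Hence $\iota$ is an embedding, $\nu(W_u):=\iota(W_u\times[-1,1])$ is a genuine closed neighbourhood of $W_u$, and $\nu(W_u)\setminus W_u$ is the disjoint union of the two connected sets $\nu^{\pm}:=\iota(W_u\times(0,\pm1])$.

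Granting this, the remaining points follow. \emph{At most two components:} $W_u$ is connected (it is a component of $\overline W$) and closed, and $\wt X$ is connected and locally connected, so any component $C$ of $\wt X\setminus W_u$ has $\mathrm{Fr}(C)$ nonempty and contained in $W_u$, hence meets $\nu(W_u)$, hence meets $\nu^+$ or $\nu^-$; as $\nu^+$ and $\nu^-$ are each connected, there are at most two components. \emph{At least two components:} $\wt X$ is simply connected --- immediate from its description as a tree of trees over $\mathbb Z$, each slab between consecutive $T_i$ and $T_{i+1}$ being a mapping cylinder, hence contractible onto a tree --- so, $W_u$ being a two-sided embedded subcomplex, the mod-$2$ count of transverse intersections of a loop with $W_u$ defines a homomorphism $\pi_1(\wt X)\to\mathbb Z/2$, necessarily trivial; a path in $\wt X\setminus W_u$ joining $\nu^+$ to $\nu^-$, closed up by a short transverse arc through $\nu(W_u)$, would be a loop of count~$1$, so no such path exists and $\nu^+,\nu^-$ lie in distinct components, whence (with the previous point) $\wt X\setminus W_u$ has exactly two components. \emph{Deepness:} each slope provides a genuine transverse crossing, and running off to infinity in the trees and along forward flows on the two sides of the busts --- using that $N(A(W_u))^1$ is quasiconvex in the hyperbolic space $\wt X^1$ but not coarsely dense, which follows from the divergence of forward flow rays (\cref{lem: edge_flows_diverge}) --- exhibits points of each component at distance tending to infinity from $W_u$, again as in \cite{hagen_wise_irreducible}. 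The main obstacle is the injectivity step of the second paragraph; the rest is soft point-set topology or a direct transcription of \cite{hagen_wise_irreducible}, legitimate precisely because that argument never used local finiteness.
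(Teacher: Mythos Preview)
Your proposal follows the same route as the paper: both defer to Hagen--Wise's argument, with the tree property of the approximation (\cref{prop: approximations_trees}) as the key input, and both note that the metric/hyperbolicity machinery already assembled makes the transfer legitimate. The paper's own proof is a two-line pointer: the approximation being a tree rules out self-intersection at the interior of two slopes, hence $W_u$ separates; see Hagen--Wise, Proposition~4.6.

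One point in your sketch deserves correction. In the injectivity step you write that a reduced path $\gamma$ in $W_u$ ``containing a tunnel of length $\geq L_1$'' forces $A(\gamma)$ to run without backtracking along a long forward-ladder segment. But the approximation $A$ collapses each \emph{level} (tunnel) to its forward endpoint, so tunnels contribute nothing to the length of $A(\gamma)$. It is the \emph{slopes} that are sent by $A$ to segments of length $\approx L$ (the primary bust together with a length-$L$ forward flow). This is exactly why the paper's hint singles out slope interiors as the only possible locus of self-intersection: the immersion $\overline W_L\to\wt X_L$ fails to be injective precisely where the two slopes over a single primary bust cross, and it is the comparison of $A(S_1)$ and $A(S_2)$ for such a pair --- long, diverging forward-flow segments in a quasiisometrically embedded tree --- that yields the contradiction. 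Your loop/homotopy framing also glosses over the fact that $A(p)$ and $A(q)$ need not coincide even when $\iota(p)=\iota(q)$, so $A(\gamma)$ is not a priori a loop in $A(W_u)$; the cleaner argument works directly with the two slope approximations rather than an abstract backtracking dichotomy. With that adjustment your outline matches the Hagen--Wise argument the paper invokes.
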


{Again, a detailed proof is given in Proposition 4.6 of \cite{hagen_wise_irreducible}. Since the flow space $\wt{X}$ is simply connected, it is enough to show that $W_u$ locally separates (a small neighbourhood) into two components. Using \cref{rmk: local_homeo_wall}, the only place where this can fail is when two distinct slopes intersect in their interior. But when the tunnel length $L \geq L_1$, such a scenario would contradict the fact that $A(W_u)$ is a tree.}


{\begin{prop}[Approximations of saturations are quasiconvex]\label{prop: saturation approximations quasiconvex}
Let $\mathcal{W}$ be a family of immersed walls satisfying the ladder overlap property. Then there exists $L_2 \geq L_1$ such that for all $W_u \in \mathcal{W}$ with tunnel length $L \geq L_2$, there exist $M_0, \Theta_1$ and $\Theta_2$ such that for any $M$-saturation $W$ of $W_u$ with $M \geq M_0L$, the inclusion $N(A(W))^1 \hookrightarrow \wt{X}^1$ is a $(\Theta_1,\Theta_2)$-quasiisometric embedding.
\end{prop}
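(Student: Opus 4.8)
The plan is to run the local-to-global scheme behind \cref{prop: approximations_quasiconvex} one level up, viewing $A(W)$ as a union of \emph{uniformly} quasiconvex pieces glued along uniformly bounded overlaps, the gluing pieces being forced to be long by the hypothesis $M \geq M_0 L$, and then feeding this into \cref{lem: quasiconvexity small overlaps}. By \cref{lem: saturations are connected} the graph $W$ is connected, so $A(W)$ and hence $N(A(W))^1$ is connected; and by construction $A(W)$ is the union of the approximations $A(C)$ of the components $C$ of $\overline W$ that constitute $W'_u$, together with the principal flow lines $\Lambda_v$ of the singular vertices $v \in W'_u$. For $L \geq L_1$ each $A(C)$ is a tree (\cref{prop: approximations_trees}) and $N(A(C))^1 \hookrightarrow \wt X^1$ is a $(\kappa_1,\kappa_2)$-quasiisometric embedding (\cref{prop: approximations_quasiconvex}) with $\kappa_1,\kappa_2$ depending only on $\mu$ and the ladder-overlap constant $B$, hence uniform over $\mathcal W$; and each $\Lambda_v$ is one of the quasiconvex peripheral lines of \cref{thm: hyperbolicity_graph_and_cone-off}, which lie in finitely many $\Gamma$-orbits and are bi-Lipschitz embedded (their intersection with $\bigcup_i T_i$ is a uniformly bi-Lipschitz copy of a line, as before \cref{prop: quasiconvexity_fwd_ladder}), hence uniform quasigeodesics of $\wt X^1$. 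Any geodesic $P$ of $N(A(W))^1$ then breaks, at the vertices where it passes between these pieces, into a concatenation $P = \alpha_0 \beta_1 \alpha_1 \cdots \beta_k \alpha_k$ in which the $\beta_i$ are the maximal sub-paths of $P$ running along a principal flow line and the $\alpha_i$ are the maximal sub-paths lying in a single $N(A(C))^1$, inserting trivial $\alpha$'s and merging adjacent $\alpha$'s where necessary so that the pattern strictly alternates; each $\alpha_i$ and $\beta_i$ is then a uniform quasigeodesic of $\wt X^1$.

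The first thing to check is that consecutive pieces have uniformly bounded coarse overlap. For two flow-line pieces this is \cref{lem: principal flow lines diverge}: distinct principal flow lines either coincide --- which cannot happen for two consecutive pieces --- or uniformly diverge in both directions, so their $(3\delta+R)$-neighbourhoods meet in a set of diameter bounded by a function of $R$ alone. For a flow-line piece on $\Lambda = \Lambda_v$ and an adjacent component piece in $N(A(C))^1$, the dichotomy built into the definition of $W'_u$ says that either $\Lambda$ meets $C$ infinitely often --- in which case $\Lambda$ is coarsely contained in $N(A(C))^1$ and does not occur as a separate piece --- or $\Lambda$ meets $A(C)$ only at the finitely many images of singular vertices of $C$ lying on it. In the latter case, outside a bounded neighbourhood of each such vertex $A(C)$ consists of forward flow segments issued from bust endpoints; these were arranged to have forward rays diverging from \emph{every} principal flow line --- by choosing bust endpoints arbitrarily close to the periodic points of \cref{lem: periodic_divergence_principal_lines}, whose periodic flow lines diverge from all principal flow lines, and by invoking \cref{prop: flowrays_fellow_travel_or_diverge} to exclude eventual fellow-travelling. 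A hyperbolicity argument then gives that $\calN_{3\delta+R}(\Lambda) \cap N(A(C))^1$ has diameter bounded by a function of $R$ alone. Hence each of $\calN_{3\delta+R}(\beta_i)\cap\beta_{i+1}$, $\calN_{3\delta+R}(\beta_i)\cap\alpha_i$, $\calN_{3\delta+R}(\beta_i)\cap\alpha_{i-1}$ has diameter $\leq B_R$ for a function $B_R$ uniform over $\mathcal W$ and over $M$.

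The second thing to check is that the $\beta_i$ are long once $M$ is large. Along $\Lambda_v$, the images of singular vertices at which $\Lambda_v$ meets the remaining pieces of $A(W)$ are, by the construction of the $M$-saturation, confined to bounded neighbourhoods of the $M$-saturation vertices $f^{Mk}(v)$ and of the analogous vertices in the components making up $W(v)$; consecutive such vertices are a horizontal distance bounded below by a linear function of $M$ along $\Lambda_v$. Consequently, after enlarging $M_0$ if necessary, every $\beta_i$ has length $\geq \tfrac{1}{2}M$. Letting $L_0'$ be the threshold furnished by \cref{lem: quasiconvexity small overlaps} for the uniform quasigeodesic constants of the $\alpha_i,\beta_i$ and the function $B_R$ above, I would set $L_2 := \max(L_1, L_0')$ and choose $M_0$ large enough that $\tfrac{1}{2}M_0 L_2 \geq L_0'$. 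Then for $L \geq L_2$ and $M \geq M_0 L$ every $\beta_i$ has length $\geq L_0'$, so \cref{lem: quasiconvexity small overlaps} applies and shows that $P$ is a $(\Theta_1,\Theta_2)$-quasigeodesic of $\wt X^1$ with $\Theta_1,\Theta_2$ depending only on $\mu$, $B$, the uniform quasiconvexity constant of principal flow lines and $L_0'$ --- in particular independent of $W_u$ and of $M$. Since this holds for every geodesic $P$ of $N(A(W))^1$, the inclusion $N(A(W))^1 \hookrightarrow \wt X^1$ is a $(\Theta_1,\Theta_2)$-quasiisometric embedding, as claimed.

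The main obstacle is the flow-line-versus-component dichotomy used in the second paragraph: passing from ``the individual forward flow segments of $A(C)$ diverge from every principal flow line'' to ``the infinite set $N(A(C))^1$ has bounded coarse overlap with a non-absorbed principal flow line $\Lambda$''. Two infinite quasiconvex sets can have unbounded coarse intersection even when all the constituent segments of one diverge from the other, so this must be ruled out by hyperbolicity (an unbounded coarse intersection would contain a common sub-ray, which would force a forward flow segment of $A(C)$ to fellow-travel $\Lambda$ forever, contradicting \cref{prop: flowrays_fellow_travel_or_diverge} together with \cref{lem: periodic_divergence_principal_lines}), and one must verify in tandem that the \emph{only} way boundedness can fail is that $\Lambda$ meets $C$ infinitely often --- precisely the alternative hard-wired into the definition of $W'_u$. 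A secondary, purely combinatorial, difficulty is keeping track of exactly which components of $\overline W$ enter $W'_u$, so as to confirm both that the exposures of a principal flow line between successive component pieces have length of order $M$ (and not merely of order $L$) and that the block decomposition of $P$ can be put in the strictly alternating form required by \cref{lem: quasiconvexity small overlaps}, with \cref{lem: principal flow lines diverge} handling the case of two flow-line pieces that would otherwise be adjacent.
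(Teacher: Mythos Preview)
Your approach is essentially the same as the paper's: decompose a geodesic of $N(A(W))^1$ into $\alpha$-pieces in component approximations and $\beta$-pieces along principal flow lines, invoke \cref{prop: approximations_quasiconvex} and \cref{prop: quasiconvexity_fwd_ladder} for uniform quasigeodesic constants, and feed the whole thing into \cref{lem: quasiconvexity small overlaps}. The key overlap bound in both arguments comes from \cref{lem: periodic_divergence_principal_lines}: bust endpoints are chosen so that their forward flow rays diverge from every principal flow line, which caps the coarse overlap between any slope approximation and any $\beta$-segment.

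One point where you diverge from the paper and where your argument is not quite right: you claim each $\beta_i$ has length $\geq \tfrac12 M$. This need not hold. A single component $C$ of $W'_u$ (for instance $W_u$ itself) may meet a non-absorbed principal flow line $\Lambda$ at several singular vertices spaced only $L$ apart, and the $W(v)$'s attached at those vertices produce further intersection points on $\Lambda$ offset from one another by multiples of $L$, not $M$. So the exposed $\beta$-segments are only guaranteed to have length of order $L$. The paper handles this by absorbing the length condition into the choice of $L_2$: since the divergence constant between bust-endpoint flow rays and principal flow lines is independent of $L$, the overlap bound $B_R$ is uniform, the threshold $L_0$ of \cref{lem: quasiconvexity small overlaps} is then fixed, and one simply takes $L_2 \geq L_0$ so that $\beta_i \geq L \geq L_0$. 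Your argument is easily repaired this way; the parameter $M_0$ plays no essential role in this particular step.

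Your self-identified ``main obstacle'' is a legitimate concern that the paper's proof glosses over entirely (it asserts the overlap bound in one sentence). Your proposed resolution via hyperbolicity is correct in spirit: an unbounded coarse overlap of the quasiconvex $N(A(C))^1$ with $\Lambda$ would force a boundary point in common, hence a forward flow segment of $A(C)$ fellow-travelling $\Lambda$ indefinitely, contradicting the divergence guaranteed by \cref{lem: periodic_divergence_principal_lines}.
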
}

{\begin{proof}
    We will prove the statement by again using \cref{lem: quasiconvexity small overlaps}. Let $P$ be a path in $N(A(W))^1$ such that $P$ is a concatenation $\alpha_0\beta_1\alpha_1\cdots\beta_k\alpha_k$, with each $\alpha_i$ a geodesic in the approximation of a component subgraph of $\overline{W}$ and each $\beta_i$ is a geodesic segment in a principal flow line. By \cref{prop: approximations_quasiconvex}, each $\alpha_i$ is a $(\kappa_1,\kappa_2)$-quasigeodesic, while \cref{prop: quasiconvexity_fwd_ladder} ensures that each $\beta_i$ is a $(\lambda_1,\lambda_2)$-quasigeodesic. We choose tunnel length $L$ large enough so that, in length smaller than $L$, $\alpha_i$ and $\beta_j$ diverge from each other:  
    recall that primary bust endpoints are chosen so that their flow rays are not asymptotic to any principal flow line (\cref{lem: periodic_divergence_principal_lines}). By choosing $L$ larger than the minimal divergence distance between the flow ray of any primary bust endpoint and a principal flow line, we obtain the required bounds. 
\end{proof}}

We also observe that since any immersed wall $W_u$ (respectively its $M$-saturation $W$) of tunnel length $L$ is at Hausdorff distance $L$ from its approximation $A(W_u)$ (respectively $A(W)$), we have:

\begin{lemma}\label{lem: walls_and_approximations_fellow_travel}
For any wall $W_u$ (with $M$-saturation $W$), the limit sets in $\partial \wt X^1$ of $N(W_u)^1$ and $N(A(W_u))^1$ (respectively, $N(W)^1$ and $N(A(W))^1$) coincide. \qed
\end{lemma}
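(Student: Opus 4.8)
The plan is to obtain the statement from two ingredients: the Hausdorff bound recorded in the sentence preceding the lemma, namely that $W_u$ (resp.\ its $M$-saturation $W$) lies at Hausdorff distance at most $L$ from its approximation $A(W_u)$ (resp.\ $A(W)$) in $\wt X$; and the standard fact that in a Gromov-hyperbolic space two subsets at finite Hausdorff distance have the same limit set in the boundary. Since $\wt X^1$ is $\delta$-hyperbolic by \cref{thm: hyperbolicity_graph_and_cone-off}, this will suffice.

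First I would pass from the carrier $1$-skeleta to the walls themselves. By construction $N(W_u)$ is the smallest subcomplex of $\wt X$ containing $W_u$, so each of its cells is a face of a cell of $\wt X$ whose interior meets $W_u$; since every cell of $\wt X$ has diameter at most the universal constant $D$ bounding the diameters of $2$-cells (as used for \cref{prop: quasiconvexity_fwd_ladder}), every point of $N(W_u)^1$ lies within $D$ of $W_u$, and conversely, each point of $W_u$ lying in a closed cell of $N(W_u)$, it lies within $D$ of $N(W_u)^1$. Hence the Hausdorff distance between $N(W_u)^1$ and $W_u$ is at most $D$, and likewise between $N(A(W_u))^1$ and $A(W_u)$. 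The triangle inequality for Hausdorff distance then yields $d_{\mathrm{Haus}}\!\left(N(W_u)^1, N(A(W_u))^1\right) \leq L + 2D$. The same computation applies verbatim to the saturation, once one notes that the approximation $A$ displaces each principal flow line only along itself, by $L-\frac{1}{2}$, so that $d_{\mathrm{Haus}}(W, A(W)) \leq L$ still holds; hence $d_{\mathrm{Haus}}\!\left(N(W)^1, N(A(W))^1\right) \leq L+2D$.

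Finally I would invoke the limit-set principle. If $\xi \in \partial \wt X^1$ is an accumulation point of a sequence $(z_n)$ in $N(W_u)^1$, I choose $z_n' \in N(A(W_u))^1$ with $d(z_n, z_n') \leq L+2D$; since $(z_n')$ stays within bounded distance of $(z_n)$, the Gromov products $(z_n' \mid z_m')_o$ (with respect to any fixed basepoint $o$) differ from $(z_n \mid z_m)_o$ by a bounded amount, so $(z_n')$ also converges to $\xi$, which therefore lies in the limit set of $N(A(W_u))^1$. The reverse inclusion is symmetric, and the case of $W$ versus $A(W)$ is identical. I do not anticipate a genuine obstacle: the only step calling for a careful sentence is the passage from the carrier $1$-skeleta to the walls and saturations --- and, for the latter, the remark that the approximation moves principal flow lines along themselves so the bound $L$ is not lost --- after which the conclusion is purely formal.
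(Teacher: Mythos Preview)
Your proposal is correct and takes essentially the same approach as the paper: the lemma is marked with a \qed immediately after the statement, and the paper's entire argument is the preceding sentence recording that $W_u$ (resp.\ $W$) lies at Hausdorff distance $L$ from $A(W_u)$ (resp.\ $A(W)$). You have simply spelled out the passage from the walls to their carrier $1$-skeleta and the standard limit-set fact in a hyperbolic space, which the paper leaves implicit.
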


{\begin{lemma} \label{lem: busts_same_component} Let $d, d'$ be primary busts such that for some $m,m' \in \bbZ$, $d \subset T_{mL}$ and $d' \subset T_{m'L}$, and one of the endpoints of each of $d,d'$ is contained in $W_u$. Then the interiors of $d$ and $d'$ lie in the same complementary component of $W_u$. \end{lemma}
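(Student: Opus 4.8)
The plan is to exploit the description of $W_u$ as a wall (Proposition~\ref{prop: long_tunnels_walls}): since $\wt X\setminus W_u$ has exactly two complementary components, to decide whether $d$ and $d'$ lie in the same one it suffices to find a path in $\wt X$ from (the chosen endpoint of) $d$ to (the chosen endpoint of) $d'$ that avoids $W_u$, or equivalently to exhibit a continuous transversal orientation on $W_u$ and check that $d$ and $d'$ sit on the same side of it. I would build such a path by moving along the trees $T_{mL}$, using the fact that primary busts were chosen to be small arcs in the interiors of edges, clustered near the fixed finite set of periodic points, and that $W_u$ itself meets each tree $T_{iL+\frac12}$ in nuclei which are parallel copies — pushed off by $\frac12$ — of the complement of the busts. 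So near $T_{mL}$ the wall $W_u$ sits on the ``$+\frac12$ side'' of the bust arcs, and the entire collection of bust endpoints lying in $T_{mL}$ is accessible from one side.

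The key steps, in order, are: (1) Recall that one endpoint of $d$ and one endpoint of $d'$ lie in $W_u$; by construction of $\overline W$ and of the slopes, the endpoint of $d$ that lies in $W_u$ is joined, via a slope $\mathbf S$, to an endpoint of the parallel copy $d'_{k,mL}$ in $T_{mL+\frac12}$, and this slope has its interior disjoint from $W_u$ except at midpoints, with the local-homeomorphism picture of \cref{rmk: local_homeo_wall} giving a well-defined side. (2) Since $d\subset T_{mL}$ is an arc in the interior of an edge of $T_{mL}$, I would note that $T_{mL}$ itself meets $W_u$ only in the (finitely many, near the $T_{mL}$ data) levels and forward endpoints flowing into $T_{mL}$, and those are disjoint from the interior of $d$ and from a neighbourhood of $d$ inside $T_{mL}$ chosen small enough (here \cref{lem: choice_of_busts}(2),(3) and \cref{lem: periodic_divergence_principal_lines} keep the relevant pieces of $W_u$ away). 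Hence a neighbourhood of $d$ in $T_{mL}$, minus $W_u$, is connected, and lies on one definite side of $W_u$. (3) Now connect $T_{mL}$ to $T_{m'L}$: walk along the sequence of trees $T_{mL}, T_{(m+1)L},\dots$ (or in the other direction), at each stage crossing from $T_{iL}$ to $T_{(i+1)L}$ through a $2$-cell (a rectangle $R_{e}$ and its forward images) that does \emph{not} belong to the forward ladder $N(W_u)^1$ of $W_u$ — such a $2$-cell exists because $W_u$ occupies only the $2$-cells hit by its levels and slopes, which form a proper (indeed ``thin'') subcomplex in each slab $T_{iL}\times[0,L]$, as the busts are strict subintervals of edges and $f^L$ of a bust does not wrap around (\cref{lem: choice_of_busts}(5)). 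Concatenating these local moves produces a path from $d$ to $d'$ in $\wt X\setminus W_u$; by Proposition~\ref{prop: long_tunnels_walls} the two complementary components are the only options, so $d$ and $d'$ lie in the same one.

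I expect the main obstacle to be step~(3): making precise and verifying that between consecutive trees there is always a ``free'' $2$-cell not contained in $N(W_u)^1$ through which one can pass while staying on a fixed side of the wall. Because $\wt X$ is not locally finite, one has to be a little careful that the subcomplex swept out by the levels and slopes of $W_u$ in a given slab really is proper and does not disconnect that slab; this is where one leans on \cref{lem: vertical_locally_finite}, on the smallness/position of busts from \cref{lem: choice_of_busts}, and on the fact that a single wall $W_u$, being one component of $\overline W$, only carries finitely many slopes per $G$-orbit and hence meets each slab in a controlled way. The orientation bookkeeping — checking that crossing these free $2$-cells never flips the side — is routine once the local-homeomorphism product structure $W_u\times[-1,1]\to\wt X$ of \cref{rmk: local_homeo_wall} (which is a genuine homeomorphism for long tunnels, by the remark after \cref{prop: long_tunnels_walls}) is invoked, since a transversal $[-1,1]$-coordinate is then globally defined and each bust endpoint in $W_u$ determines a consistent sign for the nearby bust interior.
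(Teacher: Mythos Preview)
Your approach differs substantially from the paper's, and step~(3) has a genuine gap. The paper does not attempt to build a path in $\wt X\setminus W_u$ at all; instead it takes a path $P$ \emph{inside} $W_u$ from the given endpoint of $d$ to that of $d'$ (such a path exists since $W_u$ is connected), and verifies inductively, edge by edge along $P$, that the primary busts adjacent to $P$ stay on a fixed side. The local check is: from $d^+$ the first edge of $P$ (a slope or a level segment) leads to a nucleus in a fractional tree $T_{\frac12}$ or $T_{-L+\frac12}$; all secondary busts adjacent to that nucleus lie on the same side $C$ as $d$; flowing such a secondary bust forward along a level to its primary bust in the next integer tree does not cross $W_u$; and traversing a slope exchanges a fractional-tree bust (not in $C$) for its parallel copy in the integer tree (in $C$). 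Induction on the length of $P$ finishes.

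Your step~(3), by contrast, does not work as stated: locating in each slab a $2$-cell disjoint from $W_u$ tells you nothing about which of the two complementary components that cell lies in, so stringing such cells together could hop between sides. The closing appeal to the product structure $W_u\times[-1,1]\hookrightarrow\wt X$ does not rescue this, because that embedding only assigns a \emph{local} sign near each point of $W_u$; to conclude that the interiors of $d$ and $d'$ carry the same sign you must compare the signs at two far-apart points of $W_u$, and that comparison is precisely what the paper's inductive walk along $W_u$ carries out --- it is not ``routine.'' In short, you have identified the correct obstacle but not resolved it; the paper's trick of moving along the wall rather than through its complement is what makes the argument go.
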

\begin{proof} 
For the purposes of this argument, we endow $W_u$ with a graph structure where the vertices are vertices of nuclei and the endpoints of various primary and secondary busts. Edges consist of forward flow segments in levels, slopes, and segments in nuclei between vertices. 
Let us assume that $d \subset T_0$ and let $P$ be a (geodesic) path in $W_u$ between the endpoints of $d$ and $d'$. The edge of $P$ with endpoint $d^+$ (say), which is either a slope starting at $d^+$ or a forward flow segment ending at $d^+$, leads to a nucleus $\mathbf{N}$ in a ``fractional'' tree $T_{\frac{1}{2}}$ or $T_{-L + \frac{1}{2}}$. Except for the  central vertex, every vertex of this nucleus is the endpoint of either a primary bust or of a secondary bust. In the latter case, note that each such secondary bust (in this fractional copy of $T$) meets the same complementary component $C$ as $d$ (see \cref{fig:components}), and thus flowing such a secondary bust to a primary bust in $T_L$ or $T_0$ (along an edge of $W_u$ without crossing it) will not lead to a change of components. In the former case, the primary bust  in the fractional copy, with an endpoint in $\mathbf{N}$, is not in $C$, but travelling back along the slope will lead to its parallel copy (in either $T_0$ or $T_{-L}$), and this primary bust is contained in $C$. One can now proceed by induction on the distance between $d$ and $d'$ to obtain the desired result.\end{proof}}
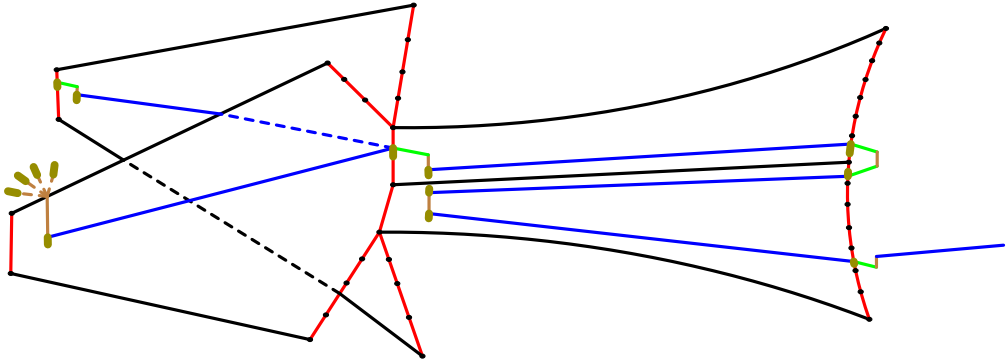
\begin{figure}
    \centering
    \begin{tikzpicture}[xscale = .65, yscale = 0.5, line cap=round,line join=round,>=triangle 45,x=1cm,y=1cm]
\draw [line width=1.2pt,color=red] (-0.52,2.72)-- (-0.52,1.2);
\draw [line width=1.2pt,color=red] (-0.52,1.2)-- (-0.8,-0.06);
\draw [line width=1.2pt] (-0.52,1.2)-- (8.8,1.8);
\draw [shift={(-0.19777777777777658,22.053333333333324)},line width=1.2pt]  plot[domain=4.695723856670749:5.241356869229838,variable=\t]({1*19.336018332073213*cos(\t r)+0*19.336018332073213*sin(\t r)},{0*19.336018332073213*cos(\t r)+1*19.336018332073213*sin(\t r)});
\draw [shift={(-0.6231307309405295,-22.094124967251766)},line width=1.2pt]  plot[domain=1.1077142286854866:1.578823215572557,variable=\t]({1*22.034834826038637*cos(\t r)+0*22.034834826038637*sin(\t r)},{0*22.034834826038637*cos(\t r)+1*22.034834826038637*sin(\t r)});
\draw [shift={(21.45194178082192,0.9601472602739729)},line width=1.2pt,color=red]  plot[domain=2.7872249919041923:3.4081614624309373,variable=\t]({1*12.679786411834097*cos(\t r)+0*12.679786411834097*sin(\t r)},{0*12.679786411834097*cos(\t r)+1*12.679786411834097*sin(\t r)});
\draw [line width=1.2pt,color=red] (-0.52,2.72)-- (-1.86,4.44);
\draw [line width=1.2pt,color=red] (-0.52,2.72)-- (-0.1,5.98);
\draw [line width=1.2pt,color=red] (-0.8,-0.06)-- (-2.22,-2.92);
\draw [line width=1.2pt,color=red] (-0.8,-0.06)-- (0.08,-3.36);
\draw [line width=1.2pt] (-1.86,4.44)-- (-8.32,0.44);
\draw [line width=1.2pt,color=red] (-8.32,0.44)-- (-8.34,-1.16);
\draw [line width=1.2pt] (-8.34,-1.16)-- (-2.22,-2.92);
\draw [line width=1.2pt] (0.08,-3.36)-- (-1.6117156008892224,-1.6948638158754759);
\draw [line width=1.2pt,dashed] (-1.6117156008892224,-1.6948638158754759)-- (-6.032867545676891,1.8561810862681796);
\draw [line width=1.2pt] (-6.032867545676891,1.8561810862681796)-- (-7.36,2.94);
\draw [line width=1.2pt,color=red] (-7.36,2.94)-- (-7.4,4.26);
\draw [line width=1.2pt] (-7.4,4.26)-- (-0.1,5.98);
\draw [line width=1.2pt,color=blue] (0.2148309541697958,0.9897520661157008)-- (8.780717180349555,1.4260341286407408);
\draw [line width=1.2pt,color=blue] (0.20037791829867577,1.5988337066301155)-- (8.841394994453214,2.283436041507512);
\draw [line width=1.2pt,color=green] (8.780717180349555,1.4260341286407408)-- (9.380871525169045,1.695987978963182);
\draw [line width=1.2pt,color=green] (8.841394994453214,2.283436041507512)-- (9.380871525169045,2.07164537941397);
\draw [line width=1.2pt,color=brown] (9.380871525169045,2.07164537941397)-- (9.380871525169045,1.695987978963182);
\draw [line width=1.2pt,color=brown] (0.20037791829867577,1.5988337066301155)-- (0.19980465815176335,1.9965138993238134);
\draw [line width=1.2pt,color=brown] (0.2148309541697958,0.9897520661157008)-- (0.2148309541697949,0.43377911344853415);
\draw [line width=1.2pt,color=blue] (0.2148309541697949,0.43377911344853415)-- (8.90106394592467,-0.843307486376901);
\draw [line width=1.2pt,color=green] (8.90106394592467,-0.843307486376901)-- (9.365845229150997,-0.9937190082644612);
\draw [line width=1.2pt,color=brown] (9.365845229150997,-0.9937190082644612)-- (9.365845229150997,-0.7082193839218622);
\draw [line width=1.2pt,color=blue] (9.365845229150997,-0.7082193839218622)-- (11.965394440270453,-0.4076934635612316);
\draw [line width=1.2pt,color=green] (0.19980465815176335,1.9965138993238134)-- (-0.52,2.1768294515401916);
\draw [line width=1.2pt,color=blue] (-0.52,2.1768294515401916)-- (-7.58382,-0.19733);
\draw [line width=1.2pt,dashed,color=blue] (-0.52,2.1768294515401916)-- (-4.051438033936354,3.0830724248072117);
\draw [line width=1.2pt,color=blue] (-4.051438033936354,3.0830724248072117)-- (-6.988842116417383,3.5979101899766732);
\draw [line width=1.2pt,color=brown] (-6.988842116417383,3.5979101899766732)-- (-6.9796646306753365,3.808992362043771);
\draw [line width=1.2pt,color=green] (-6.9796646306753365,3.808992362043771)-- (-7.389919506151647,3.9273437030043175);
\draw [line width=1.2pt,color=brown] (-7.58382,-0.19733)-- (-7.595587973547305,0.8885523383608023);
\draw [line width=1.2pt,dashed,color=brown] (-7.595587973547305,0.8885523383608023)-- (-8.094710743801643,1.3503831705484575);
\draw [line width=1.2pt,dashed,color=brown] (-7.595587973547305,0.8885523383608023)-- (-7.809211119459045,1.4856198347107412);
\draw [line width=1.2pt,dashed,color=brown] (-7.595587973547305,0.8885523383608023)-- (-7.4485800150262875,1.560751314800899);
\draw [line width=1.2pt,dashed,color=brown] (-7.595587973547305,0.8885523383608023)-- (-8.290052592036053,0.9897520661157008);
\draw [line width=3pt,color=olive] (-0.52,2.176829451540191)-- (-0.52,1.9512692783774392);
\draw [line width=3pt,color=olive] (0.2003779182986758,1.5988337066301155)-- (0.20471660821812673,1.4510489497810455);
\draw [line width=3pt,color=olive] (8.841394994453214,2.283436041507512)-- (8.817793606615815,2.034988347103665);
\draw [line width=3pt,color=olive] (8.780717180349555,1.4260341286407408)-- (8.78564413497464,1.5448586391065997);
\draw [line width=3pt,color=olive] (0.2148309541697958,0.9897520661157008)-- (0.21967106719130067,1.0936011635240008);
\draw [line width=3pt,color=olive] (0.21483095416979492,0.43377911344853426)-- (0.20851155836871038,0.3068557915313822);
\draw [line width=3pt,color=olive] (8.90106394592467,-0.843307486376901)-- (8.909972322846464,-0.9042522583227015);
\draw [line width=3pt,color=olive] (-7.58382,-0.19733)-- (-7.581153710309237,-0.389128312212138);
\draw [line width=3pt,color=olive] (-6.988842116417371,3.5979101899766723)-- (-6.992657834914038,3.4494557594562334);
\draw [line width=3pt,color=olive] (-7.389919506151647,3.92734370300432)-- (-7.386104345641894,3.8014434061824933);
\draw [line width=3pt,color=olive] (-8.199298750449753,0.9765271100607313)-- (-8.397175941270673,1.02861389047313);
\draw [line width=3pt,color=olive] (-8.035772190798902,1.2958482092055346)-- (-8.195624588146693,1.4451533535960233);
\draw [line width=3pt,color=olive] (-7.8002686177868865,1.4606259275116256)-- (-7.873142423148325,1.6735782204698681);
\draw [line width=3pt,color=olive] (-7.468077636493243,1.471597766074962)-- (-7.4431662031505015,1.7273252479695962);

\draw [fill=black] (-0.52,2.72) circle (1.5pt);
\draw [fill=black] (-0.52,1.2) circle (1.5pt);
\draw [fill=black] (-0.8,-0.06) circle (1.5pt);
\draw [fill=black] (8.8,1.8) circle (1.5pt);
\draw [fill=black] (9.22,-2.38) circle (1.5pt);
\draw [fill=black] (9.56,5.36) circle (1.5pt);
\draw [fill=black] (9.423604362828659,4.9721395843737906) circle (1.5pt);
\draw [fill=black] (9.276581930151853,4.500994045496966) circle (1.5pt);
\draw [fill=black] (9.14182335724371,3.9995498080147316) circle (1.5pt);
\draw [fill=black] (9.033516373713434,3.5213371903289667) circle (1.5pt);
\draw [fill=black] (8.941100806379133,3.023116373454311) circle (1.5pt);
\draw [fill=black] (8.866507102648233,2.504108856673259) circle (1.5pt);
\draw [fill=black] (8.775256123400037,1.2405473184889635) circle (1.5pt);
\draw [fill=black] (8.775252853926688,0.6798950516038695) circle (1.5pt);
\draw [fill=black] (8.804226825831217,0.058876009741069324) circle (1.5pt);
\draw [fill=black] (8.854539263970647,-0.48291659076030147) circle (1.5pt);
\draw [fill=black] (8.937946667498558,-1.083601685524325) circle (1.5pt);
\draw [fill=black] (9.04323604851519,-1.647726490265102) circle (1.5pt);
\draw [fill=black] (-1.86,4.44) circle (1.5pt);
\draw [fill=black] (-0.1,5.98) circle (1.5pt);
\draw [fill=black] (-2.22,-2.92) circle (1.5pt);
\draw [fill=black] (0.08,-3.36) circle (1.5pt);
\draw [fill=black] (-8.32,0.44) circle (1.5pt);
\draw [fill=black] (-8.34,-1.16) circle (1.5pt);
\draw [fill=black] (-7.36,2.94) circle (1.5pt);
\draw [fill=black] (-7.4,4.26) circle (1.5pt);
\draw [fill=black] (-1.5204055532183423,4.0041026503996635) circle (1.5pt);
\draw [fill=black] (-1.092417332772402,3.4547446360959193) circle (1.5pt);
\draw [fill=black] (-0.41919067012217703,3.5024724176231024) circle (1.5pt);
\draw [fill=black] (-0.32884635320251765,4.20371640133284) circle (1.5pt);
\draw [fill=black] (-0.21855164753794898,5.059813402443539) circle (1.5pt);
\draw [fill=black] (-0.6061410788381743,-0.7869709543568466) circle (1.5pt);
\draw [fill=black] (-0.43485477178423243,-1.4292946058091285) circle (1.5pt);
\draw [fill=black] (-0.19485477178423238,-2.3292946058091286) circle (1.5pt);
\draw [fill=black] (-1.1540251078854453,-0.7730364848960376) circle (1.5pt);
\draw [fill=black] (-1.4722298940761083,-1.4139278148293448) circle (1.5pt);
\draw [fill=black] (-1.893828952530404,-2.263063946645743) circle (1.5pt);
\end{tikzpicture}
    \caption{The busts in olive are all in a single complementary component.}
    \label{fig:components}
\end{figure}

\begin{prop}\label{prop: walls_cocompact_stabilisers}
Let $W_u \in \mathcal{W}$ be an immersed wall with tunnel length at least $L_2$ (from \cref{prop: long_tunnels_walls}). Then $\mathrm{Stab}(W_u) < \Gamma$ is a codimension-1 subgroup that stabilises each complementary component of $W_u$ and acts cocompactly on $W_u$.
\end{prop}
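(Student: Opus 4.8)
The plan is to prove the three assertions --- that $\mathrm{Stab}(W_u)$ acts cocompactly on $W_u$, that it stabilises each of the two complementary components of $W_u$, and that it is a codimension-$1$ subgroup of $\Gamma$ --- in that order. The main inputs will be: for tunnel length $L\geq L_2\geq L_1$ the immersion $W_u\to\wt X$ is a genuine embedding and $\wt X\setminus W_u$ has exactly two deep components, call them $C^+$ and $C^-$ (\cref{prop: approximations_trees}, \cref{prop: long_tunnels_walls}); the carrier $N(W_u)^1$ is uniformly quasiconvex in the hyperbolic space $\wt X^1$ (\cref{prop: approximations_quasiconvex} together with \cref{lem: walls_and_approximations_fellow_travel}); and \cref{lem: busts_same_component}. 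Throughout I treat $W_u$ as a subspace of $\wt X$ and set $\mathrm{Stab}(W_u)=\{\gamma\in\Gamma:\gamma W_u=W_u\}$.

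For cocompactness, the point is that $\overline W$ is assembled (equivariantly under a finite-index subgroup of $\Gamma$, hence with finitely many $\Gamma$-orbits of cells) from a bounded amount of data: one primary bust per edge of the fundamental domain $D$, the finitely many secondary busts it produces under $f^{-L}$, the nuclei (each a subinterval of an edge or a piece of a vertex star), the levels (forward flow segments of length $L-\frac{1}{2}$) and the slopes. Thus $\Gamma\backslash\overline W$ is compact, and since $W_u$ is a connected component of $\overline W$ with setwise stabiliser $\mathrm{Stab}(W_u)$, the natural map $\mathrm{Stab}(W_u)\backslash W_u\to\Gamma\backslash\overline W$ is injective; hence $\mathrm{Stab}(W_u)$ acts cocompactly on $W_u$.

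The second assertion is the step I expect to be the main obstacle: a priori nothing prevents an element of $\mathrm{Stab}(W_u)$ from interchanging $C^+$ and $C^-$, and \cref{lem: busts_same_component} is exactly what rules this out. First I would observe that, being connected and propagating in both time directions along the alternating pattern nucleus--level--primary bust--slope--nucleus, $W_u$ contains a non-degenerate arc inside the tree-slice $T_s$ (for $s\in\frac{1}{2}\mathbb{Z}$) precisely when $s\in\frac{1}{2}+L\mathbb{Z}$ --- these arcs are exactly the nuclei, while levels and slopes meet each tree-slice in a single point --- and this description is intrinsic to $W_u$. Since $\gamma=gt^n\in\Gamma$ shifts the layer index by $n$, the relation $\gamma W_u=W_u$ forces $n\in L\mathbb{Z}$, i.e. $\mathrm{Stab}(W_u)\leq G\rtimes_\phi\langle t^L\rangle$. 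This subgroup preserves the entire family of primary busts (it is $G$-invariant by the equivariant choice, and carried to itself by $t^L$), so $\mathrm{Stab}(W_u)$ permutes the primary busts that have an endpoint on $W_u$. By \cref{lem: busts_same_component} all of these busts have interior contained in a single component of $\wt X\setminus W_u$, say $C^+$; hence for $\gamma\in\mathrm{Stab}(W_u)$ the set $\gamma C^+$ is one of $C^\pm$ and it meets $C^+$ (it contains $\gamma$ of the interior of such a bust), so $\gamma C^+=C^+$ and $\gamma C^-=C^-$. The delicate point inside this step is the claim that $\mathrm{Stab}(W_u)$ really does carry primary busts to primary busts --- equivalently, that the combinatorial type (nucleus point, level point, slope point, bust endpoint) of a point of $W_u$ is intrinsic --- and this is exactly where the tree-like structure of $W_u$ and of its approximation for long tunnels (\cref{prop: approximations_trees}, \cref{prop: long_tunnels_walls}) is needed.

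Finally, for codimension-$1$: by cocompactness $N(W_u)^1$ lies at finite Hausdorff distance from an orbit of $\mathrm{Stab}(W_u)$, it is uniformly quasiconvex in $\wt X^1$, and $W_u$ separates $\wt X$ into the two deep components $C^\pm$, each preserved by $\mathrm{Stab}(W_u)$ by the previous step. From here the conclusion that $\mathrm{Stab}(W_u)$ is a codimension-$1$ subgroup of $\Gamma$ follows by the now-standard argument for cocompactly stabilised quasiconvex walls in a (relatively) hyperbolic space, as in \cite{hagen_wise_irreducible} and its relative counterpart in \cite{relative_cubulation}: removing a sufficiently large neighbourhood of $\mathrm{Stab}(W_u)$ from $\Gamma$ leaves at least the two distinct orbits of deep components corresponding to $C^+$ and $C^-$. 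The only extra care beyond the hyperbolic case is that $\Gamma$ acts on $\wt X$ with parabolic point stabilisers, so one phrases the separation in terms of the relatively quasiconvex subgroup $\mathrm{Stab}(W_u)$ and its limit set in the Bowditch boundary --- which is precisely the setting of \cref{thm: bdry_criterion} and is routine here.
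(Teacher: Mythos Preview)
Your proposal is essentially correct and follows the same overall strategy as the paper: cocompactness via the finite-index subgroup $\Gamma_L=G\rtimes_\phi\langle t^L\rangle$ acting on $\overline W$ with compact quotient, together with the observation that $\gamma\in\Gamma_L$ lies in $\mathrm{Stab}(W_u)$ iff $\gamma x\in W_u$; non-flipping of components via \cref{lem: busts_same_component}. Your treatment of the second step is in fact more explicit than the paper's (you spell out why $\mathrm{Stab}(W_u)\leq\Gamma_L$ and hence why it permutes primary busts, which the paper leaves implicit).

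The one place where the paper's argument is cleaner is the codimension-$1$ step. The paper observes that there is a continuous $\Gamma$-equivariant collapse map from a Cayley graph of $\Gamma$ onto $\wt X^1$ (sending each free-factor coset to the corresponding singular vertex); since $W_u$ separates $\wt X$ into two deep components, its preimage separates the Cayley graph, and codimension-$1$ follows directly. Your route via quasiconvexity and the Bowditch boundary is a detour: the action of $\Gamma$ on $\wt X$ is not proper (singular vertices have infinite stabiliser), so the orbit map is not a quasi-isometry and the ``standard'' transfer requires exactly the collapse-map observation to work. Invoking the Bowditch boundary here is unnecessary --- codimension-$1$ is a Cayley-graph statement --- and conflates this step with the later boundary-criterion arguments.
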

\begin{proof}
{First, complementary components are not flipped: as a consequence of \cref{lem: busts_same_component}, if $\gamma \in \mathrm{Stab}(W_u)$ and $C$ is a complementary component of $W_u$, we have that $\gamma C = C$.}

{Let us show cocompactness. Recall (from \cref{sec: immersed_walls}) that $W_u$ is a component of the graph $\overline{W}$. For a point $x \in \wt{X}$ such that $x \in \overline{W}$ (the image of $\overline{W}$, to be precise), denote by $W(x)$ the component of $\overline{W}$ that contains $x$. Observe that for two points $x,y \in \overline{W}$, $W(x) = W(y)$ if and only if $y \in W(x)$ (and $x \in W(y)$). }

{Let $L$ be the tunnel length of $W_u$. Denote by $\Gamma_L$ the subgroup of $\Gamma$ generated by $G$ and $t^L$. We first note that, by the equivariant choice of primary busts, and the fact that no primary bust intersects any secondary bust (\cref{lem: choice_of_busts}(1)), the action of $\Gamma_L$ on the flow space restricts to an action on $\overline{W}$.}

{Let $x \in W_u$. Let us denote by $H < \Gamma$ the stabiliser of $W_u$. For an element $\gamma \in \Gamma_L$, we observe that $\gamma \in H$ if and only if $\gamma x \in W_u$. Indeed, $\gamma x \in W_u$ if and only if $W_u = W(x) = W(\gamma x)$, with the latter being equal to $\gamma W(x)$.
Let us denote by $X$ the compact quotient of $\wt X$ by the action of $\Gamma$. Observe that $X$ is the mapping torus of the compact graph $\leftQ{T}{G}$ by the map induced by $f$. There are finitely many images in $X$ of busts and nuclei of $W_u$ and hence the image of $W_u$ in $X$ is compact. This implies that $H$ acts cocompactly on $W_u$.}

{Finally, we show that $\mathrm{Stab}(W_u)$ has   codimension-1. There is a continuous equivariant collapse map from a Cayley graph of $\Gamma$ to $\wt X^1$ which crushes free factors of $G$ to points. Since $W_u$ separates $\wt X$ into deep components, so does its preimage in $\Gamma$, giving us the desired result.} 
\end{proof}

{Note that if $W_u$ is a wall, then any $M$-saturation of $W_u$ separates $\wt X^1$ into several deep components and therefore separates the boundary $\partial \wt X^1$. In fact:}

{\begin{prop} \label{prop: saturated_walls_stabilisers}
Let $W_u \in \mathcal{W}$ be a wall with tunnel length at least $L_2$ (from \cref{prop: saturation approximations quasiconvex}). Then there exists $L_3 \geq L_2$ such that for all $L > L_3$ and for all $M$ large enough, the stabiliser of the $M$-saturation $W$ of $W_u$ is a relatively quasiconvex full subgroup. Further, $W$ separates the flow space into at least two deep components.
\end{prop}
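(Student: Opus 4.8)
The plan is to establish, in turn, that $H := \mathrm{Stab}(W)$ is relatively quasiconvex, codimension one, and full; the first two are short consequences of what is already proved, and fullness is the point of the saturation construction and the main work. I would first record that $H$ acts cocompactly on $W$, following the template of \cref{prop: walls_cocompact_stabilisers}: fix $M \in L\bbN$ and let $\Gamma_M \leq \Gamma$ be the finite-index subgroup generated by $G$ and $t^M$; it preserves $\overline W$ and the family of principal flow lines, so the $M$-saturation is $\Gamma_M$-equivariant and $W$ is assembled from finitely many $\Gamma_M$-orbits of busts, nuclei, slopes, levels and principal flow lines, the last being $\Gamma_M$-cocompact since their full stabilisers are maximal parabolics acting cocompactly on the lines; thus the image of $W$ in $\wt X/\Gamma_M$ is compact, and fixing $x \in W_u$ one has, for $\gamma \in \Gamma_M$, that $\gamma W = W$ iff $\gamma x \in W$ (as $W$ is then the $M$-saturation of the component of $\overline W$ through $\gamma x$, and $W$ is already $M$-saturated), whence $H \cap \Gamma_M$, and so $H$, acts cocompactly on $W$ and on $N(W)^1$.

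For relative quasiconvexity: by \cref{prop: saturation approximations quasiconvex} the inclusion $N(A(W))^1 \hookrightarrow \wt X^1$ is, for $L$ and $M$ large, a quasi-isometric embedding of a connected graph into the hyperbolic space $\wt X^1$, hence has quasiconvex image by the Morse lemma, and since $W$ and $A(W)$ are parallel at distance $L$ (\cref{lem: walls_and_approximations_fellow_travel}) the set $N(W)^1$ is quasiconvex in $\wt X^1$; coning $\wt X^1$ off over the principal flow lines yields a hyperbolic graph realising the relative hyperbolic structure of $(\Gamma,\scrP)$ (cf. \cref{thm: hyperbolicity_graph_and_cone-off}), in which the coned-off $N(W)^1$ is quasiconvex and $H$-cocompact, so $H$ is relatively quasiconvex by the standard criterion \cite{hruska_relative_quasiconvex}. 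For codimension one: $W \supseteq W_u$, a wall (\cref{prop: long_tunnels_walls}) whose stabiliser is codimension one and does not flip its two complementary components (\cref{prop: walls_cocompact_stabilisers}), and $W \setminus W_u$ is a union of finitely many orbits of one-dimensional components and flow lines, so $\wt X \setminus \calN_r(W)$ retains at least two deep components; the same bust-tracking as in \cref{lem: busts_same_component}, applied to $W$, shows $H$ does not interchange them, and pulling back through the equivariant collapse of a Cayley graph of $\Gamma$ onto $\wt X^1$ gives codimension one.

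Fullness is the crux, and the main obstacle. Up to conjugacy every maximal parabolic is $P_\Lambda = \mathrm{Stab}_\Gamma(\Lambda)$ for a principal flow line $\Lambda$, a suspension of the stabiliser $G_{v_0}$ of a singular vertex $v_0$ on $\Lambda$; replacing $W$ by a conjugate, it suffices to show $H \cap P_\Lambda$ is trivial or of finite index in $P_\Lambda$. The key computation is that the limit set of $N(W)^1$ in $\partial \wt X^1$ equals that of $A(W'_u)$ together with the endpoint pairs of the lines $\Lambda_v$, $v$ singular in $W'_u$, and that the limit set of $A(W'_u)$ meets the endpoint pair of no principal flow line — exactly because bust endpoints were chosen near periodic points whose periodic flow lines diverge from every principal flow line (\cref{lem: periodic_divergence_principal_lines}, used in \cref{prop: saturation approximations quasiconvex}), together with \cref{lem: principal flow lines diverge}; hence the endpoints of $\Lambda$ lie in the limit set of $N(W)^1$ precisely when $\Lambda \subseteq W$. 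If now $H \cap P_\Lambda$ is infinite then $\Lambda \subseteq W$: if the intersection contains an element loxodromic on $\wt X^1$, its axis is asymptotic to $\Lambda$ in both directions; if it is purely elliptic, it lies in $G_{v_0}$ and fixes $v_0$, and being infinite with trivial edge stabilisers on the trees it can preserve the subgraph $W$ only if the entire ($G_{v_0}$-invariant) nucleus through $v_0$ lies in $W$, so $v_0 \in W$; in both cases the limit-set description forces $\Lambda \subseteq W$. It then remains to promote $\Lambda \subseteq W$ to the statement that a finite-index subgroup of $P_\Lambda$ stabilises $W$: for the elliptic direction one uses that a nucleus $\mathbf N$ contained in a component $C$ of $\overline W$ is automatically preserved by the stabiliser $G_w$ of its central vertex $w$ (since $G_w \mathbf N = \mathbf N$ and distinct components of $\overline W$ are disjoint), and for the $\bbZ$-direction that the saturation contains, for each singular $v \in W'_u$ on $\Lambda$, the union $W(v)$ of components of $\overline W$ along the $M$-spaced vertices $\{f^{Mk}(v)\}_{k \in \bbZ}$, which an element of $P_\Lambda \cap \Gamma_M$ translating $\Lambda$ by its $M$-period preserves; combined with cocompactness of $H$ on $W$ and finiteness of orbits this should produce the required finite-index subgroup. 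I expect this last step to be the hard part: the natural translation along $\Lambda$ is a global $M$-level shift that moves $W_u$ far away, so one must verify that the recursive saturation has genuinely rendered $W'_u$ periodic in the transverse directions along $\Lambda$ — a careful accounting of which components of $\overline W$ the procedure attaches and of how $P_\Lambda \cap \Gamma_M$ and $G_{v_0}$ permute them — and, relatedly, the purely-elliptic case above (excluding an infinite elliptic subgroup of $H \cap P_\Lambda$ with $v_0 \notin W$) also requires care.
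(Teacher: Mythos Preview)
Your overall architecture matches the paper's: relative quasiconvexity from \cref{prop: saturation approximations quasiconvex} via the Einstein--Groves--Ng criterion, codimension one inherited from $W_u$, and fullness reduced to the implication ``$H\cap P_\Lambda$ infinite $\Rightarrow$ $\Lambda$ meets $W$'' (after which the saturation property is declared to finish). The loxodromic subcase is handled in the paper by a direct geometric argument rather than your limit-set computation, but both lead to the same conclusion.

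The genuine gap is your elliptic subcase. You assert that an infinite elliptic subgroup of $H\cap G_{v_0}$ ``can preserve the subgraph $W$ only if the entire ($G_{v_0}$-invariant) nucleus through $v_0$ lies in $W$, so $v_0\in W$'', but this is circular: there is no nucleus through $v_0$ unless $v_0$ already lies in some component of $\overline W$, which is exactly what is to be shown. An element $g\in G_{v_0}$ acts on $\wt X$ with fixed-point set the principal flow line $\Lambda$ of $v_0$, but nothing a priori forces $W$ to meet that fixed-point set; $g$ could in principle permute pieces of $W$ freely. The paper supplies the missing mechanism: pick $x\in W\cap T_0$, so that the orbit $\{g^n x\}_n\subset W\cap T_0$ has bounded $T_0$-diameter; by quasiconvexity of $W$ in the hyperbolic space $\wt X^1$, geodesics $\alpha_{m,n}$ in $W$ between orbit points stay in $\bigcup_{|i|\le K} T_i$ for some uniform $K$; flowing these geodesics forward to $T_K$ produces uniformly short paths joining infinitely many distinct points $f^K(g^n x)$, which forces them all to pass through a single infinite-valence vertex, necessarily $f^K(v_0)$ by equivariance; one then traces back through the (finite, by \cref{lem: preimage_singular_vertices}) backward flow of $f^K(v_0)$ to locate a point of $\Lambda$ on some $\alpha_{m,n}\subset W$. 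This is the step your proposal is missing.

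A smaller remark: you flag the promotion ``$\Lambda\subset W$ $\Rightarrow$ finite index of $P_\Lambda$ in $H$'' as the hard part and sketch a component-accounting argument. The paper treats this as essentially immediate from the definition of $M$-saturation (once a singular vertex of $\Lambda$ lies in a nucleus of $W'_u$, the whole $G_{v_0}$-invariant star and the $M$-periodic $W(v)$ are already present), so your emphasis is inverted: the work is in reaching $\Lambda\cap W\neq\emptyset$, not in exploiting it.
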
}

\begin{proof}
{ {  To prove separation with deep components, since $W_u$ itself is of co-dimension 1 by \cref{prop: walls_cocompact_stabilisers}, it suffices to find, for each deep component of    $\wt X \setminus W_u$,   a ray starting at  a  point of $W_u$, entering the given component,  and never encountering $W$ again. To do that,  it suffices to find a ray avoiding all the principal flow lines issued from $W_u$, and diverging from each of them within $M/10$ of distance from $W_u$. Since $\wt X$ is non-elementary hyperbolic, and principal flow lines are disjoint and diverge from each other in bounded time, it is possible to find such a ray, provided $M$ is sufficiently large compared to $\delta$. } }

Since $W$ is quasiconvex, an application of Theorem 1.2 of \cite{einstein_groves_ng} implies that the stabiliser of $W$ is relatively quasiconvex in $\Gamma$. Let us show that $\mathrm{Stab}(W)$  is also full. To this end, consider a principal flow line $\Lambda$ whose associated parabolic group (a suspension of a vertex stabiliser in $T_0$ by an element translating  along $\Lambda$)  intersects  $\mathrm{Stab}(W)$.  We want to show that  $\mathrm{Stab}(W)$ contains a finite index subgroup of the parabolic group. For this it is sufficient to show that $\Lambda$ intersects $W$, since in that case, the saturation property of $W$ ensures that $W$ is stabilised by such a subgroup. 

Thus, let  $\mathrm{Stab}(W)$ intersect $\mathrm{Stab}(\Lambda)$. We distinguish whether the intersection contains an element translating along $\Lambda$ or an element fixing a vertex in $\Lambda$.

First assume that $\mathrm{Stab}(W)$ contains an element  of $\Gamma$ which acts loxodromically on  $\Lambda$. This implies that a finite neighbourhood of $W$ contains $\Lambda$. Since every pair of principal flow lines diverge, either a vertex of $\Lambda$  is contained in $W$ (and we are done, by the property of saturations),  or there exists a periodic union of tunnels and slopes and nuclei of $W$ that fellow travels with $\Lambda$. Assume the latter. If there exists a singular vertex (of some nucleus) in this periodic union, then the no twinning property ensures a contradiction. 
 Let $\gamma$ be translating along $\Lambda$, and such that $\gamma^k$ stabilises $W$ (for some large $k$).  Let $x\in W$ be near $\Lambda$, and consider a shortest path $p$ in $W$ from $x$ to $\gamma^k x \in W$.  Since $W$ is assumed not to cross $\Lambda$, the path $p$ does not contain an arc of $\Lambda$. Thus $p$ consists of nuclei, slopes, and levels,  
 and is a quasigeodesic, that must remain close to $\Lambda$. Note that $p$ does not contain any piece of a principal flow line by the no twinning assumption. Since $p$ moves from a tree $T_r$ to a tree $T_{r+k}$ for some large $k$, there must be a level in $p$. As a subpath of $p$,  it travels close to $\Lambda$. But a level is a piece of a periodic flow line, and by our choice of primary bust endpoints (\cref{lem: periodic_divergence_principal_lines}) and large tunnel length, levels 
 diverge  from any principal flow line $\Lambda$. Hence, $p$ does not stay close to $\Lambda$ as promised.

{Assume now that the stabiliser of $W$ contains an elliptic element $g${, fixing some vertex $v$ of $T_0$}. 
Let $x \in T_0$ be in $W$ so that for each $n \in \bbZ$, $g^n x \in W$. Since the $T_0$-distance between $g^mx$ and $g^n x$ is bounded for all $m,n$ (it equals twice the $T_0$-distance between $x$ and $v$), there exists $K \geq 0$ such that any geodesic path $\alpha_{m,n}$ in the quasiconvex space $W$ between $g^mx$ and $g^nx$ does not meet $T_i$ for $i >K$. We flow each such geodesic to $T_K$ to obtain a path in $T_K$ between $f^K(g^mx)$ and $f^K(g^nx)$. Observe that there is a uniform bound on the length of each such path. Thus, up to taking a subsequence, there is an infinite valence vertex of $T_K$ contained in each of the flowed paths between $f^K(g^mx)$ and $f^K(g^nx)$. By equivariance of $f$, this vertex $w$ is fixed by $\phi^K(g^n)$. By uniqueness of vertex stabilisers under the $G$-action on $T_K$, we have $w = f^K(v)$. We claim that this implies that infinitely many of the paths $\alpha_{m,n}$ intersect the principal flow line $\Lambda_v$ of $v$. Indeed, by \cref{lem: preimage_singular_vertices} the intersection of the backward flow of $w$ with the trees $T_i$ intersects the infinite collection $\alpha_{m,n}$ in a finite set. Let $w' \in T_i$ be one such point that lies in infinitely many $\alpha_{m,n}$. If $w'$ is in $\Lambda_v$, we are done. If not, then by equivariance, we again have that $w'$ is stabilised by $\phi^i(g^k)$ for infinitely many $k$, a contradiction.} \end{proof}

\subsection{Many effective walls}

Recall that $D \subset T$ denotes the fixed fundamental domain. 

\begin{defn}\label{defn: effective_walls}
Using the terminology of \cite{hagen_wise_irreducible}, we say $\wt X$ has \emph{many effective walls} if the following two conditions are satisfied:
\begin{enumerate}
    \item For each $y \in D \subset T_0$ such that the flow ray from $y$ does not meet a vertex, there exists a set $\mathcal{W}$ of immersed walls with the ladder overlap property such that for every $\epsilon >0$, there exists $W_u \in \mathcal{W}$ of arbitrarily large tunnel length $L$ and a primary bust in the $\epsilon$-neighbourhood of $y$. We will call such a set of walls a \emph{regular effective set of walls}.
    \item For each periodic point $a \in D$, there exists $k = k(a)$ and a set of immersed walls $\mathcal{W}_a$ with the ladder overlap property such that 
    for each primary bust $d' \subset T' = T_{\frac{1}{2}}$ that is joined to $a' = a_{\frac{1}{2}} \in T'$ by a path in $W_u$ disjoint from the slopes of $W_u$, we have that $d_{\wt{X}^1}(f^n(a),f^n(d)) \geq 3\delta + 2\lambda$, for all $n \geq k$. We will call such a set of walls a \emph{periodic effective set of walls}.
\end{enumerate}
\end{defn}

Let us comment on why one needs the property of many effective walls. The full details are available in Section 5 of \cite{hagen_wise_irreducible}. For a hyperbolic group to admit a proper cocompact cubulation, the boundary criterion of \cite{bergeron_wise} stipulates that every pair of distinct points in the boundary of the group (equivalently, the limit set of every bi-infinite geodesic of $\wt X^1$ in the setup of \cite{hagen_wise_irreducible}) should be separated by the limit set of a quasiconvex codimension-1 subgroup (equivalently, a quasiconvex wall in $\wt X^1$). Having many effective walls assures that this can be done in $\wt X^1$. Indeed, there are two types of bi-infinite geodesics in $\wt X^1$, ``horizontal'' geodesics and ``non-horizontal'' geodesics, that we define below. If $\wt X^1$ has many effective walls, then each horizontal geodesic is cut by a wall from a periodic effective set, while each non-horizontal geodesic is cut by a wall from a regular effective set.

\begin{defn}[Geodesic classification] \label{defn: geodesic_types}
    Let $N>0$. A bi-infinite geodesic $\gamma$ in $\wt X^1$ is \emph{$N$-horizontal}\footnote{$N$-ladderlike in \cite{hagen_wise_irreducible}} if there exists a forward flow segment $\sigma$ of length $N$ such that a geodesic of the forward ladder $N(\sigma)$ joining the endpoints of $\sigma$ fellow-travels with a subpath of $\gamma$ at distance at most $2\delta + \lambda$. Here, $\delta$ is the hyperbolicity constant of $\wt X^1$ and $\lambda$ is the quasiconvexity constant of forward ladders (\cref{prop: quasiconvexity_fwd_ladder}). Otherwise, $\gamma$ is \emph{$N$-non-horizontal}.\footnote{$N$-deviating in \cite{hagen_wise_irreducible}}
\end{defn}

\begin{theorem}\label{thm: effective_walls}
The flow space $\wt X$ has many effective walls.
\end{theorem}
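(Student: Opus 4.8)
The goal is to produce, for each point $y\in D$ whose forward flow ray avoids vertices, a regular effective set of walls, and for each periodic point $a\in D$ a periodic effective set of walls; then \cref{thm: effective_walls} follows by definition. The construction of the walls themselves is the Hagen--Wise recipe already recalled in \cref{sec: immersed_walls}, so the content is in checking the two conditions of \cref{defn: effective_walls}, and in particular the ladder overlap property. The strategy is to pick a \emph{finite} collection of periodic points in $D$ whose forward flow rays pairwise diverge and avoid all vertices (using \cref{lem: periodic_divergence_principal_lines} to get divergence from each other and from every principal flow line), to place the primary busts in small neighbourhoods of these points, and to invoke \cref{lem: choice_of_busts} to arrange the compatibility conditions (1)--(6) there. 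The key point is that because the chosen flow rays are finite in number and pairwise divergent, the approximations $N(A(\mathbf S))^1$ of distinct slopes have uniformly bounded coarse intersection (\cref{prop: quasiconvexity_fwd_ladder} gives uniform quasiconvexity of forward ladders, and divergence bounds the overlap of the underlying flow rays), which is exactly the ladder overlap property of \cref{defn: ladder_overlap}.

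\textbf{Step 1: Regular effective walls.} Given $y\in D\subset T_0$ with flow ray avoiding vertices, fix $\epsilon>0$. By \cref{lem: periodic_dense} and \cref{lem: periodic_divergence_principal_lines} choose a periodic point $a_y$ in the $\epsilon$-neighbourhood of $y$ inside its edge whose periodic flow line diverges from every principal flow line in both directions; enlarge to a finite set $\{a_1,\dots,a_m\}$ of such periodic points, one in (a neighbourhood of) each point of a fixed finite set of edge-representatives of $D$, chosen so their forward flow rays pairwise diverge (interiors of distinct edges of $D$ give legal turns under $f$, and \cref{lem: edge_flows_diverge} together with \cref{prop: flowrays_fellow_travel_or_diverge}, \cref{prop: periodic_flowrays} forces divergence). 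For each choice of tunnel length $L$ and of primary busts in small neighbourhoods of the $a_i$ (satisfying \cref{lem: choice_of_busts}, with $a_y$ itself taken as a bust endpoint by item (4) there, which is legitimate since its flow ray misses vertices), we obtain an immersed wall; the resulting set $\mathcal W$ has the ladder overlap property because the flow rays $\sigma(a_i)$ have pairwise bounded overlap and forward ladders are uniformly quasiconvex, so $\calN_{3\delta+2\lambda}(N(A(\mathbf S_1))^1)\cap \calN_{3\delta+2\lambda}(N(A(\mathbf S_2))^1)$ is uniformly bounded. Since $L$ may be taken arbitrarily large and a primary bust sits $\epsilon$-close to $y$, this is a regular effective set of walls.

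\textbf{Step 2: Periodic effective walls.} Given a periodic point $a\in D$, use \cref{lem: periodic_divergence_principal_lines} to assume its periodic flow line $\Lambda_a$ diverges from every principal flow line in both directions; let $k=k(a)$ be a divergence threshold, i.e.\ for $n\geq k$ the point $f^n(a)$ is at distance $\geq 3\delta+2\lambda$ from every principal flow line. Build a set $\mathcal W_a$ as in Step 1 with $a$ among the chosen periodic points, with $a$ taken as an endpoint of its primary bust. Any primary bust $d'\subset T'=T_{1/2}$ joined to $a'=a_{1/2}$ by a path in $W_u$ disjoint from the slopes consists of secondary busts flowed forward; such a bust $d\subset T_L$ lies in the backward flow of $a$ and has $f^L(d)$ a subsegment of the edge of $D$ containing $f^L(a)$, hence its forward images stay close to $\Lambda_a$ — wait, that is the wrong direction; instead, $d$ and $a$ lie in the \emph{same} preimage structure, and by \cref{lem: edge_flows_diverge}, \cref{prop: periodic_flowrays} the flow rays from $d$ and from $a$ diverge unless $d$ already meets $\Lambda_a$, which the choice of busts (\cref{lem: choice_of_busts}(3), via endpoints near periodic points of \cref{lem: periodic_divergence_principal_lines}) forbids; one then gets $d_{\wt X^1}(f^n(a),f^n(d))\geq 3\delta+2\lambda$ for $n\geq k$ after enlarging $k$ if necessary. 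This is exactly the periodic effective set condition.

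\textbf{Main obstacle.} The delicate point is \textbf{the ladder overlap property in the non-locally-finite setting}: one must be sure that distinct slopes of a \emph{single} wall $W_u$ — whose primary busts may be $G$-translates sitting in the same edge-orbit of $T$ but at arbitrarily large angle — have uniformly bounded approximation-overlap. This is where the angle considerations under relative hyperbolicity enter: \cref{lem: vertical_locally_finite}, \cref{prop: flowrays_fellow_travel_or_diverge}, \cref{lem: divergence_flowrays_elliptic} and \cref{prop: periodic_flowrays} together guarantee that flow rays from distinct bust points either diverge uniformly or share an endpoint only in controlled ways, and the choice of bust endpoints near the finitely many periodic points of \cref{lem: periodic_divergence_principal_lines} pins down a single bound $B$ independent of $L$. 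Verifying that this $B$ works for all slopes of all walls in the family (not merely for the finitely many ``seed'' flow rays) is the technical heart, and it is the step where the argument genuinely departs from \cite{hagen_wise_irreducible}.
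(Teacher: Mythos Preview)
Your approach is essentially the paper's: the paper's proof is a one-line deferral to \cite[Theorem~6.16]{hagen_wise_irreducible}, noting only that the two ingredients needed there---density of periodic points (\cref{lem: periodic_dense}) and divergence of flow rays from $G$-translates of a periodic point (\cref{prop: periodic_flowrays})---are available in the present setting. Your Step~1 is an accurate expansion of exactly this, and matches the paper's informal description after \cref{defn: ladder_overlap}.

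Two points where your write-up wobbles. First, Step~2 is muddled (the ``wait, that is the wrong direction'' passage signals this). The primary busts $d'\subset T_{1/2}$ joined to $a'$ by a slope-free path in $W_u$ are precisely those bounding the nucleus containing $a'$; that nucleus is either an edge-subinterval or the star of a vertex, so the relevant $d$ are bust endpoints in edges adjacent to $a$. The required divergence $d_{\wt X^1}(f^n(a),f^n(d))\geq 3\delta+2\lambda$ for $n\geq k(a)$ then follows directly from the pairwise divergence of the finitely many chosen periodic seed flow rays together with \cref{prop: periodic_flowrays}; there is no backward-flow or ``preimage structure'' argument needed.

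Second, your ``main obstacle'' overstates the novelty. The paper does \emph{not} treat the ladder overlap property as a point of departure from \cite{hagen_wise_irreducible}: once \cref{prop: periodic_flowrays} is established (which the paper does, handling the elliptic case via \cref{lem: divergence_flowrays_elliptic} and angles), distinct slopes of a single wall are either $G$-translates of one another---covered by \cref{prop: periodic_flowrays}---or lie in different orbits among the finitely many seed points---covered by the choice of divergent seeds. The non-local-finiteness is absorbed entirely into the proof of \cref{prop: periodic_flowrays}, not into the wall construction itself.
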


This is proved in Theorem 6.16 of \cite{hagen_wise_irreducible}, to which we refer the reader for a detailed proof.
The main ingredients of their proof, which are available in our case as well, are the facts that periodic points are dense in $T$ (\cref{lem: periodic_dense}) and that flow rays starting from translates of a periodic point diverge (\cref{prop: periodic_flowrays}). {With these ingredients at hand, here is a brief sketch of the proof. The first condition of \cref{defn: effective_walls} can be verified by first choosing a periodic point in an $\epsilon$-neighbourhood of the given point $y$ and then choosing one periodic point in each edge of $D$ so that pairwise, their flow lines diverge. This will ensure that the ladder overlap property holds when primary busts are chosen in small enough neighbourhoods of these points, depending on the tunnel length. The family of walls is now constructed by choosing tunnel lengths to be large common multiples of the periods of the chosen periodic points, and choosing primary busts as above. To verify the second condition, we choose one periodic point per edge of $D$ as above, including one for the edge containing $a$, while ensuring that pairwise, the flow lines of the periodic points and of $a$ diverge. The family of walls is then constructed as done for the first condition.}

Let us conclude this section with another property that will be useful in the next section. Following \cite{hagen_wise_irreducible}, we say $\wt X$ is \emph{level-separated} if for each $N >0$ and $K\geq 0$, and each $N$-non-horizontal geodesic $\gamma$, there exists a point $y \in \wt X$ such that the backward flow of length $n$ meets $\gamma$ in an odd cardinality set for all large $n$, and the intersection is at distance at least $N + K$ from both $y$ and the leaves of the backward flow. 

\begin{lemma}\label{lem: level_separation}
The flow space $\wt X$ is level-separated.    
\end{lemma}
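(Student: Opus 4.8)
The plan is to derive the statement from the local separation of backward flows (Proposition~\ref{prop: local_separation_backward_band}), the quasiconvexity of forward ladders (Proposition~\ref{prop: quasiconvexity_fwd_ladder}), and the angle estimates underlying it, following Section~6 of \cite{hagen_wise_irreducible}. Fix $N>0$, $K\geq 0$ and an $N$-non-horizontal bi-infinite geodesic $\gamma$ in $\wt X^1$, and set $m:=N+K+10\delta$. The first task is to find a vertex $y_0=\gamma(t_0)$ of $\gamma$, lying in some tree $T_{i_0}$, at which $\gamma$ crosses a tree ``vertically'': one for which the forward ladder $N(\sigma_m(y_0))$ of the forward flow of length $m$ from $y_0$ does not $(2\delta+\lambda)$-fellow-travel a subpath of $\gamma$ on either side of $y_0$, and for which the two directions of $\gamma$ at $y_0$ leave $N(\sigma_m(y_0))$, hence the backward flow built over it, on opposite sides. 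To produce such a $y_0$ one separates two cases. If $h\circ\gamma$ (the height function along $\gamma$) is unbounded, then — using that a monotone all-horizontal subpath of a geodesic is forced to coincide with a forward flow — $N$-non-horizontality guarantees a point of $\gamma$ whose $N$-forward ladder it leaves quickly on both sides; choosing it where the height is not locally extremal makes the crossing transverse. If $h\circ\gamma$ is bounded, $\gamma$ lies in a bounded band of trees, and the length-$m$ forward ladder of a vertex of $\gamma$ of near-maximal height exits the band, hence a fortiori fails to $(2\delta+\lambda)$-fellow-travel $\gamma$. This bookkeeping is routine but must be spelled out.

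With $y_0$ fixed, put $y:=f^m(y_0)\in T_{i_0+m}$; I claim $y$ witnesses level-separation, for every $n\geq 2m$. Since $f$ is onto with everywhere-nonempty point-preimages (by minimality of $T$), and those preimages are finite (Lemmas~\ref{lem: vertical_locally_finite} and~\ref{lem: preimage_singular_vertices}), the backward flow $\tau_n(y)$ descends through every tree from $T_{i_0+m}$ down to $T_{i_0+m-n}$ and satisfies $\tau_n(y)\cap T_{i_0}=f^{-m}(y)\ni y_0$. As $h\circ\gamma$ is $1$-Lipschitz, the point $y_0\in T_{i_0}$ lies at $\wt X^1$-distance $\geq m\geq N+K$ from $y\in T_{i_0+m}$ and at distance $\geq n-m\geq N+K$ from the leaves $f^{-n}(y)\subset T_{i_0+m-n}$, which is the required depth. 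By Proposition~\ref{prop: local_separation_backward_band} there is an embedding $\tau_n(y)\times[-1,1]\hookrightarrow\wt X$ restricting to the identity on $\tau_n(y)=\tau_n(y)\times\{0\}$; by the choice of $y_0$, near $y_0$ the geodesic $\gamma$ enters this product neighbourhood on one side and leaves on the other, so $y_0$ is a transverse crossing of $\gamma$ with $\tau_n(y)$. It then remains to show that $\#(\gamma\cap\tau_n(y))$ is odd.

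For this, note that $\tau_n(y)$ is a finite tree assembled from the forward flows $\sigma_n(z)$, $z\in f^{-n}(y)$, each quasiconvex and all terminating at $y$; hence $\tau_n(y)$ is quasiconvex (with a constant that may grow with $n$, which is harmless since $n$ is fixed). A subpath of $\gamma$ that $(2\delta+\lambda)$-fellow-travels $\tau_n(y)$ must, up to bounded error, fellow-travel one of the finitely many constituent forward flows, hence a geodesic of a forward ladder of length $N$, and so has length at most $N+C(\delta,\lambda)$ by $N$-non-horizontality of $\gamma$. Therefore $\gamma\cap\calN_r(\tau_n(y))$ is a single bounded interval for suitable $r$; all crossings of $\gamma$ with $\tau_n(y)$ lie inside it, and the two ends of $\gamma$ leave $\calN_r(\tau_n(y))$ for good on opposite sides of the product neighbourhood. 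Consequently the $\bbZ/2$-intersection number of $\gamma$ with $\tau_n(y)$ equals $1$ and, all crossings being transverse, $\#(\gamma\cap\tau_n(y))$ is odd. The principal difficulty is precisely this last step: since $\tau_n(y)$ separates $\wt X$ only locally, excluding net re-crossings of $\gamma$ requires the quasiconvexity of the (non-locally-finitely-many) forward flows making up $\tau_n(y)$ together with their bounded mutual overlaps, which rests again on the divergence of legal flows (Lemma~\ref{lem: edge_flows_diverge}) and the quasi-isometry of angles (as in Proposition~\ref{prop: flowrays_fellow_travel_or_diverge}); this mirrors the earlier quasiconvexity arguments and is the technical heart, carried out in detail in Section~6 of \cite{hagen_wise_irreducible}.
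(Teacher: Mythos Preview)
Your approach diverges substantially from the paper's, and the final step contains a genuine gap.

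The paper does \emph{not} argue directly in $\wt X$. It passes to the limiting $\mathbb{R}$-tree $T_\infty$ via the natural map $\rho:\wt X\to T_\infty$ (isometric on vertical edges), composes with $\gamma$ to get a path $\gamma^\infty:\mathbb{R}\to T_\infty$, and shows that $\gamma^\infty$ has bounded point-preimages (in both diameter and cardinality) because $\gamma$ is $N$-non-horizontal. One then discards countably many ``flat'', ``backtrack'' and ``crossroad'' points of $T_\infty$ and chooses a generic $\xi\in\gamma^\infty(\mathbb{R})$ separating the two ends of $\gamma^\infty$. At each preimage $s$ of $\xi$ the crossing direction is well-defined, whence the preimage has odd cardinality; any such preimage point, pushed forward, gives the required $y$. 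The limit tree is what makes the parity argument global.

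Your direct argument lacks exactly this global control. The backward flow $\tau_n(y)$ only separates $\wt X$ \emph{locally} (Proposition~\ref{prop: local_separation_backward_band}), so there is no well-defined $\mathbb{Z}/2$-intersection number of $\gamma$ with it. You try to compensate by showing $\gamma\cap\calN_r(\tau_n(y))$ is a single bounded interval, but this does not force the entries and exits of $\gamma$ into the thin product neighbourhood $\tau_n(y)\times[-1,1]$ to have the right parity: within that interval, $\gamma$ may leave the product neighbourhood on one side, remain in $\calN_r(\tau_n(y))\setminus(\tau_n(y)\times[-1,1])$ (which is much larger, since $r$ grows with $n$), and re-enter on the same side, picking up additional crossings. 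Nothing in your argument rules this out. Moreover, you locate a single transverse crossing at $y_0$, but never control the other points of $\gamma\cap\tau_n(y)$; the paper's choice of a generic $\xi$ in $T_\infty$ is precisely what makes \emph{every} preimage a transverse crossing with a consistent direction.

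Finally, you defer the ``technical heart'' to Section~6 of \cite{hagen_wise_irreducible}, but that section is exactly where the limit-tree method is used (as the paper notes when introducing $T_\infty$). So the appeal is circular: the missing ingredient in your outline is the $\mathbb{R}$-tree $T_\infty$ itself.
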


    In order to prove this statement, we will make use of an $\mathbb{R}$-tree $T_\infty$ obtained as a limit of the trees $T_m$ in the flow space with metrics $d_m = \frac{d|_{T_m}}{\lambda^m}$, where $\lambda$ is the (maximal) stretching factor of the train track map $f$. We will not go into details, but refer the reader to \cite{horbez_r_trees} and \cite[Section 6]{hagen_wise_irreducible}. {Formally, $T_\infty$ is an ultralimit, its points are equivalence classes of sequences of points in the $T_m$. In particular to each point in $T_0$ (and in $\wt X$), its flow ray defines a point in $T_\infty$. This defines a tautological map $\rho: \wt X \to T_\infty$.}

The main points we will use are {easy} facts, that the map $\rho: \wt X \to T_\infty$  is continuous, restricts to an isometric embedding on any vertical edge {(as $f$ is a train track map)} and that the preimage of any point in $T_\infty$ consists of points whose forward flow rays either intersect or remain at bounded Hausdorff distances from each other.

\begin{proof}
Let $\gamma : \mathbb{R}\to \wt X^1$ be an $N$-non-horizontal geodesic. Observe that the flow-like parts of $\gamma$ are horizontal segments starting and ending at vertices and that the vertical parts are paths in trees $T_k$. Since $\gamma$ is $N$-non-horizontal, it maps infinitely many unit subintervals of $\bbR$ to vertical edges, which we call `vertical' subintervals.

First, we make a slight change.  Let $\gamma_{\mathrm{vert}}$ be the union of paths defined on the vertical subintervals of $\gamma$, but now  
reparametrised on $\mathbb{R}$.

Let $\gamma^{\infty} = \rho \circ \gamma_{\mathrm{vert}} : \bbR \to T_\infty$ be the image of $\gamma_{\mathrm{vert}}$ (and also of $\gamma$, in fact) in $T_{\infty}$.
Call $\gamma_{\mathrm{vert}}(s)$ a ``flat" point if $s$ is a point of discontinuity for $\gamma_{\mathrm{vert}}$. Observe that this happens only when $\gamma(s)$ is the starting point in $\gamma$ of a horizontal segment, and that there are only countably many flat points. Since all points in any horizontal segment have the same flow, all such points have the same image in $T_\infty$. Therefore, the map $\gamma^{\infty}: \bbR \to T_\infty$ is continuous, and in fact, $1$-Lipschitz.

Further, there exists $D$ such that the preimage of 
a point in $\gamma^{\infty}$ is of diameter $\leq D$. Indeed, if $s_1$ and $s_2$ have the same $\gamma^\infty$-image, observe that $\gamma_{\mathrm{vert}}(s_1)$ and $\gamma_{\mathrm{vert}}(s_2)$ correspond to points in the image of $\gamma$ which fellow-travel a forward flow segment. Since $\gamma$ is $N$-non-horizontal, there exists a $D$ as required.

In fact, the same argument shows that $\gamma^\infty(\bbR_+)$ (respectively $\gamma^\infty(\bbR_-)$) has infinite 
diameter and goes to an end $\gamma^\infty_+$ (respectively $\gamma^\infty_-$) of $T_\infty$. Note that, by the bound on the diameter of preimages, $\gamma^\infty_+ \neq \gamma^\infty_-$.
In particular, there are uncountably many points that cut 
$\gamma^\infty(\bbR)$ in two unbounded components, with each such point contained in $\gamma^\infty(\bbR)$ (as $T_\infty$ is an $\bbR$-tree).

Let us now show that there exists $D'$ such that the preimage of a point under $\gamma^\infty$ has cardinality at most $D'$. Indeed, if there are many points in the preimage, there 
must be many in the same tree $T_k$, and their diameter being bounded, 
either they accumulate, or have to be placed so that there is a vertex 
and a huge angle to reach them. Accumulation is not permitted since 
all edges are stretched by the scaling factor (more precisely, we are 
guaranteed that two points in the same edge are flown to different 
points of $T_\infty$). {Also for two points separated by a huge angle, we already noticed that their flow lines cannot fellow-travel (see  \cref{prop: flowrays_fellow_travel_or_diverge}), hence they must diverge with speed defined by the stretch factor $\lambda$,  thus defining different points in the limit tree $T_\infty$.} 


Call $\gamma^\infty(s)$ a ``backtrack" point if 
there are two  intervals $[s_-, s), (s, s+]$ that are sent by 
$\gamma^\infty$ to the same component of $T_\infty \setminus 
\{ \gamma^ \infty (s) \}$.

Observe that since the map $\rho$ is injective on edges, if $\gamma^\infty(s)$ is a backtrack point, then $\gamma_{\mathrm{vert}}(s)$ is a vertex. 
Therefore, there are countably many backtrack points in $\gamma^\infty$.

There are two kinds of backtrack points: those such that the component of 
the intervals $[s_-, s), (s, s+]$ contains both the ends 
$\gamma^\infty_+$ and $\gamma^\infty_-$, and those that don't. For the former, take the 
centre of the tripod $(\gamma^\infty_+, \gamma^\infty_-, \gamma^\infty(s))$,  and call it a ``crossroad'' point.

There are countably many backtrack points, so the union of flat points, backtrack points and crossroad points is a countable set.

Pick a point $\xi$ in $\gamma^\infty (\bbR)$ that is not a backtrack 
point nor a crossroad point nor a flat point, and such that $\xi$ separates $\gamma^\infty (\bbR)$ in 
two infinite components. Since it is not a backtrack point, there is a well-defined direction of crossing $\xi$ at each preimage $s$ (a small 
interval around $s$ so that $(s_-, s)$ and $(s,s_+)$ are sent on 
different sides of $\xi$, each containing only one of the ends 
$\gamma^\infty_+, \gamma^\infty_-$).

Since preimages in $\gamma_{\mathrm{vert}}$ are finite, for $\xi$, there can only be 
an odd number of preimages in $\gamma_{\mathrm{vert}}$. Since $\xi$ is not a flat 
point, it has the same (odd) number of preimages in both $\gamma$ and $\gamma_{\mathrm{vert}}$. Then at least one of the points $x$ in the preimage is such that for a large $n$, if $y = f^n(x)$, then the backward flow of large enough length from $x$ meets $\gamma$ in an odd cardinality set, with the intersection at a large distance from $y$ and the leaves of the backward flow, as required.
\end{proof}

\section{Cutting principal flow lines and geodesics by wall saturations} \label{sec: wall_saturations_cut}

In this section, we will show that three types of subsets of $\wt X^1$ are separated by the saturations of walls. In \cite{hagen_wise_irreducible}, they show that all bi-infinite geodesics are separated by (unsaturated) walls. For relative cubulation (see \cref{sec: rel_cubul}), we will need more: that saturations of walls also separate principal flow lines from geodesic rays and pairs of principal flow lines.

{Let us begin with a tautological observation in preparation for the future coning-off of principal flow lines:}

{\begin{lemma}\label{lem: saturation flowline no cut}
 Let $W_u$ be a quasiconvex wall in $\wt X$ and $W$ an $M$-saturation of $W_u$. Then for each principal flow line $\Lambda$ of a singular vertex, either $\Lambda \subset W$ or the two points of $\partial \Lambda$ lie in a single component of $\partial \wt X^1 \setminus \partial W$.  \qed
\end{lemma}}

{We will also need the notion of ``cut'' in order to disconnect points in the Bowditch boundary:}

\begin{defn}\label{defn: saturation_cuts}
  Let $W_u$ be a quasiconvex wall in $\wt X$ and let $W$ be an $M$-saturation of $W_u$. Let $A$ be a quasiconvex subspace of $\wt X^1$ such that $\partial A$ has at least two points. We say that $W$ \emph{cuts} $A$ if \begin{enumerate}
      \item $\partial A \cap \partial W = \emptyset$ (as subsets of $\partial \wt X^1$), and
      \item $\partial W$ separates $\partial A$, i.e., $\partial A$ nontrivially intersects at least two components of $\partial \wt X^1 \setminus \partial W$. 
  \end{enumerate}
\end{defn}


\subsection{Cutting geodesics}
Recall that there are two types of geodesics (see \cref{defn: geodesic_types}) in $\wt{X}^1$. In this subsection, we will show that for each geodesic line that is not a principal flow line, there exists a wall whose saturation cuts the geodesic.

\begin{prop}\label{prop: walls_cut_non-horizontal_geodesics}
Let $\gamma : \mathbb{R} \to \wt{X}^1$ be an $N$-non-horizontal bi-infinite geodesic in the flow space. Then there exists a quasiconvex wall $W_u \to \wt X$ and an $M$-saturation $W$ of $W_u$ such that $W$ cuts $\gamma$.
\end{prop}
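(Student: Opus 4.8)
The plan is to exploit the \emph{level-separation} property (\cref{lem: level_separation}) to locate a point in $\wt X$ whose backward flow crosses $\gamma$ transversally, and then to run that point's backward flow through the \emph{many effective walls} machinery (\cref{thm: effective_walls}, \cref{defn: effective_walls}(1)). Since $\gamma$ is $N$-non-horizontal, \cref{lem: level_separation} supplies a point $y \in \wt X$ such that for all large $n$, the backward flow $\tau_n(y)$ of length $n$ meets $\gamma$ in an odd-cardinality set, with that intersection at distance at least $N+K$ from both $y$ and the leaves of $\tau_n(y)$ (here $K$ will be chosen to absorb the quasiconvexity constants, in particular $K \geq 3\delta + 2\lambda$). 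After perturbing slightly, I may assume the forward flow from $y$ (and from the relevant leaves) does not meet a vertex; \cref{lem: periodic_divergence_principal_lines} lets me arrange that the flow ray from $y$ is not asymptotic to any principal flow line. Now apply \cref{defn: effective_walls}(1): there is a regular effective set $\mathcal{W}$ of immersed walls with the ladder overlap property containing, for arbitrarily large tunnel length $L$, a wall $W_u$ whose primary bust lies in an $\epsilon$-neighbourhood of $y$. Choosing $L \geq L_3$ (from \cref{prop: saturated_walls_stabilisers}) and $M$ large, I get a quasiconvex wall $W_u$ with a quasiconvex $M$-saturation $W$.

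The key geometric step is that $W_u$ (and hence $W$) \emph{separates} the two ends of $\gamma$. The intersection of $\tau_n(y)$ with $\gamma$ being of odd cardinality, and $W_u$ containing (a folded copy of) the backward flow from a primary bust $\epsilon$-close to $y$, the wall $W_u$ meets $\gamma$ transversally an odd number of times along a long subpath; since crossing $W_u$ switches complementary components (\cref{prop: long_tunnels_walls}, \cref{lem: busts_same_component}), the two ends of $\gamma$ land in different components of $\wt X \setminus W_u$. This is the same argument as in Hagen--Wise \cite{hagen_wise_irreducible}; the point is that it is unchanged by passing to the saturation, since $W \supset W_u$ and the extra pieces of $W$ (principal flow lines and extra components of $\overline W$) only enlarge $W$. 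To promote ``separates'' to ``cuts'' in the sense of \cref{defn: saturation_cuts}, I verify the three conditions: (1) $\partial\gamma \cap \partial W = \emptyset$ holds because $\gamma$ is $N$-non-horizontal while $N(A(W))^1 \hookrightarrow \wt X^1$ is a quasi-isometric embedding (\cref{prop: saturation approximations quasiconvex}) and the bust endpoints' flow rays diverge from all principal flow lines and from the relevant forward ladders by the large tunnel/saturation length and by \cref{lem: periodic_divergence_principal_lines}, \cref{prop: flowrays_fellow_travel_or_diverge} — so the limit set of $W$ misses the two (distinct, by level-separation) endpoints of $\gamma$; (2) is the transversality/odd-crossing argument just described, now phrased on $\partial\wt X^1$ using that $W$ separates $\wt X$ into deep components with disjoint limit sets on each side; (3) is automatic from the \emph{definition} of the $M$-saturation: for every singular vertex $v \in W'_u$ the whole principal flow line $\Lambda_v$ is included in $W$, and any principal flow line $\Lambda$ \emph{not} contained in $W$ diverges (forward and backward, by \cref{lem: principal flow lines diverge} and the no-twinned-subgroups hypothesis) from $W$, so $\partial\Lambda$ sits on one side of $\partial W$ — indeed, a principal flow line not in $W$ has, by \cref{lem;only_finitely_many_traps} and \cref{lem: periodic_divergence_principal_lines}, bounded overlap with $W$, hence its two ideal endpoints cannot be separated by $\partial W$.

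Concretely the steps, in order, are: (i) invoke \cref{lem: level_separation} on $\gamma$ to get the base point $y$ with odd-crossing backward flow, choosing the buffer constant $K$ large enough ($\geq 3\delta + 2\lambda$ plus the quasi-isometry constants of \cref{prop: saturation approximations quasiconvex}); (ii) perturb $y$ within a small interval so its flow ray meets no vertex and is non-asymptotic to every principal flow line, using \cref{lem: periodic_divergence_principal_lines}; (iii) apply the regular-effective-walls part of \cref{thm: effective_walls} to produce $\mathcal{W}$ and, for $L \geq L_3$, a wall $W_u$ with primary bust $\epsilon$-close to $y$; (iv) form the $M$-saturation $W$ for $M$ large, quasiconvex by \cref{prop: saturation approximations quasiconvex}, with $\mathrm{Stab}(W)$ full relatively quasiconvex codimension-1 by \cref{prop: saturated_walls_stabilisers}; (v) show $W_u$ meets $\gamma$ with an odd number of transversal crossings along the long subpath where $\tau_n(y)$ crosses $\gamma$, using that the folded backward-flow pieces of $W_u$ track $\tau_n(y)$ (bust $\epsilon$-close to $y$, \cref{lem: choice_of_busts}) and that $W_u$ genuinely separates $\wt X$ (\cref{prop: long_tunnels_walls}); (vi) conclude the two ends of $\gamma$ lie in distinct components of $\wt X \setminus W$, then pass to $\partial\wt X^1$ to get \cref{defn: saturation_cuts}(1)--(2); (vii) check \cref{defn: saturation_cuts}(3) from the definition of saturation plus divergence of non-contained principal flow lines.

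The main obstacle I expect is bookkeeping the ``transversality'' and oddness of crossings once we pass from the clean backward flow $\tau_n(y)$ to the actual wall $W_u$: the wall is not literally the backward flow — it is built from nuclei, slopes, levels and then folded via $\varrho_L$, and its saturation adjoins further components and principal flow lines. One must argue that none of these extra features introduces spurious crossings of $\gamma$ near the relevant subpath. This is handled by the large-tunnel-length and large-saturation-length choices: by \cref{prop: saturation approximations quasiconvex} and the divergence statements (\cref{lem: periodic_divergence_principal_lines}, \cref{prop: flowrays_fellow_travel_or_diverge}, \cref{lem: principal flow lines diverge}), all the extra pieces of $W$ stay uniformly far from any given bounded portion of $\gamma$ once $L, M$ are large enough relative to $N$ and $K$, so the parity of $\#(W \cap \gamma)$ on the relevant subpath equals that of $\#(\tau_n(y) \cap \gamma)$, which is odd. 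The remaining routine verifications — that the limit set $\partial W$ avoids $\partial\gamma$, and that condition (3) holds — then follow from the quasiconvexity and divergence facts already assembled, exactly as in the surface and free-by-cyclic prototypes \cite{cooper_long_reid, hagen_wise_irreducible}.
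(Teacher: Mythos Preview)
Your proposal is correct and follows essentially the same strategy as the paper: use level-separation to locate a point whose backward flow crosses $\gamma$ in odd cardinality, build $W_u$ from a regular effective set with tunnel length $L \gg N$, then pass to an $M$-saturation with $M$ large. The paper's proof is more economical on two points. First, for condition~(1) of \cref{defn: saturation_cuts} it does not rely on divergence statements about bust endpoints' flow rays; instead it applies \cref{lem: quasiconvexity small overlaps} directly to the \emph{union} $\gamma \cup N(A(W))^1$, noting that $N$-non-horizontality bounds the fellow-travelling of $\gamma$ with any slope approximation (which is a forward flow segment of length $L > N$), and that principal flow lines in $A(W)$ 1-fellow-travel $A(W_u)$ for distance $\geq L$ so add no longer overlaps --- your appeal to \cref{prop: saturation approximations quasiconvex} alone gives only that $N(A(W))^1$ embeds, not that its limit set avoids $\partial\gamma$. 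Second, condition~(3) is dispatched in one line as immediate from the definition of saturation, without the chain of divergence lemmas you invoke.
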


\begin{proof}
Proposition 5.18 of \cite{hagen_wise_irreducible} shows that in this case, there exists a quasiconvex wall $W_u$ which separates $\partial \gamma$. Let us quickly explain the idea behind their proof. We refer the reader to \cref{fig: nonhorizontal_cut} for an illustration of what follows.  We choose a wall $W_u$ from a regular effective set of walls (\cref{defn: effective_walls}(1)) with tunnel length $L$ sufficiently larger than $N$, while ensuring that the approximation $A(S)$ of some slope $S$ of $W_u$ intersects a vertical segment of $\gamma$ in an odd number of points (in the interior of some vertical edges), and far away from the endpoints of $A(S)$ ({arguing exactly as in \cite{hagen_wise_irreducible} while using \cref{lem: level_separation} for level separatedness}). Such a $W_u$ exists because of \cref{thm: effective_walls}. The fact that $\gamma$ is $N$-non-horizontal will ensure that $\gamma$ does not have long subpaths which $3\delta+2\lambda$-fellow-travel with $W_u$ (as the tunnel length $L$ of $W_u$ is much larger than $N$). This implies, by \cref{lem: quasiconvexity small overlaps}, that the union of $\gamma$ and $N(A(W_u))^1$ quasiisometrically embeds in $\wt X^1$ and thus $\partial W_u$  is disjoint from, and separates, $\partial \gamma$ in $\partial \wt X^1$. 

\begin{figure}
    \centering
    \begin{tikzpicture}[line cap=round,line join=round,>=triangle 45,x=.6cm,y=.6cm, dot/.style={circle,inner sep=1pt,fill,label={#1},name=#1},
 extended line/.style={shorten >=-#1,shorten <=-#1},
 extended line/.default=1cm]
\draw [line width=2pt] (-5,-3)-- (-5,4);
\draw [line width=2pt] (-5,4)-- (-3,4);
\draw [line width=2pt] (-3,4)-- (-3,1);
\draw [line width=2pt] (-3,1)-- (-1,1);
\draw [line width=2pt] (-1,1)-- (-1,6);
\draw [line width=2pt,dashed] (-1,6)-- (-1,7.42);
\draw [line width=2pt,dashed] (-5,-3)-- (-5,-4.42);
\draw [line width=1.2pt,color=orange] (-11,3)-- (6,3);
\draw [line width=1.2pt,color=green] (-10,3)-- (-11,2.44);
\draw [line width=1.2pt,color=brown] (-10,3)-- (-10,3.78);
\draw [line width=1.2pt,dashed,color=brown] (-10,3.78)-- (-10.682644300000009,4.3330526800000015);
\draw [line width=1.2pt,dashed,color=brown] (-10,3.78)-- (-10,4.48);
\draw [line width=1.2pt,dashed,color=brown] (-10,3.78)-- (-9.5,4.46);
\draw [extended line, line width = 1.2pt, dotted, color = red] (-10,3.78) -- (6,5);
\draw [line width=1.2pt,color=orange] (-11,3)-- (-11,2.44);
\draw [line width=1.2pt,color=orange] (6,3)-- (6,5);
\draw [line width=1.2pt,dashed,color=orange] (6,5)-- (5,6);
\draw [line width=1.2pt,dashed,color=orange] (6,5)-- (5.96,6.22);
\draw [line width=1.2pt,dashed,color=orange] (6,5)-- (6.76,6.16);

\draw[color=black] (-2,0.4) node {$\gamma$};

\end{tikzpicture}
    \caption{Proof of \cref{prop: walls_cut_non-horizontal_geodesics}. The geodesic $\gamma$ hits the approximation of a slope of $W_u$. The red dotted line indicates a flow line in the saturation of $W_u$.}
    \label{fig: nonhorizontal_cut}
\end{figure}

{We now choose $M \in L\mathbb{N}$, sufficiently large, and let $W$ be the $M$-saturation of $W_u$.  Since $\partial W_u$ separates $\partial \gamma$, and $W_u \subset W$,  in order to show that $\partial W$ also separates $\partial \gamma$, it suffices to show that 
 $\partial \gamma$ is disjoint from $\partial W$, and for that it suffices to 
  show that the union of $\gamma$ and $N(A(W))^1$ quasiisometrically embeds in $\wt X^1$.  
  Since we already know that the union of $\gamma$ and $N(A(W_u))^1$ quasiisometrically embeds in $\wt X^1$
  and that the saturation consists in translates of $W_u$ that are at distance larger than $M$ from each other, and of principal flow lines between them,  a failure of quasiconvexity for the union of $\gamma$ and $N(A(W))^1$ would mean that $\gamma$ fellow travels with a principal flow line for a long distance (at least $M$ by definition of saturation). This contradicts the $N$-non-horizontality of $\gamma$. }
\end{proof}

\begin{prop}\label{prop: walls_cut_horizontal_geodesics}
Let $\gamma : \mathbb{R} \to \wt{X}^1$ be a bi-infinite geodesic in the flow space such that \begin{enumerate}
    \item $\gamma$ is $N$-horizontal for all large $N$, and 
    \item no ray of $\gamma$ is asymptotic to any ray of a principal flow line.
\end{enumerate} Then there exists a quasiconvex wall $W_u \to \wt X$ and an $M$-saturation $W$ of $W_u$ such that $W$ cuts $\gamma$.
\end{prop}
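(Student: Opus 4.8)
The plan is to mimic the structure of the $N$-non-horizontal case (Proposition~\ref{prop: walls_cut_non-horizontal_geodesics}), but now drawing the wall from a \emph{periodic} effective set of walls rather than a regular one, and to combine the separation statement from Proposition~5.18 of \cite{hagen_wise_irreducible} with the saturation argument. Since $\gamma$ is $N$-horizontal for all large $N$, some subpath of $\gamma$ $(2\delta+\lambda)$-fellow-travels a geodesic of a forward ladder $N(\sigma)$ for arbitrarily long $\sigma$; in other words $\gamma$ ``tracks a forward flow ray'' in one direction. First I would invoke the density of periodic points (Lemma~\ref{lem: periodic_dense}) together with Proposition~\ref{prop: periodic_flowrays} to locate a periodic point $a\in D$ whose forward flow ray $\sigma(a)$ is the one that $\gamma$ is tracking (up to bounded error), using that two forward flow rays that fellow-travel forever in $\wt X^1$ have the same endpoint in $\partial\wt X^1$ and hence, by the hypothesis that no ray of $\gamma$ is asymptotic to a principal flow line, $\sigma(a)$ is not a principal flow line; by Lemma~\ref{lem: periodic_divergence_principal_lines} (and Lemma~\ref{lem;only_finitely_many_traps}) we may even take the periodic flow line through $a$ to diverge from every principal flow line in both directions. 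Then, by \cref{thm: effective_walls}(2), there is a periodic effective set of walls $\mathcal{W}_a$ containing a wall $W_u$ of arbitrarily large tunnel length $L$ whose slopes at $a'$ flow away from $f^n(a)$ at distance $\geq 3\delta+2\lambda$ for all $n\geq k(a)$; this is exactly the ``effectiveness'' that lets a slope-approximation of $W_u$ intersect the horizontal part of $\gamma$ transversally in an odd number of points and away from its endpoints.

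Next I would quote Proposition~5.18 of \cite{hagen_wise_irreducible}: with $L$ chosen large (and using the odd-intersection count), the union of $\gamma$ with $N(A(W_u))^1$ is a quasigeodesic via \cref{lem: quasiconvexity small overlaps} — here the relevant overlaps are controlled because $\gamma$ is a geodesic and the slopes/levels of $W_u$ diverge from $\gamma$ outside a bounded set, the effectiveness condition being what prevents a long fellow-travelling segment between $\gamma$ and $W_u$ along the flow. Consequently $\partial W_u$ is disjoint from $\partial\gamma$ and separates it in $\partial\wt X^1$. This gives conditions (1) and (2) of \cref{defn: saturation_cuts} at the unsaturated level.

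Then comes the saturation step, which runs exactly as in the proof of \cref{prop: walls_cut_non-horizontal_geodesics}. Choose $M\in L\mathbb{N}$ with $M\gg L$ and let $W$ be the $M$-saturation of $W_u$. Because principal flow lines of $A(W)$ meeting $A(W_u)$ only $1$-fellow-travel $A(W_u)$ for a length bounded by (a multiple of) $L$, and because those principal flow lines diverge from $A(W_u)$ afterwards (here using \cref{lem: principal flow lines diverge} and that our periodic flow line through $a$ diverges from every principal flow line), any subpath of $\gamma$ fellow-travels $A(W)$ at distance $3\delta+2\lambda$ for no longer than it fellow-travels $A(W_u)$; a second application of \cref{lem: quasiconvexity small overlaps} shows $\gamma\cup N(A(W))^1$ is a quasigeodesic, so $\partial W$ is disjoint from $\partial\gamma$ and still separates it. Condition (3) of \cref{defn: saturation_cuts} — each principal flow line is either contained in $W$ or has both endpoints on one side of $\partial W$ — is built into the definition of the saturation $W$, exactly as at the end of the previous proof. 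Hence $W$ cuts $\gamma$.

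The main obstacle I expect is the first step: pinning down the periodic point $a$ so that $\gamma$'s horizontal part genuinely tracks $\sigma(a)$ within bounded error \emph{and} $\sigma(a)$ is not asymptotic to a principal flow line. The $N$-horizontal hypothesis only tells us $\gamma$ tracks \emph{some} forward ladder geodesic for arbitrarily long stretches; upgrading this to tracking a single forward flow \emph{ray} (so that density of periodic points can be applied in a small edge-subinterval) requires a compactness/limiting argument — passing to a limit of the initial points of these ladders within a fundamental domain edge and using Lemma~\ref{lem: edge_flows_diverge} to see that nearby starting points cannot track $\gamma$ forever unless they coincide. The hypothesis that no ray of $\gamma$ is asymptotic to a principal flow line is precisely what rules out the degenerate case where the tracked ray is itself (asymptotic to) a principal flow line, in which situation no effective wall of the required type exists; incorporating this cleanly into the limiting argument, and quoting the correct statement from \cite{hagen_wise_irreducible} for the existence of the separating unsaturated wall, is where the care is needed.
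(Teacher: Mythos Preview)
Your high-level plan (periodic effective wall, then saturate) is right, but two points are off.

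\medskip
\textbf{Geometry of the cut.} You describe the wall meeting $\gamma$ via a slope-approximation in an odd number of points, as in the $N$-non-horizontal case. That is not how the horizontal case works. Here $\gamma$ is tracking a forward flow ray, and the wall $W_u$ is taken from $\mathcal W_a$ so that the \emph{nucleus} containing $a'$ slices $\gamma$ transversally: the long horizontal subsegment of $\gamma$ (of length much larger than the tunnel length $L$) passes through the vertical nucleus at $a$. The effectiveness condition in \cref{defn: effective_walls}(2) says that the primary busts adjacent to $a'$ flow away from $f^n(a)$; this is exactly what forces the slope-approximations of $W_u$ to \emph{diverge} from the flow of $a$, hence from $\gamma$, rather than to intersect it. So the ``odd intersection with a slope'' picture is inapposite, and the correct reference in \cite{hagen_wise_irreducible} is Proposition~5.19, not~5.18.

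\medskip
\textbf{The saturation step.} You reuse the $N$-non-horizontal argument verbatim: ``subpaths of $\gamma$ fellow-travel $A(W)$ for no longer than they fellow-travel $A(W_u)$, because principal flow lines fellow-travel $A(W_u)$ only for length $\sim L$''. That inference fails here. Since $\gamma$ is $N$-horizontal for all $N$, it can fellow-travel a principal flow line for a length enormously larger than $L$; the principal flow lines added by the saturation are precisely the dangerous pieces, not the slopes. The paper instead invokes hypothesis~(2) directly: since no ray of $\gamma$ is asymptotic to a principal flow line, the fellow-travelling of $\gamma$ with the principal flow line $\Lambda_a$ through the singular vertex of the nucleus at $a$ is \emph{bounded} (though possibly large), and one then chooses $M$ larger than this bound. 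Your extra care in picking $a$ so that the \emph{periodic} flow line through $a$ diverges from every principal flow line does not address this, since the nucleus containing $a'$ may still carry a singular vertex and hence a principal flow line in the saturation.
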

\begin{proof}
Proposition 5.19 of \cite{hagen_wise_irreducible} ensures that there exists a quasiconvex wall $W_u$ which separates $\partial \gamma$; the wall $W_u$ is chosen from a periodic effective set of walls $\mathcal{W}_a$ (\cref{defn: effective_walls}(2)) so that, up to translation, the point $a$ in the nucleus of $W_u$ intersects $\gamma$ roughly in the middle of a long horizontal subpath of $\gamma$, whose length is much bigger than the tunnel length of $W_u$ (\cref{fig: horizontal_cut}). Such a $W_u$ exists, again, by \cref{thm: effective_walls}. By the way $W_u$ was chosen, slope approximations of $W_u$ do not have long $3\delta+2\lambda$-fellow-travelling subpaths with the flow line of $a$, and therefore with $\gamma$. \cref{lem: quasiconvexity small overlaps} now ensures that the union of $\gamma$ and $N(A(W_u)^1$ quasiisometrically embeds in $\wt X^1$. Therefore, $\partial W_u$ is disjoint from, and separates, $\partial \gamma$ in $\partial \wt X^1$.

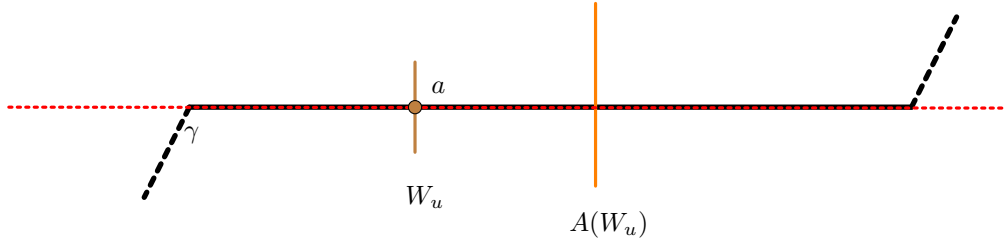
\begin{figure}
    \centering
    \begin{tikzpicture}[line cap=round,line join=round,>=triangle 45,x=.6cm,y=.6cm]

\draw [line width=2pt] (-7,2)-- (9,2);
\draw [line width=2pt,dashed] (-7,2)-- (-8,0);
\draw [line width=2pt,dashed] (9,2)-- (10,4);
\draw [line width=1.2pt,color=brown] (-2,2)-- (-2,3);
\draw [line width=1.2pt,color=brown] (-2,2)-- (-2,1);
\draw [line width=1.2pt,color=orange] (2,4.3)-- (2,2);
\draw [line width=1.2pt,color=orange] (2,2)-- (2,0.25);
\draw [line width=1.2pt,dotted,color=red] (-11,2)-- (11.18,1.98);
\draw[right] (-7.32,1.39) node {$\gamma$};
\draw [fill=brown] (-2,2) circle (2.5pt);
\draw [right](-1.84,2.43) node {$a$};
\draw (-1.78,0) node {$W_u$};
\draw (2.3,-.6) node {$A(W_u)$};
\end{tikzpicture}
    \caption{Proof of \cref{prop: walls_cut_horizontal_geodesics}. The geodesic $\gamma$ is cut along a nucleus of $W_u$ at $a$. The red dotted line indicates the principal flow line $\Lambda_a$.}
    \label{fig: horizontal_cut}
\end{figure}

In order to prove the proposition, we will now choose an $M$-saturation $W$ of $W_u$ with $M$ much larger than the length of the maximal subpath of $\gamma$ that $3\delta + 2\lambda$-fellow-travels with the principal flow line $\Lambda_a$ through $a$. Observe that this maximal subpath is bounded as $\partial \gamma$ is disjoint from $\partial \Lambda_a$. An application of \cref{lem: quasiconvexity small overlaps} now shows that the union of $N(A(W))^1$ with $\gamma$ is a quasiisometric embedding and thus $W$ cuts $\gamma$.
\end{proof}

\subsection{Separating geodesics from principal flow lines}

In this subsection, we will show that the union of a principal flow line and a geodesic ray is cut by a wall. More precisely:

\begin{prop}\label{prop: walls_cut_geodesic-parabolics}
Let $\Lambda$ be a principal flow line and $\gamma$ be a geodesic ray starting at some point on $\Lambda$. Suppose that $\gamma$ and $\Lambda$ are not asymptotic. Then there exists a quasiconvex wall $W_u \to \wt X$ and an $M$-saturation $W$ of $W_u$, such that \begin{enumerate}
    \item $W$ does not separate $\partial \Lambda$, and
    \item $W$ cuts $\Lambda \cup \gamma$.
\end{enumerate}
\end{prop}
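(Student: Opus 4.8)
The plan is to reduce this statement to the geodesic-cutting results already established, by first replacing the principal flow line $\Lambda$ with a nearby \emph{periodic} flow line that has better divergence properties. First I would invoke \cref{lem: periodic_divergence_principal_lines} to produce, in a small subinterval of an edge near where $\Lambda$ meets $T_0$, a periodic point $x_0$ whose periodic flow line $\Lambda_0$ diverges from every principal flow line in both directions; in particular $\Lambda_0$ diverges from $\Lambda$. Since $\gamma$ is not asymptotic to $\Lambda$, the two endpoints $\partial \Lambda = \{\Lambda_+, \Lambda_-\}$ together with the endpoint $\gamma_+$ of the ray are three distinct points of $\partial \wt X^1$. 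The key observation is that $\Lambda_+, \Lambda_-$ can be separated from $\gamma_+$ by separating a suitable bi-infinite geodesic: consider a bi-infinite geodesic $\hat\gamma$ that is asymptotic to $\Lambda_+$ in one direction and follows (a sub-ray of) $\gamma$ — or a geodesic asymptotic to $\gamma_+$ — in the other. Actually, the cleanest route is: the bi-infinite geodesic $\gamma'$ with the same two endpoints as $\Lambda$ fellow-travels $\Lambda$ and is $N$-horizontal for all large $N$ (since principal flow lines are uniform quasigeodesics made of horizontal segments, cf. the discussion before \cref{prop: quasiconvexity_fwd_ladder}); but it \emph{is} asymptotic to a principal flow line, so \cref{prop: walls_cut_horizontal_geodesics} does not directly apply. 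Hence the strategy must instead cut the geodesic ray $\gamma$ away from $\Lambda$ while keeping $\partial\Lambda$ on one side.

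So the main step is the following: choose the wall $W_u$ from a periodic effective set $\mathcal{W}_a$ adapted to the periodic point $x_0$ above, with tunnel length $L$ sufficiently large, placing (a translate of) the nucleus of $W_u$ at a point of $\gamma$ far along $\gamma$ — past the point where $\gamma$ has definitively diverged from $\Lambda$. By \cref{thm: effective_walls} such walls exist; by \cref{prop: long_tunnels_walls} $W_u$ is a genuine wall for $L$ large; and by \cref{prop: saturation approximations quasiconvex} and \cref{prop: saturated_walls_stabilisers}, for $L \geq L_3$ and $M$ large enough the saturation $W$ is relatively quasiconvex, full, codimension-$1$. The point of placing the nucleus deep along $\gamma$ and using the divergence of the flow ray of $x_0$ from all principal flow lines (guaranteed by our choice via \cref{lem: periodic_divergence_principal_lines}, and by \cref{lem;only_finitely_many_traps}) is that $A(W_u)$, and then $A(W)$, have uniformly bounded overlap with both $\gamma$ and $\Lambda$ — so \cref{lem: quasiconvexity small overlaps} applies twice to show that $\gamma \cup N(A(W))^1$ and $\Lambda \cup N(A(W))^1$ each quasiisometrically embed in $\wt X^1$. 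The first embedding gives $\partial W \cap \partial\gamma = \emptyset$ and that $\partial W$ separates $\gamma_+$ from the ``base'' side; the second gives $\partial W \cap \partial \Lambda = \emptyset$ and, crucially, that $\partial\Lambda_+$ and $\partial\Lambda_-$ land in the \emph{same} complementary component of $\partial \wt X^1 \setminus \partial W$ (since $\Lambda$ stays boundedly close to one side of $W$ — this is exactly where we use that $\Lambda$ is not contained in $W$, so by \cref{lem: principal flow lines diverge} and the saturation construction $\Lambda$ cannot repeatedly cross $W$). This establishes conclusion (1).

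For conclusion (2), namely that $W$ cuts $\Lambda \cup \gamma$ in the sense of \cref{defn: saturation_cuts}: condition (1) of that definition is $\partial(\Lambda\cup\gamma) \cap \partial W = \emptyset$, which we just verified piece by piece ($\partial\Lambda$ and $\gamma_+$ both avoid $\partial W$). Condition (2) is that $\partial W$ separates $\partial(\Lambda\cup\gamma)$: since $\partial\Lambda$ lies on one side and $\gamma_+$ lies on the other side (the nucleus was placed past the divergence point, so one end of $\gamma$ is genuinely on the far side of $W$), this holds. Condition (3) — for every principal flow line $\Lambda'$, either $\Lambda' \subset W$ or $\partial\Lambda'$ lies in one component of $\partial\wt X^1\setminus\partial W$ — is a property of $W$ alone, independent of $\Lambda\cup\gamma$, and follows directly from the definition of the $M$-saturation together with \cref{lem: principal flow lines diverge}: a principal flow line not contained in $W$ diverges from $\Lambda_a$ and from $W$, so it cannot cross $W$ infinitely often, hence its two endpoints sit on one side (this is the same argument used at the end of \cref{prop: walls_cut_non-horizontal_geodesics} and \cref{prop: walls_cut_horizontal_geodesics}).

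The main obstacle I anticipate is conclusion (1): controlling the position of $\Lambda$ relative to $W$ so that $\partial\Lambda$ does not get split. The delicate point is that $\Lambda$ is a \emph{principal} flow line and the saturation $W$ explicitly contains many principal flow lines, so a priori $W$ could wrap around $\Lambda$. The resolution is to ensure, when choosing $W_u$ and $M$, that $\Lambda$ is one of the principal flow lines that is \emph{not} absorbed into the saturation — this is arranged by taking the nucleus near the periodic point $x_0$ from \cref{lem: periodic_divergence_principal_lines}, whose flow line is designed to diverge from all principal flow lines, so that \emph{no} vertex of $\Lambda$ lies in $W$ and $\Lambda$ fellow-travels $W$ for only bounded time; then the two-sided divergence from \cref{lem: principal flow lines diverge} and \cref{lem;from_one_to_both_directions} pins both ends of $\Lambda$ to the same side. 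Making the quantifiers on $L$ and $M$ consistent between the ``$\gamma$ quasi-embeds'' application and the ``$\Lambda$ quasi-embeds'' application of \cref{lem: quasiconvexity small overlaps} is the remaining bookkeeping.
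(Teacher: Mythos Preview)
Your core intuition---place the wall far along $\gamma$, past the point where $\gamma$ has diverged from $\Lambda$, so that $W$ never meets $\Lambda$---is exactly the paper's idea. But the execution diverges from the paper in two ways, and one of them is an actual gap.

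First, the auxiliary periodic point $x_0$ near $\Lambda \cap T_0$ and its periodic flow line $\Lambda_0$ play no role. You introduce $x_0$ via \cref{lem: periodic_divergence_principal_lines} to get a line diverging from all principal flow lines, but then you try to build a wall from the periodic effective set $\mathcal{W}_{x_0}$ ``placing the nucleus at a point of $\gamma$ far along $\gamma$''. These two requirements are incompatible: the nucleus of a wall in $\mathcal{W}_{x_0}$ is pinned at (a translate of) $x_0$, which you chose near $\Lambda$, not far along $\gamma$. The paper does not use any such auxiliary point; drop it.

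Second, and more seriously, you commit to a \emph{periodic} effective set regardless of the nature of $\gamma$. Periodic effective walls are designed to cut geodesics along a long horizontal subpath (\cref{defn: effective_walls}(2), proof of \cref{prop: walls_cut_horizontal_geodesics}). If the ray $\gamma$ is $N$-non-horizontal for some $N$, there is no such subpath to hit, and your wall need not separate $\gamma_+$ at all. The paper instead branches: it views the concatenations $\gamma \cup \Lambda_+$ and $\gamma \cup \Lambda_-$ as two bi-infinite $(1, 2\epsilon+10\delta)$-quasigeodesics, and then invokes whichever of \cref{prop: walls_cut_non-horizontal_geodesics} or \cref{prop: walls_cut_horizontal_geodesics} applies, choosing the wall so that it meets $\gamma$ at distance much larger than $2\epsilon + 20\delta + 2\kappa + \lambda + L$ from the basepoint $p$. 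This single distance condition, fed into \cref{lem: quasiconvexity small overlaps}, forces $W_u$ (and then $A(W_u)$) to be disjoint from $\Lambda$, which immediately gives that $\partial W_u$ does not separate $\partial \Lambda$---no separate ``$\Lambda$ stays on one side'' argument is needed. The saturation step is then exactly as you describe: take $M$ large compared to the maximal fellow-travel length between pairs of principal flow lines and apply \cref{lem: quasiconvexity small overlaps} once more.
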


\begin{proof}
Let $p \in \Lambda$ be the starting point of $\gamma$. Denote by $\Lambda_+, \Lambda_-$ the two geodesic subrays of $\Lambda$ starting at $p$. Denote the endpoints at infinity of these rays by $\Lambda_+(\infty), \Lambda_-(\infty), \gamma(\infty)$.   Let $\epsilon$ be the maximal Gromov product at $p = \gamma(0)$ of $\gamma(\infty)$ and 
$ \Lambda_{\pm}(\infty)$. Then both the concatenations $\gamma \cup \Lambda_{\pm}$ are $(1, 2\epsilon + 10\delta)$-quasigeodesic lines. We will now choose a quasiconvex wall $W_u$ (see \cref{fig:geodesic_principal_cut}) such that 
\begin{enumerate}
    \item $W_u \cap (\gamma \cup \Lambda_-) = W_u \cap (\gamma \cup \Lambda_+) \subset \gamma$,
    \item the union of the quasigeodesic lines $\gamma \cup \Lambda_{\pm}$ with $N(A(W_u)^1$ embeds quasiisometrically, and 
    \item $\partial W_u$ separates $\gamma(\infty)$ from $\Lambda_+(\infty)$ and $\Lambda_-(\infty)$.  
\end{enumerate} 
 
The choice of $W_u$ that will work for us is a wall given by either \cref{prop: walls_cut_non-horizontal_geodesics} or \cref{prop: walls_cut_horizontal_geodesics} depending on whether or not $\gamma$ is $N$-horizontal for all $N$. 

Let $\lambda$ be the quasiconvexity constant from \cref{prop: quasiconvexity_fwd_ladder}, and $\epsilon$ as before. Let $\kappa, L$ be respectively the quasiconvexity constant for $N(A(W_u))^1$ and the tunnel length of $W_u$  for such walls, as guaranteed by \cref{prop: approximations_quasiconvex}. We  choose $W_u$  so that $W_u$ intersects $\gamma$ at points at distance much larger than $2\epsilon + 20\delta + 2\kappa + \lambda + L$ from $p$.  

Thanks to the large distance between the point $p$ and $W_u \cap \gamma$, \cref{lem: quasiconvexity small overlaps} says that the union of $N(A(W_u)^1$ with $\gamma \cup \Lambda$ embeds quasiisometrically and both $W_u$ and $A(W_u)$ are disjoint from $\Lambda$. 
This implies that $\partial W_u$ does not separate $\partial \Lambda$. We now choose an $M$-saturation $W$ of $W_u$ with $M$ much larger than the maximal $3\delta + 2\lambda$-fellow-travelling length between pairs of principal flow lines. 
This choice ensures, yet again by \cref{lem: quasiconvexity small overlaps}, that $W$ is disjoint from $\Lambda$ and therefore cuts $\gamma \cup \Lambda$ but does not separate $\partial \Lambda$.
\end{proof}

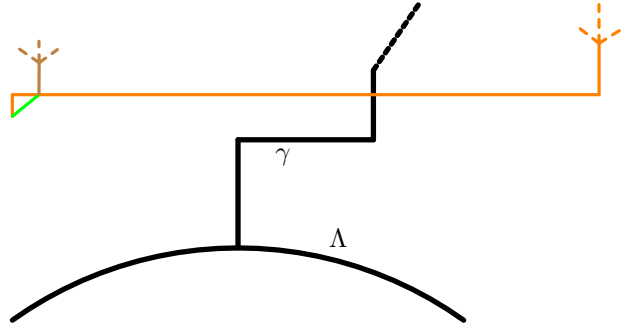
\begin{figure}
    \centering
    \begin{tikzpicture}[line cap=round,line join=round,>=triangle 45,x=.6cm,y=.6cm]
\draw [shift={(0,-7)},line width=2pt]  plot[domain=0.9505468408120751:2.191045812777718,variable=\t]({1*8.602325267042627*cos(\t r)+0*8.602325267042627*sin(\t r)},{0*8.602325267042627*cos(\t r)+1*8.602325267042627*sin(\t r)});
\draw [line width=2pt] (0,1.6023252670426267)-- (0,4);
\draw [line width=2pt] (0,4)-- (3,4);
\draw [line width=2pt] (3,4)-- (3,5.54);
\draw [line width=2pt,dotted] (3,5.54)-- (4,7);
\draw [line width=1.2pt,color=orange] (-5,5)-- (8,5);
\draw [line width=1.2pt,color=orange] (-5,5)-- (-5,4.52);
\draw [line width=1.2pt,color=orange] (8,5)-- (8,6.14);
\draw [line width=1.2pt,dashed,color=orange] (8,6.14)-- (7.48,6.54);
\draw [line width=1.2pt,dashed,color=orange] (8,6.14)-- (8.64,6.54);
\draw [line width=1.2pt,dashed,color=orange] (8,6.14)-- (7.997272727272721,6.998363636363641);
\draw [line width=1.2pt,color=green] (-5,4.52)-- (-4.410727272727277,5);
\draw [line width=1.2pt,color=brown] (-4.410727272727277,5)-- (-4.406133371910058,5.709666507323151);
\draw [line width=1.2pt,dashed,color=brown] (-4.406133371910058,5.709666507323151)-- (-4,6);
\draw [line width=1.2pt,dashed,color=brown] (-4.406133371910058,5.709666507323151)-- (-4.410727272727277,6.184363636363641);
\draw [line width=1.2pt,dashed,color=brown] (-4.406133371910058,5.709666507323151)-- (-4.872727272727277,6.030363636363641);
\draw[color=black] (2.225607983978148,1.8140275619789166) node {$\Lambda$};
\draw[color=black, below] (1,4) node {$\gamma$};

\end{tikzpicture}
    \caption{The wall $W_u$ is chosen so as to hit the geodesic ray $\gamma$ far from the principal flow line $\Lambda$.}
    \label{fig:geodesic_principal_cut}
\end{figure}

\subsection{Cutting pairs of principal flow lines}

\begin{prop}\label{prop: walls_cut_parabolic-parabolic}
Let $\Lambda_1 \neq \Lambda_2$ be two principal flow lines.  Then there exists a quasiconvex wall $W_u \to \wt X$ and an $M$-saturation $W$ of $W_u$ such that \begin{enumerate}
    \item $W$ cuts $\Lambda_1 \cup \Lambda_2$, and
    \item $\partial W$ does not separate $\partial \Lambda_i$.
\end{enumerate}
\end{prop}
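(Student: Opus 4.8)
The plan is to deduce this from the previous proposition, \cref{prop: walls_cut_geodesic-parabolics}, rather than to build a fresh wall. First I would record, via \cref{lem: principal flow lines diverge}, that since $\Lambda_1\neq\Lambda_2$ the two lines uniformly diverge in both the forward and backward directions, so the four endpoints $\Lambda_1^{+}(\infty),\Lambda_1^{-}(\infty),\Lambda_2^{+}(\infty),\Lambda_2^{-}(\infty)$ in $\partial\wt X^1$ are pairwise distinct. Then I would fix a point $p\in\Lambda_1\cap T_0$, pick one endpoint $\Lambda_2^{+}(\infty)$ of $\Lambda_2$, and let $\gamma$ be a geodesic ray in $\wt X^1$ from $p$ to $\Lambda_2^{+}(\infty)$. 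Since $\Lambda_2^{+}(\infty)\notin\partial\Lambda_1$, the ray $\gamma$ is not asymptotic to $\Lambda_1$, so \cref{prop: walls_cut_geodesic-parabolics} applies to the pair $(\Lambda_1,\gamma)$ and produces a quasiconvex wall $W_u\to\wt X$ and an $M$-saturation $W$ with $\partial W$ not separating $\partial\Lambda_1$ and $W$ cutting $\Lambda_1\cup\gamma$. The claim to verify is that this same pair $(W_u,W)$ already witnesses the proposition.

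The key point will be that $\Lambda_2\not\subset W$. Here I would argue: $W$ is quasiconvex (its approximation is quasi-isometrically embedded by \cref{prop: saturation approximations quasiconvex}), so if the line $\Lambda_2$ were contained in the graph $W$ then its limit set $\partial\Lambda_2$ would satisfy $\partial\Lambda_2\subset\partial W$; but $\Lambda_2^{+}(\infty)=\gamma(\infty)\in\partial\Lambda_2$, whereas condition (1) of ``$W$ cuts $\Lambda_1\cup\gamma$'' asserts $\partial(\Lambda_1\cup\gamma)\cap\partial W=\emptyset$, in particular $\gamma(\infty)\notin\partial W$ --- a contradiction. Granting $\Lambda_2\not\subset W$, condition (3) of ``$W$ cuts $\Lambda_1\cup\gamma$'' (which constrains principal flow lines only, with no reference to the set being cut) applies to $\Lambda_2$ and places $\partial\Lambda_2$ inside a single component $C$ of $\partial\wt X^1\setminus\partial W$. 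In particular $\partial\Lambda_2\cap\partial W=\emptyset$ and $\partial W$ does not separate $\partial\Lambda_2$; together with the fact that $\partial W$ does not separate $\partial\Lambda_1$, this is conclusion (2) of the proposition.

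For conclusion (1), i.e.\ that $W$ cuts $\Lambda_1\cup\Lambda_2$ in the sense of \cref{defn: saturation_cuts}, I would check the three conditions in turn. Condition (1) follows since $\partial\Lambda_1\cap\partial W=\emptyset$ (part of ``$W$ cuts $\Lambda_1\cup\gamma$'') and $\partial\Lambda_2\cap\partial W=\emptyset$ (just established). Condition (3) is a statement about principal flow lines alone, with no reference to $A$, so it is inherited verbatim from ``$W$ cuts $\Lambda_1\cup\gamma$''. For condition (2): the same limit-set argument gives $\Lambda_1\not\subset W$ as well, so condition (3) of ``$W$ cuts $\Lambda_1\cup\gamma$'' puts $\partial\Lambda_1$ inside a single component $C'$; since $\partial(\Lambda_1\cup\gamma)=\partial\Lambda_1\cup\{\gamma(\infty)\}$ meets at least two components by condition (2) of ``$W$ cuts $\Lambda_1\cup\gamma$'', the point $\gamma(\infty)$ lies in a component distinct from $C'$. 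But $\gamma(\infty)\in C$, so $C\neq C'$, hence $\partial\Lambda_1$ and $\partial\Lambda_2$ sit in distinct components of $\partial\wt X^1\setminus\partial W$, which is condition (2).

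I expect essentially no real obstacle here: once \cref{prop: walls_cut_geodesic-parabolics} is available, the whole proof is bookkeeping with \cref{defn: saturation_cuts}. The single non-formal ingredient is the exclusion $\Lambda_2\not\subset W$ --- that the saturation handed to us does not swallow $\Lambda_2$ --- and this is forced, as above, purely by the fact that $\gamma$ terminates at an endpoint of $\Lambda_2$ together with the quasiconvexity of $W$. No new divergence or quasiconvexity estimates are needed.
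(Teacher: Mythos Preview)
Your reduction to \cref{prop: walls_cut_geodesic-parabolics} is elegant, but there is a genuine gap at the very first step: the existence of a geodesic ray in $\wt X^1$ from $p\in\Lambda_1$ to the point $\Lambda_2^+(\infty)$ is not guaranteed. The space $\wt X^1$ is not proper (singular vertices have infinite valence), and the only visibility result the paper establishes is \cref{prop;enough_visibility}, which produces geodesic rays only to preimages of \emph{conical} limit points of the Bowditch boundary. The endpoint $\Lambda_2^+(\infty)$ is parabolic, so that proposition does not apply. You could try to repair this by taking a quasi-geodesic ray (a segment from $p$ to $\Lambda_2$ concatenated with a sub-ray of $\Lambda_2$), but then you are no longer invoking \cref{prop: walls_cut_geodesic-parabolics} as a black box and must reopen its proof.

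There is a second, related concern: even granting the ray, the proof of \cref{prop: walls_cut_geodesic-parabolics} in the horizontal case passes through the construction of \cref{prop: walls_cut_horizontal_geodesics}, whose hypothesis~(2) explicitly excludes geodesics asymptotic to principal flow lines --- precisely the situation of your $\gamma$. Concretely, the saturation constant $M$ there is chosen larger than the fellow-travelling length of $\gamma$ with the principal flow line $\Lambda_a$ through the nucleus point $a$; if $a$ lands near or on $\Lambda_2$ (as it will, since the wall is placed far out along $\gamma$), this length is unbounded and no admissible $M$ exists. The paper only ever applies \cref{prop: walls_cut_geodesic-parabolics} in Case~2 of the main theorem with $\gamma(\infty)$ conical, so its proof was not written to cover parabolic $\gamma(\infty)$.

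The paper avoids both issues by working with a finite geodesic \emph{segment} between $\Lambda_1$ and $\Lambda_2$ rather than a ray to a parabolic point. On a vertical edge of that segment it selects, via \cref{lem: periodic_divergence_principal_lines}, a periodic point $x$ whose periodic flow line diverges from every principal flow line, and builds the wall directly around $x$ from a regular effective family; the bounded fellow-travelling of $\Lambda_x$ with each $\Lambda_i$ then supplies finite constants for \cref{lem: quasiconvexity small overlaps}. Your route would be shorter if the black box held in the parabolic-endpoint case, but as written the direct construction is what actually closes the argument.
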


\begin{proof}
 Let $\gamma$ be a geodesic segment between $\Lambda_1$ and $\Lambda_2$. Note that $\gamma$ contains at least one vertical edge $e$, since $\Lambda_1 \neq \Lambda_2$. By \cref{lem: periodic_divergence_principal_lines}, there exists a periodic point $x$ in the edge $e$ such that the periodic flow line $\Lambda_x$ is disjoint from the lines $\Lambda_i$ and diverges from $\Lambda_i$ in both the forward and backward directions. We now choose an immersed wall $W_u$ from a regular effective set (\cref{defn: effective_walls}(1)) and satisfying the following:\begin{enumerate}
     \item $W_u$ has tunnel length $L$ much larger than the $3\delta + 2\lambda$-fellow-travelling length of $\Lambda_x$ with each $\Lambda_i$,
     \item the intersection of $\Lambda_x$ with the approximation $A(W_u)$ is a segment of length $L$ with centre $x$,
     \item the approximation $A(W_u)$ intersects the edge $e$ at the singleton $\{x\}$ and the geodesic $\gamma$ in an odd cardinality set, and 
     \item the intersection of the union $\Lambda_1 \cup \Lambda_2 \cup \gamma$ with $A(W_u)$ coincides with the intersection of $\gamma$ with $A(W_u)$.
 \end{enumerate}
 Such a $W_u$ exists as $\wt X$ is level-separated (\cref{lem: level_separation}) and has many effective walls (\cref{thm: effective_walls}). Property (1) above allows us to apply \cref{lem: quasiconvexity small overlaps} to the union of $\Lambda_1 \cup \Lambda_2 \cup \gamma$ with $N(A(W))^1$ and conclude that it is a quasiisometric embedding in $\wt X^1$. Therefore, $\partial W_u$ is disjoint from, and separates, $\partial (\Lambda_1 \cup \Lambda_2)$. Further, by property (4), it does not separate $\partial \Lambda_i$. We now choose an $M$-saturation $W$ of $W_u$ for $M$ much larger than $L$. By applying \cref{lem: quasiconvexity small overlaps} again and arguing as above for $W_u$, we obtain the desired result.
\end{proof}

\section{Separating pairs of points in the Bowditch boundary} \label{sec: separation_bowditch}
The goal of this section is to show that the mapping torus $ \Gamma = G \rtimes_{\phi} \mathbb{Z}$ satisfies the hypothesis of \cref{thm: bdry_criterion}. Recall that $\Gamma$ is hyperbolic relative to the collection $\scrP$ consisting of the suspensions of the free factors $H_i$ of $G$.

Let us first make two observations connecting the Gromov boundary of the flow space $\wt X^1$ with the Bowditch boundary $\partial_B (\Gamma, \scrP)$. We denote by $\widehat{\wt X^1}$ the cone-off of the (1-skeleton of the) flow space $\wt X^1$ over the set of principal flow lines. Recall also that $\widehat{\Gamma}$ denotes the cone-off of $\Gamma$ relative to $\scrP$.

As a consequence of the relative \v{S}varc-Milnor Lemma (\cite[Theorem 5.1]{charney_crisp}), we obtain:

\begin{prop}\label{prop: rel_swarc_milnor}
The cone-off $\widehat{\wt X^1}$ is $\Gamma$-equivariantly quasiisometric to the cone-off $\widehat{\Gamma}$.
\end{prop}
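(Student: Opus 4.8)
The plan is to invoke the relative version of the \v{S}varc--Milnor Lemma from \cite{charney_crisp} (their Theorem 5.1) applied to the action of $\Gamma$ on the coned-off space $\widehat{\wt X^1}$. First I would recall the precise hypotheses of that theorem: one needs a group $\Gamma$ acting by isometries on a (coned-off) length space $Y$ such that the action is cobounded, the action is ``acylindrical along the cones'' or, in the form of \cite{charney_crisp}, that the peripheral cone-points have stabilisers acting properly and coboundedly on the corresponding cone-subspaces, and that $\Gamma$ is finitely generated. Here $Y = \widehat{\wt X^1}$ and the peripheral subspaces are the principal flow lines together with their cone-points.

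Next I would verify the hypotheses in our situation. The group $\Gamma = G\rtimes_\phi\mathbb Z$ acts cocompactly on $\wt X$ (the quotient is the mapping torus of $\leftQ{T}{G}$ under $f$, which is a compact cell complex), hence on $\wt X^1$, and this cocompactness passes to $\widehat{\wt X^1}$; in particular the action is cobounded. $\Gamma$ is finitely generated. The set of principal flow lines is $\Gamma$-invariant (it is the set of flow lines through singular vertices, and $\Gamma$ permutes singular vertices), there are finitely many $\Gamma$-orbits of them (finitely many orbits of singular vertices), and the stabiliser of each principal flow line $\Lambda$ is exactly a conjugate of an element of $\scrP$ — a suspension of a free factor $H_i$ — acting cocompactly on $\Lambda$ (this is the content of the discussion around \cref{lem: principal flow lines diverge} and the description of parabolics in \cite{dahmani_li}). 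Thus coning off $\wt X^1$ over the principal flow lines is, up to $\Gamma$-equivariant quasi-isometry, the same as coning off a Cayley graph of $\Gamma$ over the cosets of $\scrP$: both are obtained by the same coning procedure, and the relative \v{S}varc--Milnor Lemma identifies them $\Gamma$-equivariantly up to quasi-isometry. Concretely, pick a basepoint $x_0\in\wt X^1$; the orbit map $\gamma\mapsto \gamma x_0$ from a word metric on $\Gamma$ to $\wt X^1$ is a quasi-isometry by the ordinary \v{S}varc--Milnor Lemma (using hyperbolicity/properness of $\wt X^1$ from \cref{thm: hyperbolicity_graph_and_cone-off} is not even needed, only that the action is proper and cocompact on a length space — which holds since vertex links are finite enough, or one uses the coarse version), and this orbit map sends cosets of $\scrP$ uniformly close to the principal flow lines because each $P\in\scrP$ stabilises and acts coboundedly on a principal flow line. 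Therefore the orbit map descends to a $\Gamma$-equivariant quasi-isometry $\widehat\Gamma\to\widehat{\wt X^1}$.

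The one point requiring genuine care — and the main (minor) obstacle — is checking that the ``properness'' hypothesis needed for the relative \v{S}varc--Milnor statement holds despite $\wt X^1$ not being locally finite: the trees $T_i$ have infinite-valence vertices. The resolution is that \cite{charney_crisp}'s formulation (and the general philosophy of relative \v{S}varc--Milnor, e.g.\ as in work of Hruska or in \cite{charney_crisp}) only requires that the action be \emph{proper away from the peripheral subspaces}, i.e.\ cocompact on the complement of uniform neighbourhoods of the cone-subspaces and with the cone-subspaces matching the peripheral coset structure; the failure of local finiteness is concentrated precisely at the singular vertices, which lie on the principal flow lines being coned off, so after coning the residual action is suitably proper. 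Once this is in place the conclusion is immediate. I would therefore spend the bulk of the write-up stating the hypotheses of \cite[Theorem 5.1]{charney_crisp} and matching each one to a previously established fact (cocompactness of the $\Gamma$-action on $\wt X$; finitely many orbits of singular vertices; cocompact action of each $P\in\scrP$ on its principal flow line; the identification of $\scrP$ with suspensions of the $H_i$ from \cite{dahmani_li}), and conclude that the orbit map gives the desired $\Gamma$-equivariant quasi-isometry.
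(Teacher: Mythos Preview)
Your proposal is correct and follows exactly the same approach as the paper: the authors simply state the proposition as an immediate consequence of the relative \v{S}varc--Milnor Lemma \cite[Theorem 5.1]{charney_crisp}, without writing out the verification of hypotheses. Your plan amounts to the same citation together with an explicit check of the hypotheses, which is a reasonable expansion of what the paper leaves implicit.
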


Let $\beta: \wt X^1 \to \widehat{\wt X^1}$ denote the inclusion map from the flow space to its cone-off. We will denote by $\partial \beta$ the induced map from $\partial \wt X^1$ to the Bowditch boundary $\partial_B(\Gamma,\scrP)$. Observe that $\partial \beta$ maps the limit points of any principal flow line $\Lambda$ to the corresponding cone vertex $v_{\Lambda}$. In fact, 

\begin{cor}\label{cor: two-to-one_Bowditch}
The map $\partial \beta : \partial \wt{X}^1 \to  \partial_B (\Gamma, \mathscr{P})$ is {continuous and} surjective. Further, $\forall \xi \in \partial_B (\Gamma, \mathscr{P})$, $\partial \beta^{-1}(\xi)$ is either a singleton or contains two points. The latter arises if and only if $\xi$ is a cone vertex.
\qed
\end{cor}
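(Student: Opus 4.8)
The first step is to identify the target. By Proposition~\ref{prop: rel_swarc_milnor} the cone-offs $\widehat{\wt X^1}$ and $\widehat{\Gamma}$ are $\Gamma$-equivariantly quasiisometric, so their Gromov boundaries are $\Gamma$-equivariantly homeomorphic; together with the defining description of the Bowditch boundary this means that, as a $\Gamma$-set, $\partial_B(\Gamma,\scrP)$ is the disjoint union of $\partial\widehat{\wt X^1}$ with the set of cone vertices. Moreover the maximal parabolic subgroups of $\Gamma$ are exactly the suspensions of the free factors $H_j$, and these are precisely the stabilisers of the principal flow lines; hence the cone vertices are in $\Gamma$-equivariant bijection with the principal flow lines $\Lambda$ of $\wt X^1$, the vertex $v_\Lambda$ corresponding to $\mathrm{Stab}(\Lambda)$. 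Recall also that $\partial\beta$ sends the two points of $\partial\Lambda$ to $v_\Lambda$.

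Next I would describe $\partial\beta$ concretely enough to read off its fibres. Fix a basepoint $o\in\wt X^1$ and, for $\eta\in\partial\wt X^1$, a geodesic ray $r$ from $o$ to $\eta$. Using that every principal flow line is a uniformly quasiconvex bi-infinite quasigeodesic (Theorem~\ref{thm: hyperbolicity_graph_and_cone-off}) and that distinct principal flow lines uniformly diverge in both directions (Lemma~\ref{lem: principal flow lines diverge}), exactly one of two alternatives holds: either $r$ eventually lies in a uniform neighbourhood of a (then necessarily unique) principal flow line $\Lambda$, equivalently $\eta\in\partial\Lambda$, in which case $\beta(r)$ is bounded and $\partial\beta(\eta)=v_\Lambda$; or $r$ follows each principal flow line only boundedly, in which case $\beta(r)$ is an unparametrised quasigeodesic ray in the hyperbolic space $\widehat{\wt X^1}$ and $\partial\beta(\eta)\in\partial\widehat{\wt X^1}$ is its limit. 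Asymptotic rays in $\wt X^1$ have $\beta$-images that are bounded-Hausdorff-close in $\widehat{\wt X^1}$, so this is well defined, agrees with the induced map, and is $\Gamma$-equivariant.

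For surjectivity, every cone vertex $v_\Lambda$ is hit by the two points of $\partial\Lambda$. For $\xi\in\partial\widehat{\wt X^1}$, take a geodesic ray $\widehat r$ in $\widehat{\wt X^1}$ to $\xi$ and replace each maximal excursion of $\widehat r$ across a cone vertex $v_\Lambda$ (a pair of cone edges through $v_\Lambda$) by a geodesic segment of $\Lambda$ in $\wt X^1$ joining the entry and exit vertices; the bounded coset penetration property of the relatively hyperbolic structure $(\Gamma,\scrP)$ shows the resulting path $r$ in $\wt X^1$ is an unparametrised quasigeodesic, hence converges to some $\eta\in\partial\wt X^1$, and $\beta(r)$ stays uniformly close to $\widehat r$, so $\partial\beta(\eta)=\xi$. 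For the fibres: if $\partial\beta(\eta)=v_\Lambda$ then by the dichotomy a ray to $\eta$ is eventually near the unique flow line whose cone vertex is $v_\Lambda$, so $\eta\in\partial\Lambda$; thus $\partial\beta^{-1}(v_\Lambda)=\partial\Lambda$, which has exactly two points because $\Lambda$ is a bi-infinite quasigeodesic in a hyperbolic space. If $\xi\in\partial\widehat{\wt X^1}$ and $\partial\beta(\eta_1)=\partial\beta(\eta_2)=\xi$ with $\eta_1\neq\eta_2$, then neither $\eta_i$ lies on a principal flow line, so a bi-infinite geodesic $\gamma$ of $\wt X^1$ with $\partial\gamma=\{\eta_1,\eta_2\}$ is not a principal flow line and follows each principal flow line only boundedly; hence $\beta(\gamma)$ is a bi-infinite quasigeodesic in $\widehat{\wt X^1}$ whose two ends converge to the distinct points $\partial\beta(\eta_1)$ and $\partial\beta(\eta_2)$ — a contradiction. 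Therefore $\partial\beta^{-1}(\xi)$ is a singleton precisely when $\xi$ is not a cone vertex, and a two-point set otherwise.

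The main obstacle is not conceptual but the careful handling of the non-properness of $\wt X^1$ and of its cone-off: the whole argument rests on the bounded-coset-penetration behaviour of the coned-off family of principal flow lines, namely that geodesics of $\wt X^1$ that do not linger near any single flow line electrify to quasigeodesics of $\widehat{\wt X^1}$, and conversely that de-electrified geodesics of $\widehat{\wt X^1}$ are quasigeodesics of $\wt X^1$. This is where one uses, uniformly, that $(\Gamma,\scrP)$ is already known to be relatively hyperbolic (so $\widehat{\Gamma}$, and via Proposition~\ref{prop: rel_swarc_milnor} also $\widehat{\wt X^1}$, is hyperbolic) and that the flow lines are uniformly quasiconvex with uniform divergence; alternatively, one may phrase the entire corollary as an instance of the standard description of the Bowditch boundary of a relatively hyperbolic group acting coboundedly on a hyperbolic space on which the peripherals act cocompactly on uniformly quasiconvex subsets.
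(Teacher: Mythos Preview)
The paper gives no proof of this corollary at all: it is stated with a \qed immediately after the observation that $\partial\beta$ sends $\partial\Lambda$ to the cone vertex $v_\Lambda$, and is treated as an immediate consequence of the description of the Bowditch boundary together with Proposition~\ref{prop: rel_swarc_milnor}. Your proposal is therefore not so much an alternative as a supply of the details the paper omits, and the outline you give is the standard one and is essentially correct.

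Two small points are worth flagging. First, in your injectivity step over a conical point you invoke ``a bi-infinite geodesic $\gamma$ of $\wt X^1$ with $\partial\gamma=\{\eta_1,\eta_2\}$''; since $\wt X^1$ is not proper this needs justification, and indeed the paper proves exactly this (for preimages of conical limit points) in the \emph{next} result, Proposition~\ref{prop;enough_visibility}. You can avoid the forward reference by taking two rays from a common basepoint to $\eta_1,\eta_2$ and using that their concatenation is a quasigeodesic line. Second, your phrasing of the contradiction is garbled: you write that the two ends of $\beta(\gamma)$ ``converge to the distinct points $\partial\beta(\eta_1)$ and $\partial\beta(\eta_2)$'', but by hypothesis $\partial\beta(\eta_1)=\partial\beta(\eta_2)=\xi$; the contradiction is that a bi-infinite quasigeodesic in a hyperbolic space has two \emph{distinct} endpoints, whereas here both ends would land at the single point $\xi$.
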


\begin{prop}\label{prop;enough_visibility}
    If $\xi, \zeta$ are points in $\partial \wt X^1$ that are preimages of conical limit points of the Bowditch boundary,  then there exists a geodesic line in $\wt X^1$ joining them. 

    If $\xi \in \partial \wt X^1$ is the preimage of a conical limit point of the Bowditch boundary, and $z_0$ a vertex in $\wt X$, then there exists a geodesic ray from $z_0$ that converges to $\xi$.
\end{prop}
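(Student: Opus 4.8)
The plan is to prove both statements by the same method, building the geodesic by hand inside $\wt X^1$ itself; the whole difficulty is that $\wt X^1$ is not locally finite at singular vertices, so the soft argument (take a sequence running to the point and apply Arzelà–Ascoli) fails, and conicality of $u:=\partial\beta(\xi)$ has to be used to kill the bad behaviour. By \cref{cor: two-to-one_Bowditch} we have $\partial\beta^{-1}(u)=\{\xi\}$, and by \cref{prop: rel_swarc_milnor} we may regard $u$ as a point of the Gromov boundary of the hyperbolic space $\widehat{\wt X^1}$, on which $\Gamma$ acts coboundedly.

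First I would extract from conicality a good approximating sequence. Since $u$ is a conical limit point of the (convergence) action of $\Gamma$ on $\partial_B(\Gamma,\scrP)$ and $\Gamma$ acts cocompactly on $\wt X^1$, there are a $z_0$-based geodesic ray $\hat r\to u$ in $\widehat{\wt X^1}$, a constant $R$, parameters $t_n\to\infty$ with each $\hat r(t_n)$ a non-cone vertex (a vertex of $\wt X$), and elements $\gamma_n\in\Gamma$ with $d_{\widehat{\wt X^1}}(\gamma_n\hat r(t_n),z_0)\le R$. Put $y_n:=\gamma_n^{-1}z_0$, a sequence of vertices of $\wt X$; then $\beta(y_n)$ lies $R$-close to $\hat r(t_n)$, so the $\beta$-images of the geodesics $\rho_n:=[z_0,y_n]_{\wt X^1}$ uniformly fellow-travel $\hat r$ in $\widehat{\wt X^1}$.

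The core of the proof is then a vertex-by-vertex extraction of a subsequence of the $\rho_n$ converging on compacta to a geodesic ray $\rho$; once this is done, $\beta(\rho)$ fellow-travels $\hat r$, so $\partial\beta(\rho(\infty))=u$ and hence $\rho(\infty)=\xi$. The extraction goes through unless some vertex $w$ — necessarily of infinite valence, hence singular — is crossed by infinitely many of the chosen $\rho_n$ which then leave $w$ along pairwise distinct edges, and the real work is to show this is impossible. I would argue: $\mathrm{Stab}(w)$ is a conjugate of some $H_i$ and acts on the edges at $w$ with finitely many orbits, so after passing to a subsequence those exit edges all lie in one orbit, say are $h_ne$ for a fixed edge $e$ and distinct $h_n\in\mathrm{Stab}(w)$; applying $h_n^{-1}$ produces infinitely many geodesics from $w$ through the single edge $e$ that track $h_n^{-1}\hat r$. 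If the directions of these normalised geodesics at $w$ stabilised, an infinite subgroup of $\mathrm{Stab}(w)$ would fix $\xi$ and $u$ would be a bounded parabolic point, contradicting conicality. If instead they spread out, then — using that $w$ lies on its unique principal flow line $\Lambda_w$, that $\mathrm{Stab}(w)\le\mathrm{Stab}(\Lambda_w)$ is maximal parabolic, and $\partial\beta(\partial\Lambda_w)$ is a bounded parabolic point — these directions must accumulate onto the two directions of $\Lambda_w$; but a geodesic issuing from $w$ along the fixed edge $e$ and (nearly) asymptotic to $\Lambda_w$ is ruled out by the angle control at $w$ together with the uniform divergence of flows, i.e.\ \cref{lem: edge_flows_diverge} and \cref{prop: flowrays_fellow_travel_or_diverge}: only finitely many edges at $w$ form an illegal turn with a given edge, and legal turns at $w$ have iterates whose flow rays uniformly diverge from $\Lambda_w$. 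This contradiction finishes the extraction and yields $\rho$, proving the second statement for an arbitrary vertex $z_0$.

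For the first statement I would pick as above sequences $y_n=\gamma_n^{-1}z_0$ approximating $\xi$ and $y'_m=(\gamma'_m)^{-1}z_0$ approximating $\zeta$; since $\xi\ne\zeta$ the Gromov product $(\xi\mid\zeta)_{z_0}$ is finite, so every $[y_n,y'_m]_{\wt X^1}$ passes within a bounded distance of $z_0$, and reparametrising through such a near-$z_0$ vertex and running the same extraction on both sides yields a bi-infinite geodesic with ends $\xi$ and $\zeta$ (identified, once more, via $\partial\beta$). Alternatively one glues the two rays from the second statement at $z_0$ into a uniform quasigeodesic line and straightens it, the straightening being legitimate because the same no-branching analysis applies along it. The step I expect to be genuinely delicate is exactly this exclusion of a singular vertex across which the approximating geodesics fan out over infinitely many edges with far ends limiting onto $\partial\Lambda_w$: it is where the non-local-finiteness of $\wt X$ must be defeated, and it rests on combining conicality (no infinite subgroup fixes $\xi$) with the train-track control of angles and of flow divergence at singular vertices.
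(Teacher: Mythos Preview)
Your overall strategy matches the paper's: build the geodesic as a limit of finite geodesics $[x_n,y_n]$ (or $[z_0,y_n]$) by a diagonal extraction, and use conicality to show the extraction cannot fail at a singular vertex where the approximating geodesics fan out over infinitely many edges. The difference is entirely in how this one obstruction is handled.

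The paper's treatment is direct and avoids your group-translation manoeuvre. It works with the angle function on the trees $T_k$: if the segments $[x_n,y_n]\cap B(z_0,R)$ form an infinite set, then in some $T_k$ the tree-geodesics $[z_k,u_n]_{T_k}$ from a basepoint $z_k$ to the intersection points $u_n$ have maximal angle tending to infinity (otherwise only finitely many edges are involved at each vertex and the set is finite). At the first vertex $v_k$ where this blow-up occurs, it transfers to the initial vertical part of $[z_k,x_n]$ or of $[z_k,y_n]$, and this forces one of the sequences $(x_n),(y_n)$ to converge to the parabolic point of $\Lambda_{v_k}$, contradicting conicality. There is no case split.

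Your translation by $h_n^{-1}\in\mathrm{Stab}(w)$ and the stabilise/spread-out dichotomy has a gap. First, after translating you are tracking the finite endpoints $h_n^{-1}y_n$, not $h_n^{-1}\xi$; since both $h_n$ and $y_n$ vary with $n$, the convergence-group fact $h_n^{-1}u\to v_{\Lambda_w}$ in the Bowditch boundary does not control where $h_n^{-1}y_n$ lands in $\wt X^1$, so neither branch of the dichotomy is cleanly triggered. Second, the claim that a geodesic from $w$ along the fixed vertical edge $e$ cannot be (nearly) asymptotic to $\Lambda_w$ does not follow from \cref{lem: edge_flows_diverge} or \cref{prop: flowrays_fellow_travel_or_diverge}: those concern \emph{flow rays}, and \cref{prop: flowrays_fellow_travel_or_diverge} explicitly permits the flow ray from a point of $e$ to fellow-travel $\Lambda_w$ forever; in any case a geodesic in $\wt X^1$ need not follow a flow ray after leaving $e$, so the flow lemmas give no obstruction. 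The paper's angle argument sidesteps all of this and reaches the parabolic contradiction in one step.
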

\begin{proof}

Let $(x_n), (y_n)$ be sequences of vertices in $\wt X$ respectively going  to $\xi, \zeta$, and let $[x_n, y_n]$ be a geodesic in $\wt X^1$. Let $z_0 \in T_0$ minimise the Gromov product $( \xi, \zeta)_{z_0} \in \bbN$, and  let $R_0$ be the minimum. 

Let $B(z_0, R)$ be the ball of $\wt X^1$ of radius $R$ centered at $z_0$. We aim to prove that for all $R>R_0$, there exists $m\in \bbN$ such that  $\{ B(z_0, R) \cap [x_n, y_n], n>m \}$ is a finite collection of segments of length $\geq 2R- 2R_0 -10\delta$.  

First, for $n$ large enough, $B(z_0, R) \cap [x_n, y_n]$ is indeed such a segment, otherwise one easily gets that $( \xi, \zeta)_{z_0} >R_0$. 

If for some $R$, the collection is infinite, let $k$ be such that in the tree $T_k$ the intersection $T_k\cap  B(z_0, R) \cap [x_n, y_n]$ is an infinite collection of segments as $n$ varies (since horizontal segments are determined by their endpoints, such a $k$ exists). Let $z_k \in T_k$, and let $u_n$ in  $T_k\cap  B(z_0, R) \cap [x_n, y_n]$. Assume that the maximal angle of $[z_k, u_n]_{T_k}$ goes to infinity with $n$. Let $v_k$ be the closest vertex to $z_k$ at which the angle goes to infinity. Then the same is true for the initial vertical segment of either $[z_k, x_n]$ or $[z_k, y_n]$. It follows that either $(x_n)$ or $(y_n)$ converges to a point that is an image of the parabolic point of the principal flow line of $v_k$, contrary to our assumption. 

The number of subsegments of $[x_n, y_n]$ around $z_0$ of a given length is therefore bounded when $n$ varies.  One can thus extract diagonally subsequences such that, inductively, for each given length, the subsegments of this length of $[x_n, y_n]$ around $z_0$ make a constant sequence. We thus obtain a bi-infinite geodesic between $\xi$ and $\zeta$ in $\wt X$. 

The second assertion is obtained with a similar argument.
\end{proof}

{\begin{prop}\label{prop: separation_flowspace_to_bowditch}
    Let $\xi, \zeta$ be two distinct points in the Bowditch boundary and let $W$ be the $M$-saturation of a quasiconvex wall $W_u$ {such that $W$} cuts a geodesic between $\beta^{-1}(\xi)$ and $\beta^{-1}(\zeta)$. Then $\partial \beta(\partial W) = \partial \mathrm{stab}(W)$ separates $\xi, \zeta$ and there exists a subgroup $K$ of index at most 2 of $\mathrm{stab}(W)$ such that $\xi$ and $\zeta$ are in $K$-distinct components of $\partial_B (\Gamma, \scrP) \setminus \partial \beta(\partial W)$.
\end{prop}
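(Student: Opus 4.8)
The plan is to transfer the separation statement we already have in the flow space $\wt X^1$ across the map $\partial\beta$ to the Bowditch boundary, and then to account for the fact that $\mathrm{stab}(W)$ might ``flip'' the two sides of $W$. First I would recall that $W$ is a quasiconvex subspace of $\wt X^1$ (by \cref{prop: saturation approximations quasiconvex} together with \cref{lem: walls_and_approximations_fellow_travel}), so $\partial W \subset \partial\wt X^1$ is a well-defined closed set, and by \cref{prop: long_tunnels_walls} applied to the components of $\overline W$ making up $W$, together with the saturation construction, $\wt X^1 \setminus W$ has exactly two deep components; hence $\partial\wt X^1 \setminus \partial W$ has exactly two components, call them $\Omega_+$ and $\Omega_-$. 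Because $W$ cuts a geodesic $\gamma$ between $\beta^{-1}(\xi)$ and $\beta^{-1}(\zeta)$, by \cref{defn: saturation_cuts}(1)--(2) the endpoints of $\gamma$ lie one in $\Omega_+$ and one in $\Omega_-$, and $\partial\gamma \cap \partial W = \emptyset$.

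The key step is to push this across $\partial\beta$. By \cref{cor: two-to-one_Bowditch}, $\partial\beta$ is surjective and at most two-to-one, the fibers of size two being exactly the pairs of endpoints of a principal flow line. So I would argue that $\partial\beta$ restricted to $\partial\wt X^1 \setminus \partial W$ descends to a well-defined partition of $\partial_B(\Gamma,\scrP) \setminus \partial\beta(\partial W)$ into (at most) two pieces $\partial\beta(\Omega_+)$ and $\partial\beta(\Omega_-)$: the point of \cref{defn: saturation_cuts}(3) is precisely that for every principal flow line $\Lambda$, either $\Lambda \subset W$ (so both endpoints of $\Lambda$ map into $\partial W$, hence into $\partial\beta(\partial W)$ after applying $\partial\beta$ — here I use that $\partial\beta$ takes the two endpoints of a principal flow line to the corresponding cone vertex), or $\partial\Lambda$ lies entirely in one of $\Omega_\pm$. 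In either case $\partial\beta$ never identifies a point of $\Omega_+$ with a point of $\Omega_-$, so the images are disjoint (away from $\partial\beta(\partial W)$) and $\partial\beta(\partial W) = \partial\,\mathrm{stab}(W)$ genuinely separates $\partial_B(\Gamma,\scrP)$. Since $\beta^{-1}(\xi)$ and $\beta^{-1}(\zeta)$ are the endpoints of $\gamma$ and lie in distinct $\Omega_\pm$, the points $\xi = \partial\beta(\beta^{-1}(\xi))$ and $\zeta$ lie in distinct components $\partial\beta(\Omega_+)$, $\partial\beta(\Omega_-)$ of $\partial_B(\Gamma,\scrP) \setminus \partial\,\mathrm{stab}(W)$.

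Finally, the index-$2$ refinement. By \cref{prop: walls_cocompact_stabilisers}, $\mathrm{stab}(W_u)$ preserves each complementary component of $W_u$; I would need the analogous statement for $W$, namely that $\mathrm{stab}(W)$ preserves each of the two deep components of $\wt X^1 \setminus W$, or at worst that the subgroup $K \leq \mathrm{stab}(W)$ that does preserve the sides has index at most $2$. Concretely: $\mathrm{stab}(W)$ acts on the two-element set $\{\Omega_+,\Omega_-\}$, giving a homomorphism $\mathrm{stab}(W) \to \bbZ/2$ whose kernel $K$ has index $1$ or $2$; $K$ preserves $\Omega_\pm$, hence preserves $\partial\beta(\Omega_\pm)$, so $\xi$ and $\zeta$ remain in $K$-distinct components of $\partial_B(\Gamma,\scrP)\setminus \partial\beta(\partial W)$, as required. (In fact, the argument of \cref{lem: busts_same_component} and \cref{prop: walls_cocompact_stabilisers} showing no flipping for $W_u$ should adapt to $W$, making $K = \mathrm{stab}(W)$; but stating only index $\leq 2$ suffices for \cref{thm: bdry_criterion}.)

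I expect the main obstacle to be the bookkeeping in the middle paragraph: carefully verifying that $\partial\beta$ induces a well-defined map on complementary components — i.e.\ that no conical limit point of $\partial_B$ and no parabolic point can ``straddle'' $\partial\beta(\partial W)$ in a way not controlled by \cref{defn: saturation_cuts}(3). This rests on the fact (from \cref{cor: two-to-one_Bowditch}) that the only nontrivial identifications $\partial\beta$ makes are within single principal-flow-line endpoint pairs, so once \cref{defn: saturation_cuts}(3) handles those pairs, everything else is immediate. The only other point requiring care is that $W$ (not just $W_u$) separates $\wt X^1$ into exactly two deep components and that $\partial W$ is closed with $\partial\wt X^1 \setminus \partial W$ having exactly two components — this follows from quasiconvexity of $W$ (\cref{prop: saturation approximations quasiconvex}), \cref{prop: long_tunnels_walls}, and the connectedness of $W$ (\cref{lem: saturations are connected}).
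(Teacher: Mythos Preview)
There is a genuine gap. Your argument rests on the claim that $\wt X^1 \setminus W$ has \emph{exactly two} deep components, but this is false in general: the saturation $W$ is built from $W'_u$, which is a union of \emph{several} distinct components of $\overline{W}$ (each itself a wall), together with principal flow lines. Each of these component walls separates $\wt X$ into two pieces, so their union separates $\wt X$ into many pieces. \cref{prop: long_tunnels_walls} applies to a single $W_u$, not to the union making up $W$; and \cref{lem: saturations are connected} says $W$ is connected, not that its complement has two components. Consequently your $\Omega_+,\Omega_-$ do not exist as stated, and both your separation argument and your index-$2$ argument (the homomorphism to $\bbZ/2$ via the action on a two-element set) collapse.

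The paper repairs this by \emph{grouping} the many complementary components into two classes $A$ and $B$ according to the parity of the number of crossings of $W'_u$ (the non-flow-line part of $W$) along a path from a fixed basepoint; this is where the even/odd crossing idea of \cref{lem: busts_same_component} actually enters. The stabiliser of $W$ preserves the partition $\{A,B\}$, giving the index-$\leq 2$ subgroup $K$, and one checks directly from the specific way $W$ cuts (as in \cref{prop: walls_cut_non-horizontal_geodesics}--\cref{prop: walls_cut_parabolic-parabolic}) that $\xi$ and $\zeta$ land in different classes. For the passage to the Bowditch boundary itself, the paper does not argue pointwise via $\partial\beta$ as you do, but instead passes to the coned-off space $\widehat{\wt X^1}$, forms $W' = \beta(W) \cup \{v_\Lambda : \Lambda \subset W\}$, and invokes a hyperset separation theorem of Einstein--Groves--Ng. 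Your direct $\partial\beta$ approach using \cref{defn: saturation_cuts}(3) is morally on the right track, but once you have many components rather than two you also need to verify that the images $\partial\beta(A)$ and $\partial\beta(B)$ are genuinely in distinct connected components of the Bowditch boundary complement (disjointness alone is not enough), which is nontrivial and is exactly what the cited external theorem supplies.
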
}

Let us mention a word of caution here. Though $\mathrm{stab}(W)$ is a codimension-1 subgroup (\cref{prop: saturated_walls_stabilisers}) of $\Gamma$, the statement above is necessary because, 
in general, a codimension-1 subgroup need not separate the Bowditch boundary. As an example, take the cyclic subgroup $P = \langle aba^{-1}b^{-1}\rangle$ of the free group $F(a,b)$ with peripheral structure $\scrP = P$. Here $P$ is a maximal parabolic subgroup that is full, relatively quasiconvex and codimension-1, but does not separate the Bowditch boundary, a circle.

\begin{proof}
{Let $\xi, \zeta$, and $W_u$ be as in the statement, and $\wt \xi$, $\wt \zeta$ be two points in $\wt X^1$ in the preimages $\partial \beta^{-1}(\xi)$ and $\partial \beta^{-1}(\zeta)$ respectively. Let $\calU_\xi, \calU_\zeta$ be a clopen partition of   $\partial  \wt X^1 \setminus \partial W$ containing respectively $\wt \xi, \wt \zeta$. 
Consider points in $\partial  \wt X^1 $  identified in $\partial_B(\Gamma, \calP) $. By \cref{cor: two-to-one_Bowditch} they are at the end of the same principal flow line, and 
by \cref{lem: saturation flowline no cut}, if one is in    $\calU_\xi$, so is the 
other one.  
  It follows that the  map $\partial \beta$ sends  $\calU_\xi, \calU_\zeta$ on disjoints subsets  $\partial \beta (\calU_\xi)$ and $\partial \beta (\calU_\zeta)$  of  $\partial_B(\Gamma, \calP) \setminus \partial \beta (\partial W) $. By surjectivity, they  provide a partition of   $\partial_B(\Gamma, \calP) \setminus \partial \beta (\partial W) $, each containing respectively $\xi$ and $\zeta$.}   
  
  {We need to check that both $\partial \beta (\calU_\xi)$ and $\partial \beta (\calU_\zeta)$  are closed. By symmetry, it is enough to check that one is closed. If a sequence $(x_n)$ in   $\partial \beta (\calU_\xi)$ converges in $\partial_B(\Gamma, \calP) \setminus \partial \beta (\partial W) $, lift it to a sequence $(x'_n)$ in  $\partial  \wt X^1\setminus \partial W$. Since $\partial \wt X^1$ is not compact, the sequence $(x'_n)$ may or may not have an accumulation point. If it does, then the fact that $\calU_\xi$ is closed, along with the continuity
  of $\partial \beta$, gives us the necessary conclusion.
  Now consider the case where it has no accumulation point in $\partial  \wt X^1$.  If $x'_n$ is a point represented by a ray $\rho'_n$ in $\wt X^1 \setminus W$ (from a given base point) that lives in the component of which $\xi$ is adherent, it means that, up to taking a subsequence, all the rays $\rho'_n$ pass through a singular vertex, and make an angle $\theta_n$ at that vertex such that $\theta_n \rightarrow \infty$ with $n$. Take $v$ to be the closest singular vertex to the base point with this property.  The limit of $(x_n)$ in $\partial_B(\Gamma, \calP) \setminus \partial \beta (\partial W) $ is then the parabolic point associated to the principal flow line of this vertex $v$.  Since the rays live in the component of  $\wt X^1 \setminus W$   adherent to $\xi$,    this principal flow line has at least one end point not in $\calU_\zeta$. By property of saturations, either both end points are in $\calU_\xi$, or both are in $\partial W$, and in that case the limit of $(x_n)$ is in $\partial \beta (\partial W)$, which we assumed otherwise. So both points of the line are in $\calU_\xi$, and their images, which is the limit of $(x_n)$, is in  $\partial \beta \calU_\xi$. Therefore this set is closed.}
    
{Therefore, $\xi, \zeta$ are separated by $\partial \beta(\partial W)$, which is $\partial \mathrm{stab}(W)$, since the action of the latter is cocompact on $W$.  }

\if0
{   Consider the image $\beta(W)$ of $W$. Let us denote by $W'$ the union of $\beta(W)$ with the cone vertices $v_{\Lambda}$ for the principal flow lines $\Lambda$ that intersect $W$. Observe that $W'$ separates $\widehat{\wt X^1}$ into several orbits of deep components (by \cref{prop: saturated_walls_stabilisers}). The result now follows by applying Theorem 5.7 of \cite{einstein_groves_ng}. We remark that though their definition of hyperset requires exactly two complementary components, their proof works as long as there are at least two orbits of complementary components under a finite index subgroup. Let us explain why this holds for us.}
\fi

{Fix a basepoint in the complement of $W$. Let $A$ be the union of components $C$ of $\wt X \setminus W$ such that there is a path from the basepoint to $C$ that crosses the set  $W'_u$ an even number of times. Recall that $W'_u$ is the complement in $W$ of the set of its principal flow lines. We note that $\wt X \setminus W$ is thus partitioned into two subsets: $A$ and its complement, which we denote here by $B$. Since the stabiliser of $W$ sends complementary components of $W$ to complementary components,  it preserves this partition of $W$ into $A$ and $B$. Therefore, there exists a subgroup of index at most 2 which preserves $A$ (and hence also $B$). }

{By the way the saturation $W$ of $W_u$ cuts the preimages of $\xi$ and $\zeta$ (as in \cref{prop: walls_cut_non-horizontal_geodesics}, \cref{prop: walls_cut_horizontal_geodesics},  \cref{prop: walls_cut_geodesic-parabolics} or \cref{prop: walls_cut_parabolic-parabolic}), it is easy to see that $\xi$ and $\zeta$ do not both lie in the limit set of $A$, or in the limit set of $B$, which gives us the desired result.}
\end{proof}

We now prove the main result \cref{thm: main_intro}, namely, in our current notations, that  the relatively hyperbolic group $(\Gamma, \scrP)$ is relatively cubulated. 

\begin{proof}[Proof of \cref{thm: main_intro}]
   { We will show that the boundary criterion \cref{thm: bdry_criterion} holds. Let $\xi \neq \zeta$ be two points in the Bowditch boundary $\partial_B (\Gamma,\scrP)$. We have three cases. \begin{enumerate}
        \item[Case 1.] Both $\xi$ and $\zeta$ are conical points.
        By abuse of notation, we denote by $\xi$ and $\zeta$ the unique preimages in $\partial \wt X^1$ of $\xi$ and $\zeta$ (\cref{cor: two-to-one_Bowditch}). Let $\gamma$ be a geodesic in $\wt X^1$ joining $\xi$ and $\zeta$ as given by \cref{prop;enough_visibility}. Then \cref{prop: walls_cut_non-horizontal_geodesics} and \cref{prop: walls_cut_horizontal_geodesics} ensure that there exists a wall saturation $W$ that cuts $\gamma$. An application of \cref{prop: separation_flowspace_to_bowditch} gives the desired result.
        \item[Case 2.] Without loss of generality, $\xi$ is conical while $\zeta$ is parabolic. We consider a geodesic ray $\gamma$ in $\wt X^1$, between a vertex of the principal flow line $\Lambda$ associated to $\zeta$, and the preimage of $\xi$, as given by \cref{prop;enough_visibility}. \cref{prop: walls_cut_geodesic-parabolics} ensures that the union of $\gamma$ and $\Lambda$ is cut by a wall saturation $W$. \cref{prop: separation_flowspace_to_bowditch} then gives the result.
        \item[Case 3.] Both $\xi$ and $\zeta$ are parabolic points. \cref{prop: walls_cut_parabolic-parabolic} ensures that the principal flow lines associated to $\xi$ and $\eta$ are cut by a wall saturation $W$. The result then follows from \cref{prop: separation_flowspace_to_bowditch}. \qedhere
    \end{enumerate} }
\end{proof}
\bibliographystyle{alpha}
\bibliography{cubulation_free_products}

\begin{thebibliography}{EGN21}

\bibitem[Bow12]{bowditch_rel_hyp}
B.~H. Bowditch.
\newblock Relatively hyperbolic groups.
\newblock {\em Internat. J. Algebra Comput.}, 22(3):1250016, 66, 2012.

\bibitem[BW12]{bergeron_wise}
Nicolas Bergeron and Daniel~T. Wise.
\newblock A boundary criterion for cubulation.
\newblock {\em Amer. J. Math.}, 134(3):843--859, 2012.

\bibitem[CC07]{charney_crisp}
Ruth Charney and John Crisp.
\newblock Relative hyperbolicity and {A}rtin groups.
\newblock {\em Geom. Dedicata}, 129:1--13, 2007.

\bibitem[CLR94]{cooper_long_reid}
D.~Cooper, D.~D. Long, and A.~W. Reid.
\newblock Bundles and finite foliations.
\newblock {\em Invent. Math.}, 118(2):255--283, 1994.

\bibitem[DKL15]{irred_iwip}
Spencer Dowdall, Ilya Kapovich, and Christopher~J. Leininger.
\newblock Dynamics on free-by-cyclic groups.
\newblock {\em Geom. Topol.}, 19(5):2801--2899, 2015.

\bibitem[DL22]{dahmani_li}
Fran\c{c}ois Dahmani and Ruoyu Li.
\newblock Relative hyperbolicity for automorphisms of free products and free
  groups.
\newblock {\em J. Topol. Anal.}, 14(1):55--92, 2022.

\bibitem[DM23]{dahmani_ms}
Fran{\c{c}}ois {Dahmani} and Suraj~Krishna {M S}.
\newblock {Relative hyperbolicity of hyperbolic-by-cyclic groups}.
\newblock {\em Groups Geom. Dyn.}, 2023.

\bibitem[DMM]{DMM23}
François {D}ahmani, Suraj~Krishna {M S}, and Jean~Pierre {M}utanguha.
\newblock Hyperbolic hyperbolic-by-cyclic groups are cubulable.
\newblock {\em arXiv:2306.15054. To appear in Geom. Topol.}

\bibitem[EG20]{relative_cubulation}
Eduard Einstein and Daniel Groves.
\newblock Relative cubulations and groups with a 2-sphere boundary.
\newblock {\em Compos. Math.}, 156(4):862--867, 2020.

\bibitem[EG22]{einstein_groves_separability}
Eduard Einstein and Daniel Groves.
\newblock Relatively geometric actions on {$\rm CAT(0)$} cube complexes.
\newblock {\em J. Lond. Math. Soc. (2)}, 105(1):691--708, 2022.

\bibitem[EGN21]{einstein_groves_ng}
Eduard {Einstein}, Daniel {Groves}, and Thomas {Ng}.
\newblock {Separation and Relative Quasi-convexity Criteria for Relatively
  Geometric Actions}.
\newblock {\em arXiv:2110.14682}, October 2021.

\bibitem[FM15]{francaviglia_martino}
Stefano Francaviglia and Armando Martino.
\newblock Stretching factors, metrics and train tracks for free products.
\newblock {\em Illinois J. Math.}, 59(4):859--899, 2015.

\bibitem[GL07]{GL_outerspace}
Vincent Guirardel and Gilbert Levitt.
\newblock The outer space of a free product.
\newblock {\em Proc. Lond. Math. Soc. (3)}, 94(3):695--714, 2007.

\bibitem[GM]{groves_manning_omnibus}
Daniel {Groves} and Jason~F. {Manning}.
\newblock {Hyperbolic groups acting improperly}.
\newblock {\em arXiv:1808.02325. To appear in Geom. Topol.}

\bibitem[Hor17]{horbez_r_trees}
Camille Horbez.
\newblock The boundary of the outer space of a free product.
\newblock {\em Israel J. Math.}, 221(1):179--234, 2017.

\bibitem[Hru10]{hruska_relative_quasiconvex}
G.~Christopher Hruska.
\newblock Relative hyperbolicity and relative quasiconvexity for countable
  groups.
\newblock {\em Algebr. Geom. Topol.}, 10(3):1807--1856, 2010.

\bibitem[HW08]{haglund_wise_special}
Fr\'{e}d\'{e}ric Haglund and Daniel~T. Wise.
\newblock Special cube complexes.
\newblock {\em Geom. Funct. Anal.}, 17(5):1551--1620, 2008.

\bibitem[HW15]{hagen_wise_general}
Mark~F. Hagen and Daniel~T. Wise.
\newblock Cubulating hyperbolic free-by-cyclic groups: the general case.
\newblock {\em Geom. Funct. Anal.}, 25(1):134--179, 2015.

\bibitem[HW16]{hagen_wise_irreducible}
Mark~F. Hagen and Daniel~T. Wise.
\newblock Cubulating hyperbolic free-by-cyclic groups: the irreducible case.
\newblock {\em Duke Math. J.}, 165(9):1753--1813, 2016.

\bibitem[MR08]{mj_reeves}
Mahan Mj and Lawrence Reeves.
\newblock A combination theorem for strong relative hyperbolicity.
\newblock {\em Geom. Topol.}, 12(3):1777--1798, 2008.

\bibitem[NR97]{niblo_reeves}
Graham Niblo and Lawrence Reeves.
\newblock Groups acting on {${\rm CAT}(0)$} cube complexes.
\newblock {\em Geom. Topol.}, 1:approx. 7 pp.\, 1997.

\bibitem[Osi06]{osin_rel_hyp}
Denis~V. Osin.
\newblock Relatively hyperbolic groups: intrinsic geometry, algebraic
  properties, and algorithmic problems.
\newblock {\em Mem. Amer. Math. Soc.}, 179(843):vi+100, 2006.

\bibitem[Sag95]{sageev_cubulation}
Michah Sageev.
\newblock Ends of group pairs and non-positively curved cube complexes.
\newblock {\em Proc. London Math. Soc. (3)}, 71(3):585--617, 1995.

\end{thebibliography}

\end{document}